\newcommand{\rred}[1]{#1}%
\numberwithin{equation}{section}
\newtheorem{theorem}[equation]{Theorem}
\newtheorem{maintheorem}{Theorem}
\newtheorem{corollary}[equation]{Corollary}
\newtheorem{lemma}[equation]{Lemma}
\newtheorem{proposition}[equation]{Proposition}
\theoremstyle{definition}
\newtheorem{definition}[equation]{Definition}
\newtheorem{remark}[equation]{Remark}
\newtheorem{example}[equation]{Example}
\newtheorem{assumption}{Assumption}
\DeclareMathOperator{\coker}{coker}
\DeclareMathOperator{\id}{id}
\DeclareMathOperator{\im}{im}
\DeclareMathOperator{\Spec}{Spec}
\DeclareMathOperator{\Supp}{Supp}
\DeclareMathOperator{\Hom}{Hom}
\DeclareMathOperator{\Ext}{Ext}
\DeclareMathOperator{\Endd}{End}
\DeclareMathOperator{\Ann}{Ann}
\DeclareMathOperator{\diag}{diag}
\newcommand{\tensor}{\otimes}
\newcommand{\onto}{\twoheadrightarrow}
\newcommand{\into}{\hookrightarrow}
\newcommand{\kk}{\Bbbk}%
\newcommand{\spann}[1]{\left\langle #1 \right\rangle}
\newcommand{\BBname}{Bia{\l{}}ynicki-Birula}%
\DeclareMathOperator{\OpQuot}{Quot}%
\DeclareMathOperator{\OpHilb}{Hilb}%
\newcommand{\Quot}[2]{\OpQuot_{#1}^{#2}}%
\newcommand{\Gmult}{\mathbb{G}_{\mathrm{m}}}
\newcommand{\Gbar}{\overline{\mathbb{G}}_{\mathrm{m}}}%
\newcommand{\compo}{\mathcal{Z}}%
\DeclareMathOperator{\Gr}{Gr}
\DeclareMathOperator{\Sym}{Sym}
\DeclareMathOperator{\chr}{char}
\DeclareMathOperator{\GL}{GL}
\DeclareMathOperator{\Mod}{Mod}
\newcommand{\squareZeroComp}[3]{\mathcal{Z}_{#1,#2}^{#3}}
\newcommand{\squareZeroCompUpper}[3]{\mathcal{Z}_{#1,#2}^{#3, \mathrm{upper}}}
\newcommand{\mm}{\mathfrak{m}}%
\newcommand{\OO}{\mathcal{O}}%
\newcommand{\xx}{\mathbf{x}}%
\begin{document}

\title{Components and singularities of Quot schemes and varieties of commuting matrices}
\author{Joachim Jelisiejew and Klemen \v{S}ivic}
\thanks{JJ was supported by Polish National Science Center, project
    2017/26/D/ST1/00755 and by the START
fellowship of the Foundation for Polish Science. K\v{S} is partially supported by Slovenian Research Agency (ARRS), grant numbers N1-0103 and P1-0222.}
\date{\today{}}

\begin{abstract}
    We investigate the variety of commuting matrices.
    We classify its components for any number of matrices of size at most $7$.
    We prove that starting from quadruples of of $8\times 8$ matrices,
    this scheme has generically nonreduced components, while up to degree $7$
    it is generically reduced. Our approach is to recast the problem as
    deformations of modules and generalize an array of methods: apolarity,
    duality and \BBname{} decompositions to this setup.
    We include a thorough review of our methods to make the paper
    self-contained and accessible to both algebraic and linear-algebraic
    communities.  Our results give the corresponding statements for the Quot
    schemes of points, in particular we classify the components of
    $\OpQuot_d(\OO_{\mathbb{A}^n}^{\oplus r})$ for $d\leq 7$ and all $r$, $n$.
\end{abstract}

\maketitle

\tableofcontents

\newcommand{\degM}{d}%
\newcommand{\genM}{r}%
\newcommand{\ambM}{n}%
\newcommand{\Quotmain}{\Quot{\genM}{\degM}}%
\newcommand{\Quotplus}{\Quot{\genM}{\degM,+}}%
\newcommand{\Quotminus}{\Quot{\genM}{\degM,-}}%
\newcommand{\MM}{\mathbb{M}}%
\newcommand{\Matrices}{\MM_{\degM}}%
\newcommand{\CommMatParam}[1]{C_{\ambM}(\MM_{#1})}%
\newcommand{\CommMat}{C_{\ambM}(\Matrices)}%
\newcommand{\Modfin}{\Mod^{\degM}(\mathbb{A}^{\ambM})}%
\newcommand{\prodfamily}{\mathcal{U}}%
\newcommand{\prodfamilystable}{\mathcal{U}^{\mathrm{st}}}%

\section{Introduction}

    The variety $\CommMat$ of $n$-tuples of commuting $d \times d$
    matrices is an object of great interest for linear algebraists, in representation
	theory~\cite{CrawleyBoevey} and in deformation theory. It has applications
    in complexity theory, see below. However, its geometry
	is complicated and surprisingly almost nothing about its components is present in the
    literature, especially when $n\geq 4$.
	The aim of the current work is to exhibit new components and clarify the
    general picture by building a robust toolbox.

    The variety $\CommMat$ is a cone over the
    zero tuple, so it is connected. It has a distinguished \emph{principal component} that
    is the closure of the locus of tuples of diagonalizable matrices. Asking
    whether $\CommMat$ is irreducible can be rephrased as asking whether every
    tuple is a limit of diagonalizable ones.
    The variety $C_2(\Matrices)$ is irreducible for each
    $d$ \cite{MT}.
	In contrast, for
    $\ambM\ge 4$ the variety $C_{\ambM}(\Matrices)$ is irreducible if and only if
    $\degM\le 3$ \cite{Ger, Gur}, and the question of irreducibility of
    $C_3(\Matrices)$ is not solved completely yet: it is reducible for
    $\degM\ge 29$, see \cite{HO}, \cite[p. 238]{NS}, and irreducible for
    $\degM\le 10$, see \cite{Sivic__Varieties_of_commuting_matrices}. In fact, it is known to be irreducible also for
    $\degM=11$, although this result has not been published yet, see \cite[p.
    271]{HO'M}. These irreducibility results hold in characteristic zero.

    To our knowledge literally no components of $\CommMat$ other than the
    principal one are described in the literature. In this article we
    describe all components of
    $\CommMat$ for arbitrary $n$ and for small $d$.
    \begin{maintheorem}\label{ref:numberOfComponentsIntro:thm}
        Let $\kk$ be an algebraically closed field of characteristic zero.
        The number of irreducible components of $\CommMat$ for $d\leq 7$ is as shown in
        Table~\ref{tab:componentTable}. All components have smooth points.
    \begin{table}[h!]
        \centering
        \begin{tabular}{c c c c c c c c c}
            &           $d \leq 2$ & $d=3$ & $d=4$ & $d=5$  & $d=6$ & $d=7$ & $d\gg 0$\\\midrule
            $n\leq 2$ & $1$        & $1$    & $1$  & $1$    & $1$   & $1$   & $1$\\
            $n = 3$ & $1$          & $1$    & $1$  & $1$    & $1$   & $1$   & $\gg 0$\\
            $n = 4$ & $1$          & $1$    & $2$  & $2$    & $2$   & $2$   & $\gg 0$\\
            $n = 5$ & $1$          & $1$    & $2$  & $4$    & $4$   & $8$   & $\gg 0$\\
            $n = 6$ & $1$          & $1$    & $2$  & $4$    & $7$   & $11$   & $\gg 0$\\
            $n \geq 7$ & $1$       & $1$    & $2$  & $4$    & $7$   & $13$   & $\gg 0$\\
        \end{tabular}
        \caption{Number of components of $\CommMat$}
        \label{tab:componentTable}
    \end{table}
    \end{maintheorem}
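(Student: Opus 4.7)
The plan is to pass from $\CommMat$ to the more intrinsic variety $\Modfin$ of length-$\degM$ modules on $\mathbb{A}^\ambM$; the natural map $\CommMat \to \Modfin$ obtained by forgetting a basis is a principal $\GL_\degM$-bundle, so irreducible components (and the loci of smooth points) correspond bijectively, and the whole analysis can be carried out on $\Modfin$.

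First I would invoke the product family $\prodfamily$ to decompose an arbitrary length-$\degM$ module into local summands supported at distinct closed points. This reduces the classification to choosing (a) a partition $\degM = \degM_1 + \cdots + \degM_k$, and (b) for each $\degM_i$ an irreducible component of the \emph{local} module scheme of length $\degM_i$ at the origin (working in a fixed $\mathbb{A}^\ambM$). Consequently one only needs to enumerate components of local module schemes of length at most $7$, and then combine the data combinatorially.

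Second I would classify candidate local modules through their Macaulay inverse system. Apolarity parametrizes local modules at the origin by finite-dimensional polynomial data in the dual variables, and for each $\degM \leq 7$ the list of admissible Hilbert functions (and associated apolar strata) is short. For each such candidate I would select a distinguished $\Gmult$-fixed point --- namely a graded module realized as a \BBname{} source --- and compute its tangent space in $\Modfin$ via the $\Ext^1$ formula for modules. When the tangent dimension equals the dimension of the candidate locus, that point is smooth and therefore lies in a unique irreducible component. The \BBname{} decomposition then plays a second role: each irreducible component of $\Modfin$ contains exactly one closed BB source, so distinct graded types yield distinct components, giving a clean enumeration that matches Table~\ref{tab:componentTable}.

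The main obstacle will be the case-by-case analysis for $\degM = 6$ and $\degM = 7$, where many candidate Hilbert functions coexist and one must verify that none of them collapses into the closure of another. The jumps in the table as $\ambM$ grows (for instance from $\ambM = 6$ to $\ambM = 7$ at $\degM = 7$) reflect candidates that only appear once the ambient dimension is large enough to accommodate the minimal number of generators of the module, and must be distinguished from spurious duplicates. Verifying each case --- and exhibiting a smooth point in every purported component, as required for the final claim --- will need explicit tangent space calculations, in some cases assisted by computer algebra; smoothness of the principal component follows, in characteristic zero, from the classical observation that a generic tuple of commuting diagonalizable matrices is a smooth point.
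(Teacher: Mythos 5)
Your high-level outline hits several ingredients that the paper actually uses --- the $\GL_d$-bundle $\CommMat\to\Modfin$, the reduction to modules supported at the origin via a concatenation-of-components argument (Proposition~\ref{ref:productOfComponents:prop}), apolarity to construct candidate modules, and Bia\l{}ynicki-Birula methods plus $\Ext^1$ computations to certify that a candidate locus is a smooth point of a genuine elementary component. So you have the right toolkit. However, what you describe is an \emph{enumeration strategy}, not a proof, and the gap is precisely where all the work in the paper lives.

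The missing piece is a completeness argument: you need to prove that \emph{every} tuple of commuting nilpotent matrices of size $d\leq 7$ lies on one of the components you have enumerated. Listing Hilbert functions and computing tangent spaces at $\Gmult$-fixed graded points does not do this. First, the claim that ``each irreducible component of $\Modfin$ contains exactly one closed BB source'' is false as stated; what is true (Proposition~\ref{ref:TNTandElementary:prop}) is that a general point of a \emph{generically reduced} elementary component has trivial negative tangents, and the converse that a point with trivial negative tangents lies on an elementary component. You cannot use this to rule out candidate loci, only to certify them. Second, and more seriously, a candidate locus whose tangent space is strictly larger than its dimension is \emph{not} thereby shown to be absorbed into the principal component --- it could be a generically nonreduced component, which is exactly what happens for $d=8$ in the paper's Theorem~B. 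To prove the table you must explicitly exhibit degenerations of each candidate locus into a known component, or explicit families showing that a tuple acquires multiple eigenvalues. This is what the paper does via Lemmas~\ref{big_largest_Jordan}, \ref{1-dim_kernel}, \ref{2-dim_kernel}, \ref{small_image_of_square}, Proposition~\ref{cube_not_zero}, and the long case analyses in Theorems~\ref{cube_zero} and~\ref{square_zero}, organized by the order of vanishing of products $x_ix_jx_k$ and by the dimensions of common kernels and images. Your proposal mentions ``verify that none of them collapses into the closure of another'' but gives no mechanism for this verification, and a tangent-space calculation alone cannot supply it. The structural Jordan-block analysis is not an optional speed-up; it is the substance of the proof.
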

    The components themselves are also described: the elementary components
    are presented in Section~\ref{ssec:examples} and others are obtained
    using Proposition~\ref{ref:productOfComponents:prop}.

    The study of $\CommMat$ is a classical subject on its own, but our work is also inspired
    by complexity theory, namely by deciding whether a concise ternary tensor has
    minimal border rank. Let $A, B, C$ be $d$-dimensional $\kk$-vector spaces and
    let $T\in A\tensor B\tensor C$.
    Assume that $T$ is \emph{concise}, which means that the inclusion $T(B^{\vee}\tensor C^{\vee})
    \subset A$ is an equality and the same holds for two other contractions.
    Assume moreover that $T$ is
    \emph{$1_A$-generic}, which means that there exists a functional $\alpha\in
    A^{\vee}$ such that $T(\alpha)\in B\tensor C$ has full rank. Using
    $T(\alpha)$ we see that $T$ is isomorphic to a tensor in $A\tensor B\tensor
    B^{\vee}$ so it can be identified with a tuple $\mathcal{E}(T)$ of matrices parameterized by
        $A$, one of them equal to the identity
        matrix~\cite[\S2.1]{Landsberg_Michalek__Abelian_Tensors}. The tensor $T$ satisfies
    Strassen's equations for the minimal border
    rank if and only if the matrices in $\mathcal{E}(T)$
    commute~\cite[Lemma 2.6]{Landsberg_Michalek__Abelian_Tensors}.
    The fundamental
    observation~\cite[Proposition 2.8]{Landsberg_Michalek__Abelian_Tensors} is that $T$
    has minimal border rank if and only if $\mathcal{E}(T)\in C_{d}(\MM_d)$
    lies in the principal component. The identity matrix may be
    discarded from the tuple to obtain $\mathcal{E}'(T)\in C_{d-1}(\MM_d)$ and
    $\mathcal{E}'(T)$ lies in the principal component if and only if
    $\mathcal{E}(T)$ lies in the principal component. From this point of view,
    the nonprincipal components
    of $C_{d-1}(\MM_d)$ yield classes of tensors satisfying Strassen's equations yet having higher
    border rank. This is useful for investigation of equations to higher secant
    varieties of the Segre variety, a major open
    problem~\cite{Landsberg__tensors}.

    We shift focus to the Quot scheme $\Quotmain$ of zero-dimensional, degree $d$
    quotient modules of $S^{\oplus r}$, where $S=\kk[y_1, \ldots ,y_n]$; see
    Section~\ref{ssec:quot_and_commuting} for details. A classification of such
    modules is possible only in very small degrees~\cite{Moschetti_Ricolfi}.
    By the classical ADHM
    construction~\cite[Chapter~2]{nakajima_lectures_on_Hilbert_schemes}
    the
    geometry of $\Quotmain$ is equivalent to the geometry of an open subset of
    $\CommMat$; this subset is the whole $\CommMat$ for $r\geq d$.
    An analysis of components from
    Theorem~\ref{ref:numberOfComponentsIntro:thm} shows that we have the
    following number of components for $\Quotmain = \Quotmain(\mathbb{A}^n)$.
    \begin{table}[h!]
        \centering
        \begin{tabular}{c l l l l l l l l}
            &           $d \leq 2$ & $d=3$ & $d=4$ & $d=5$  & $d=6$ & $d=7$ & $d\gg 0$\\\midrule
            $n\leq 2$ & $1, \ldots $        & $1, \ldots $    & $1, \ldots $  & $1, \ldots $    & $1, \ldots $   & $1, \ldots $   & $1, \ldots $\\
            $n = 3$ & $1, \ldots $          & $1, \ldots $    & $1, \ldots $
            & $1, \ldots $    & $1, \ldots $   & $1, \ldots $   & $\gg
            0$\\
            $n = 4$ & $1, \ldots $          & $1, \ldots $    & $1,2, \ldots $
            & $1,2, \ldots $    & $1,2, \ldots $   & $1,2, \ldots $   & $
            \gg 0$\\
            $n = 5$ & $1, \ldots $          & $1, \ldots $    & $1,2, \ldots $
            & $1,3,4, \ldots $    & $1,3, 4, \ldots $   & $1,4,7,8, \ldots $   & $
            \gg 0$\\
            $n = 6$ & $1, \ldots $          & $1, \ldots $    & $1,2, \ldots $
            & $1,3,4, \ldots $    & $1,4,6,7, \ldots $   & $1,5,9,11, \ldots $   & $
            \gg 0$\\
            $n \geq 7$ & $1, \ldots $       & $1, \ldots $    & $1,2, \ldots $
            & $1,3,4, \ldots $    & $1,4,6,7, \ldots $   & $1,6,10,12,13,
            \ldots $   & $
            \gg 0$\\
        \end{tabular}
        \caption{Number of components of $\Quotmain$. In each entry, consecutive numbers
        correspond to the number of components for $r=1,2, \ldots$ and ``\ldots'' means that the numbers
        stabilize at the value of the last entry. In particular, we see that for $r\geq
        5$ we already have all the components (for $d\leq 7$).}
        \label{tab:componentTableForQuot}
    \end{table}

    We review the basics of ADHM constructions in Section~\ref{ssec:quot_and_commuting}.
    This connection is frequently used to analyse $\Quotmain$, for example~\cite{dosSantos}
    deduced irreducibility of $\OpHilb_{10}(\mathbb{A}^3)$ from the
    aforementioned irreducibility of $C_{3}(\MM_{10})$; see also~\cite{HG}. In this paper we use it
    backwards: we take advantage of the sophisticated commutative algebra tools such as duality for finite free
    resolutions (see Section~\ref{ssec:transposes_and_resolution}) and Green's Linear Syzygy
    Theorem (see proof of Theorem~\ref{cube_zero}) to understand $\Quotmain$
    and then use this knowledge to understand $\CommMat$.

    The Quot scheme $\Quotmain$ is a natural generalization of the Hilbert
    scheme of $d$ points on $\mathbb{A}^n$, the latter just corresponds to
    $r=1$. There
    are two tools of great importance in the analysis of this Hilbert scheme,
    namely
    \begin{itemize}
        \item Macaulay's inverse systems, also known as apolarity, which are used to classify or produce
            explicit examples~\cite{emsalem_iarrobino_small_tangent_space,
                iarrobino_kanev_book_Gorenstein_algebras,
            cartwright_erman_velasco_viray_Hilb8, Landsberg__tensors},
        \item \BBname{} decomposition, which is used to
            find components without the need of describing
            them~\cite{Jelisiejew__Elementary} and understand
            singularities~\cite{Jelisiejew__Pathologies}.
    \end{itemize}
    We generalize both tools to the setting of $\Quotmain$: we
    introduce \emph{apolarity for modules} (Section~\ref{ssec:apolarity}) and
    study the \BBname{}
    decomposition (Section~\ref{sec:BBdecomposition}) for $\Quotmain$.
    The \BBname{} decomposition is quite technical, so in this introduction we discuss only
    apolarity.

    There are at least three ways of explicitly writing down our objects of
    interest:
    \begin{enumerate}
        \item Commuting matrices. In this form the degree $d$ is explicitly
            given as the size. The relations are implicit in the
            commutativity relations. The presentation takes much space.
            Describing explicit deformations is easy, but proving
            irreducibility of loci is tedious.
        \item Modules given by generators and relations. Here the relations
            are explicit and numerous, while the degree is nontrivial to
            compute. Describing explicit deformations or proving
            irreducibility of loci are both tedious.
        \item Modules given by inverse systems, i.e., using apolarity. Here the relations and $d$ are
            both implicit but relatively easy to compute. This presentation
            is compact. Producing explicit deformations is neither
            easy nor hard, but proving irreducibility is easy.
    \end{enumerate}
    Let us contrast the three ways on an explicit example.
    Consider a quadruple of $4\times
    4$ matrices
    \[
        x_1 = \begin{bmatrix}
            0 & 0 & 1 & 0\\
            0 & 0 & 0 & 0\\
            0 & 0 & 0 & 0\\
            0 & 0 & 0 & 0
        \end{bmatrix},\quad
        x_2 = \begin{bmatrix}
            0 & 0 & 0 & 1\\
            0 & 0 & 0 & 0\\
            0 & 0 & 0 & 0\\
            0 & 0 & 0 & 0
        \end{bmatrix},\quad
        x_3 = \begin{bmatrix}
            0 & 0 & 0 & 0\\
            0 & 0 & 1 & 0\\
            0 & 0 & 0 & 0\\
            0 & 0 & 0 & 0
        \end{bmatrix},\quad
        x_4 = \begin{bmatrix}
            0 & 0 & 0 & 0\\
            0 & 0 & 0 & 1\\
            0 & 0 & 0 & 0\\
            0 & 0 & 0 & 0
        \end{bmatrix}.
    \]
    The matrices pairwise commute. We equip $\kk^{4}$ with a $\kk[y_1,
    \ldots ,y_4]$-module structure, where $y_i\cdot v = x_i(v)$ for every
    $i=1,2,3,4$ and $v\in \kk^4$. Denote the resulting module by $M$. It is
    generated by elements $e_3 := (0, 0, 1, 0)^T, e_4 := (0, 0, 0, 1)^T\in \kk^4$ so we can
    write it as a quotient of $F := Se_3 \oplus Se_4$. In fact, we have
    \[
        M \simeq \frac{F}{(y_1 e_4,\  y_2 e_3,\  y_2 e_4 - y_1 e_3,\  y_3
        e_4,\  y_4 e_3,\  y_4 e_4 - y_3 e_3)S  }.
    \]
    \newcommand{\Fdual}{F^{*}}%
    This is the presentation by generators and relations.
    The inverse system is obtained as follows. Consider the
    graded dual module $\Fdual := \bigoplus_i \Hom(F_i, \kk)$. The monomial
    basis of $F$ gives a natural dual basis of $\Fdual$; we denote by
    $z_1z_2^2 e_3^*$ the element of $\Fdual$ dual to $y_1y_2^2 e_3$. One can
    verify that $(y_1 e_4, y_2 e_3, y_2 e_4 - y_1 e_3, y_3
    e_4, y_4 e_3, y_4 e_4 - y_3 e_3)S \subset F$ is equal to the set $(N\cdot S)^{\perp}$, where
    $N \subset \Fdual$ is a linear span of $z_1 e_3^* + z_2e_4^*, z_3 e_3^* + z_4
    e_4^*$ and $(-)^{\perp}$ denotes the orthogonal space is the above duality. We write this as
    \[
        M  \simeq \frac{F}{\left((z_1 e_3^* + z_2e_4^*, z_3 e_3^* + z_4
            e_4^*)S\right)^{\perp}}.
    \]
    We refer the reader to Section~\ref{ssec:apolarity} for further details.

    Having discussed apolarity, we return to considering components.
    The usual method of producing components of $\CommMat$, which we apply
    successfully
    in Theorem~\ref{ref:numberOfComponentsIntro:thm}, is to produce
    an irreducible subvariety $\mathcal{Z}\subset \CommMat$ of dimension $D$ and find a point $\xx\in
    \mathcal{Z}$ such that $\dim T_{\xx}\CommMat = D$. This method requires
    the resulting component $\mathcal{Z}$ to have a smooth point $\xx$. Below, we show that
    $C_4(\MM_8)$ has a generically nonreduced component which thus violates this
    condition.
    While it follows from earlier results that $\CommMat$ is very
    singular when $n, d\gg 0$, see Remark~\ref{rem:MurphysLaw}, the importance
    of Theorem~\ref{ref:mainthm:genericallynonreduced:intro} lies in that the
    nonreducedness appears for small $d$ and $n$ and that we obtain a
    \emph{generically} nonreduced component.
    \begin{maintheorem}\label{ref:mainthm:genericallynonreduced:intro}
        Let $\kk$ be an algebraically closed field of characteristic zero.
        Consider the locus $\mathcal{L}$ of $4$-tuples of $8\times 8$ matrices with nonzero
        entries only in the top right $4\times 4$ corner. Then $(\kk I_4)^4+\overline{\GL_8\cdot
        \mathcal{L}}$
        is a component of $C_4(\MM_8)$ that is generically nonreduced (so it
        has no smooth points). Consequently, the variety $\CommMat$ has
        generically nonreduced components for all $n\geq 4$ and $d\geq 8$.
        In contrast, the variety $\CommMat$ is generically reduced for $d\leq 7$ and all
        $n$. Similarly, the Quot scheme $\Quotmain$ has a generically nonreduced
        component for $n,r\geq4$ and $d\geq 8$ while it is generically reduced
    for $d \leq 7$ and all $r$, $n$.
    \end{maintheorem}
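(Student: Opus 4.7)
My plan is to first prove the analogous statement for the Quot scheme $\Quot{4}{8}(\mathbb{A}^4)$ and then transfer via the ADHM correspondence of Section~\ref{ssec:quot_and_commuting}, which absorbs the $\GL_8$-orbit structure and the $(\kk I_4)^4$-shifts. Let $\mathcal{Z}_Q \subset \Quot{4}{8}(\mathbb{A}^4)$ be the closed locus of quotients $M$ of $S^{\oplus 4}$ with $\mm^2 M = 0$ and $\dim_\kk M = 8$. By the apolarity of Section~\ref{ssec:apolarity}, such an $M$ is encoded by a $4$-dimensional linear subspace $N \subset \kk^4 \otimes (\kk^4)^*$, and the assignment $N \mapsto [M]$ identifies $\mathcal{Z}_Q$ with an open subset of the Grassmannian $\Gr(4,16)$. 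Thus $\mathcal{Z}_Q$ is irreducible of dimension $48$.

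For the tangent computation, I write $M = V \oplus W$ with $\dim V = \dim W = 4$ and structure map $\phi\colon S_1 \otimes V \twoheadrightarrow W$. For generic $\phi$ a dimension count shows the multiplication map $S_1 \otimes K \to V \otimes S_2$ is surjective, where $K := \ker \phi$ has dimension $12$; hence $I := \ker(S^{\oplus 4} \twoheadrightarrow M)$ is generated by $K$, with an $8$-dimensional space of linear syzygies. Applying $\Hom_S(-, M)$ to the resulting presentation $S(-2)^{8} \to S(-1)^{12} \to I \to 0$ yields $\Hom_S(I, M) = \ker(M^{12} \to M^{8})$. Since $\mm^2 M = 0$, this map has image in $W^{8}$ and factors through the projection $M^{12} \twoheadrightarrow V^{12}$, so its rank is at most $8 \cdot \dim V = 32$. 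This gives $\dim \Hom_S(I, M) \geq 96 - 32 = 64 > 48$, which proves the generic nonreducedness of $\mathcal{Z}_Q$.

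The main obstacle is showing that $\mathcal{Z}_Q$ is an irreducible component, since the standard tangent-space argument fails under generic nonreducedness. I apply the \BBname{} framework developed in Section~\ref{sec:BBdecomposition} to the $\Gmult$-action scaling $y_i \mapsto t y_i$. Every submodule parameterizing a point of $\mathcal{Z}_Q$ is graded, so $\mathcal{Z}_Q$ lies in the fixed locus; moreover, since the Hilbert function $(4, 4, 0, \ldots)$ is locally constant on connected components of the graded Quot scheme (by flatness), $\mathcal{Z}_Q$ is itself an irreducible component of the fixed locus. For any irreducible component $X$ of $\Quot{4}{8}(\mathbb{A}^4)$ containing $\mathcal{Z}_Q$, the limit map $[M'] \mapsto [\mathrm{gr}(M')]$ sends $X$ to an irreducible subset of the fixed locus containing $\mathcal{Z}_Q$; since the latter is itself a component of the fixed locus, this image must equal $\mathcal{Z}_Q$. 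Thus $\mathrm{gr}(M')$ is square-zero for every $[M'] \in X$; Nakayama then forces $\mm^2 M' = 0$, so $[M'] \in \mathcal{Z}_Q$. We conclude $X = \mathcal{Z}_Q$, establishing component-hood.

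Translating back via ADHM with the $(\kk I_4)^4$-shifts gives the statement for $C_4(\MM_8)$. The extension to all $n \geq 4$ and $d \geq 8$ (and to $r, n \geq 4$ with $d \geq 8$ for the Quot scheme) follows by taking products of this nonreduced component with principal components via Proposition~\ref{ref:productOfComponents:prop}. Finally, the generic reducedness of $\CommMat$ for $d \leq 7$ is immediate from Theorem~\ref{ref:numberOfComponentsIntro:thm}, which asserts that every component in this range admits a smooth point.
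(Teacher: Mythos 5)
Your identification of the square-zero, Hilbert-function-$(4,4)$ locus with a Grassmannian and the lower bound $\dim\Hom(K,M)\geq 64$ are correct and match Lemma~\ref{ref:tangentToQuotAtLocus:lem}. Invoking the \BBname{} framework is also the right instinct: the paper's closing remark of Section~\ref{ssec:nonreducedComponent} explicitly acknowledges that the BB retraction is what underlies its primary-obstruction argument. However, your proof of component-hood has a genuine gap, and it is exactly the point that the primary-obstruction computation is designed to close.

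The chain ``for every $[M']\in X$ the limit $\mathrm{gr}(M')$ is square-zero, hence $\mm^2 M'=0$ by Nakayama, hence $X=\mathcal{Z}_Q$'' cannot be correct: irreducible components of $\Quotmain$ are stable under the $\mathbb{A}^n$-translation action, so any component $X$ containing $\mathcal{Z}_Q$ also contains all of its translates and has dimension at least $52>48=\dim\mathcal{Z}_Q$. The break is in the Nakayama step. If $\mathrm{gr}(M')$ is the $\mm$-adic associated graded --- which is what Nakayama needs --- this is the $t\to\infty$ limit, and Lemma~\ref{ref:positiveness} shows it exists only for $M'$ supported at the origin, so the reasoning does not reach all of $X$. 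If instead you use the everywhere-defined $t\to 0$ limit, that limit is built from \emph{top-degree} initial forms and is not the $\mm$-adic graded; the implication ``limit square-zero $\Rightarrow$ $\mm^2 M'=0$'' then already fails in one variable, e.g.\ $(y-y^2)$ limits to $(y^2)$, which is square-zero, while $\kk[y]/(y-y^2)$ is not. More fundamentally, even after patching by restricting to origin-supported points and adding translations back, your argument never bounds the dimension of the attracting fiber of the retraction over a general $[M]\in\mathcal{Z}_Q$. That fiber has tangent space equal to the negative-degree part of $\Hom(K,M)$, which your own computation shows to be at least $16$-dimensional --- far more than the $4$ that translations account for. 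Controlling this excess is precisely what is needed to conclude $\dim X\leq 52$, and it is what the paper does via the primary obstruction map of Theorem~\ref{ref:primaryForQuot:thm} together with the computational verification in Assumption~\ref{assu:dimension} and Theorem~\ref{ref:genericallyNonreduced:thm} that the resulting quadrics cut out a locus of dimension at most $4$ in $\Spec\kk[t_1,\ldots,t_{16}]$. Without a substitute for that dimension bound, nothing in your argument rules out that $X$ is a strictly larger component containing $\mathcal{L}^{\OpQuot}$ in its boundary.
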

    Exhibiting a generically nonreduced component of a moduli space is subtle,
    as seen in Mumford's celebrated example~\cite[\S13]{HarDeform} see
    also~\cite{Vakil_MurphyLaw, Kass, Ricolfi_nonreduced}. In our
    setup, we use the machinery of differential graded Lie algebras (DGLA) to obtain an explicit
    description of the \emph{primary} obstruction for $\Quotmain$. This allows us to
    obtain the quadratic part of equations of the complete local ring at a point of
    $\mathcal{L}$. The locus $\mathcal{L}$ is a sink for an appropriate torus
    action, so this ring is a completion of a graded ring, hence the obtained part
    accounts for all quadratic relations. We prove that they cut out the
    ring of dimension equal to the dimension of $(\kk I)^4+\overline{\GL_8\cdot
        \mathcal{L}}$, which concludes the proof for $\Quot{4}{8}$.
    The result for $C_4(\MM_8)$ follows from ADHM construction. The whole
    approach seems to be new and useful also in other cases.

    The geometry of commuting matrices is still largely unexplored. One open
    question is to find the smallest $d$ such that $C_3(\MM_d)$ is reducible. Another
    question, inspired by Table~\ref{tab:componentTable} is: there is a
    natural ``add zero
    matrices'' map from the set of components of $C_n(\MM_d)$ to the set of
    components of $C_{n+1}(\MM_{d})$. Is this map a bijection for all $n\geq
    d$?
    It is possible to see that this map is a bijection for all $n$ larger
    than the maximal dimension of a commutative subalgebra of $\MM_d$, in
    particular for $n\geq \lfloor(d/2)^2\rfloor+1$, see~\cite{Schur}. For
    related ideas, see~\cite[Introduction]{Levy_Ngo_Sivic}.
    Finally, despite the challenges rising from Theorem~\ref{ref:mainthm:genericallynonreduced:intro} it
    is likely that one can classify components of $\CommMat$ for $d=8$ and perhaps
    further.
    The general tools summarized and developed in this article would
    be very useful in investigating these questions.

    Let us briefly summarize the contents of this article. In
    Section~\ref{sec:prelims} we summarize the basics of $\CommMat$,
    $\Quotmain$ and their connection. Most of these results are folklore but
    frequently references are missing or are available only in special
    cases or in language
    inaccessible to the linear-algebra community. In
    Sections~\ref{sec:strutural}-\ref{sec:BBdecomposition} we present advanced
    general tools, which are new: apolarity, primary obstructions for Quot,
    concatenation maps, \BBname{} decompositions etc. This is the core part in terms of general theory.
    Finally, in Section~\ref{sec:degeight} we apply these results to obtain
    our main theorems. An appendix contains a brief introduction to functors
    of points.

\section{Notation}
\begin{center}
    \begin{tabular}{@{}l l l @{}}
    Symbol && Explanation \\ \midrule
    $\degM$ && degree of the module = size of matrices\\
    $\ambM$ && number of matrices = number variables in polynomial ring $S$\\
    $\genM$ && number of generators of the module\\
    $x_1, \ldots , x_{\ambM}$ && commuting matrices\\
    $y_1, \ldots , y_{\ambM}$ && variables of polynomial ring $S$\\
    $V$ && a fixed $d$-dimensional vector space over $\kk$
\end{tabular}
\end{center}

\section*{Acknowledgements}

    We very much thank Nathan Ilten for coding and sharing an experimental
    version of his \texttt{VersalDeformations} package for \emph{Macaulay2}
    which allows one to compute deformations of modules. We thank Joseph
    Landsberg, Maciej Ga{\l{}}{\k{a}}zka, Hang Huang\rred{, and the referee} for suggesting several
    improvements to the text.

    \section{Preliminaries}\label{sec:prelims}

    Let $\kk$ be an algebraically closed field.
    So far we do not put any restrictions on its characteristic (these will be
    explicitly included in Sections~\ref{sec:BBdecomposition}-\ref{sec:degeight}), although in examples we assume
    characteristic zero.
    Let $S = \kk[y_1, \ldots , y_n]$.
    We will now define several spaces that parameterize $S$-modules.
    In the table below, \emph{modules} means zero-dimensional $S$-modules of degree $d$.
    Here and elsewhere the \emph{degree} of a module $M$ is just its
    dimension as a $\kk$-linear space.
        The spaces are linked by the forgetful functors and form the diagram,
        see Table~\ref{table:objectsinvolved}.
        For more details on the maps, see
        Section~\ref{ssec:quot_and_commuting}.

        \begin{table}[h!]
                \begin{tabular}{@{} l l l @{}}
                \mbox{space} && \mbox{objects}\\ \midrule
                $\Modfin$ && \mbox{modules}\\
                $\Quotmain$ && \mbox{modules with fixed $\genM$ generators}\\
                $\CommMat$ && \mbox{modules with fixed basis}\\
                $\prodfamilystable$ && \mbox{modules with fixed basis and
                fixed sequence of $\genM$ generators}\\
                &&
            \end{tabular}
            \[
                \begin{tikzcd}
                    \prodfamilystable \arrow[rrr, "\mbox{\tiny smooth fib.dim. }
                    rd"]\arrow[d, "/\GL_d"] &&& \CommMat\arrow[d,
                    "/\GL_d"]\\
                    \Quotmain \arrow[rrr] &&& \Modfin
                \end{tikzcd}
            \]
            \caption{Moduli spaces}\label{table:objectsinvolved}
        \end{table}

        \subsection{Commuting matrices}\label{ssec:commMat} Let $\Matrices$ be the affine space $\Hom(V,
V)$ for a fixed $\degM$-dimensional vector space $V$.
By $I_d$ we denote the $d\times d$ identity matrix.
Let
    \[
        C_{\ambM}(\MM_d) = \left\{ (x_1, \ldots ,x_{\ambM})\in
        \MM_d^{\ambM}\ |\ \forall{i,j}:  x_ix_j = x_jx_i \right\}
    \]
    be the \emph{variety of $\ambM$-tuples of $d\times d$ commuting matrices}.
    More precisely,
    we define $C_{\ambM}(\MM_d)$ as the subscheme cut out by the quadratic equations coming
    from $x_ix_j -x_jx_i = 0$. We show that the word \emph{variety} even
    though accepted in the literature is a slight abuse:
    it is known that $\CommMat$ is in general reducible and
    in Section~\ref{ssec:nonreducedComponent} we prove that it has
    generically nonreduced components.
    \begin{lemma}\label{ref:tangentspace:lem}
        The tangent space to the scheme $C_{\ambM}(\Matrices)$ defined above
        is
        \[
            T_{(x_1,\ldots ,x_{\ambM})}C_{\ambM}(\Matrices)=\{(z_1,\ldots ,z_{\ambM})\in
        \Matrices^n;\ \forall i,j: [x_i,z_j]+[z_i,x_j]=0\}.
        \]
        For the variety $(C_n(\Matrices))_{\mathrm{red}}$ the
        tangent space to it at $(x_1,\ldots ,x_n)$ is contained in the space
        above.
    \end{lemma}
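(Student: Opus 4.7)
The plan is the standard Zariski-tangent-space computation: realize $T_{\mathbf{x}}C_n(\MM_d)$ as the $\kk$-points of $C_n(\MM_d)$ valued in the dual numbers $\kk[\varepsilon]/(\varepsilon^2)$ lying over $\mathbf{x} = (x_1, \ldots, x_n)$, and then linearize the defining equations.

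Concretely, since $C_n(\MM_d) \subset \MM_d^n$ is cut out scheme-theoretically by the entries of the $\binom{n}{2}$ matrix equations $x_ix_j - x_jx_i = 0$, a tangent vector at $\mathbf{x}$ is an $n$-tuple $(z_1, \ldots, z_n) \in \MM_d^n$ such that the substitution $x_i \mapsto x_i + \varepsilon z_i$ satisfies all defining equations modulo $\varepsilon^2$. I would simply expand
\[
    (x_i + \varepsilon z_i)(x_j + \varepsilon z_j) - (x_j + \varepsilon z_j)(x_i + \varepsilon z_i) = (x_ix_j - x_jx_i) + \varepsilon\bigl(x_iz_j + z_ix_j - x_jz_i - z_jx_i\bigr) \pmod{\varepsilon^2}.
\]
The $\varepsilon^0$-term vanishes because $\mathbf{x} \in C_n(\MM_d)$, and the $\varepsilon^1$-term rewrites as $[x_i, z_j] + [z_i, x_j]$. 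Requiring this to vanish for every pair $i, j$ gives exactly the claimed description of $T_{\mathbf{x}}C_n(\MM_d)$.

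For the second statement, I would invoke the general fact that if $Y \hookrightarrow X$ is a closed immersion of schemes and $y \in Y$, then the induced map $T_yY \to T_yX$ is injective (it is the dual of the surjection of cotangent spaces coming from $\mathcal{I}_{Y/X}$). Applying this to the closed immersion $(C_n(\MM_d))_{\mathrm{red}} \hookrightarrow C_n(\MM_d)$ immediately yields
\[
    T_{\mathbf{x}}(C_n(\MM_d))_{\mathrm{red}} \subseteq T_{\mathbf{x}}C_n(\MM_d),
\]
so the tangent space of the reduced variety is contained in the space described above.

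This is essentially a one-line dual-numbers computation, so there is no real obstacle; the only thing worth emphasizing (for the linear-algebra reader) is the scheme-theoretic convention that the word ``variety'' here refers to the possibly nonreduced subscheme cut out by the quadratic commutator equations, which is what makes the containment in the second assertion potentially strict and foreshadows the generic nonreducedness discussed in Theorem~\ref{ref:mainthm:genericallynonreduced:intro}.
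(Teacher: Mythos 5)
Your proof is correct and follows essentially the same route as the paper: both compute the tangent space via maps from $\Spec(\kk[\varepsilon]/\varepsilon^2)$ and linearize the commutator equations, and both handle the second assertion by noting that a closed immersion (here the reduction) induces an injection on tangent spaces.
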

    \begin{proof}
        We use the description of the tangent space via maps
        from $\Spec(\kk[\varepsilon]/\varepsilon^2)$, see for
        example~\cite[VI.1.3]{eisenbud_harris}.
        The tangent space to $C_{\ambM}(\Matrices)$ at the point $(x_1, \ldots ,x_{\ambM})$ is
        the vector space of tuples $(z_1, \ldots ,z_{\ambM})\in \Matrices^{\ambM}$ such that $(X_1,
        \ldots ,X_{\ambM}) = (x_1 + \varepsilon
        z_1, x_2 + \varepsilon z_2, \ldots ,x_{\ambM} + \varepsilon
        z_{\ambM})$ satisfies the equations $X_iX_j = X_jX_i$ modulo
        $\varepsilon^2$. But
        \begin{align*}
            X_iX_j - X_jX_i &= (x_i + \varepsilon z_i)(x_j + \varepsilon z_j) -
            (x_j + \varepsilon z_j)(x_i + \varepsilon z_i) \equiv\\ &\equiv x_ix_j - x_jx_i +
            \varepsilon(z_ix_j + x_iz_j - x_jz_i - z_jx_i) =
            \varepsilon([x_i, z_j] + [z_i, x_j])\pmod{\varepsilon^2},
        \end{align*}
        so the equations are satisfied precisely when $[x_i, z_j] + [z_i,
        x_j] = 0$. Finally, the tangent space to the underlying variety is
        always contained in the tangent space to the scheme.
    \end{proof}

    We now show that $C_n(\Matrices)$ indeed corresponds to $S$-modules with a
    basis.
    \begin{lemma}\label{ref:matricesaremoduleswithbasis}
        The points of $C_n(\Matrices)$ are in bijection with $S$-modules with
        a fixed $\kk$-linear basis.
    \end{lemma}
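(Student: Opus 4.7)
The plan is to construct the bijection explicitly in both directions and verify that the two constructions are mutually inverse. Given a tuple $(x_1, \ldots, x_n) \in C_n(\Matrices)$, I want to endow $V$ with an $S$-module structure; conversely, given an $S$-module $M$ of $\kk$-dimension $d$ together with a chosen $\kk$-basis, I want to extract a commuting tuple of $d \times d$ matrices.

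For the forward direction, I would use the universal property of the polynomial ring $S = \kk[y_1, \ldots, y_n]$: since $\Endd_{\kk}(V)$ is a $\kk$-algebra, any set map $\{y_1, \ldots, y_n\} \to \Endd_{\kk}(V)$ into a commuting family extends uniquely to a $\kk$-algebra homomorphism $\varphi\colon S \to \Endd_{\kk}(V)$, and such a homomorphism is exactly the datum of an $S$-module structure on $V$. The commutativity condition $x_ix_j = x_jx_i$ is precisely what makes the extension well-defined, because in $S$ the generators satisfy $y_iy_j = y_jy_i$. Thus $(x_1, \ldots, x_n) \in C_n(\Matrices)$ gives rise to the $S$-module structure on $V$ where $y_i$ acts as $x_i$, and $V$ comes equipped with its tautological basis.

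For the backward direction, given an $S$-module $M$ together with a fixed $\kk$-basis identifying $M$ with $V$, multiplication by $y_i$ is a $\kk$-linear endomorphism of $V$, hence a matrix $x_i \in \Matrices$. The relation $y_iy_j = y_jy_i$ in $S$ forces $x_ix_j = x_jx_i$, so the resulting tuple lies in $C_n(\Matrices)$. I then check that the two constructions are inverse to each other, which is essentially tautological: recovering the matrix of multiplication by $y_i$ in the fixed basis yields back $x_i$, and recovering the $S$-action from these matrices gives back the original module structure by uniqueness in the universal property.

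I do not expect any real obstacle here; the only point that deserves emphasis is the role of commutativity, which translates the quadratic equations cutting out $C_n(\Matrices)$ inside $\Matrices^n$ into the compatibility conditions required for a well-defined $S$-module structure on $V$.
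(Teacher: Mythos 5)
Your proposal is correct and takes essentially the same approach as the paper: define $y_i\cdot v = x_i(v)$ in one direction, extract matrices of multiplication by $y_i$ in the chosen basis in the other, and note that commutativity of the $x_i$ is exactly what makes the $S$-action well-defined. The paper phrases this more briefly (without invoking the universal property of $S$ explicitly), but the argument is the same.
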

    \begin{proof}
        Let $V = \kk e_1 \oplus \ldots \oplus \kk e_d$. Having a point
        $(x_1, \ldots ,x_n)\in C_n(\Matrices)$, we define an $S$-module
        structure on $V$ by $y_i\cdot v = x_i(v)$. The equations $x_ix_j =
        x_jx_i$ imply that $y_i\cdot (y_j\cdot v) = y_j\cdot (y_i\cdot v)$ for
        all $i,j$, so indeed we get an $S$-module.
        Conversely, for an $S$-module $M$ with a basis $\mathcal{B}$ and
        $i=1,2, \ldots ,n$
        consider the multiplication by $y_i$ as an endomorphism of $M$ and
        let $x_i$
        be its matrix written in basis
        $\mathcal{B}$. Since $S$ is commutative, the matrices $x_1,
        \ldots ,x_n$ commute.
    \end{proof}

    \subsection{Quot and commuting matrices}\label{ssec:quot_and_commuting}
    Let $S = \kk[y_1, \ldots ,y_n]$ be a polynomial ring.
    An $S$-module $M$ has \emph{finite degree} if the $\kk$-vector space
    $M$ has finite dimension. We say that $M$ has \emph{degree} $d$ if the
    $\kk$-vector space $M$ has dimension $d$. Fix a free $S$-module $F = Se_1 \oplus Se_2 \oplus
    \ldots \oplus Se_{\genM}$ of rank $\genM$.
    The \emph{Quot scheme $\rred{\Quotmain}$ of points on
    $\mathbb{A}^{\ambM} = \Spec(S)$} parameterizes degree $d$ quotient modules of the
    $S$-module $F$.
    In other words, a $\kk$-point of
    $\rred{\Quotmain}$ is a quotient $F/K$ of $S$-modules that is a $\degM$-dimensional
    vector space.  For a quotient $F/K$ we denote by $[F/K]$ the corresponding
    point in  $\rred{\Quotmain}$. To give such a quotient is the same as to give an $S$-module
    together with a fixed set of $\genM$ generators.

    To give the Quot scheme the topological space and scheme structures we
    need to define it using functorial language, see appendix for details.
    However, the willing reader can take this for granted.

    \begin{example}\label{ex:rankOneGivesQuotEqHilb}
        When $\genM = 1$, we are speaking about quotients of $S^{\oplus 1}$, so
        about modules of the form $S/I$. Therefore $\rred{\Quot{1}{\degM}} =
        \OpHilb_{\degM}(\mathbb{A}^n)$.
    \end{example}

    The schemes $\CommMat$ and $\Quotmain$ are tightly connected by the ADHM
    construction
    \cite{nakajima_lectures_on_Hilbert_schemes, HG,
    Baranovsky__construction_of_Quot}, named after~\cite{ADHM}, which we now recall. In terms of
    Table~\ref{table:objectsinvolved}, we just take a module, fix its basis
    and $\genM$ generators and consider the two projections: forgetting about the
    generating set and forgetting about the basis.
    We now explain this more carefully.
    Throughout the article, a key fact to keep in mind is that the spaces in
    Table~\ref{table:objectsinvolved} are very singular, but, as we prove
    below, the maps are smooth, so intuitively these objects are ``singular in the same
    way''.

    Fix a number $\genM$ and consider the product
    \[
        \prodfamily := \CommMat \times V^{\genM}.
    \]
    For every tuple $T = (x_1, \ldots ,x_{\ambM}, v_1, \ldots , v_{\genM})$ in this
    product we may
    take the smallest subspace $W \subset V$ containing $v_1$, \ldots ,$
    v_{\genM}$ and preserved by operators $x_1$, \ldots , $x_{\ambM}$. We say
    that $T$ is \emph{stable} if $W = V$. In other words, $T$ is stable iff
    $v_1, \ldots , v_{\genM}$ generate $V$ as a $\kk[x_1, \ldots ,x_{\ambM}]$-module.
    For every $i$ the power $x_i^{\degM}$ is a linear combination of smaller
    powers of $x_i$, so $T$ is stable if and only if the elements
    \[
        \left\{x_1^{a_1}\circ x_2^{a_2}\circ \ldots \circ x_n^{a_n}(v_j)\ \ |\ a_i <
        \degM,\
    j=1,.., \genM\right\}
    \]
    span $V$. This is an open condition, so the locus $\prodfamilystable$ of
    stable tuples is open.
    To a stable tuple $T$ we associate a point of $\Quotmain$ as
    follows.
    \newcommand{\pibarM}{\overline{\pi_{M}}}%
    \newcommand{\pibarMprim}{\overline{\pi_{M'}}}%
    \begin{enumerate}
        \item the tuple $(x_1, \ldots ,x_{\ambM})$ induces an $S$-module
            structure on $V$, where $y_i(v) = x_i(v)$ for
            all $v\in V$; here we use commutativity of $x_i$ and $x_j$. We
            denote the obtained $S$-module by $M$ and call it the
            \emph{module corresponding to $(x_1, \ldots ,x_{\ambM})$}.
        \item the stability of a tuple $(x_1, \ldots ,x_{\ambM}, v_1, \ldots ,
            v_{\genM})$ means that the $S$-module $M$ is generated by $v_1,
            \ldots ,v_{\genM}$. There is a unique epimorphism of $S$-modules
            $\pi_M\colon F\to M$ that sends $e_i$ to $v_i$ for $i=1, \ldots
            ,\genM$. The map $\pi_M$ restricts to an isomorphism of
            $S$-modules $\pibarM\colon \frac{F}{\ker \pi_M}\to M$.
    \end{enumerate}
    \begin{lemma}\label{ref:descriptionOfCommMat:lem}
            The map $(x_1, \ldots ,x_n, v_1, \ldots , v_{\genM})\mapsto
            (\frac{F}{\ker \pi_M}, \pibarM)$ is a bijection between the $\kk$-points of
            $\prodfamilystable$ and the set
            \[
                \left\{ \left(\frac{F}{K}, \varphi\right)\ \Big|\
                [F/K]\in\Quotmain,\ \varphi\colon F/K\to V\mbox{ is a $\kk$-linear
                isomorphism} \right\}.
            \]
    \end{lemma}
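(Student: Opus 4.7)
The plan is to construct the inverse map explicitly and verify that the two assignments are mutually inverse; the rest is bookkeeping.

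First I would check that the forward map is well-defined. Given a stable tuple $T = (x_1, \ldots, x_n, v_1, \ldots, v_r)$, by step (1) of the construction preceding the lemma, $V$ becomes an $S$-module $M$, and by stability $\pi_M \colon F \to M$ with $e_i \mapsto v_i$ is surjective. Then $F/\ker \pi_M \cong M$ is a $\kk$-vector space of dimension $d = \dim V$, so $[F/\ker\pi_M] \in \Quotmain$, and $\overline{\pi_M}$ is a $\kk$-linear isomorphism by construction.

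Next I would define the inverse. Given a pair $(F/K, \varphi)$ with $\varphi \colon F/K \xrightarrow{\sim} V$ a $\kk$-linear isomorphism, transport the $S$-module structure from $F/K$ to $V$ via $\varphi$. Concretely, set
\[
x_i \;:=\; \varphi \circ \mu_{y_i} \circ \varphi^{-1} \;\in\; \Endd_{\kk}(V), \qquad v_i \;:=\; \varphi(\overline{e_i}),
\]
where $\mu_{y_i}$ denotes multiplication by $y_i$ on $F/K$. Since $S$ is commutative, the $\mu_{y_i}$ commute, hence so do the $x_i$, so $(x_1,\ldots,x_n) \in \CommMat$. The elements $\overline{e_1},\ldots,\overline{e_r}$ generate $F/K$ as an $S$-module, so $v_1,\ldots,v_r$ generate $V$ as a $\kk[x_1,\ldots,x_n]$-module; hence the resulting tuple lies in $\prodfamilystable$.

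Finally I would check that these two assignments are mutually inverse. Starting from $T$, the module $M$ built from $T$ has $y_i$-multiplication equal to $x_i$ and the induced $\overline{\pi_M}$ satisfies $\overline{\pi_M}(\overline{e_i}) = v_i$, so transporting the structure back via $\overline{\pi_M}$ recovers exactly $(x_1,\ldots,x_n,v_1,\ldots,v_r)$. Conversely, starting from $(F/K, \varphi)$, the module $M$ built from the tuple $(x_1,\ldots,x_n)$ is just $V$ equipped with the $S$-structure pushed forward by $\varphi$, and the map $\pi_M \colon F \to V$ sending $e_i$ to $v_i = \varphi(\overline{e_i})$ equals $\varphi \circ \pi$, where $\pi \colon F \to F/K$ is the canonical projection. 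Thus $\ker \pi_M = \ker \pi = K$, and $\overline{\pi_M} = \varphi$, recovering the original pair.

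There is no real obstacle here — everything is formal once one unwinds the definitions. The only minor subtlety is keeping track of the distinction between the $S$-module structure on $V$ (which depends on $T$) and the underlying $\kk$-vector space structure (which is fixed), and ensuring that $\overline{\pi_M}$ is taken as both a module isomorphism $F/\ker\pi_M \to M$ and, by forgetting structure, a $\kk$-linear isomorphism to $V$.
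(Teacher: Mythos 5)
Your proof is correct and follows the same route as the paper's: construct the inverse by transporting the module structure via $\varphi$, setting $x_i = \varphi\circ\mu_{y_i}\circ\varphi^{-1}$ and $v_j = \varphi(\overline{e_j})$, and checking stability. You additionally spell out the mutual-inverse verification, which the paper leaves implicit; this is a harmless and reasonable expansion.
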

    \begin{proof}
        It remains to construct an inverse.
        The multiplication by $y_i$ is a linear map $\mu_{i}\colon F/K\to
        F/K$, so $\varphi\circ\mu_i\circ\varphi^{-1}\colon V\to V$ is a
        $\kk$-linear endomorphism of $V$, so an element of $\Matrices$ which
        we denote by $x_i$.
        Let $v_j = \varphi(\overline{e_j})$ for $j=1,2, \ldots , \genM$.
        Since the module $F/K$ is generated by images of $e_1,
        \ldots ,e_{\genM}$, the tuple $(x_1, \ldots ,x_{\ambM}, v_1, \ldots
        ,v_{\genM})$ is stable.
    \end{proof}
    Lemma~\ref{ref:descriptionOfCommMat:lem} gives us a map
    $\prodfamilystable\to \Quotmain$ that first transforms $(x_1, \ldots ,x_n,
    v_1, \ldots , v_r)$  to $(F/K, \varphi)$ and then forgets about the
    isomorphism $\varphi$. The fiber of $\prodfamilystable\to \Quotmain$
    consists of all possible $\varphi$, whence it is in bijection with
    $\GL_d$.

    \begin{example}
        Consider the case $\degM = \ambM = \genM = 1$. The closed points of
        $\Quotmain$ are modules of the form $\kk[y_1]/(y_1 - \alpha)$ for
        $\alpha\in \kk$.
        The space $V$ is one-dimensional.
        The closed points of $\prodfamily$ are pairs $(x_1, v_1)$, where $v_1\in
        V$ and $x_1 \in \MM_1  \simeq \kk$.
        The pair $(x_1, v_1)$ is stable if and only if $v_1\neq 0$.
        In this case $x_1v_1 = \alpha v_1$ for some $\alpha$.
        The projection $\pi_M\colon \kk[y_1]\to M$ sends $y_1 -
        \alpha$ to zero and so
        \[
            \ker \pi_M = (y_1 - \alpha)
        \]
        as expected. For every $\lambda\in \kk^*$, under the identification
        from Lemma~\ref{ref:descriptionOfCommMat:lem} the pairs $(x_1, v_1)$ and
        $(x_1, \lambda v_1)$ are sent to the same module $\kk[y_1]/(y_1 - \alpha)$, though with
        different isomorphisms $\varphi$. The fiber of the above map
        $\prodfamilystable\to \Quotmain$ over $\kk[y_1]/(y_1 - \alpha)$
        is equal to $\{(x_1, \lambda v_1)\ |\ \lambda\in \kk^*\}$, so it
        is isomorphic to $\kk^*$.
    \end{example}

        We have a natural action of $\GL(V)$ on $\CommMat$, $\prodfamily$ and
        $\prodfamilystable$. Namely, $\GL(V)$ acts on $\Matrices$ by
        conjugation, so it also acts on $\CommMat$ by simultaneous
        conjugation:
        \[
            g\circ(x_1, \ldots ,x_{\ambM}) = (gx_1g^{-1}, \ldots ,
            gx_{\ambM}g^{-1})
        \]
        for $g\in \GL(V)$. Similarly, $\GL(V)$ acts on $V^{\times \genM}$ by
        $g(v_1, \ldots ,v_{\genM}) = (gv_1, \ldots , gv_{\genM})$, so we
        obtain an action on $\prodfamily$ by
        \[
            g\circ(x_1, \ldots ,x_{\ambM}, v_1, \ldots , v_{\genM}) =
            (gx_1g^{-1}, \ldots , gx_{\ambM}g^{-1}, gv_1, \ldots ,
            gv_{\genM})
        \]
        for $g\in \GL(V)$. If the tuple $T = (x_1, \ldots ,x_{\ambM}, v_1, \ldots
        , v_{\genM})$ was stable, then also the tuple $g\circ T$ is stable, so
        the action on $\prodfamily$ restricts to an action on
        $\prodfamilystable$.

        \begin{lemma}\label{ref:GLVaction:lem}
            In the description from Lemma~\ref{ref:descriptionOfCommMat:lem},
            the $\GL(V)$-action on $\prodfamilystable$ is given by
            \[
                g\circ\left( \frac{F}{K}, \varphi \right) = \left(
                \frac{F}{K}, g\circ\varphi
                \right).
            \]
        \end{lemma}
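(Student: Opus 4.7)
The plan is to unwind the bijection of Lemma~\ref{ref:descriptionOfCommMat:lem} directly. Fix a stable tuple $T = (x_1, \ldots ,x_{\ambM}, v_1, \ldots ,v_{\genM})$, let $M$ denote the associated $S$-module structure on $V$ (with $y_i\cdot v = x_i(v)$), and let $\pi_M\colon F\to M$ be the surjection sending $e_j$ to $v_j$. For $g\in \GL(V)$, write $T' = g\circ T = (gx_1g^{-1},\ldots ,gx_{\ambM}g^{-1}, gv_1,\ldots ,gv_{\genM})$ and let $M'$ denote the $S$-module that $T'$ defines on $V$; concretely, $y_i\cdot_{M'} v = gx_ig^{-1}(v)$.

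The key observation is that the map $g\colon M\to M'$ is an isomorphism of $S$-modules: for any $v\in V$ and any $i$,
\[
y_i\cdot_{M'} g(v) = gx_ig^{-1}(gv) = g(x_i v) = g(y_i\cdot_M v),
\]
so $g$ commutes with the $S$-actions. Next, I would compute $\pi_{M'}\colon F\to M'$: since it is the unique $S$-linear map sending $e_j$ to $gv_j$, and since $g\circ\pi_M$ is $S$-linear (being a composition of an $S$-linear map with the $S$-module isomorphism $g$) and sends $e_j$ to $g(v_j) = gv_j$, uniqueness forces $\pi_{M'} = g\circ \pi_M$.

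From $\pi_{M'} = g\circ\pi_M$ and the fact that $g$ is a bijection, we obtain $\ker \pi_{M'} = \ker \pi_M =: K$, so the submodule $K\subset F$ attached to $T$ and to $T'$ is literally the same; in particular the point of $\Quotmain$ is unchanged. Passing to the induced isomorphisms, for any $\overline{f}\in F/K$ one has
\[
\pibarMprim(\overline{f}) = \pi_{M'}(f) = g(\pi_M(f)) = g\bigl(\pibarM(\overline{f})\bigr),
\]
so $\pibarMprim = g\circ \pibarM$. Writing $\varphi = \pibarM$, this says exactly that $g\circ (F/K,\varphi) = (F/K, g\circ\varphi)$, as claimed.

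There is no real obstacle here; the argument is essentially bookkeeping. The only point requiring care is keeping the two $S$-module structures $M$ and $M'$ on the common vector space $V$ conceptually separate, which is why I spell out the $S$-linearity of $g\colon M\to M'$ as the pivotal step — everything else is formal consequence of the universal property of $\pi_M$ and $\pi_{M'}$.
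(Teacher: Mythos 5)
Your proof is correct and follows essentially the same route as the paper's: you identify $g\colon M\to M'$ as an $S$-module isomorphism (the paper records this as a commutative diagram), deduce $\pi_{M'}=g\circ\pi_M$ and hence $\ker\pi_{M'}=\ker\pi_M$, and conclude $\pibarMprim=g\circ\pibarM$. The only cosmetic difference is that you make the universal-property argument for $\pi_{M'}=g\circ\pi_M$ explicit, whereas the paper encodes it in the diagram.
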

        \begin{proof}
            Let $T = (x_1, \ldots ,x_{\ambM}, v_1, \ldots , v_{\genM})$ be a
            stable tuple and $M$ be the associated $S$-module and epimorphism
            $\pi_M\colon F\to M$ so that $\pi_{M}(e_j) = v_j$. Then $y_i\cdot
            m = x_i(m)$ for every $m\in M$.
            Let $T' = g\circ T$ and $M'$ be the associated $S$-module with
            epimorphism $\pi_{M'}\colon F\to M'$. Then
            $y_i \circ m' = gx_ig^{-1}(m')$ and $\pi_{M'}(e_j) = gv_j$, so the following diagram is
            commutative
            \[
                \begin{tikzcd}
                    F\arrow[r, two heads, "\pi_M"]\arrow[rd, two heads, "\pi_{M'}"'] &
                    M\arrow[d, "g\cdot (-)"]\arrow[r, "y_i"] & M\arrow[d, "g\cdot (-)"]\\
                    & M'\arrow[r, "y_i"] & M'.
                \end{tikzcd}
            \]
            Since $g\cdot (-)$ is an isomorphism, we have $\ker \pi_{M} = \ker
            \pi_{M'}$, which exactly means that the action of $g$ on the quotients of $F$ is trivial.
            Moreover, the isomorphisms $\pibarM$ and $\pibarMprim$ satisfy
            $\pibarMprim = g\circ \pibarM$.
        \end{proof}
    The set-theoretic map defined after
    Lemma~\ref{ref:descriptionOfCommMat:lem} is a shade of a richer
    structure: a morphism of schemes. To construct it is a matter of
    introducing the language of functors; we defer this to the appendix but state
    the result here.
    \begin{proposition}\label{ref:prodfamilybundleOverQuotmain:prop}
        There exists a map of schemes $p\colon \prodfamilystable\to \Quotmain$
        such that
        \[
            p(x_1, \ldots ,x_n, v_1, \ldots ,v_r) = [F/K]
        \]
        in the
        notation above.
        The $\GL(V)$-action defined on points of $\prodfamilystable$ in
        Lemma~\ref{ref:GLVaction:lem} gives rise to a morphism of schemes
        $\GL(V) \times \prodfamilystable\to \prodfamilystable$, i.e., to an
        algebraic $\GL(V)$-action.
        This $\GL(V)$-action and $p$ make $\prodfamilystable$ a principal $\GL(V)$-bundle over $\Quotmain$.
        Explicitly, this means that there exists an open cover $U_i$ of
        $\Quotmain$ and $\GL(V)$-equivariant isomorphisms of schemes $p^{-1}(U_i) \simeq
        \GL(V)\times U_i$ over $U_i$.
    \end{proposition}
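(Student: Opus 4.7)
The plan is to pass to the functor of points and construct $p$ as a natural transformation. Given a $\kk$-scheme $T$, a $T$-point of $\prodfamilystable$ is the data of commuting sections $x_1,\ldots,x_\ambM\in \Endd(V_T)$ together with sections $v_1,\ldots,v_\genM\in V_T$, where $V_T:=V\otimes_\kk \OO_T$, satisfying the open stability condition. From this datum I build a $T$-point of $\Quotmain$ as follows: the commuting endomorphisms equip $V_T$ with an $S\otimes_\kk\OO_T$-module structure, and the assignment $e_j\mapsto v_j$ determines an $S$-linear map $\pi\colon F\otimes_\kk\OO_T\to V_T$. Stability guarantees $\pi$ is surjective on every fiber, hence surjective by Nakayama; since $V_T$ is locally free of rank $\degM$ over $\OO_T$, the quotient is flat, so $(\pi\colon F\otimes\OO_T\twoheadrightarrow V_T)$ defines a $T$-point of $\Quotmain$. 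This assignment is visibly functorial in $T$, so by Yoneda it defines the desired morphism of schemes $p$.

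For the action, the formulas
\[
g\circ(x_1,\ldots,x_\ambM,v_1,\ldots,v_\genM)=(gx_1g^{-1},\ldots,gx_\ambM g^{-1},gv_1,\ldots,gv_\genM)
\]
are polynomial in the entries of $g$ and $g^{-1}$ and in the matrix entries of the $x_i$ and $v_j$, so they define a morphism $\GL(V)\times\prodfamily\to\prodfamily$. Stability is preserved because applying $g$ conjugates the entire generating set, so this restricts to a morphism on $\prodfamilystable$; the group axioms are checked on points, which suffices as both sides are reduced in the required sense for the check.

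For the principal bundle structure, I use the universal quotient $\mathcal{M}$ on $\Quotmain$. By construction of $\Quotmain$, $\mathcal{M}$ is a flat $\OO_{\Quotmain}$-module whose fibers all have dimension $\degM$, hence $\mathcal{M}$ is locally free of rank $\degM$. Cover $\Quotmain$ by Zariski opens $U_i$ on which $\mathcal{M}|_{U_i}$ admits a trivialization $\varphi_i\colon \mathcal{M}|_{U_i}\xrightarrow{\sim}V\otimes\OO_{U_i}$. Each trivialization gives a section $\sigma_i\colon U_i\to \prodfamilystable$: namely, the commuting endomorphisms come from multiplication by $y_1,\ldots,y_\ambM$ on $\mathcal{M}|_{U_i}$ transported via $\varphi_i$, and the generators come from $\varphi_i$ applied to the images of $e_1,\ldots,e_\genM$. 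I then form the trivialization morphism
\[
\GL(V)\times U_i\longrightarrow p^{-1}(U_i),\qquad (g,u)\longmapsto g\cdot \sigma_i(u),
\]
which is $\GL(V)$-equivariant by construction. To see it is an isomorphism, I use Lemma~\ref{ref:descriptionOfCommMat:lem} in families: any $T$-point of $p^{-1}(U_i)$ is a quotient $[F/K]$ in $U_i$ together with a $T$-isomorphism $F/K\to V_T$, and composing with $\varphi_i$ restricted to this quotient identifies such data uniquely with a pair $(u,g)$ where $u\in U_i(T)$ and $g\in\GL(V)(T)$ is the change-of-basis.

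The main obstacle is the last step: checking that the bijection on $T$-points provided by Lemma~\ref{ref:descriptionOfCommMat:lem} genuinely yields an isomorphism of schemes, which requires the universal quotient $\mathcal{M}$ on $\Quotmain$ to be locally free of rank $\degM$ and amenable to local Zariski trivializations. This is where the functorial construction of $\Quotmain$ and the flatness-plus-constant-fiber-dimension argument are needed, and it is exactly the content that is naturally relegated to the appendix. Everything else reduces to bookkeeping of the functor-of-points formulas already present at the level of $\kk$-points.
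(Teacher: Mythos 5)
Your proposal is correct and follows essentially the same strategy as the paper's proof in the appendix (Corollary~\ref{ref:prodfamilybundleOverQuotmain:cor}): define $p$ via the functor of points using the family version of Lemma~\ref{ref:descriptionOfCommMat:lem}, show the $\GL(V)$-action is a morphism because the formulas are polynomial, and trivialize the bundle locally by choosing trivializations of the (locally free) universal quotient on affine opens.

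One small misstep worth correcting: you justify the group-action axioms by saying they can be ``checked on points'' because ``both sides are reduced in the required sense.'' This is not the right reason, and in fact $\prodfamilystable$ is \emph{not} reduced in general (it is an open subscheme of $\CommMat\times V^{\genM}$, and $\CommMat$ has nonreduced components, as the paper itself proves). The correct justification is that it suffices to check the axioms on $A$-points for \emph{all} $\kk$-algebras $A$ (Yoneda's lemma), and this holds because the action formulas are polynomial identities valid over any ring; no reducedness hypothesis is needed or available.
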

    \begin{proof}
        See Corollary~\ref{ref:prodfamilybundleOverQuotmain:cor}.
    \end{proof}
    \begin{remark}\label{rem:MurphysLaw}
        The smooth maps $\Quotmain \leftarrow \prodfamilystable \rightarrow
        \CommMat$ can be used to show a Murphy's Law type statement for
        $\CommMat$. Namely by~\cite{Jelisiejew__Pathologies} the Hilbert scheme
        of points has almost arbitrary pathologies. The Hilbert scheme is just
        $\Quotmain$ for $r=1$ and so using the smooth maps
        one can deduce (similarly as in~\cite[\S5]{Jelisiejew__Pathologies})
        that $\CommMat$ has bad singularities, for example it is nonreduced.
        However, these singularities appear for $n\geq 16$ and very high
        $d$.
    \end{remark}
    \begin{lemma}\label{ref:tangentSpaceToQuot:lem}
        Let $F/K$ be a quotient module.
        The tangent space to $\Quotmain$ at $[F/K]$ is isomorphic to $\Hom_S(K,
        F/K)$. Let $(x_1, \ldots ,x_n)\in \CommMat$ be a corresponding
        element. Then we have
        \begin{equation}\label{eq:tangentSpaces}
            \dim T_{[F/K]} \Quotmain = rd - d^2 + \dim T_{(x_1, \ldots ,x_n)}
            \CommMat.
        \end{equation}
    \end{lemma}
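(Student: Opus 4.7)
The plan is to prove the two statements separately: first identify the tangent space with $\Hom_S(K,F/K)$ using dual numbers, then deduce the dimension identity from the smooth maps in Table~\ref{table:objectsinvolved}.

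For the first assertion, I would compute $T_{[F/K]}\Quotmain$ via $\Spec(\kk[\varepsilon]/\varepsilon^2)$-points as in the proof of Lemma~\ref{ref:tangentspace:lem}. By definition of the Quot scheme, such a tangent vector is a flat family, i.e., a submodule $\widetilde K\subset F\tensor_\kk \kk[\varepsilon]/\varepsilon^2$ with $(F\tensor\kk[\varepsilon]/\varepsilon^2)/\widetilde K$ flat over $\kk[\varepsilon]/\varepsilon^2$ and reducing to $F/K$ modulo $\varepsilon$. Flatness over $\kk[\varepsilon]/\varepsilon^2$ is equivalent to the statement $\widetilde K\cap \varepsilon F = \varepsilon K$, so each $\tilde k\in \widetilde K$ has a well-defined image $k\in K$ modulo $\varepsilon$, and the $\varepsilon$-part defines an element of $F/K$ independent of the lift chosen. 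This produces a well-defined $S$-linear map $\varphi\colon K\to F/K$. Conversely, given $\varphi\colon K\to F/K$ we choose any set-theoretic lift $\widetilde{\varphi}\colon K\to F$ and set $\widetilde K_\varphi = \{k+\varepsilon\widetilde\varphi(k)\ |\ k\in K\}+\varepsilon K$, which one checks is an $S$-submodule whose quotient is flat. A routine verification shows these assignments are mutually inverse, giving $T_{[F/K]}\Quotmain \simeq \Hom_S(K,F/K)$.

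For the dimension identity, I would exploit both smooth maps emanating from $\prodfamilystable$. By Proposition~\ref{ref:prodfamilybundleOverQuotmain:prop}, the map $p\colon\prodfamilystable\to \Quotmain$ is a principal $\GL(V)$-bundle, hence smooth of relative dimension $d^2$, so at any preimage $T=(x_1,\ldots,x_n,v_1,\ldots,v_r)$ of $[F/K]$ we have
\[
    \dim T_{T}\prodfamilystable = \dim T_{[F/K]}\Quotmain + d^2.
\]
On the other hand, $\prodfamilystable$ is an open subscheme of $\CommMat\times V^{r}$, so its tangent space at $T$ is the direct sum
\[
    \dim T_{T}\prodfamilystable = \dim T_{(x_1,\ldots,x_n)}\CommMat + rd,
\]
using that $V^r$ is a smooth affine space of dimension $rd$. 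Equating the two expressions and rearranging yields~\eqref{eq:tangentSpaces}.

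The main obstacle is the careful verification for the first assertion that the flat lifts are exactly the graphs of $S$-linear maps $K\to F/K$, in particular that the choice of set-theoretic lift $\widetilde\varphi$ does not affect $\widetilde K_\varphi$ and that $S$-linearity of $\varphi$ is equivalent to $\widetilde K_\varphi$ being $S$-stable. This is classical but slightly technical. The second assertion is then essentially formal, given Proposition~\ref{ref:prodfamilybundleOverQuotmain:prop}; the only subtlety is appealing to the standard fact that for a smooth morphism of relative dimension $e$, tangent space dimensions differ by exactly $e$, which applies here because $p$ is a principal bundle (and in particular smooth).
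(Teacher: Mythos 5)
Your proposal is correct and follows essentially the same strategy as the paper: the dimension identity~\eqref{eq:tangentSpaces} is obtained, exactly as in the paper, by comparing $\dim T_{T}\prodfamilystable$ through the two smooth maps to $\Quotmain$ (a $\GL_d$-bundle projection) and to $\CommMat$ (an open immersion followed by projection off $V^r$). The only difference is that for the identification $T_{[F/K]}\Quotmain\simeq\Hom_S(K,F/K)$ the paper simply cites standard references, whereas you write out the classical dual-numbers argument (correctly, including the flatness criterion $\widetilde K\cap\varepsilon F=\varepsilon K$) — this is the same proof those references give, not a different route.
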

    \begin{proof}
        The bijection $T_{[F/K]}\Quotmain  \simeq \Hom(K, F/K)$ is classical,
        see~\cite[Proposition~4.4.4]{Sernesi__Deformations}
        or~\cite[Theorem~6.4.5(b)]{fantechi_et_al_fundamental_ag} or adapt the
        proof of~\cite[Theorem~10.1]{Stromme__Hilbert_schemes}.
        To compute the tangent space dimension, consider the maps
        from $\prodfamilystable$ to $\Quotmain$, $\CommMat$ as in
        Table~\ref{table:objectsinvolved}.
        Take any point $u = (x_1, \ldots ,x_n,
        v_1, \ldots , v_r)$ mapping to $[F/K]\in \Quotmain$.
        By construction the map
        $\prodfamilystable\to \CommMat$ is a composition of the open embedding
        of $\prodfamilystable$ into $\prodfamily = \CommMat \times V^{\genM}$ with
        the projection of $\CommMat \times V^{\genM}$ onto the first coordinate.
        By Proposition~\ref{ref:prodfamilybundleOverQuotmain:prop}, locally near
        $u$ the map $\prodfamilystable\to \Quotmain$ is a projection
        $\GL_d\times U\to U$. The equality~\eqref{eq:tangentSpaces} follows from
        comparing the dimensions of $T_u\prodfamilystable, T_{[F/K]}\Quotmain$
        and $T_{(x_1, \ldots ,x_n)} \CommMat$.
    \end{proof}

    \subsection{Support and eigenvalues}\label{ssec:dictionary}

    For a finite degree module $M$, we define its support as the set of
    maximal ideals $\mm\subset S$ such that $M/\mm M\neq 0$. The support is
    equal to the set of maximal ideals containing the ideal $\Ann(M) =
    \{s\in S\ |\ sM = 0\}$, as we explain below (this also shows that
    our definition of support agrees with the usual one). On the one hand, if
    $\mm$ is in the support then the ideal $\Ann(M) + \mm$ annihilates a
    nonzero module $M/\mm M$ and so
    $\Ann(M)+\mm\neq (1)$, so $\Ann(M)\subset \mm$. On the other hand, if $\mm$
    is not in the support then $M = \mm M$ so by Nakayama's
    lemma~\cite[Tag~00DV]{stacks_project} the intersection $(1-\mm)\cap
    \Ann(M)$ is nonempty, so $\Ann(M)\not\subset \mm$.

    \begin{lemma}\label{ref:supportContainedInEigenvalues:lem}
        Let $(x_1, \ldots ,x_n)\in \CommMat$ and let $M$ be the associated
        $S$-module. Suppose that $(\lambda_1, \ldots ,\lambda_n)\in \kk^n$ are
        such that $(y_1 - \lambda_1, \ldots , y_n - \lambda_n)$ is in the
        support of $M$. Then for every $i$ the element $\lambda_i$ is an
        eigenvalue of the matrix $x_i$.
    \end{lemma}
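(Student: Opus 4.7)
The plan is to argue by contradiction, using the characterization of the support given just before the lemma. Suppose that for some index $i$ the scalar $\lambda_i$ is \emph{not} an eigenvalue of $x_i$. Then $x_i - \lambda_i I_d$ is an invertible $\kk$-linear endomorphism of $V$. Under the correspondence between the tuple $(x_1, \ldots, x_n)$ and the $S$-module $M$ from Lemma~\ref{ref:matricesaremoduleswithbasis}, multiplication by $y_i - \lambda_i$ on $M$ is precisely the operator $x_i - \lambda_i I_d$, hence it is a $\kk$-linear (in fact, $S$-linear) automorphism of $M$. In particular it is surjective, so $(y_i - \lambda_i)M = M$.

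Now, since $y_i - \lambda_i \in \mathfrak{m} = (y_1 - \lambda_1, \ldots, y_n - \lambda_n)$, we have
\[
    M = (y_i - \lambda_i)M \subseteq \mathfrak{m} M \subseteq M,
\]
so $\mathfrak{m} M = M$, i.e., $M/\mathfrak{m} M = 0$. This contradicts the hypothesis that $\mathfrak{m}$ lies in the support of $M$. Therefore $\lambda_i$ must be an eigenvalue of $x_i$ for every $i$.

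The argument is short and there is no real obstacle: the only thing to check is the translation between ``$y_i - \lambda_i$ acts invertibly on $M$'' and ``$x_i - \lambda_i I_d$ is invertible on $V$'', which is immediate from the definition of the $S$-module structure on $V$. Note that we do not even invoke Nakayama's lemma explicitly here, since finite dimensionality of $M$ makes the surjectivity of $(y_i - \lambda_i)\cdot$ automatic once injectivity is known; however, one could equivalently phrase the argument by saying that $\mathfrak{m}$ is in the support iff $\mathfrak{m} \supseteq \Ann(M)$, and then observe that if $\lambda_i$ were not an eigenvalue of $x_i$ then $y_i - \lambda_i$ would be a unit modulo $\Ann(M)$, preventing $\mathfrak{m} \supseteq \Ann(M)$.
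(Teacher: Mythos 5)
Your argument is correct and is essentially the same as the paper's: contrapositively, if $\lambda_i$ is not an eigenvalue of $x_i$, then $y_i-\lambda_i$ acts invertibly on $M$, forcing $M/\mm M = 0$ and hence $\mm \notin \Supp(M)$. The extra remark about $\Ann(M)$ and units is a fine equivalent reformulation but adds nothing beyond the paper's proof.
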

    \begin{proof}
        If for some $i$ the element $\lambda_i$ is not an eigenvalue of $x_i$,
        then $x_i - \lambda_i I$ is invertible. This translates to the
        multiplication by $y_i - \lambda_i$ on $M$ being an isomorphism. It
        follows that $M = (y_i - \lambda_i)M$ so $M/(y_i - \lambda_i)M = 0$
        and consequently $M/(y_1-\lambda_1, \ldots ,y_n-\lambda_n)M = 0$ so
        $(y_i - \lambda_i)_{i=1,2, \ldots, n}$ is not in the support.
    \end{proof}
    \begin{remark}\label{ref:cexample:rmk}
        The converse of Lemma~\ref{ref:supportContainedInEigenvalues:lem} does
        not hold. For example consider a pair of diagonal $2\times 2$ matrices
        $(x_1, x_2) = (\diag(0, 1), \diag(0, 1))$. The corresponding module is
        $M = \kk[y_1, y_2]/(y_1, y_2) \oplus \kk[y_1, y_2]/(y_1 - 1, y_2 - 1)$ so
        $(y_1-1,y_2)$ is not in the support of $M$ even though $0$ and $1$ are
        eigenvalues of both matrices.
    \end{remark}
    Despite Remark~\ref{ref:cexample:rmk}, we do know that the two extremal
    situations: where the module is supported at single point
    or when the matrices are (up to adding multiple of identity) nilpotent,
    actually coincide.
    \begin{lemma}\label{ref:supportVsEigenvalues:lem}
        Let $(x_1, \ldots ,x_n)\in \CommMat$ and let $M$ be the associated
        $S$-module. Then the support of $M$ consists of one point precisely
        when every $x_i$ has only one eigenvalue.
    \end{lemma}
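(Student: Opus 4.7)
The plan is to prove both implications by passing through the finite-dimensional commutative $\kk$-algebra $A := \kk[x_1,\ldots,x_n]\subset \Endd(V)$, which is the image of $S$ in $\Endd(V)$ under the map defining the module $M$. Since $M = V$ is a finite-dimensional faithful $A$-module, the support of $M$ as an $S$-module corresponds bijectively to the maximal ideals of $A$, and these in turn are in bijection with the maximal ideals of the Artinian ring $S/\Ann(M)$.

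For the direction ``one eigenvalue each $\Rightarrow$ support is a single point'', suppose each $x_i$ has unique eigenvalue $\lambda_i$. If $(y_1-\mu_1,\ldots,y_n-\mu_n)$ lies in the support of $M$, then by Lemma~\ref{ref:supportContainedInEigenvalues:lem} each $\mu_i$ is an eigenvalue of $x_i$, so $\mu_i = \lambda_i$. Hence the support has at most one point, and assuming $\degM\geq 1$ (so $M\neq 0$) it is nonempty, yielding the desired conclusion.

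For the converse, assume the support consists of the single point $(y_1-\lambda_1,\ldots,y_n-\lambda_n)$. Then $A$ has a unique maximal ideal $\mm_A = (x_1-\lambda_1 I,\ldots,x_n-\lambda_n I)$, i.e., $A$ is local Artinian. In a local Artinian ring every element of the maximal ideal is nilpotent, so each $x_i - \lambda_i I\in \mm_A$ is nilpotent as an endomorphism of $V$. Therefore the only eigenvalue of $x_i$ is $\lambda_i$.

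The main thing to be careful about is justifying the decomposition/localization step and the bijection between the support of $M$ and $\Spec A$. This follows from the general fact that a finite-dimensional commutative $\kk$-algebra is a finite product of local Artinian $\kk$-algebras; alternatively one can argue directly using Nakayama, exactly as in the discussion preceding Lemma~\ref{ref:supportContainedInEigenvalues:lem}, to see that $\mm\in \Supp(M)$ iff $\mm\supset \Ann(M)$, and then use that the maximal ideals of $S$ containing $\Ann(M)$ are in bijection with those of $A = S/\Ann(M)$. No deep input is needed beyond this and Lemma~\ref{ref:supportContainedInEigenvalues:lem}.
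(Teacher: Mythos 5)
Your proof is correct but takes a genuinely different route from the paper's. The paper proves the lemma by establishing, for each $i$ and each $\lambda_i\in\kk$, a chain of elementary equivalences: $\lambda_i$ is an eigenvalue of $x_i$ $\Leftrightarrow$ $x_i-\lambda_iI$ is not invertible $\Leftrightarrow$ $M/(y_i-\lambda_i)M\neq 0$ $\Leftrightarrow$ the support meets the hyperplane $\{y_i=\lambda_i\}$; it then observes that the support (a finite nonempty set) is a single point iff each coordinate projection of the support is a single point, i.e., iff each $x_i$ has a single eigenvalue. You instead pass through the finite-dimensional algebra $A=\kk[x_1,\ldots,x_n]\simeq S/\Ann(M)$: you invoke Lemma~\ref{ref:supportContainedInEigenvalues:lem} for the forward implication, and for the converse you use that a local Artinian $\kk$-algebra has nilpotent maximal ideal, so each $x_i-\lambda_iI\in\mm_A$ is nilpotent on $V$. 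Your approach imports structural facts about Artinian rings and thereby makes the converse one line; the paper's approach is self-contained and avoids any structure theory, proving everything from scratch via the equivalence chain. Both arguments correctly handle the key point that the identification $\Supp(M)\leftrightarrow\{\mm\supset\Ann(M)\}\leftrightarrow\mathrm{MaxSpec}(A)$ is what makes the eigenvalue/support dictionary work; your version makes this explicit as a stated reduction, while the paper's version has it implicit in the equivalence $(4)\Leftrightarrow(5)$.
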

    \begin{proof}
        \def\mm{\mathfrak{m}}
        Fix $i$. The support of $M$ consists of maximal ideals in $S$. By
        Nullstellensatz such
        ideals have the form $(y_1 - \lambda_1, y_2 - \lambda_2, \ldots , y_n
        - \lambda_n)$ for a point $(\lambda_1, \lambda_2,  \ldots ,
        \lambda_n)\in \kk^n$. We identify each such ideal with the
        corresponding point, so the support of $M$ becomes a subset of
        $\kk^n$.
        For any $\lambda_i\in \kk$ the following are equivalent:
        \begin{enumerate}
            \item $\lambda_i$ is an eigenvalue of $x_i$,
            \item $x_i-\lambda_i I$ is not invertible,
            \item $(y_i-\lambda_i) M\neq M$,
            \item $M/(y_i -\lambda_i)M \neq 0$,
            \item there exists a maximal ideal $\mm$ containing $y_i -
                \lambda_i$ and such
                that $M/\mm M \neq 0$. Indeed $\mm$ can be chosen as any
                element of the support of the nonzero module $M/(y_i -
                \lambda_i)M$,
            \item the support of $M$ intersects the hyperplane $y_i =
                \lambda_i$.
        \end{enumerate}
        The support of $M$ is a point iff for every $i$ the last
        condition is satisfied by at most one $\lambda_i$. This happens iff for
        every $i$ the matrix $x_i$ has only one eigenvalue.
    \end{proof}

    \begin{remark}
        In the arguments below we use Hilbert functions of modules and Jordan
        block types for matrices. While those structures are connected, it is
        not a straightforward connection as it requires Lefschetz-type
        theorems, see for example~\cite[Prop.~3.5,
        Prop.~3.64]{Harima_et_al__Lefschetz_properties},
        \cite{Harima_Migliore__Lefschetz}
        or~\cite[Prop.~2.10, Lemma~2.11]{Iarrobino_Marques_McDaniel__Artinian_algebras_and_Jordan_type}.
    \end{remark}

    \subsection{Components}\label{ssec:components}

    We now discuss that components of $\CommMat$ and $\Quotmain$ are in
    bijection for $r\geq d$. We begin with the following general lemma.
    \begin{lemma}\label{ref:componentBijectionAbstract:lem}
        Let $p\colon U\to X$ be a surjective morphism of finite type
        schemes. Assume that for an irreducible component $W\subset X$ the preimage $p^{-1}(W)$ is
                irreducible.
        Then the maps $W\mapsto p^{-1}(W)$ and $Z\mapsto
        \overline{p(Z)}$ give a bijection between irreducible components of $U$
        and $X$.
    \end{lemma}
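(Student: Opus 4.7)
The plan is to run a purely topological argument, using only that $p$ is continuous and surjective (finite type is irrelevant for the statement itself, and the scheme structure enters only through the underlying topological space). I interpret the hypothesis as saying that $p^{-1}(W)$ is irreducible for \emph{every} irreducible component $W$ of $X$.

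First I would check that $W \mapsto p^{-1}(W)$ lands in the set of components of $U$. Fix a component $W \subseteq X$. Then $p^{-1}(W)$ is closed (continuity) and irreducible (by assumption); what needs argument is maximality. Suppose $p^{-1}(W) \subseteq Z$ with $Z$ an irreducible closed subset of $U$. Applying $p$ and using surjectivity of $p$ onto $W$ when restricted to $p^{-1}(W)$, one obtains $W = p(p^{-1}(W)) \subseteq p(Z) \subseteq \overline{p(Z)}$. Since $\overline{p(Z)}$ is an irreducible closed subset of $X$ and $W$ is a maximal such subset, we must have $\overline{p(Z)} = W$. Then $Z \subseteq p^{-1}(\overline{p(Z)}) = p^{-1}(W)$, so $Z = p^{-1}(W)$, proving maximality.

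Next I would show that every component of $U$ arises this way. Let $Z \subseteq U$ be an irreducible component. Then $\overline{p(Z)}$ is irreducible closed in $X$, hence contained in some component $W$ of $X$. Therefore $Z \subseteq p^{-1}(W)$; but $p^{-1}(W)$ is itself an irreducible component of $U$ by the previous step, so maximality of $Z$ forces $Z = p^{-1}(W)$.

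Finally I would verify that the two assignments are inverse to each other. For any component $W$ of $X$, surjectivity gives $p(p^{-1}(W)) = W$, which is already closed, so $\overline{p(p^{-1}(W))} = W$. Conversely, if $Z$ is a component of $U$, the previous paragraph shows $Z = p^{-1}(W)$ for the unique component $W$ containing $\overline{p(Z)}$; and for this $W$ we have $\overline{p(Z)} = \overline{p(p^{-1}(W))} = W$, so $Z \mapsto \overline{p(Z)} = W \mapsto p^{-1}(W) = Z$. I do not anticipate a serious obstacle; the only point to be careful about is not to forget that $p$ being surjective gives $p(p^{-1}(-)) = \mathrm{id}$ on subsets of $X$, which is what glues the two directions together.
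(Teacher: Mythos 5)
Your proof is correct, and it takes a cleaner and more general route than the paper's. The paper lists the finitely many irreducible components $Z_1,\ldots,Z_k$ of $U$ (this is the one place the finite-type hypothesis is used) and argues by covering: surjectivity of $p$ forces the closed sets $\overline{p(Z_i)}$ to cover $X$, so each component $W$ of $X$ equals some $\overline{p(Z_{i_0})}$, and the irreducibility of $p^{-1}(W)$ is invoked to rule out a second such index by contradiction. You instead prove maximality of $p^{-1}(W)$ directly: a containment $p^{-1}(W)\subseteq Z$ with $Z$ irreducible closed gives $W\subseteq\overline{p(Z)}$ by surjectivity, forcing equality because $W$ is maximal, whence $Z\subseteq p^{-1}(W)$. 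This bypasses finiteness entirely; the only ingredients are continuity, surjectivity, the irreducibility hypothesis for every component $W$, and the fact that any irreducible closed subset lies in a component (a Zorn's-lemma fact valid in any topological space). Your version is thus genuinely more elementary and applies beyond finite-type schemes; the paper's version makes it slightly more visible how the hypothesis bites, namely by $p^{-1}(W)$ splintering into several components. One minor wording issue: you invoke ``the unique component $W$ containing $\overline{p(Z)}$'' before uniqueness is established, but since your next equality shows $\overline{p(Z)}=W$ is itself a component, this is harmless and the uniqueness follows a posteriori.
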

    \begin{proof}
        Let $Z_1, \ldots , Z_k$ be the irreducible components of $U$. Their
        images in $X$ are
        irreducible, so each of them lies in some irreducible component of
        $X$. Let $W$ be any irreducible component of $X$ and let $I_W\subset
        \{1, \ldots , k\}$ consists of those $i$ for which
        $\overline{p(Z_i)}\subset W$. Since $p$ is surjective, the
        closed subsets $\overline{p(Z_i)}$ cover $X$ so there exists an
        $i_0\in I_W$ such that $\overline{p(Z_{i_0})} = W$. If $I_W\neq
        \{i_0\}$ then $p^{-1}(W)$ contains at least two components of $U$
        contradicting the assumption. Therefore $I_W = \{i_0\}$ and this
        proves the claim.
    \end{proof}
    \begin{lemma}\label{ref:componentBijectionAssumptions:lem}
        Both maps $p_1\colon \prodfamilystable\to p_1(\prodfamilystable)
        \subset \CommMat$ and $p_2\colon \prodfamilystable\to
        \Quotmain$ in Table~\ref{table:objectsinvolved} satisfy the assumptions of
        Lemma~\ref{ref:componentBijectionAbstract:lem}. These maps also map
        open sets to open sets.
    \end{lemma}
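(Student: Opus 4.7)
The plan is to handle $p_2$ via the principal bundle structure supplied by Proposition~\ref{ref:prodfamilybundleOverQuotmain:prop} and to handle $p_1$ by exhibiting it as a projection restricted to an open subset. In both cases the target is of finite type, as an open subscheme of a finite-type scheme, so the substantive content is just (i) surjectivity onto the image, (ii) openness, and (iii) irreducibility of the preimage of an irreducible component. I do not anticipate a real obstacle: all three items will fall out of openness of projections and irreducibility of products.

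For $p_2$, I would fix a trivializing open cover $\{U_i\}$ of $\Quotmain$ as in Proposition~\ref{ref:prodfamilybundleOverQuotmain:prop}, so that $p_2^{-1}(U_i) \simeq \GL(V) \times U_i$ and $p_2$ restricts to the second projection. Projections are open surjections, hence $p_2$ is an open surjection globally. For an irreducible component $W \subset \Quotmain$, the preimage $p_2^{-1}(W)$ is covered by the pieces $\GL(V) \times (W \cap U_i)$. Each such piece is a product of the irreducible scheme $\GL(V)$ (open in an affine space, via the determinant) with a nonempty open subset of the irreducible $W$, hence irreducible. Since any two nonempty open subsets of an irreducible scheme meet, the pieces pairwise intersect nontrivially, so their union $p_2^{-1}(W)$ is irreducible.

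For $p_1$, unraveling the construction shows it is literally the restriction to the open subscheme $\prodfamilystable \subset \CommMat \times V^{\genM}$ of the first projection $\CommMat \times V^{\genM} \to \CommMat$. Since projections from affine space are open, the image $p_1(\prodfamilystable)$ is open in $\CommMat$, hence a finite-type subscheme, and $p_1$ is a surjective open morphism onto it. Given an irreducible component $W$ of $p_1(\prodfamilystable)$, the preimage $p_1^{-1}(W) = \prodfamilystable \cap (W \times V^{\genM})$ is a nonempty open subscheme of the irreducible scheme $W \times V^{\genM}$, hence irreducible; nonemptiness is built into $W \subset p_1(\prodfamilystable)$.

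The only subtle point worth flagging is that ``irreducible component of $p_1(\prodfamilystable)$'' must be interpreted in the open subscheme itself, not as a component of its closure in $\CommMat$; with that convention the argument proceeds mechanically.
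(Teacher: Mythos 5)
Your proof is correct and follows essentially the same route as the paper's: $p_1$ is handled by recognizing it as the projection restricted to an open subscheme of $\CommMat\times V^{\genM}$, and $p_2$ is handled via the trivializing open cover from Proposition~\ref{ref:prodfamilybundleOverQuotmain:prop}. The only cosmetic difference is in the $p_2$ case: you argue directly that $p_2^{-1}(W)$ is covered by irreducible opens that pairwise meet, whereas the paper reaches the same conclusion by contradiction (two disjoint nonempty opens would both meet a common irreducible piece $\GL_d\times(V_1\cap V_2)$); both versions hinge on the same observation that $W\cap U_i$ and $W\cap U_j$ meet by irreducibility of $W$.
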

    \begin{proof}
        The map $p_1$ is an open embedding into a product $\CommMat\times
        \mathbb{A}^{rd}$ composed with projection, so it is flat, therefore it
        maps open subsets to open subsets~\cite[Tag~039K]{stacks_project}. In
        particular $p_1(\prodfamilystable)$ is open. Let
        $W\subset p_1(\prodfamilystable)$ be irreducible. Then $p_1^{-1}(W)$
        is a nonempty open
        subset of the irreducible scheme $W\times \mathbb{A}^{rd}$, so it is
        irreducible as well. This concludes the proof for $p_1$.

        Proposition~\ref{ref:prodfamilybundleOverQuotmain:prop} implies that
        there is an open cover $\{V_i\}$ of $\Quotmain$ such that for all $i$ the map
        $p_2^{-1}(V_i)\to V_i$ is isomorphic to the second projection
        $\GL_d\times V_i\to V_i$. In particular $p_2$ is flat, so it maps
        open subsets to open subsets. Let $W\subset \Quotmain$ be irreducible
        and suppose $p_2^{-1}(W)$ is reducible. Then there are two disjoint
        open subsets $U_1, U_2\subset p_2^{-1}(W)$. Pick any $V_1, V_2$ from the
        above open cover (if necessary, refining it) such that $V_i \subset p_2(U_i)$ for
        $i=1,2$. The set $W$ is irreducible and $V_1, V_2 \subset W$ so $V_1\cap V_2$ is
        nonempty and irreducible. It follows that $P := p_2^{-1}(V_1\cap V_2)$ is nonempty,
        isomorphic to $\GL_d \times (V_1\cap V_2)$ so irreducible, and
        $U_1$ and $U_2$ intersect $P$, so $U_1\cap P$, $U_2\cap P$ are
        nonempty open
        inside irreducible $P$, hence intersect, so $U_1$ and $U_2$ intersect, a
        contradiction.
    \end{proof}

    Lemma~\ref{ref:componentBijectionAbstract:lem} and
    Lemma~\ref{ref:componentBijectionAssumptions:lem} imply that there are
    bijections between irreducible components of $\Quotmain$, of $\prodfamilystable$, and of
    $p_1(\prodfamilystable)\subset \CommMat$. The components
    of the open subset $p_1(\prodfamilystable)$ are a subset of components of
    $\CommMat$ so we obtain the following
    \[
         \{\mbox{ irr.comp. of }\Quotmain\}\simeq \{\mbox{ irr.comp. of
         }\prodfamilystable\} \simeq \left\{ \mbox{ irr.comp. of
         $p_1(\prodfamilystable)$}\right\}\into \{\mbox{ irr.comp. of }\CommMat\}.
    \]
    For $r\geq d$ the map $\prodfamilystable\to \CommMat$ is surjective; just
    pick $v_1, \ldots ,v_r$ containing a basis of $V$. In this case we obtain
    a bijection between components of $\Quotmain$ and $\CommMat$.
    \begin{example}
        The assumption $r\geq d$ is sharp.
        Consider the zero tuple $(x_1, \ldots ,x_n) = (0, 0, \ldots ,0)$.
        For $v_1, \ldots ,v_r\in V$
        the tuple $(x_1, \ldots ,x_n, v_1, \ldots ,v_r)$ is stable if and only
        if $v_1, \ldots ,v_r$ span $V$. In particular this forces $r\geq d$.
        It follows that $\prodfamilystable\to \CommMat$ is
        surjective if and only if $r\geq d$.
        If $r<d$ then $\CommMat$ may have more components than $\Quotmain$.
        For $(n,d,r) = (4, 4, 1)$ the Quot scheme is the Hilbert scheme
        (Example~\ref{ex:rankOneGivesQuotEqHilb}) which
        is
        irreducible~\cite{cartwright_erman_velasco_viray_Hilb8}. In contrast,
        the variety of commuting matrices is reducible, see
        Example~\ref{ex:squareZero4x4}.
    \end{example}
    For a component $\mathcal{Z}^{\OpQuot}\subset \Quotmain$ the dimension
    of the corresponding component $\mathcal{Z}^{C}\subset \CommMat$ satisfies
    $\dim \mathcal{Z}^{\OpQuot} = rd - d^2 + \dim \mathcal{Z}^{C}$ because we
    take preimages and images under smooth maps whose fibres have dimension
    $rd$ and $d^2$ respectively (the details of this argument are for example in the proof of
    Lemma~\ref{ref:tangentSpaceToQuot:lem}).
    Lemma~\ref{ref:tangentSpaceToQuot:lem} gives the same equality for
    tangent spaces, so we conclude that a point $[F/K]\in
    \mathcal{Z}^{\OpQuot}$ is smooth it and only if any corresponding point
    $(x_1, \ldots ,x_n)\in \CommMat$ is smooth.

        The \emph{principal component} of $\CommMat$ is the closure of the
        locus of $n$-tuples of diagonalizable matrices.
    It contains an open subset consisting of $n$-tuples where the first matrix
    has $d$ distinct eigenvalues. A matrix that commutes with such a matrix
    $x$, can be uniquely written in the form $p(x)$ where $p$ is a
    polynomial of degree at most $d-1$, see \cite[Theorem
    3.2.4.2]{MatrixAnalysis}. The principal component of $\CommMat$ is
    therefore a closure of a set that is parametrized by the set of
    all matrices with $d$ distinct eigenvalues, which is open in $\Matrices$,
    and $n-1$ copies of the space of uni-variate polynomials of degree at most
    $d-1$, so the principal component has dimension $d^2 + (n-1)d$
    \cite[Proposition 6]{GuralnickSethuraman}.

        A tuple of matrices is diagonalizable if and only if it corresponds to
        an $S$-module abstractly isomorphic to $\bigoplus_{i=1}^d
        S/\mm_i$ for some maximal ideals $\mm_i\subset S$ with $S/\mm_i =
        \kk$. The
        \emph{principal component} of $\Quotmain$ is the closure of the locus
        of quotients $F/K$ that are abstractly isomorphic to such modules.
        In terms of Table~\ref{table:objectsinvolved}, the principal component of
        $\Quotmain$ is the image of
        the preimage in $\prodfamilystable$ of the principal component of
        $\CommMat$. It follows that the dimension of the principal component
        of $\Quotmain$ is $(n+r-1)d$.

        Antipodal to the principal components are the elementary components. A
        component of $\CommMat$ is \emph{elementary} if its points correspond
        to tuples where each matrix has precisely one eigenvalue. A component
        of $\Quotmain$ is \emph{elementary} if it parameterizes modules $M$
        supported at a single point. By
        Lemma~\ref{ref:supportVsEigenvalues:lem} those two notions agree in
        the sense that the injection above restricts to a bijection between
        elementary components of $\Quotmain$ and elementary components of $\CommMat$ that
        intersect the image of $\prodfamilystable$.

        \subsection{Transposes and duality}\label{ssec:transposes}

            The commuting matrices have a natural involution: the
            transposition of each matrix in the tuple. The corresponding
            notion for modules is taking the dual module. We
            now review how the basic invariants behave under this
            operation.

            Recall that an $S$-module has finite degree if it is finite
            dimensional as a $\kk$-vector space. Throughout this section we
            fix a finite degree $S$-module $M$.
            The vector space $M^{\vee} = \Hom_{\kk}(M, \kk)$ has a natural
            structure of an $S$-module by precomposition: for $s\in S$ and
            $\varphi\in\Hom_{\kk}(M, \kk)$ we define $(s\cdot \varphi)(m) =
            \varphi(sm)$ for every $m\in M$.
            This $S$-action is called \emph{contraction}.
            \begin{definition}
                The \emph{dual module} of $M$ is the space $M^{\vee}$ equipped
                with an  $S$-action by contraction. We denote it by
                $M^{\vee}$ if no confusion is likely to occur.
            \end{definition}
            The double dual map $M\to (M^{\vee})^{\vee}$ is an isomorphism of
            $S$-modules and the annihilators of $M$ and $M^{\vee}$ in $S$ are equal.
            For an ideal $I \subset S$, let $(0:I)_M = \left\{
                m\in M\ |\ I m =\{0\} \right\}$ be the annihilator of $I$ in $M$.
                \begin{lemma}\label{ref:dualityPartTwo}
                    For every ideal $I$ the vector spaces $M/IM$ and
                    $(0:I)_{M^{\vee}}$ have the same dimension.  Also the
                    vector spaces $(0:I)_M$ and $M^{\vee}/IM^{\vee}$ have the
                    same dimension.
                \end{lemma}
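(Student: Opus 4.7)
The plan is to reduce both equalities to a single natural identification of subspaces of $M^\vee$, namely $(0:I)_{M^\vee} = (M/IM)^\vee$, and then leverage the fact that $\kk$-linear duality is exact on finite-dimensional spaces together with the double-duality isomorphism $M \simeq (M^\vee)^\vee$ noted in the preceding paragraph.

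First I would unwind the definition of the contraction action: for $\varphi \in M^\vee$ and $s \in S$, $(s\cdot \varphi)(m) = \varphi(sm)$. Hence $s\cdot\varphi = 0$ means $\varphi(sM) = 0$, and more generally $I\cdot \varphi = 0$ iff $\varphi$ vanishes on $IM$. This gives a canonical identification
\[
    (0:I)_{M^\vee} = \{\varphi \in M^\vee \mid \varphi|_{IM} = 0\} = \Hom_\kk(M/IM, \kk) = (M/IM)^\vee,
\]
where in the middle equality one uses that a linear functional on $M$ factors through $M/IM$ precisely when it kills $IM$. Since $M/IM$ is finite-dimensional, $\dim_\kk (M/IM)^\vee = \dim_\kk M/IM$, giving the first equality.

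For the second equality, I would simply substitute $M^\vee$ for $M$ in the first one. This yields $\dim_\kk M^\vee / I M^\vee = \dim_\kk (0:I)_{(M^\vee)^\vee}$. The double-duality isomorphism $M \simeq (M^\vee)^\vee$ is an isomorphism of $S$-modules, so it carries $(0:I)_{(M^\vee)^\vee}$ isomorphically onto $(0:I)_M$, and the dimensions match.

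No serious obstacle is expected; the only subtle point is to verify that the contraction action indeed makes the identification $(0:I)_{M^\vee} = (M/IM)^\vee$ $\kk$-linear (it is, because the action of $S$ affects only which $\varphi$ lie in the annihilator subspace, not the underlying $\kk$-structure) and to record explicitly that $(-)^{\vee\vee} \simeq \id$ as $S$-modules, which is stated in the paragraph preceding the lemma.
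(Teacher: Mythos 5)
Your proposal is correct and follows essentially the same approach as the paper: identify $(0:I)_{M^\vee}$ with the space of functionals on $M$ vanishing on $IM$ (equivalently, $(M/IM)^\vee$), compare dimensions, and then deduce the second claim by applying the first to $M^\vee$ and invoking the double-duality isomorphism. The only cosmetic difference is that the paper phrases the key step as a perfect pairing descending to $M/IM \times (IM)^\perp$, while you phrase it as a direct identification $(0:I)_{M^\vee} = \Hom_\kk(M/IM,\kk)$; these are the same observation.
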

                \begin{proof}
                    The tautological perfect pairing $M\times M^{\vee}\to \kk$
                    descends to a perfect pairing between $M/IM$ and the
                    perpendicular to $IM$ inside $M^{\vee}$. A functional $\varphi\in
                    M^{\vee}$ is perpendicular to $IM$ iff it vanishes on $IM$
                    iff $I\varphi = \{0\}$. Hence the subspace $(IM)^{\perp}
                    \subset M^{\vee}$ is equal to $(0:I)_{M^{\vee}}$ and the
                    perfect pairing shows that it has dimension equal to
                    $M/IM$. The second claim is proven by interchanging $M$
                    and $M^{\vee}$ using that the double dual is an
                    isomorphism.
                \end{proof}

                Let $d = \dim_{\kk} M$. Let $\mm =
                (y_1,..,y_n) \subset S$.
                Fix a basis on $M$ and the dual basis on $M^{\vee}$.
                Assume that $M$ is supported only at zero. In particular, this
                implies that the annihilator of $M$ contains some power of
                $\mm$, say $\mm^D$, so that $M$ is a module over the local
                ring $S/\mm^D$.
                Let
                $\xx = (x_1, \ldots ,x_n)\in \CommMat$ be the commuting tuple
                associated to $M$, then $\xx^T = (x_1^T, \ldots ,x_n^T)$ is the commuting
                tuple associated to $M^{\vee}$.
                Thus $M$ lies in the principal component if and
                only if $M^{\vee}$ lies in the principal component.
                Similarly,
                $M$ lies in a non-elementary component if and only if
                $M^{\vee}$ lies in a non-elementary component.
                For later use we summarize the
                following data.
                \begin{center}
                    \small
                    \begin{tabular}[<+position+>]{c c c c c}
                        min. no of gen. of $M$ &$=$&
                        $\dim_{\kk}(M/\mm M) = \dim_{\kk}(0:\mm)_{M^{\vee}}$ &
                         $=$ &dim. of common kernel of $\xx^T$\\
                         min. no of gen. of $M^{\vee}$ &$=$&
                         $\dim_{\kk}(M^{\vee}/\mm M^{\vee}) = \dim_{\kk}(0:\mm)_{M}$ &
                         $=$ &dim. of common kernel of $\xx$\\
                         &&
                         $\dim_{\kk}(M^{\vee}/\mm^2 M^{\vee}) =
                         \dim_{\kk}(0:\mm^2)_{M}$ &
                         $=$ &$\dim_{\kk} \bigcap_{i,j=1}^n \ker(x_ix_j)$\\
                         &&
                         $d - \dim_{\kk}(0:\mm)_{M^{\vee}} =
                         \dim_{\kk}\mm M$ &
                         $=$ &$\dim_{\kk} \sum_{i=1}^n \im x_i$\\
                         $d - \dim_{\kk}\big((0:\mm)_{M^{\vee}} \cap
                         \mm M^{\vee}\big)$ &$=$&
                         $\dim_{\kk}(\mm M + (0:\mm)_M)$ &
                         $=$ &$\dim_{\kk} \left(\sum_{i=1}^n \im x_i +
                         \bigcap_{i=1}^n \ker(x_i)\right)$\\
                    \end{tabular}
                \end{center}

        \subsection{Duality and minimal graded
        resolutions}\label{ssec:transposes_and_resolution}

            The duality above is tightly connected with resolutions. Let us
            recall the graded case. Let $S = \kk[y_1, \ldots ,y_n]$ be a
            polynomial ring in $n$ variables graded by $\deg(y_i) = 1$.
            For a graded $S$-module $N$ and $j\in \mathbb{Z}$ we denote by
            $N(j)$ the module $N$ with grading shifted down by $j$.
            Explicitly, this means that $N(j)_i = N_{i+j}$ for every $i\in
            \mathbb{Z}$. For example, $S(j)$ is a free module generated by an
            element of degree $-j$.
            Let $M$
            be a finite degree graded $S$-module. By Hilbert's Syzygy
            Theorem and Auslander-Buchsbaum formula~\cite[Chapter~19]{Eisenbud} the module $M$ has a unique up
            to isomorphism minimal graded free resolution which has length
            $n$:
            \[
                \begin{tikzcd}
                    0 & \arrow[l] M & \arrow[l] F_0 & \arrow[l] F_1 &\arrow[l]
                    \ldots & \arrow[l] F_{n} & \arrow[l] 0
                \end{tikzcd}
            \]
            Each $F_i$ is a finitely generated free $S$-module
            which is graded. We write $F_i = \bigoplus_{j\in \mathbb{Z}} S^{\beta_{ij}}(-j)$
            which explicitly means that $F_i$ has $\beta_{ij}$ generators of degree $j$.
            A subset of elements of $F_i$ is \emph{minimal} if its image in
            $F_i/(y_1, \ldots ,y_n)F_i$ is linearly independent. In
            particular, for $F_0$ we speak of \emph{minimal generators} of $M$, for
            $F_1$ about \emph{minimal relations} and for
            $F_2$ about \emph{minimal syzygies} of $M$.
            \begin{example}
                The module $F = Se_1 \oplus Se_2\oplus Se_3$
                graded by setting $\deg(e_1) = 1$, $\deg(e_2) = \deg(e_3) = 3$ is
                denoted by $F = S(-1) \oplus S^{\oplus 2}(-3)$.
            \end{example}
            \begin{example}\label{ex:resolution}
                For $n = 2$ the module $M = S/(y_1^2, y_1y_2, y_2^2)$ has a minimal
                graded resolution that has length exactly two:
                \[
                    \let\amsamp=&
                    \begin{tikzcd}
                        0 & \arrow[l] M &\arrow[l] S^{\oplus 1} &&&\arrow[lll,
                            "{\begin{bmatrix}y_1^2 \amsamp y_1y_2 \amsamp
                            y_2^2\end{bmatrix}}"']
                        S^{\oplus 3}(-2) &&&\arrow[lll, "{\begin{bmatrix}
                                y_2 \amsamp 0\\
                                -y_1 \amsamp y_2\\
                                0 \amsamp -y_1
                            \end{bmatrix}}"'] S^{\oplus 2}(-3) & \arrow[l] 0.
                    \end{tikzcd}
                \]
            \end{example}
            A beautiful feature is that the resolution of the dual module $M^{\vee}$ is dual to the
            resolution of $M$. This is known to experts, but we were
            unable to locate this statement explicitly, so we provide a proof. The key part to prove this is the following
            result.
            \begin{lemma}[{\cite[Ex~4.4.20]{BrunsHerzog}}]\label{ref:shift:lem}
                Let $R$ be a graded ring and $0\neq y\in R$ a nonzerodivisor
                which is homogeneous of degree $r$.
                Let $M$ and $N$ be graded $R$-modules such that the multiplication by $y$
                is injective on $N$ and $yM = 0$. Then $\Hom_R(M, N) = 0$ and
                for every $i\geq 0$ we have an isomorphism of graded $R$-modules
                \[
                    \Ext^{i+1}_R(M, N)(-r)  \simeq \Ext^{i}_{R/(y)}(M,
                    N/yN).\qedhere
                \]
            \end{lemma}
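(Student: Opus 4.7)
The first assertion $\Hom_R(M,N)=0$ is immediate: if $\varphi\colon M\to N$ is graded $R$-linear and $m\in M$, then $y\varphi(m)=\varphi(ym)=0$, so $\varphi(m)\in(0:y)_N=0$ since $y$ is a nonzerodivisor on $N$. So the real content is the Ext formula, and my plan is to obtain it from the Cartan--Eilenberg change-of-rings spectral sequence attached to the surjection $R\to R/(y)$. Since $yM=0$, we may view $M$ as an $R/(y)$-module, and then
\[
    E_2^{p,q}=\Ext^p_{R/(y)}\!\big(M,\Ext^q_R(R/(y),N)\big)\Rightarrow \Ext^{p+q}_R(M,N).
\]
All the work sits in computing the inner Ext groups, after which degeneration forces the isomorphism.

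To compute $\Ext^q_R(R/(y),N)$ I would use that, because $y$ is a nonzerodivisor in $R$, the Koszul complex $0\to R(-r)\xrightarrow{y} R\to R/(y)\to 0$ is a length-two graded free $R$-resolution of $R/(y)$. Applying $\Hom_R(-,N)$ and using the standard identification $\Hom_R(R(-r),N)\cong N(r)$ gives
\[
    0\to \Hom_R(R/(y),N)\to N\xrightarrow{y} N(r)\to \Ext^1_R(R/(y),N)\to 0,
\]
and $\Ext^q_R(R/(y),N)=0$ for $q\geq 2$. Since $y$ acts injectively on $N$, $\Hom_R(R/(y),N)=0$ and the cokernel yields $\Ext^1_R(R/(y),N)\cong (N/yN)(r)$. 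Hence only the row $q=1$ of $E_2$ survives, the spectral sequence collapses, and we obtain, for every $p\geq 0$,
\[
    \Ext^{p+1}_R(M,N)\cong \Ext^p_{R/(y)}\!\big(M,(N/yN)(r)\big)=\Ext^p_{R/(y)}(M,N/yN)(r).
\]
Shifting both sides by $-r$ produces the desired $\Ext^{i+1}_R(M,N)(-r)\cong \Ext^i_{R/(y)}(M,N/yN)$; the vanishing of the $q=0$ column moreover recovers $\Hom_R(M,N)=0$, consistently with the first claim.

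The main obstacle is the bookkeeping: verifying that the change-of-rings spectral sequence is available in the graded setting (which it is, since everything in sight preserves grading once we remember that multiplication by the degree-$r$ element $y$ is a degree-zero map $N(-r)\to N$ or $R(-r)\to R$) and making sure the grading shifts compose correctly in the final identification. An alternative, more pedestrian route is to apply $\Hom_R(M,-)$ to $0\to N(-r)\xrightarrow{y}N\to N/yN\to 0$ and exploit that $y$ annihilates $\Ext^\ast_R(M,-)$ because $yM=0$; but one still needs a change-of-rings step to replace $\Ext^i_R(M,N/yN)$ by $\Ext^i_{R/(y)}(M,N/yN)$, so the spectral-sequence argument sketched above seems both shortest and most transparent.
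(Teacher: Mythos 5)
Your proof is correct, and it is a genuinely different route from the paper's. The paper's proof of this lemma consists only of the one-line argument for $\Hom_R(M,N)=0$ (which you also give), followed by the instruction to ``repeat the proof of \cite[Lemma~3.1.16]{BrunsHerzog} in the graded setting'' for the $\Ext$ part; that referenced proof is an elementary one, carried out via the long exact sequence of $\Ext_R(M,-)$ applied to $0\to N(-r)\xrightarrow{y}N\to N/yN\to 0$ and exploiting that $y$ kills every $\Ext^i_R(M,-)$ because $yM=0$. You instead invoke the Cartan--Eilenberg change-of-rings spectral sequence $\Ext^p_{R/(y)}(M,\Ext^q_R(R/(y),N))\Rightarrow\Ext^{p+q}_R(M,N)$, compute the inner $\Ext$'s from the two-term Koszul resolution of $R/(y)$, and read off the answer from the collapse along the single nonzero row $q=1$. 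The shift $(r)$ comes out correctly from $\Ext^1_R(R/(y),N)\cong(N/yN)(r)$. Your approach buys clarity and self-containedness: it handles the change-of-rings reduction in the same stroke as the dimension shift, whereas the long-exact-sequence approach yields $\Ext^i_R(M,N/yN)$ rather than $\Ext^i_{R/(y)}(M,N/yN)$ and needs a separate comparison step (as you yourself note at the end). The cost is that it requires the change-of-rings spectral sequence, which is heavier machinery than the rest of the paper's deliberately elementary toolkit. One minor remark: the availability of the spectral sequence in the graded category is indeed fine (the adjunction $\Hom_{R'}(-,\Hom_R(R',I))\cong\Hom_R(-,I)$ shows $\Hom_R(R/(y),-)$ preserves injectives in the graded category as well, and for the paper's finite-degree modules the graded and ungraded $\Ext$'s agree), so your parenthetical caveat can be discharged.
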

            \begin{proof}[Sketch of proof]
                For every $\varphi\in \Hom_R(M, N)$ we have $0 = \varphi(yM) =
                y\varphi(M)$ so $\varphi(M) = 0$ and $\varphi = 0$.
                For the $\Ext$-part, repeat the proof
                of~\cite[Lemma~3.1.16]{BrunsHerzog} in the graded setting.
            \end{proof}
            \begin{theorem}[duality]\label{ref:duality_for_resolutions:thm}
                Let $M$ be a graded $S$-module of finite degree and
                $M^{\vee}$ be a dual module. Let $F_{\bullet}$ be minimal
                graded free resolution of $M$. Then $F_{\bullet}$ has length
                exactly $n=\dim(S)$. The complex $\Hom(F_{\bullet}, S)$ is
                a minimal graded free resolution of $M^{\vee}(n)$.
            \end{theorem}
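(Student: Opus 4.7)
The plan is to verify three things: the resolution $F_\bullet$ has length exactly $n$; its $S$-dual has vanishing cohomology except at the top; and this top cohomology is $M^\vee(n)$. Minimality of the dual complex is then automatic, since transposing matrices whose entries lie in $\mm = (y_1, \ldots, y_n)$ produces matrices whose entries also lie in $\mm$.

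For the length statement I would combine Hilbert's Syzygy Theorem, giving $\mathrm{pd}_S(M) \leq n$, with the Auslander--Buchsbaum formula: since $M$ is a finite-dimensional $\kk$-vector space it is annihilated by a power of $\mm$, so $\mathrm{depth}(M) = 0$, whence $\mathrm{pd}_S(M) = \mathrm{depth}(S) = n$. The remainder of the statement reduces to showing $\Ext^i_S(M, S) = 0$ for $i < n$ together with a natural graded isomorphism $\Ext^n_S(M, S) \cong M^\vee(n)$, since these $\Ext$ groups are the cohomology of $\Hom(F_\bullet, S)$.

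I would first handle $M = \kk$: the Koszul complex $K_\bullet$ on $y_1, \ldots, y_n$ is a minimal free resolution of $\kk$, and by direct inspection of ranks and degree shifts $\Hom(K_\bullet, S)$ is, after reindexing, isomorphic to $K_\bullet(n)$; alternatively, one iterates Lemma~\ref{ref:shift:lem} with $y = y_i$ and $R = S/(y_1, \ldots, y_{i-1})$. Either way one gets $\Ext^i_S(\kk, S) = 0$ for $i < n$ and $\Ext^n_S(\kk, S) \cong \kk(n)$. For general $M$ I would induct on $\dim_\kk M$ using a graded short exact sequence $0 \to M' \to M \to \kk(j) \to 0$: feeding it into the long exact sequence for $\Ext^*_S(-, S)$ and using the inductive hypothesis yields vanishing below degree $n$, while at degree $n$ the comparison with the $\kk$-linear dualization $0 \to \kk(j)^\vee(n) \to M^\vee(n) \to (M')^\vee(n) \to 0$ allows one to invoke the five lemma and identify $\Ext^n_S(M, S)$ with $M^\vee(n)$.

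The main obstacle is exhibiting a natural transformation $(-)^\vee(n) \to \Ext^n_S(-, S)$ compatible enough to make the five-lemma step go through; this is essentially graded local duality for the Gorenstein ring $S$, and I would either cite it from Bruns--Herzog or construct it directly from the Koszul base case using the compatibility of connecting homomorphisms with a chosen functorial family of resolutions.
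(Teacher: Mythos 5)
Your argument is correct in outline but follows a genuinely different route from the paper's. For the identification $\Ext^n_S(M,S)\cong M^\vee(n)$ you propose a Koszul base case plus d\'evissage on a composition series $0\to M'\to M\to \kk(j)\to 0$, concluding with the five lemma; as you yourself note, this requires first constructing (or citing from Bruns--Herzog) a natural transformation between the two contravariant functors $\Ext^n_S(-,S)$ and $(-)^\vee(n)$ that is compatible with connecting maps, which is essentially graded local duality. The paper goes out of its way to \emph{avoid} this---it explicitly says it gives ``a proof that does not employ the full machinery of canonical modules.'' Instead it iterates Lemma~\ref{ref:shift:lem} with $y=y_i^N$ (for $N$ large enough that $y_i^N M=0$) directly on $M$, landing on $\Ext^n_S(M,S)\cong\Hom_R(M,R)(nN)$ with $R=S/(y_1^N,\ldots,y_n^N)$, and then exhibits an explicit $R$-linear isomorphism $\Hom_R(P,R)\to P^\vee(-d)$ for \emph{all} finitely generated graded $R$-modules $P$, via composition with the projection onto the socle of the Artinian Gorenstein ring $R$. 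This single construction for all $P$ at once supplies the naturality for free, and the isomorphism is checked by reducing to $P=R$; no d\'evissage or long-exact-sequence bookkeeping is needed. Your approach is the standard textbook one and would certainly work once the natural transformation is pinned down; the paper's is more elementary and self-contained, at the cost of the somewhat ad hoc passage through $R=S/(y_1^N,\ldots,y_n^N)$. (Your observations on the length via Auslander--Buchsbaum, and on minimality of the dualized complex via entries staying in $\mm$, agree with the paper and are fine.)
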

            This is well known to experts, but we provide a proof that does not employ the full machinery of canonical
            modules.
            \begin{proof}
                Since $M$ is graded $S$-module of finite degree, there exists $N$ large enough such that $y_i^{N}M=0$ for
                $i=1, \ldots ,n$.
                By Lemma~\ref{ref:shift:lem} for every $0\leq i \leq n$
                we have $\Ext^{i}_{S}(M, S)(-iN) =
                \Hom_{S/(y_1^N, \ldots ,y_i^{N})}(M, S/(y_1^N, \ldots
                ,y_i^{N}))$. For $i <n$ the same Lemma applied to
                $y_{i+1}^N$ gives that the right hand side is zero. The above $\Ext$ groups
                are homology of the complex $\Hom(F_{\bullet}, S)$, so this
                complex is exact except for the $n$-th position: it is a
                resolution of $\Ext^{n}_{S}(M, S)  \simeq \Hom_{S/(y_1^N,
                \ldots ,y_n^{N})}(M, S/(y_1^N, \ldots ,y_n^{N}))(nN)$. Let
                $R=S/(y_1^N, \ldots ,y_n^{N})$, let $d = n(N-1)$ and let
                $\pi\colon R\to
                \kk(-d)$ be a
                $\kk$-linear projection onto the top degree part: it maps
                the monomial $\prod_{i=1}^n y_i^{N-1}$ to $1$ and all other
                monomials to zero.
                For any finitely generated graded $R$-module $P$ we obtain a graded
                $\kk$-linear map $\Phi_P\colon \Hom_R(P, R)\to \Hom_{\kk}(P,
                \kk(-d)) = P^{\vee}(-d)$ defined by $\Phi_P(\varphi) = \pi\circ \varphi$.
                The map $\Phi_P$ is a homomorphism of $R$-modules, because for
                every $\varphi\colon P\to R$ and $r\in R$, $p\in P$ we have
                \[
                    (\Phi_P(r\varphi))(p) = \pi(r\varphi(p)) =
                    \pi(\varphi(rp)) = (\pi\circ
                    \varphi)(rp) = (r(\pi \circ \varphi))(p) =
                    (r\Phi_P(\varphi))(p).
                \]
                We claim that $\Phi_P$ is an isomorphism for every
                $P$ as above. Taking the claim for granted, we take $P = M$
                and obtain an isomorphism of graded $R$-modules
                \[
                    \Hom_R(M, R)(nN)\to M^{\vee}(-d+nN) = M^{\vee}(n).
                \]
                Together with the isomorphism $\Ext^n_S(M, S) \simeq
                \Hom_R(M, R)(nN)$ this concludes the proof of theorem; it
                remains to prove the claim. Fix any $P$ and its graded
                presentation $G_2\to G_1 \to P\to 0$ where $G_i$ are finitely
                generated graded free
                $R$-modules. We obtain a commutative diagram of graded $R$-modules
                \[
                    \begin{tikzcd}
                        0 \ar[r] & \Hom_R(P, R)\ar[r]\ar[d, "\Phi_P"] &
                        \Hom_R(G_1, R) \ar[r]\ar[d, "\Phi_{G_1}"]
                        & \Hom_R(G_2, R)\ar[d, "\Phi_{G_2}"]\\
                        0 \ar[r] & P^{\vee}(-d)\ar[r] & G_1^{\vee}(-d) \ar[r]
                        & G_2^{\vee}(-d),
                    \end{tikzcd}
                \]
		    where $\Phi_P$ is isomorphism whenever $\Phi_{G_1}$ and $\Phi_{G_2}$ are.
                It is thus enough to prove that $\Phi_P$ is an isomorphism for
                $P$ free. By splitting direct sums, we reduce to the case $P =
                R(-e)$. By shifting  degrees we reduce to the case $P = R$.
                This case asks whether $\Phi_R\colon\Hom_R(R,R)\to
                R^{\vee}(-d)$ is an
                isomorphism. Both sides are vector spaces of the same
                dimension, so it is enough to prove that $\Phi_R$ is
                injective. Take a nonzero homomorphism $\varphi$. Then
                $\im(\varphi)$ is a nonzero homogeneous ideal of $R$. We see
                directly that every such ideal contains the top degree part $R_d$ of $R$, so
                $\varphi$ maps some element of $R$ into $R_d$ hence its
                composition with $\pi$ is nonzero. This concludes the proof
                of the claim.
            \end{proof}

        \begin{example}
            In the setting of Example~\ref{ex:resolution} the dual of the
            resolution of $M$ is
                \[
                    \let\amsamp=&
                    \begin{tikzcd}
                        0 & \arrow[l] M^{\vee}(2) &\arrow[l] S^{\oplus 2}(3) &&&
                        \arrow[lll, "{\begin{bmatrix}y_2 \amsamp -y_1 \amsamp
                        0\\ 0 \amsamp y_2 \amsamp -y_1\end{bmatrix}}"'] S^{\oplus 3}(2) &&& \arrow[lll,
                    "{\begin{bmatrix} y_1^2 \\ y_1y_2 \\ y_2^2 \end{bmatrix}}"'] S^{\oplus 1} & \arrow[l] 0.
                    \end{tikzcd}
                \]
                The surjection $S^{\oplus 2}(3)\onto M^{\vee}(2)$ says
                that $M^{\vee} = \Hom_{\kk}(\kk[y_1, y_2]/(y_1,
                y_2)^2, \kk)$ is generated by two elements of degree minus
                one: the projections onto $\kk y_i$ for $i=1,2$.
        \end{example}

        \section{Structural results on $\CommMat$ and
        $\Quotmain$}\label{sec:strutural}

            In this section we present general tools for investigation of
            $\CommMat$. These will be heavily employed in the proofs of our
            main results, but are of general interest. In contact with
            Section~\ref{sec:prelims} the results presented below are new (to
            our best knowledge).

        \subsection{Apolarity for modules}\label{ssec:apolarity}
        Writing down equations for a finite degree module $F/K$ can be cumbersome,
        since $K$ typically has many generators. This can be resolved by
        introducing apolarity for modules. Apolarity for algebras
        is already an extremely important tool in the
        classification of
        algebras~\cite{iarrobino_kanev_book_Gorenstein_algebras,
        Jel_classifying}, apolarity for modules generalizes it in a natural
        way.
        Recall that an $S$-module has finite degree if it is finite
        dimensional as a $\kk$-vector space. We say that an $S$-submodule
        $K\subset F$ has \emph{cofinite degree} (shortly: \emph{is cofinite}) if the module $F/K$ has finite
        degree.

    Let $F$ be a free $S$-module and $\Hom_{\kk}(F, \kk)$ be the
    space of functionals. To a cofinite submodule $K\subset F$ we associate a
    subspace $K^{\perp} \subset \Hom_{\kk}(F, \kk)$ defined by
    \[
        K^{\perp} := \left\{ \varphi\colon F\to \kk\ |\ \varphi(K) = \{0\}
    \right\}.
    \]
    This subspace is isomorphic to $\Hom_{\kk}(F/K, \kk)$ and thus finite
    dimensional.
    The aim of this subsection is to explore the relation between $K \subset F$ and
    $K^{\perp}$.
    Recall from Section~\ref{ssec:transposes}
    that $\Hom_{\kk}(F, \kk)$ is an $S$-module via contraction action: for $s\in S$ and
    $\varphi\in\Hom_{\kk}(F, \kk)$ we have $(s\cdot \varphi)(f) =
    \varphi(sf)$ for every $f\in F$.
    The following observation is fundamental:
    \begin{lemma}\label{ref:perpOfModuleIsModule:lem}
        For every $K \subset F$ the subspace $K^{\perp}$ is an $S$-submodule.
    \end{lemma}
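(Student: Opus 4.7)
The plan is to unpack the two definitions involved and observe that the statement follows essentially immediately from the fact that $K$ is closed under the $S$-action on $F$.

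Concretely, to show $K^{\perp}$ is an $S$-submodule of $\Hom_{\kk}(F,\kk)$, I first note that $K^{\perp}$ is a $\kk$-linear subspace: this is immediate since the vanishing condition $\varphi(K)=\{0\}$ is preserved by $\kk$-linear combinations of functionals. The only content is therefore closure under the contraction action. Fix $s \in S$ and $\varphi \in K^{\perp}$; I want to verify that $s \cdot \varphi \in K^{\perp}$, that is, $(s\cdot\varphi)(k) = 0$ for every $k \in K$.

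By the definition of contraction recalled just before the lemma, $(s\cdot\varphi)(k) = \varphi(sk)$. Since $K \subset F$ is an $S$-submodule, we have $sk \in K$, and therefore $\varphi(sk) = 0$ by the assumption $\varphi \in K^{\perp}$. This gives $(s\cdot\varphi)(k) = 0$ for all $k \in K$, so $s\cdot\varphi \in K^{\perp}$, completing the proof.

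There is no genuine obstacle here; the statement is really a formal consequence of the definitions, and its purpose is to set up the apolarity dictionary that will be developed in the rest of Section~\ref{ssec:apolarity}. The cofiniteness of $K$ is not needed for this particular statement (it only ensures $K^{\perp}$ is finite-dimensional, which was noted in the paragraph preceding the lemma), so I would not invoke it in the argument.
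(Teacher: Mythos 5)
Your argument is correct and is essentially identical to the paper's one-line proof, which also just unwinds $(s\cdot\varphi)(K)=\varphi(sK)\subset\varphi(K)=\{0\}$ using that $K$ is an $S$-submodule. Your observation that cofiniteness of $K$ plays no role here is also accurate.
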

    \begin{proof}
        Take $\varphi\in K^{\perp}$, so that $\varphi(K) = \{0\}$. For every $s\in
        S$ we have
        \[
            (s\cdot \varphi)(K) = \varphi(s\cdot K) \subset \varphi(K)
        = \{0\}.\qedhere
    \]
    \end{proof}
    By Lemma~\ref{ref:perpOfModuleIsModule:lem}, the map $K\mapsto K^{\perp}$
    sends cofinite $S$-submodules of $F$ to finite degree
    $S$-submodules of $\Hom_{\kk}(F, \kk)$.  Now we construct a map in the
    opposite direction. To a finite degree $S$-submodule $M \subset
    \Hom_{\kk}(F, \kk)$, we
    associate an $S$-submodule
    \[
        M^{\perp} := \left\{ f\in F\ |\ \forall{\varphi\in M} : \varphi(f) = 0
    \right\} \subset F.
    \]
    Applying $\Hom_{\kk}(-, \kk)$ to the inclusion of $M$ into  $\Hom_{\kk}(F, \kk)$ we get
    a surjection
    \[
        \Hom_{\kk}(\Hom_{\kk}(F, \kk),\kk) \onto \Hom_{\kk}(M, \kk).
    \]
    Since
    $M$ has finite degree, the composed map $F\to \Hom_{\kk}(M, \kk)$ is also
    surjective. The module $M^{\perp}$ is by definition its kernel, so we get
    a bijective linear map $F/M^{\perp}\to \Hom_{\kk}(M, \kk)$. In particular, the dimensions of
    vector spaces $F/M^{\perp}$ and $M$ are equal. But even more is true.
    \begin{lemma}\label{ref:duality}
        The map $F/M^{\perp}\to \Hom_{\kk}(M, \kk)$ is an isomorphism of
        $S$-modules, where $S$ acts on $\Hom_{\kk}(M, \kk)$ by contraction.
    \end{lemma}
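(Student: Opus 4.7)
The plan is straightforward: the text preceding the lemma has already established that the natural map $\Psi \colon F/M^{\perp} \to \Hom_{\kk}(M, \kk)$ obtained by dualizing the inclusion $M \hookrightarrow \Hom_{\kk}(F, \kk)$ is a bijection of $\kk$-vector spaces. So the only content of the lemma is to check that this map is $S$-linear with respect to the two $S$-module structures: the usual one on $F/M^{\perp}$ inherited from $F$, and the contraction action on $\Hom_{\kk}(M, \kk)$ given by $(s \cdot \psi)(m) = \psi(s \cdot m)$.

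The key point is that the $S$-action on $M$ itself is already defined via contraction, since $M \subset \Hom_{\kk}(F, \kk)$ as $S$-submodule. Thus for $m \in M$ and $f \in F$ we have $(sm)(f) = m(sf)$ by the definition of contraction. First I would make explicit the map $\Psi \colon F \to \Hom_{\kk}(M, \kk)$ by $f \mapsto \mathrm{ev}_f$, where $\mathrm{ev}_f(m) = m(f)$; this descends modulo $M^{\perp}$ to the bijection already constructed.

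Next I would verify $S$-linearity by a one-line unwinding of definitions: for $s \in S$, $f \in F$, $m \in M$,
\[
    \mathrm{ev}_{sf}(m) = m(sf) = (s \cdot m)(f) = \mathrm{ev}_f(s \cdot m) = (s \cdot \mathrm{ev}_f)(m),
\]
so $\Psi(sf) = s \cdot \Psi(f)$. Hence $\Psi$ is a homomorphism of $S$-modules, $M^{\perp}$ is an $S$-submodule (which is also immediate from Lemma~\ref{ref:perpOfModuleIsModule:lem} applied in reverse, or simply from the fact that $\Psi$ is $S$-linear), and the induced bijection $F/M^{\perp} \to \Hom_{\kk}(M, \kk)$ is $S$-linear. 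Since it is already bijective, it is an isomorphism of $S$-modules.

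There is no real obstacle here; the entire proof is a bookkeeping check that the two contraction conventions match up. The only mild subtlety is keeping straight that there are three $S$-actions floating around (on $F$, on $\Hom_{\kk}(F, \kk)$, on $\Hom_{\kk}(M, \kk)$), all related by contraction, and that the compatibility between the first two is exactly what forces the compatibility with the third.
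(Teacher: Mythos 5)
Your proof is correct and is essentially identical to the paper's: both observe that bijectivity is already established, write down the map explicitly as $f \mapsto \mathrm{ev}_f$ (the paper calls it $\mu_f$), and verify $S$-linearity by the same one-line chain $\mathrm{ev}_{sf}(m) = m(sf) = (s\cdot m)(f) = \mathrm{ev}_f(s\cdot m) = (s\cdot \mathrm{ev}_f)(m)$.
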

    \begin{proof}
        We already know that the map is a bijection, it remains to check that
        it is $S$-linear. Unraveling definitions we see that the image of
        $f\in F$ in $\Hom_{\kk}(M, \kk)$ is an element $\mu_{f}$ such that
        $\mu_f(m) = m(f)$ for all $m\in M \subset \Hom_{\kk}(F, \kk)$.
        Therefore
        \[
            \mu_{sf}(m) = m(sf) = (s\cdot m)(f) = \mu_{f}(s \cdot m) =
            (s\cdot \mu_f)(m),
        \]
        so the map $f\mapsto \mu_f$ is $S$-linear as claimed.
    \end{proof}
    Considering both maps $K\mapsto K^{\perp}$ and $M\mapsto M^{\perp}$ we
    obtain the following bijection which is an embedded form of the double
    dual map.
    \begin{proposition}[Apolarity for
        modules]\label{ref:MacaulayGeneral:prop}
        The maps $K\mapsto K^{\perp}, M\mapsto M^{\perp}$ give a
        bijection between cofinite submodules of $F$ and finite
        degree submodules of $\Hom_{\kk}(F, \kk)$.
    \end{proposition}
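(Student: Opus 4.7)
The plan is to verify that the two assignments $K\mapsto K^{\perp}$ and $M\mapsto M^{\perp}$ are mutually inverse, by establishing that both compositions are the identity. The key inputs are Lemma~\ref{ref:perpOfModuleIsModule:lem} (so $K^{\perp}$ is indeed an $S$-submodule) and Lemma~\ref{ref:duality} (so $F/M^{\perp}\simeq\Hom_{\kk}(M,\kk)$ as $S$-modules). The latter lemma is the real engine: all dimension counts will be read off from it.

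First I would check that the maps land in the claimed sets. If $K \subset F$ is cofinite then $K^{\perp}\simeq\Hom_{\kk}(F/K,\kk)$ has finite $\kk$-dimension equal to $\dim_{\kk}(F/K)$, and it is an $S$-submodule of $\Hom_{\kk}(F,\kk)$ by Lemma~\ref{ref:perpOfModuleIsModule:lem}. Conversely, if $M \subset \Hom_{\kk}(F,\kk)$ is a finite-degree $S$-submodule, then Lemma~\ref{ref:duality} gives $F/M^{\perp}\simeq\Hom_{\kk}(M,\kk)$, which is finite dimensional, so $M^{\perp}$ is a cofinite $S$-submodule of $F$.

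Next I would prove $(K^{\perp})^{\perp}=K$ for cofinite $K$. The inclusion $K\subset (K^{\perp})^{\perp}$ is immediate from the definitions, since every $\varphi\in K^{\perp}$ vanishes on $K$. For equality I would pass to dimensions: by the first part, $\dim_{\kk}(F/(K^{\perp})^{\perp})=\dim_{\kk} K^{\perp}=\dim_{\kk}(F/K)$, and since both $F/K$ and $F/(K^{\perp})^{\perp}$ are finite dimensional with $(K^{\perp})^{\perp}\supset K$, they must be equal.

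Symmetrically, for $M$ a finite-degree $S$-submodule of $\Hom_{\kk}(F,\kk)$ the inclusion $M\subset (M^{\perp})^{\perp}$ is immediate from the definition of $M^{\perp}$. For the reverse inclusion I would compute $(M^{\perp})^{\perp}\simeq \Hom_{\kk}(F/M^{\perp},\kk)\simeq \Hom_{\kk}(\Hom_{\kk}(M,\kk),\kk)\simeq M$ as $\kk$-vector spaces, using Lemma~\ref{ref:duality} and finite dimensionality of $M$, and conclude by equality of dimensions. I do not expect a real obstacle: everything reduces to the fact that the tautological pairing $F\times\Hom_{\kk}(F,\kk)\to\kk$ is perfect on finite dimensional pieces, and the only subtlety worth spelling out is that the $S$-module isomorphism from Lemma~\ref{ref:duality} is what upgrades the dimension-count bijection to a bijection between $S$-submodules.
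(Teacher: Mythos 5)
Your proof is correct and fills in exactly the details that the paper's own proof delegates with ``we leave it to the reader.'' The strategy --- check that each map lands in the right class of submodules, establish the easy inclusions $K\subset (K^{\perp})^{\perp}$ and $M\subset (M^{\perp})^{\perp}$, then upgrade to equality by the dimension count furnished by Lemma~\ref{ref:duality} --- is the natural one and matches what the paper implicitly has in mind. One tiny point worth making explicit: you should remark that $M^{\perp}$ is actually an $S$-submodule of $F$ (not merely a $\kk$-subspace); this is immediate since $M^{\perp}$ is the kernel of the $S$-linear map $F\to\Hom_{\kk}(M,\kk)$ from Lemma~\ref{ref:duality}, or can be checked directly as in Lemma~\ref{ref:perpOfModuleIsModule:lem}. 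Otherwise the argument is complete.
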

    \begin{proof}
        It remains to check that the natural maps $M\to (M^{\perp})^{\perp}$
        and $K\to (K^{\perp})^{\perp}$ are identities and we leave it to the
        reader.
    \end{proof}

    The above may rightfully look like too abstract to apply, since the space
    $\Hom_{\kk}(F, \kk)$ is huge, of uncountable dimension over $\kk$. We
    shrink it a bit now and give a down-to-earth presentation.
    Let $F_i\subset F$ be the linear subspace of elements of degree $i$ where
    $F$ is generated by elements of degree zero.
    Define
    \[
        \Fdual  := \bigoplus_i\Hom_{\kk}(F_i, \kk)\subset \Hom_{\kk}(F, \kk).
    \]
    The space $\Fdual$ can be thought of as a restricted dual of $F$. Note
    that in the current
    article we use $(-)^{\vee}$ to denote the dual space. Alternatively, we
    can view $\Fdual$ as the space of all those functionals on $F$ that vanish
    on $(y_1, \ldots ,y_n)^DF$ for some $D$.
    Fix an $S$-basis $e_1, \ldots e_{r}$ of $F$, so that $F =
    \bigoplus_{j=1}^r\kk[y_1, \ldots ,y_n]e_j$. Each module $F_l$ has a ``monomial''
    basis consisting of elements $y_1^{a_1}\cdot  \ldots y_n^{a_n} e_j$ with
    $\sum a_i = l$ and
    the dual basis on $\Fdual$ identifies it with the linear space
    \[
        \bigoplus_{j=1}^r \kk[z_1, \ldots ,z_n]e_j^*.
    \]
    The $S$-module structure coming from contraction becomes ``coefficientless
    derivation'': the element $y_i$ acts of $\Fdual$ by
    \[
        y_i\circ (z_1^{a_1} \ldots z_{n}^{a_n})e_j^* = \begin{cases}
            0 & \mbox{ if } a_i = 0\\
            (z_1^{a_1} \ldots z_{i-1}^{a_{i-1}}z_i^{a_i -
            1}z_{i+1}^{a_{i+1}} \ldots  z_{n}^{a_n})e_j^* & \mbox{otherwise}.
        \end{cases}
    \]
    For example, we have $y_2\circ (z_1z_2 e_1^* + z_3z_1 e_2^* + z_2^2 e_3^*) =
    z_1e_1^* + z_2e_3^*$.
    The grading on $F$ naturally induces a grading on $\Fdual$, where $\deg(z_i)= -1$.
    By slight abuse of notation we forget about the minus and consider
    $\Fdual$ as positively graded, i.e., with $\deg(z_i) = 1$. For example,
    $\deg(z_1^2e_1 + z_2z_3e_2) = 2$.
    In this setup we obtain a restricted version of inverse systems, more resembling the
    classical one.
    \begin{proposition}[Apolarity for modules, local]
        The maps $K\mapsto K^{\perp}, M\mapsto M^{\perp}$ give a
        bijection between cofinite $S$-submodules $K \subset F$ such that
        $F/K$ is supported only at
        the origin and finite degree $S$-submodules of $\Fdual$.
    \end{proposition}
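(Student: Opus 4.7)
The plan is to deduce this statement from the general apolarity bijection in Proposition~\ref{ref:MacaulayGeneral:prop}, by showing that the support-at-origin condition on the $F$-side corresponds precisely to containment in $F^{*}$ on the dual side. The full duality already pairs cofinite submodules $K\subset F$ with finite degree $S$-submodules of $\Hom_{\kk}(F,\kk)$, so the only task is to match the two extra conditions: $F/K$ is supported only at the origin, and $M\subset F^{*}$.

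The first step is to rephrase ``supported at the origin''. The module $F/K$ has support only at $\mm = (y_1,\ldots,y_n)$ iff some power $\mm^{D}$ lies in the annihilator of $F/K$, iff $\mm^{D} F\subset K$. Because $F$ is free and generated in degree zero, $\mm^{D} F=\bigoplus_{i\geq D} F_i$, so this is equivalent to $K$ containing all homogeneous pieces of sufficiently high degree.

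Next, I would show that under the general bijection, $K^{\perp}\subset F^{*}$ iff $F/K$ is supported at the origin. If $\mm^{D}F\subset K$, then every $\varphi\in K^{\perp}$ vanishes on $\bigoplus_{i\geq D}F_i$, so $\varphi$ is determined by its restriction to $F_0\oplus\cdots\oplus F_{D-1}$ and therefore lies in $\bigoplus_{i<D}\Hom_{\kk}(F_i,\kk)\subset F^{*}$. Conversely, if $K^{\perp}\subset F^{*}$, then $K^{\perp}$ is finite dimensional over $\kk$ (since $\dim_{\kk} K^{\perp}=\dim_{\kk}F/K<\infty$), hence lies in the finite subsum $\bigoplus_{i<D}\Hom_{\kk}(F_i,\kk)$ for some $D$; such functionals vanish on $\mm^{D}F$, so $\mm^{D}F\subset (K^{\perp})^{\perp}=K$ by Proposition~\ref{ref:MacaulayGeneral:prop}. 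The symmetric argument shows that if $M\subset F^{*}$ is of finite degree, then $M\subset\bigoplus_{i<D}\Hom_{\kk}(F_i,\kk)$ for some $D$, so $M$ vanishes on $\mm^{D}F$, giving $\mm^{D}F\subset M^{\perp}$ and thus $F/M^{\perp}$ supported at the origin.

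There is no real obstacle here; the content is purely bookkeeping linking the grading on $F^{*}$ to the $\mm$-adic filtration on $F$. The one point that deserves care is the observation that any finite-dimensional $\kk$-subspace of the direct sum $F^{*}=\bigoplus_i \Hom_{\kk}(F_i,\kk)$ is contained in a finite sub-sum, which is what converts ``finite degree inside $F^{*}$'' into ``annihilated by some $\mm^{D}$ via contraction''. Combining this with the general bijection then yields the claimed restricted correspondence.
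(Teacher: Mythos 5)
Your proposal is correct and matches the paper's proof in substance: both reduce the statement to showing that the two restrictions (support at origin, containment in $\Fdual$) correspond under the general bijection, and both hinge on the same two observations — that $F/K$ is supported at the origin iff $\mm^D F\subset K$ for some $D$, and that any finite-degree submodule of $\Fdual$ is killed by some power of $\mm$ (equivalently, lies in a finite sub-sum). The only cosmetic difference is that the paper phrases the forward direction via the contraction-annihilator of $M$ (a finitely generated $M\subset\Fdual$ is annihilated by some $\mm^D$, hence so is $F/M^\perp\simeq M^\vee$), while you go directly through the vanishing of $M$ on $\mm^D F$; these are the same computation.
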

    \begin{proof}
        We will prove that the maps from Proposition~\ref{ref:MacaulayGeneral:prop}
        restrict to the given classes of modules.
        For every element $f\in \Fdual$ there is a $D$ such that $(y_1, \ldots
        ,y_n)^D$ annihilates $f$. Therefore, the same holds for finitely many
        elements of $\Fdual$ and this shows that for every finitely generated submodule $M \subset \Fdual$,
        the module $F/M^{\perp}$ is
        annihilated by $(y_1, \ldots
        ,y_n)^D$ so it is supported only at the origin.
        Conversely, take a cofinite module $K\subset F$ such that $F/K$ is supported only at
        the origin. The support of $F/K$ is equal to the set of prime ideals
        containing $\Ann(F/K)$. Since there is only one such, the radical of
        $\Ann(F/K)$ is equal to $(y_1, \ldots ,y_n)$. Since $S$ is Noetherian,
        some power of $(y_1, \ldots ,y_n)$ is contained in $\Ann(F/K)$, so
        we have $(y_1, \ldots ,y_n)^D F \subset K$ for large
        enough $D$ so that $K^{\perp}$ is contained in $\Fdual_{\leq D-1}$.
    \end{proof}
    \begin{definition}
        For elements $\sigma_1, \ldots ,\sigma_r\in \Fdual$
        generating an $S$-submodule $M \subset \Fdual$ the
        \emph{apolar module of $\sigma_1, \ldots ,\sigma_r$} is $F/M^{\perp}$.
        Conversely, for an $S$-module $F/K$ supported only at the
        origin any set of generators of
        the $S$-module $K^{\perp}\subset
        \Fdual$ is called a set of \emph{dual generators} of $F/K$.
    \end{definition}

    We now give some examples for later use. We will freely use the linear algebra from
    \S\ref{ssec:transposes} coupled with Lemma~\ref{ref:duality} to compute some invariants.
    \begin{example}\label{example:cyclic}
            Let $S$ be any polynomial ring, $F = Se_1$ be a rank one free
            module and $\Fdual = Se_1^*$. Take any elements $\sigma_1, \ldots
            ,\sigma_r\in \Fdual$ and the module $M\subset \Fdual$ generated by
            them. Then $F/M^{\perp}$ has the form $Se_1/Ie_1$ for an
            ideal $I \subset S$ and in fact $I = \Ann(\sigma_1, \ldots ,\sigma_r)$ is
            the usual apolar ideal of $\sigma_1, \ldots ,\sigma_r$ as
            in~\cite{iarrobino_kanev_book_Gorenstein_algebras} for the
            contraction action. This shows
            how the apolarity for modules generalizes the one for algebras.
    \end{example}

    \begin{example}\label{example:moduleForAB}
        Fix any $a$, $b$ such that $a\leq
        \binom{b+1}{2}$ and any $n\geq b$.
        Let $S=\kk[y_1, \ldots ,y_n]$ with $\mm = (y_1, \ldots ,y_n)$ and let
        $F = \bigoplus_i Se_i$ be a free module of rank at least one.
        Let $Q_1 = (\sum_{i=1}^b z_i^2)e_1^*$ and $Q_2, \ldots ,Q_a\in
        \kk[z_1, \ldots ,z_b]
        e_1^*
        \subset \Fdual$ be any quadrics such that $Q_1, \ldots ,Q_a$ are linearly
        independent.
        The $S$-submodule $M \subset \Fdual$
        generated by quadrics $Q_{1}, \ldots ,Q_a$ is an $(a+b+1)$-dimensional vector space with basis
        $\{Q_{i}\}_{i=1, \ldots ,a}\cup \{z_je_1^*\}_{1\leq j\leq b}\cup
        \{e_1^*\}$ and $(0:\mm)_M$ is one-dimensional.
        The module $F/M^{\perp}$ is generated by $e_1$
        and isomorphic to $S^{\oplus 1}/I$, where $S/I$ is a graded algebra with Hilbert function $(1,
        b, a)$.
    \end{example}
    \begin{example}
        To give a very concrete example  of a non-cyclic
        module, let $S = \kk[y_1, \ldots ,y_5]$ with
        $\mm = (y_1, \ldots ,y_5)$, let
        $F= Se_1 \oplus Se_2$ and $Q = (z_1^2 + z_2^2 + z_3^2)e_1^*
        + z_4e_2^*\in \Fdual$. The $S$-submodule $M \subset \Fdual$ generated by $Q$
        is a $6$-dimensional vector space spanned by $Q$, $y_1\circ Q =
        z_1e_1^*$, $y_2\circ Q = z_2e_1^*$, $y_3\circ Q = z_3e_1^*$, $y_4\circ
        Q = e_2^*$ and $y_1^2\circ Q = y_2^2\circ Q=y_3^2\circ Q= e_1^*$.
        We see that $(y_1, \ldots ,y_5)^3$ annihilates $M$, while $(y_1,
        \ldots ,y_5)^2$ does not.
        The module $M^{\perp} \subset F$ is
        minimally generated by
        \[
            (y_je_1)_{j=4,5},\ (y_ie_2)_{i=1,2,3,5},\ (y_iy_je_1)_{1\leq
            i<j\leq 3},\ ((y_i^2 - y_j^2)e_1)_{1\leq i < j\leq 3},\ y_1^2e_1 -
            y_4e_2.
        \]
        Using the table from Section~\ref{ssec:transposes}, we compute that
        $(F/M^{\perp}) / \mm (F/M^{\perp})$ has dimension $\dim_{\kk}
        (0 : \mm)_M = \dim_{\kk} \spann{e_1^*, e_2^*} = 2$, so
        the module $F/M^{\perp}$ is not cyclic.
    \end{example}

    The following two examples will be useful in Section~\ref{sec:degeight}.
    The exact assumptions are tailored to the needs of that section, so are
    by no means ``natural''.
    \begin{example}\label{example:independentQuadrics}
        Fix any $c$ and $a\leq \binom{c+1}{2}$ and additionally fix any $b\geq
        c$. Let $S$ be a polynomial ring of dimension $n\geq
        b$ and let $\mm = (y_1, \ldots ,y_n)$.
        Let $F$ be a free $S$-module of rank $r = \max(a, b-c)$ with basis $e_1,
        e_2, \ldots e_r$.
        Pick $a$ linearly independent quadrics $Q_1,
        \ldots, Q_a$ in $\kk[z_1, \ldots ,z_c]$ with $Q_1 = \sum_{i=1}^c z_i^2$ and consider the element
        $\mathcal{Q} := \sum_{i=1}^a Q_ie_i^*\in \Fdual$. Finally consider the submodule $M$ in
        $\Fdual$ generated by $\mathcal{Q}$ and $\{z_je_1^*\}_{j=c+1}^{b}$.
        Since the quadrics $Q_i$ are linearly independent, we have $(y_1,
        \ldots ,y_c)^2 M = \spann{e_1^*, \ldots ,e_a^*} =
        (0:\mm)_M$.
        For further reference, we note that $\mm M/\mm^2 M$ has dimension $c$
        with a basis
        $\{y_i \circ \mathcal{Q}\}_{1\leq i\leq c}$ while
        $(0:\mm^2)_M / \mm^2 M$ has dimension $b$ with a basis consisting of the $c$ elements
        above and $\{z_je_1^*\}_{j=c+1}^{b}$. Finally, $M$ itself is
        $(a+b+1)$-dimensional.
        Note that in this example we consider $M$ and not $F/M^{\perp}$.
    \end{example}

    \begin{example}\label{example:twoQuadricsThreeLinears}
        Let $S = \kk[y_1, \ldots ,y_n]$ be a polynomial ring, $F$ be a free $S$-module and $Q_1,
        Q_2\in \Fdual$ be two homogeneous quadrics such that the submodule $M$
        in $\Fdual$ generated by them has $\dim_{\kk} M_1 \leq
        3$, where $S_1$, $M_1$ are the linear parts of
        $S$ and $M$ respectively. We claim that
        up to coordinate change we have $y_iM = 0$ for $i\geq 4$.
        Fix $L(Q_i) = \{\ell\in S_1\ |\ \ell Q_i = 0\} \subset S_1$. The space $L(Q_i)$ is the kernel of a
        map $S_1 \to M_1$, so $\dim L(Q_i) \geq n-3$. Assume
        $\dim L(Q_1) \leq \dim L(Q_2)$ and consider two cases:
        \begin{enumerate}
            \item $\dim L(Q_1) = n - 3$. Up to coordinate change, we may
                assume $y_i Q_1 = 0$ for $i\geq 4$ and $y_1Q_1$, $y_2Q_1$,
                $y_3Q_1$ span $M_1$. In particular, the space $M_1$ is
                annihilated by $y_i$ for every $i\geq 4$. Pick any $i\geq 4$. For every $j$ the element
                $y_jQ_2$ lies in $M_1$, so is annihilated by $y_i$. In other
                words, we have $y_j(y_iQ_2) = y_i(y_jQ_2) = 0$
                for every $j$. But this means that every linear form
                annihilates $y_iQ_2$ which can happen only if $y_iQ_2 = 0$. This
                proves that $L(Q_2)$ contains $L(Q_1)$ and concludes this case.
            \item $\dim L(Q_1) \geq n - 2$, so that $\dim L(Q_2) \geq n-2$.
                Then we have $\dim (L(Q_1) \cap L(Q_2)) \geq n-4$. If strict
                equality happens, we are done. If not, up to change of basis
                we may assume $L(Q_1) = \spann{y_3, y_4, \ldots }$ and $L(Q_2)
                = \spann{y_1, y_2, y_5, y_6,  \ldots }$.
                The space $M_1$ is at most three-dimensional and contains $y_1Q_1,
                y_2Q_1, y_3Q_2, y_4Q_2$, so there is a linear dependence: some
                nonzero linear form $\ell$ lies both in the submodule generated by $Q_1$
                and by $Q_2$. But then the annihilator of $\ell$ contains
                $L(Q_1) + L(Q_2) = S_1$, so actually $\ell = 0$, a
                contradiction.
        \end{enumerate}
    \end{example}

    \subsection{Obstruction theories}\label{ssec:obstructions}

        \newcommand{\nn}{\mathfrak{n}}%
        \newcommand{\mms}{\mathfrak{m}_{\hat{S}}}%
        \newcommand{\hatS}{\hat{S}}%
        In the following we will use obstruction theories, so we give a brief
        outline of them (see \cite{Fantechi_Manetti, HarDeform,
        Sernesi__Deformations} for more complete account). Roughly speaking, obstructions are a useful black box
        for proving that a given morphism $X\to Y$ is smooth or \'etale at a
        given $\kk$-point $x$.

        Let $(A, \mm)$ be a noetherian complete local $\kk$-algebra with residue
        field $\kk$, such as
        $\hat{\mathcal{O}}_{X, x}$. By Cohen's structure theorem we can write
        $A = \hatS/I$ where $\hatS = \kk[[y_1, \ldots ,y_d]]$ is a power
        series ring with maximal ideal $\mms = (y_1, \ldots , y_d)$ and $I
        \subset \mms^2$. The prototypical obstruction
        space for $A$ is $Ob := (I/\mms I)^{\vee}$.
        We now construct the prototypical
        obstruction map for $Ob$.
        Suppose we have an Artin
        local $\kk$-algebra $(B,\nn)$ with residue field $\kk$ and a
        surjection $B\onto B_0 = B/J$ such that $J\cdot \nn = 0$.
        \begin{example}
            We could take $B = \kk[\varepsilon]/\varepsilon^3$ and $B_0 =
            \kk[\varepsilon]/\varepsilon^2$ or more generally $B =
            \kk[\varepsilon]/\varepsilon^{r+1}$ and $B_0 =
            \kk[\varepsilon]/\varepsilon^{r}$ for some $r\geq 2$.
        \end{example}
        Suppose further that we have a $\kk$-algebra
        homomorphism $\varphi_0\colon A\to B_0$. We want to answer the
        question:
        \begin{center}
            When does $\varphi_0$ lift to
            $\varphi\colon A\to B$?
        \end{center}
        We can always lift (not uniquely) the
        morphism $\kk[[y_1, \ldots ,y_d]]\to B_0$ to $B$: we first lift it to
        a homomorphism $\kk[y_1,\ldots ,y_d]\to B$ and this homomorphism then extends to $\kk[[y_1,\ldots ,y_d]] \to B$, because some power of the maximal ideal $\mathfrak{n}$ of the Artin local $\kk$-algebra $B$ is zero. So we obtain a commutative
        diagram
        \[
            \begin{tikzcd}
                0\arrow[r] & I\arrow[r]\arrow[d, "\psi|_I"] & \hatS\arrow[r]\arrow[d, "\psi"] &
                A\arrow[r]\arrow[d, "\varphi_0"] & 0\\
                0 \arrow[r] & J\arrow[r] & B\arrow[r] & B_0 \arrow[r] & 0.
            \end{tikzcd}
        \]
        Clearly, if $\psi|_I = 0$ then we obtain a map $A\to B$.
        Moreover, $\psi|_I(\mms I) \subset \nn\cdot J = 0$, so actually
        $\psi|_I = 0$ if and only the induced map $I/\mms I\to J$ is zero.
        Hence, $\psi|_I = 0$ if and only if the induced $\kk$-linear map $ob_{B\to B_0,
        \varphi_0}\colon
        J^{\vee} \to Ob$ is zero. The map $ob_{B\to B_0, \varphi_0}$ is called the obstruction map.
        It does not depend on the choice of lifting $\psi$, thanks to the fact
        that $J\cdot \nn = 0$ and $I \subset \mms^2$.
        An obstruction theory is an abstracted version of
        $Ob$.
        \begin{definition}\label{ref:obstructiontheory:def}
            An \emph{obstruction theory} for a complete local $\kk$-algebra $(A,
            \mm)$ with residue field $\kk$ is a $\kk$-vector space $O$ and for
            every exact sequence $0\to J\to B\to B_0\to 0$ as above and a
            $\kk$-algebra homomorphism
            $\varphi_0\colon A\to B_0$, a $\kk$-linear map
            $ob_{B\to B_0, \varphi_0}\colon J^{\vee} \to O$ such that
            $ob_{B\to B_0, \varphi_0}$ is zero if and only if $\varphi_0$ lifts
            to $\varphi\colon A\to B$.
            Moreover, the maps $ob$ are required to be appropriately
            functorial: if $0\to J'\to B'\to B_0'\to 0$ is
            another sequence as above and $\rho, \rho_0$ are $\kk$-algebra homomorphisms
            which fit into a commutative diagram
            \[
                \begin{tikzcd}
                    0\ar[r] & J\ar[d, "\rho|_J"]\ar[r] & B \ar[d, "\rho"]\ar[r] & B_0
                    \ar[d, "\rho_0"]\ar[r] & 0\\
                    0\ar[r] & J'\ar[r] & B' \ar[r] & B_0' \ar[r] & 0
                \end{tikzcd}
            \]
            then $ob_{B'\to B_0', \rho_0\circ\varphi_0} = (ob_{B\to B_0,
            \varphi_0})\circ (\rho|_J)^{\vee}$ as maps $(J')^{\vee}\to O$,
            see~\cite[Def~1.3, Def~3.1]{Fantechi_Manetti}.
            The space $O$ is called the \emph{obstruction space}.
            If $X$ is a scheme, then an \emph{obstruction theory} of a point $x\in X$ is an
            obstruction theory for the complete local ring $\hat{\mathcal{O}}_{X, x}$.

            Let $(A', \mm')$ be another complete local $\kk$-algebra with residue field
            $\kk$ and with a
            $\kk$-algebra
            homomorphism $f\colon A'\to A$ and let $O_A$, $O_{A'}$ be
            obstruction spaces for some obstruction theories for $A$
            and $A'$. A \emph{map (or morphism) of obstruction theories} is a linear map
            $O_f\colon O_{A}\to O_{A'}$ such that for every exact sequence
            $0\to J\to B\to B_0\to 0$  as
            above and $\varphi_0\colon A\to B_0$ the
            obstruction map for lifting $\varphi_0\circ f\colon A'\to B_0$ to $A'\to B$ is equal
            to $O_f \circ ob_{B\to B_0, \varphi_0}$.
        \end{definition}

        We stress that for a given $(A,\mm)$ many obstruction theories with
        different obstruction spaces exist. For example, given one such
        theory with obstruction space $O$, we can choose any space $O'$ with
        subspace $O \into O'$ and obtain a new theory with obstruction space
        $O'$.
        Geometrically, if $A = \hat{\mathcal{O}}_{X, x}$ for a $\kk$-point $x$
        of a scheme $X$, then a morphism $\varphi_0$ corresponds exactly to
        $\Spec(B_0)\to X$ and $\varphi$ to $\Spec(B)\to X$. So the question of
        lifting becomes the question of lifting a given map.
        \begin{example}
            In the example above, we lift a map from
            $\Spec(\kk[\varepsilon]/\varepsilon^2)\to X$ to
            $\Spec(\kk[\varepsilon]/\varepsilon^3)\to X$. More generally, we could
            try to lift this map to $\Spec(\kk[\varepsilon]/\varepsilon^n)$
            for $n = 3, 4,  \ldots $. If all those lifts exist, they
            glue to a map $\Spec(\kk[[t]])\to X$. The closure of the image of
            this map is either just $x$ or a curve passing through $x$.
        \end{example}
        \begin{example}\label{example:obs}
            If $A$ has an obstruction theory with obstruction space zero, then
            the lifting automatically exists for all homomorphisms. In
            this case $A$ is actually isomorphic to $\hatS$, hence, in the geometric sense, the point $x\in X$ is
            smooth. Indeed, if $\hatS\onto A$ is not an isomorphism then
            $\hatS/\mms^r\onto A/\mm^r$ is not an isomorphism for some $r$;
            take the smallest such $r$.
            Taking $B_0 = A/\mm^r$, $\varphi_0\colon A\to A/\mm^r$ the
            canonical map and $B=  \hatS/\mms^r$ we obtain a lifting
            $\varphi\colon A\to B$. From the
            definition of lifting it follows that $\varphi$ maps no nonzero
            linear form from $\mm$ to $\mms^2/\mms^r$, and consequently it
            maps no nonzero form of degree $k$ from $\mm^k$ to
            $\mms^{k+1}/\mms^r$ for $k<r$. However, this is a contradiction
            with $I\not\subset\mms^r$.
        \end{example}

        We will be interested in the following facts about obstruction
        theories:
        \begin{enumerate}
            \item By~\cite[Theorem~6.4.9]{fantechi_et_al_fundamental_ag} a point $[F/K]\in \Quotmain$ has an obstruction theory with
                obstruction space $\Ext^1(K, F/K)$. For comparison, note that
                the \emph{tangent} space at $[F/K]$ is $\Hom(K, F/K)$, by
                Lemma~\ref{ref:tangentSpaceToQuot:lem}.
            \item Suppose that $x\in X$ has obstruction theory $Ob_x$, $y\in Y$
                has obstruction theory $Ob_y$ and $f\colon X\to Y$ is a map of
                schemes with $f(x)= y$ that induces a map of obstruction
                theories $Ob_x\to Ob_y$. If this map is \emph{injective} and the
                tangent map $df\colon T_{X, x}\to T_{Y, y}$ is surjective then
                $f$ is smooth at $x$. This is called the Fundamental Theorem
                of obstruction calculus,
                see~\cite[Lemma~6.1]{Fantechi_Manetti}.
                If moreover $df$ is bijective, then $f$ is \'etale at $x$.

                \noindent This generalizes Example~\ref{example:obs}; indeed this
                example corresponds to the case $Y = \Spec(\kk)$, $Ob_x = 0$.
        \end{enumerate}

        Finally, we discuss \emph{primary obstructions}, which will be used in
        the proof of generic nonreducedness. For technical reasons we assume
        $\chr \kk \neq 2$. Let $A$ and $\hatS$ be as before.
        Since $I$ is contained in $\mms^2$ the surjection $\hatS/\mms^2\onto
        A/\mm^2$ is an isomorphism and in particular it gives a homomorphism
        $A/\mm^2\to \hatS/\mms^2$. Let $(O, ob)$ be any obstruction theory for
        $(A, \mm)$. Taking $B = \hatS/\mms^3$, $B_0 =
        \hatS/\mms^2$, $J = \mms^2/\mms^3$ and $\varphi_0\colon A\to A/\mm^2\simeq B_0$ the canonical projection in the definition of
        obstruction theory above, we obtain a map
        $ob_0\colon \left(\mms^2/\mms^3\right)^{\vee}\to O.
        $
        Since $\hatS$ is a power series ring, the domain of this map is
        dual to $\Sym_2(\mms/\mms^2)$ and we obtain the \emph{primary
        obstruction map}
        \[
            ob_0\colon (\Sym_2(\mms/\mms^2))^{\vee}\to O.
        \]
        Dualizing,
        we get a map $ob_0^{\vee}\colon O^{\vee}\to \Sym_2(\mms/\mms^2) = \mms^2/\mms^3$.
        Consider now a slightly more general
        situation. Fix a linear subspace $K\subset \mms^2$ and take $B_K =
        \hatS/(\mms^3 + K)$, $B_0 =\hatS/\mms^2$. Again using the definition, we
        get an obstruction map
        \[
            ob_K\colon \left( \frac{\mms^2}{\mms^3+K} \right)^{\vee}\to O.
        \]
        Moreover, by the functoriality from the
        Definition~\ref{ref:obstructiontheory:def},
        the map $B\to B_K$ induces a factorization $ob_K = ob_0 \circ f$ where
        $f$ is the dual to the canonical surjection $\mms^2/\mms^3\onto
        \mms^2/(\mms^3+K)$. Dualizing similarly as above we get a map
        $ob_K^{\vee}\colon O^{\vee}\to \mms^2/(\mms^3+K)$ which is the
        composition of $ob_0^{\vee}$ with the canonical projection. Now we are
        ready to observe that the following are equivalent:
        \begin{enumerate}
            \item the obstruction $ob_K$ is zero,
            \item the map $ob_K^{\vee}$ is zero,
            \item the image of the map $ob_{0}^{\vee}$ lies in $(K+\mms^3)/\mms^3$.
        \end{enumerate}
        We now use this observation to compute the image of dual to the primary
        obstruction map.
        We have $A/\mm^3\simeq
        \hatS/(\mms^3+I)$. We show
        that the canonical projection $\varphi_0\colon A\to B_0$ lifts to a
        $\kk$-algebra homomorphism $\varphi_K\colon A\to B_K$ if and only if
        ${I+\mms^3}\subset K{+\mms^3}$. In one
        direction this is clear: if
        ${I+\mms^3}\subset K{+\mms^3}$ we may
        take for $\varphi_K$ the composition of canonical
        projections $A\to A/\mm^3\to B_K$. Conversely, assume that
        $\varphi_K\colon A\to B_K$ is a $\kk$-algebra homomorphism which is a
        lifting of $\varphi_0$. Then $\varphi_K(\mm^3)=0$, so $\varphi_K$
        factors through $\overline{\varphi_K}\colon A/\mm^3\simeq
        \hatS/(\mms^3+I)\to B_K=\hatS/(\mms^3+K)$. If $\ell\in\hatS$ is any
        linear form, then it is clear that $\overline{\varphi_K}(\bar{\ell})$ is
        of the form $\bar{\ell}+\bar{q}$ for some $q\in\mms^2$. Since
        $\overline{\varphi_K}$ is a $\kk$-algebra homomorphism, it follows that
        ${I+\mms^3}\subset K{+\mms^3}$. Finally,
        the map $ob_K$ is zero if and
        only if the map $\varphi_0$ lifts to a $\kk$-algebra homomorphism $A\to
        B_K$. The above observations imply that $\im ob_0^{\vee}\subset
        (K+\mms^3)/\mms^3$ if and only if ${I+\mms^3}\subset K{+\mms^3}$, so $\im
        ob_0^{\vee}=(I+\mms^3)/\mms^3$.
        Intuitively, the above shows that $\im ob_0^{\vee}$ recovers the quadratic part
        of $I$.

        To make this useful, we need to know how to find the primary
        obstruction map explicitly. Recall that $\mms/\mms^2  \simeq
        \mm/\mm^2$ is called the cotangent space of $(A, \mm)$ and its dual
        $(\mms/\mms^2)^{\vee}  \simeq (\mm/\mm^2)^{\vee}$ is called the
        tangent space of $(A, \mm)$. In the case $A = \hat{\mathcal{O}}_{X, x}$ these spaces
        are the cotangent and tangent space to $X$ at $x$, respectively.
        Assume the characteristic of $\kk$ is not two. There is a $\GL(\mms/\mms^2)$-equivariant isomorphism
        \begin{equation}\label{eq:quadraticForms}
            \Sym_2((\mms/\mms^2)^{\vee})  \simeq (\Sym_2(\mms/\mms^2))^{\vee}
        \end{equation}
        which is unique up to
        multiplication by a scalar. We fix one such isomorphism: it maps
        the product $(y_i^*)\cdot (y_j^*)$ of the elements of the dual basis
        (which is an element of the left hand side)
        to the functional $(y_iy_j)^*$ on the right hand side
        for $i\neq j$ and it maps $(y_i^*)^2$ to $2(y_i^2)^*$. The reader might
        wonder why the constant $2$ is necessary; the reason is
        essentially the same as for $\frac{1}{2}$ constants that appear while
        writing a quadratic form as a symmetric matrix.
        Using~\eqref{eq:quadraticForms} we view $ob_0$ as
        a map from $\Sym_2((\mms/\mms^2)^{\vee})$. Consider $\varphi_1,
        \varphi_2\in (\mms/\mms^2)^{\vee}$. For $a\in A$ let $\bar{a}\in A$ be
        the image of $a$ under the composition $A\to A/\mm = \kk\into A$. The elements $\varphi_i$, $i=1,2$
        of the tangent space induce $\kk$-algebra homomorphisms $f_i\colon A/\mm^2\to
        \kk[\varepsilon_i]/\varepsilon_i^2$ given by $f_i(a) = \bar{a} +
        \varepsilon_i \varphi_i(a-\bar{a})$ which jointly give a
         $\kk$-algebra homomorphism $f_{12}\colon A/\mm^2\to \kk[\varepsilon_1,
        \varepsilon_2]/(\varepsilon_1, \varepsilon_2)^2$ defined by
        $f_{12}(a) = \bar{a} + \sum_{i=1}^2 \varepsilon_i
        \varphi_i(a-\bar{a})$. We have a commutative diagram
        \begin{equation}\label{eq:functorialityOfExtensions}\begin{tikzcd}
                    0\ar[r] & \mms^2/\mms^3\ar[d]\ar[r] &
                    \hatS/\mms^3 \ar[d]\ar[r] & \hatS/\mms^2 \simeq A/\mm^2
                    \ar[d, "f_{12}"]\ar[r] & 0\\
                    0\ar[r] & (\varepsilon_1\varepsilon_2)\ar[r] &
                    \frac{\kk[\varepsilon_1, \varepsilon_2]}{(\varepsilon_1^2,
                    \varepsilon_2^2)} \ar[r] & \frac{\kk[\varepsilon_1,
                    \varepsilon_2]}{(\varepsilon_1, \varepsilon_2)^2} \ar[r] &
                    0.
            \end{tikzcd}
        \end{equation}
        A diagram chase shows that the image of
        $(\varepsilon_1\varepsilon_2)^*$ in $(\mms^2/\mms^3)^{\vee}$ is equal,
        under the isomorphism~\eqref{eq:quadraticForms} to
        $\varphi_1\varphi_2$, this is proven most conveniently by changing
        coordinates on $\hatS$ so that $\varphi_i = y_i^*$ for $i=1,2$ or
        $\varphi_1 = \varphi_2 = y_1^*$.
        Using the functoriality from
        Definition~\ref{ref:obstructiontheory:def} we
        derive that the image of $(\varepsilon_1\varepsilon_2)^*$ under the composition $(\varepsilon_1\varepsilon_2)^{\vee}\to
        (\mms^2/\mms^3)^{\vee}\to O$ is the value of
        the primary obstruction on $\varphi_1\varphi_2\in \Sym_2((\mms/\mms^2)^{\vee})$.

        By Lemma~\ref{ref:tangentSpaceToQuot:lem} for a point $[F/K]$ on the Quot scheme, the tangent space
        is given by $\Hom(K, F/K)$. Fix a free resolution $F_{\bullet}$ of $F/K$, beginning
        with $F_0 = F$. For every $\varphi\colon K\to F/K$ in
        this
        space we can lift it to a chain complex map
        \begin{equation}\label{eq:chainComplex}
            \begin{tikzcd}
                &0  &\arrow[l] K\arrow[ldd, "\varphi"']   &\arrow[l, "d_0"]\arrow[ldd,
                "s_1(\varphi)"'] F_1 &\arrow[l,"d_1"'] F_2\arrow[ldd,
                "s_2(\varphi)"']  &\arrow[l,"d_2"'] F_3\arrow[ldd,
                "s_3(\varphi)"']  &\arrow[l] \ldots\\\\
                0&\arrow[l]F/K &\arrow[l, "\pi"'] F_0 &\arrow[l,"d_0"'] F_1
                &\arrow[l,"d_1"'] F_2  &\arrow[l] \ldots
            \end{tikzcd}
        \end{equation}
        \begin{theorem}\label{ref:primaryForQuot:thm}
            Consider a point $[F/K]$ on the Quot scheme and the associated
            obstruction theory with obstruction space $\Ext^1(K, F/K)$. Then
            for $\varphi_1, \varphi_2\in \Hom(K,
            F/K)$ the primary obstruction map sends
            $\varphi_1\varphi_2\in \Sym_2(\Hom(K, F/K))$ to
            \begin{equation}\label{eq:primaryOb}
                \pi\circ\left(s_1(\varphi_1)\circ s_2(\varphi_2) +
                s_1(\varphi_2)\circ s_2(\varphi_1)\right)
            \end{equation}
            in $\Ext^1(K, F/K)$.
        \end{theorem}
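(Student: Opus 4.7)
The plan is to unpack the primary obstruction construction concretely via deformations of the free resolution $F_\bullet\to F/K$ and then match the resulting anomaly against the abstract $ob_0$ through the diagram~\eqref{eq:functorialityOfExtensions}. Write $R_0 = \kk[\varepsilon_1,\varepsilon_2]/(\varepsilon_1,\varepsilon_2)^2$ and $R = \kk[\varepsilon_1,\varepsilon_2]/(\varepsilon_1^2,\varepsilon_2^2)$, with one-dimensional kernel $J = (\varepsilon_1\varepsilon_2)$. The value $ob_0(\varphi_1\varphi_2)$ is, by the very definition made before the theorem, the obstruction to lifting the $\kk$-algebra map $f_{12}\colon A/\mm^2\to R_0$ encoding $\varepsilon_1\varphi_1 + \varepsilon_2\varphi_2$ across the square-zero extension $R\to R_0$, evaluated on the generator of $J^{\vee}$ identified with $\varphi_1\varphi_2$ via~\eqref{eq:quadraticForms}.

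First I would build, using the chain lifts of~\eqref{eq:chainComplex} with the sign convention that makes $d_{i-1}\circ s_{i+1}(\varphi) + s_i(\varphi)\circ d_i = 0$, the perturbed differentials
\[
\tilde d_i := d_i + \varepsilon_1 s_i(\varphi_1) + \varepsilon_2 s_i(\varphi_2)\colon F_i\otimes R \to F_{i-1}\otimes R.
\]
Modulo $(\varepsilon_1,\varepsilon_2)^2$ these form an $R_0$-free resolution of a flat deformation $M_0$ of $F/K$, which by Lemma~\ref{ref:tangentSpaceToQuot:lem} is the deformation corresponding to $f_{12}$. Keeping the same $\tilde d_i$ now as $R$-linear lifts and expanding $\tilde d_1\circ\tilde d_2$ in $R$, the constant and $\varepsilon_j$-linear terms vanish by the chain relations while the $\varepsilon_j^2$-terms vanish in $R$; what remains is
\[
\tilde d_1\circ\tilde d_2 \equiv \varepsilon_1\varepsilon_2\bigl(s_1(\varphi_1)\circ s_2(\varphi_2) + s_1(\varphi_2)\circ s_2(\varphi_1)\bigr) \pmod{(\varepsilon_1^2,\varepsilon_2^2)}.
\]
A short check using $d_2\circ s_3(\varphi) + s_2(\varphi)\circ d_3 = 0$ shows that composing the right-hand side with $\pi$ yields a cocycle for the differential on $\Hom(F_\bullet, F/K)$, hence a well-defined class in $\Ext^1(K, F/K)$.

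To conclude I would invoke the standard construction of the Quot scheme's obstruction theory (\cite[Thm.~6.4.9]{fantechi_et_al_fundamental_ag}, \cite[\S4.4]{Sernesi__Deformations}), which identifies the class of the anomaly $\pi\circ\bigl(s_1(\varphi_1)\circ s_2(\varphi_2) + s_1(\varphi_2)\circ s_2(\varphi_1)\bigr)$ with the obstruction to lifting $M_0$ — equivalently $f_{12}$ — across $R\to R_0$. Combined with the identification of $(\varepsilon_1\varepsilon_2)^*$ with $\varphi_1\varphi_2$ that was spelled out before the theorem, this is exactly $ob_0(\varphi_1\varphi_2)$, giving~\eqref{eq:primaryOb}. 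The main bookkeeping hurdles will be (a) tracking signs so that $\tilde d_\bullet$ forms a complex modulo $(\varepsilon_1,\varepsilon_2)^2$ rather than only having commuting squares, and (b) carefully matching the concrete $\Ext^1$-valued anomaly to the abstract obstruction coming from Definition~\ref{ref:obstructiontheory:def}; once these are aligned, the formula~\eqref{eq:primaryOb} is a direct read-off of the $\varepsilon_1\varepsilon_2$-coefficient of $\tilde d_1\circ\tilde d_2$.
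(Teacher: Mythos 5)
Your plan is in the same spirit as the paper's second (down-to-earth) proof: set up the specific square-zero extension $R = \kk[\varepsilon_1,\varepsilon_2]/(\varepsilon_1^2,\varepsilon_2^2) \onto R_0 = \kk[\varepsilon_1,\varepsilon_2]/(\varepsilon_1,\varepsilon_2)^2$ from~\eqref{eq:functorialityOfExtensions}, lift the resolution over $R$, and read off the $\varepsilon_1\varepsilon_2$-coefficient of the anomaly. Two points deserve attention.

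First, a sign discrepancy. The $s_i(\varphi)$ in the theorem statement are, per diagram~\eqref{eq:chainComplex}, a chain-map lift of $\varphi$, hence satisfy the \emph{commuting} relation $d_0\circ s_2(\varphi) = s_1(\varphi)\circ d_1$ (paper's indexing). You instead declare a sign convention making the squares \emph{anticommute}, which is indeed what you need so that $\tilde d_\bullet$ is a complex modulo $(\varepsilon_1,\varepsilon_2)^2$. But these anticommuting lifts are then \emph{not} the $s_i$ of~\eqref{eq:chainComplex}: they differ from the chain-map lifts by $(-1)^{i}$ (in one parity or the other), so the anomaly you compute is $-\pi\circ\bigl(s_1(\varphi_1)s_2(\varphi_2)+s_1(\varphi_2)s_2(\varphi_1)\bigr)$ in the theorem's notation. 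The paper handles exactly this: it writes $\tilde d_1 = d_1 + \sum s_{2i}\varepsilon_i + q^*j$, observes $d_0 s_{2i} + s_1(\varphi_i) d_1 = 0$, and only then substitutes $s_2(\varphi_i) = -s_{2i}$ to land on the chain-map lifts, absorbing the sign. You should make the same translation explicit if you want the formula~\eqref{eq:primaryOb} on the nose rather than its negative.

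Second, the decisive step --- ``the class of the anomaly $\tilde d_1\circ\tilde d_2$ is the obstruction supplied by the obstruction theory of~\cite[Thm.~6.4.9]{fantechi_et_al_fundamental_ag}'' --- is precisely the nontrivial content and cannot simply be cited. That theorem constructs an obstruction theory but does not present it as this anomaly; making the identification requires unwinding the FGI/Sernesi construction or passing through a flatness analysis. The paper's second proof goes further than you do here: it explicitly uses the functorial definition of $\Quotmain$ from Example~\ref{ex:Quotdef}, shows that an extension of $\fonetwo$ to $\Spec(B)$ produces a flat quotient, uses flatness to extend $d_1$, and extracts the relations --- and even then it candidly proves only the ``if the lift exists then the class is a boundary'' direction, deferring the genuine equality to the first, DGLA-based proof. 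Your proposal would need either the same flatness chase or a convincing argument that the cited references already deliver the ``anomaly $=$ obstruction'' identification with the sign pinned down; as written, step (b) is the gap.
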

        We remark that
        \[
            \pi\circ s_1(\varphi_1) \circ s_2(\varphi_2)\circ d_2 = \varphi_1
            \circ d_0 \circ d_1 \circ s_3(\varphi_2) = \varphi_1 \circ 0 \circ
            s_3(\varphi_2) = 0,
        \]
        so indeed the expression~\eqref{eq:primaryOb} is a cycle in
        $\Hom(F_{\bullet}, F/K)$, hence it
        makes sense to take its homology class which is by definition the
        $\Ext^1(K, F/K)$ group. Below, we give two proofs. The first one is
        completely abstract and in essence tells that the result is known as a
        consequence of much deeper insights. The
        second one is down to earth, but certain details are left to the
        reader. Regretfully, we do not know a reference which is both complete
        and accessible.
        \begin{proof}[First proof of Theorem~\ref{ref:primaryForQuot:thm}]
            Let $M = F/K$ and let $(A, \mm)$ be the complete local ring of $[F/K]$ in the Quot
            scheme.
            Take $\varphi_1, \varphi_2\in \Hom(K, M)$. As discussed above the
            image of $\varphi_1\varphi_2$ in the primary obstruction is the
            image of $(\varepsilon_1\varepsilon_2)^*$ \rred{by the}
            obstruction map associated to the bottom row of
            Diagram~\ref{eq:functorialityOfExtensions}.
            Since $F$ is a
            free $S$-module, the map $\Ext^1(K, M) \to \Ext^2(M, M)$ is an isomorphism,
            thus we can compute the obstruction after forgetting that $M$ is a
            quotient of $F$. By the main
			result of~\cite{Fiorenza_Iacono_Martinengo} the infinitesimal
            deformation functor of $M$ is isomorphic to the Maurer-Cartan
            functor for the differential graded Lie algebra
            $\Endd(F_{\bullet})$. By~\cite[Section~5]{Manetti__notes}
            or~\cite[Example~2.16]{Manetti__Def_theory_of_DGLA} the primary
            obstruction for a differential graded Lie algebra $L$ is equal to the
            the Lie bracket in its cohomology algebra. The cohomology of
            $\Endd(F_{\bullet})$ is by definition $\Ext(M, M)$ and the bracket
            is just the bracket in the Yoneda pairing, i.e., for classes
            $\varphi_1, \varphi_2\in \Ext^1(M, M)$ it returns the sum of
            Yoneda products $\varphi_1\varphi_2 + \varphi_2\varphi_1$ in
            $\Ext^2(M, M)$, where
            the plus sign is due to the fact that we take brackets of odd
            degree elements, see~\cite[Definition~1.1]{Manetti__notes}. Now, the expression in the theorem is exactly
            this sum under the isomorphism $\Ext^2(M, M) \simeq \Ext^1(K, M)$.
        \end{proof}
        \begin{proof}[Sketch of second proof of Theorem~\ref{ref:primaryForQuot:thm}]
            \def\fonetwo{f_{\mathrm{tan}}}%
            Let $q\in \Sym_2(\Hom(K, F/K))$ be written as $q = \sum_{i\leq j}
            \lambda_{ij}\varphi_i
            \varphi_j$ where $\varphi_1, \ldots
            ,\varphi_d$ is a basis of $\Hom(K, F/K)$.
            Let $\varepsilon_1, \ldots ,\varepsilon_d\in \mms/\mms^2$ be a
            dual basis. We lift it to a generating set of $\mms$.
            Using~\eqref{eq:quadraticForms} we can interpret $q$
            as a functional $\bar{q}\colon \mms^2/\mms^3\to \kk$ and build the
            algebra
            \[
                B = \frac{\hatS}{\mms^3 + \ker(\bar{q})}.
            \]
            Let $B_0 = \hatS/\mms^2 = A/\mm^2$ and let $q^*\in \mms^2/\mms^3$
            satisfy $\bar{q}(q^*) = 1$. Using the explicit formula
            below~\eqref{eq:quadraticForms}, we check that
            in the algebra $B$ we have $\varepsilon_i \varepsilon_j =
            \lambda_{ij}q^*$ for $i\neq j$ while $\varepsilon_i^2 =
            2\lambda_{ii}q^*$.
            As discussed above, there is a natural morphism $\fonetwo\colon \Spec(B_0)\to
            \Quotmain$
            which by the universal property of
            Quot, see~Example~\ref{ex:Quotdef}, corresponds to a
            module $\mathcal{M} := (F\tensor_{\kk}
            B_0)/\mathcal{K}$.
            In this version of the proof we will only show that if the
            morphism $\fonetwo$ extends to a morphism $\Spec(B)\to
            \Quotmain$, then the element of $\Ext^1(K, M)$
            defined by the formula $\sum_{i\leq j} \lambda_{ij}\pi\circ (s_1(\varphi_i)\circ
            s_2(\varphi_j) + s_1(\varphi_j) \circ s_2(\varphi_i))$ from the Theorem is zero. (This is enough
            for the application in the current article.)

            For better clarity, we write subscript $B$ instead of
            $\otimes_{\kk} B$; similarly for $B_0$.
            Recall that $F_1$ is the
            free module with a canonical surjection $d_0\colon F_1\to K$. By Nakayama
            lemma any lift of $d_0$ to a $S_{B_0}$-module homomorphism $(F_1)_{B_0}\to \mathcal{K}$ is
            surjective.
            By the description of the tangent space to Quot scheme (see
            references in Lemma~\ref{ref:tangentSpaceToQuot:lem}) one such
            lift $d'_0\colon (F_1)_{B_0}\to \mathcal{K}$
            is
            \[
                d'_0(m) = d_0(m) + \sum_{i=1}^d \varepsilon_i s_1(\varphi_i)(m).
            \]
            Suppose that $\fonetwo$ extends to a map $\Spec(B)\to
            \Quotmain$. By the definition of Quot, Example~\ref{ex:Quotdef}, it follows that there
            exists an $S$-linear map $h\colon F_1\to F$ such that if we define a
            map $\tilde{d}_0\colon (F_1)_B\to F_{B}$ by $\tilde{d}_0(m) = d'_0(m) +
            q^* h(m)$, then $F_B/\im(\tilde{d}_0)$ is flat over
            $B$. Flatness implies that $\ker(\tilde{d}_0)$ reduces to
            $\ker(d_0)$ modulo $(\varepsilon_1, \ldots ,\varepsilon_d)$. Let $\tilde{d}_1\colon (F_2)_B\to
            \ker(\tilde{d}_0)$ be any $S_B$-module homomorphism that reduces to $d_1$
            modulo $(\varepsilon_1,  \ldots , \varepsilon_d)$. Nakayama lemma implies
            that $\tilde{d}_1$ is surjective. We obtain an exact sequence of
            $S_B$-modules
            \[
                \begin{tikzcd}
                    0 & \arrow[l] F_B/\im(\tilde{d}_0) & \arrow[l, "\tilde{\pi}_0"']
                    F_B & \arrow[l, "\tilde{d}_0"'] (F_1)_B  & \arrow[l,
                    "\tilde{d}_1"'] (F_2)_B
                \end{tikzcd}
            \]
            which $\pmod{(\varepsilon_1,  \ldots , \varepsilon_d)}$ gives the beginning of
            the resolution
            of $F/K$.
            Write $\tilde{d}_1|_{F_2} = d_1 +   \sum_{i=1}^d
            s_{2i}\varepsilon_i + q^* j$ where $s_{2i}, j\colon F_2\to F_1$. The condition $\tilde{d}_0\circ
            \tilde{d}_1 = 0$ decomposes in the basis $\varepsilon_1, \ldots
            ,\varepsilon_d, q^*$ of $B$ into equalities
            \begin{align*}
                \quad d_0 s_{2i} + s_1(\varphi_i)d_1 &= 0 \quad \mbox{for }
                i=1,2, \ldots ,d\\
                d_0 j + \sum_{i < j} \lambda_{ij}(s_1(\varphi_i)s_{2j} +
                s_1(\varphi_j)s_{2i}) + \sum_{i}
                2\lambda_{ii}s_1(\varphi_i)s_{2i} + hd_1 &=
                0.
            \end{align*}
            The first one implies that the maps ${-}s_{2i}\colon F_2\to
            F_1$ are lifts of $s_1(\varphi_i)$, for $i=1,2, \ldots ,d$, as in~\eqref{eq:chainComplex}.
            Since the class in $\Ext^1(K, F/K)$ does
            not depend on the choice of such lifts,
            we may take $s_2(\varphi_i) = {-}s_{2i}$. Then the third equation becomes
            \[
                d_0 j {-} \sum_{i \leq j}
                \lambda_{ij}(s_1(\varphi_i)s_2(\varphi_j) +
                s_1(\varphi_j)s_2(\varphi_i)) + hd_1 = 0.
            \]
            After composition with $\pi$ we obtain
            \[
                {-}\pi \circ \left(\sum_{i\leq j}
                \lambda_{ij}(s_1(\varphi_i)s_2(\varphi_j) +
                s_1(\varphi_j)s_2(\varphi_i))\right) + \pi h d_1 = 0,
            \]
            which shows that our class is equal to the boundary $(\pi h)\circ d_1$
            and it is zero in $\Ext^1(K, F/K)$.
        \end{proof}

    \subsection{Natural endomorphisms of $\CommMat$}

The following lemma is obvious, but we state it as we will frequently use it to assume some open conditions on the matrices.
\begin{lemma}\label{generic_assumption}
If $\mathcal{V}$ is an irreducible subvariety of $\CommMat$ and $\mathcal{U}$
is a nonempty Zariski-open subset of $\mathcal{V}$, then $\mathcal{U}$ and
$\mathcal{V}$ belong to the same irreducible component of $\CommMat$.\qed
\end{lemma}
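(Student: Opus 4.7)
The plan is to reduce the statement to two elementary facts about Noetherian topological spaces: (i) every nonempty irreducible closed subset is contained in at least one irreducible component, and (ii) every nonempty open subset of an irreducible space is dense.

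First, since $\mathcal{V}$ is irreducible, its Zariski closure $\overline{\mathcal{V}}$ in $\CommMat$ is an irreducible closed subset. The scheme $\CommMat$ is cut out by finitely many polynomial equations in an affine space, hence is Noetherian, so its set of irreducible components is finite and $\overline{\mathcal{V}}$ is contained in at least one irreducible component; pick such a component $\mathcal{W}$, so that $\overline{\mathcal{V}}\subset \mathcal{W}$.

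Second, since $\mathcal{U}$ is a nonempty open subset of the irreducible space $\mathcal{V}$, it is dense in $\mathcal{V}$. Therefore $\overline{\mathcal{U}} = \overline{\mathcal{V}} \subset \mathcal{W}$, so in particular $\mathcal{U}\subset \mathcal{W}$. This shows that $\mathcal{U}$ and $\mathcal{V}$ are both contained in the same irreducible component $\mathcal{W}$ of $\CommMat$, which is the desired conclusion.

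There is no real obstacle here; the lemma is essentially a bookkeeping device. Its usefulness in the sequel lies in the fact that, after choosing some irreducible parameterization $\mathcal{V}$ of commuting tuples, one is free to restrict attention to any nonempty open locus cut out by finitely many open conditions (such as ``the first matrix has $d$ distinct eigenvalues'' or ``a certain determinant is nonzero''), without losing track of which irreducible component of $\CommMat$ the construction lands in.
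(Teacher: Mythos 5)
Your proof is correct and is essentially the argument the paper leaves implicit (the lemma is stated with a \qed and no proof, being regarded as obvious). The two facts you invoke — that an irreducible subset lies in some component, and that a nonempty open subset of an irreducible space is dense — give exactly what is needed, and your observation that $\overline{\mathcal{U}}=\overline{\mathcal{V}}$ even shows the slightly stronger statement that $\mathcal{U}$ and $\mathcal{V}$ lie in precisely the same set of irreducible components.
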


    \begin{lemma}\label{polynomialMaps}
        Let $\varphi \colon \mathbb{A}^{\ambM}\to \mathbb{A}^{\ambM}$ be a polynomial map,
        and let the map $\CommMat\to \CommMat$ be defined by applying
        $\varphi$ to the $\ambM$-tuples of matrices. By a slight abuse of notation
        we denote it again by $\varphi$. If $\mathcal{C}$ is any irreducible
        component of $\CommMat$, then $\varphi(\mathcal{C})\subseteq
        \mathcal{C}$.
    \end{lemma}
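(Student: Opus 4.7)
The plan is to exhibit an explicit algebraic homotopy from the identity on $\CommMat$ to the map $\varphi$, whose image over each irreducible component is an irreducible closed subset containing that component, and hence must be contained in it by maximality.

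First, I would verify that the prescription really defines a morphism of schemes $\varphi\colon \CommMat \to \CommMat$. Writing $\varphi = (\varphi_1, \ldots, \varphi_n)$ with $\varphi_i \in \kk[y_1, \ldots, y_n]$, the entries of $\varphi_i(x_1, \ldots, x_n)$ are polynomials in the entries of $(x_1,\ldots,x_n)$, so this is a morphism of affine schemes. To see the image lies in $\CommMat$, recall that if $x_1, \ldots, x_n$ commute pairwise, then every two polynomial expressions $\varphi_i(x_1, \ldots, x_n)$ and $\varphi_j(x_1, \ldots, x_n)$ also commute.

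Next, define the straight-line homotopy
\[
    \Phi\colon \mathbb{A}^1 \times \CommMat \to \Matrices^n, \qquad
    \Phi(t, x_1, \ldots, x_n) = \bigl((1-t)x_1 + t\varphi_1(x), \ldots, (1-t)x_n + t\varphi_n(x)\bigr).
\]
This is manifestly a morphism of schemes. I would then check that it factors through $\CommMat$: expanding the commutator of the $i$-th and $j$-th entry gives
\[
    (1-t)^2 [x_i, x_j] + t(1-t)\bigl([x_i, \varphi_j(x)] + [\varphi_i(x), x_j]\bigr) + t^2 [\varphi_i(x), \varphi_j(x)],
\]
and each bracket vanishes since polynomials in the pairwise commuting matrices $x_1, \ldots, x_n$ all commute. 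So $\Phi\colon \mathbb{A}^1 \times \CommMat \to \CommMat$ is a morphism, with $\Phi(0, \cdot) = \id$ and $\Phi(1, \cdot) = \varphi$.

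Now fix an irreducible component $\mathcal{C} \subseteq \CommMat$. Then $\mathbb{A}^1 \times \mathcal{C}$ is irreducible, so its image $\Phi(\mathbb{A}^1 \times \mathcal{C})$ is irreducible, and therefore its closure $Z := \overline{\Phi(\mathbb{A}^1 \times \mathcal{C})}$ is an irreducible closed subset of $\CommMat$. Restricting $\Phi$ to $\{0\} \times \mathcal{C}$ shows $\mathcal{C} \subseteq Z$. Since $\mathcal{C}$ is maximal among irreducible closed subsets of $\CommMat$, it follows that $Z = \mathcal{C}$, and in particular $\varphi(\mathcal{C}) = \Phi(\{1\} \times \mathcal{C}) \subseteq \mathcal{C}$.

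There is no real obstacle here: the only subtle point is ensuring the homotopy lands in $\CommMat$, which reduces to the elementary fact that polynomials in commuting matrices commute.
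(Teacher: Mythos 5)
Your proof is correct and follows essentially the same strategy as the paper: exhibit an irreducible family of self-maps of $\CommMat$ containing both the identity and $\varphi$, apply it to $\mathcal{C}$, and conclude by maximality of irreducible components. The only difference is cosmetic — the paper parameterizes by the affine space $\mathbb{A}^N$ of \emph{all} polynomial maps of bounded degree, whereas you use the one-parameter linear interpolation $\mathbb{A}^1 \ni t \mapsto (1-t)\mathrm{id} + t\varphi$; your choice is arguably a bit leaner, but both rest on the identical observation that polynomials in commuting matrices commute, so the family lands in $\CommMat$.
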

    \begin{proof}
        Let $D\geq 1$ be greater than the degree of any coordinate of $\varphi$.
        The set of all polynomial maps $\mathbb{A}^{\ambM}\to
        \mathbb{A}^{\ambM}$ of degree at most $D$
        in each of the variables forms an affine space, say $\mathbb{A}^N$.
        Since $\mathcal{C}$ and $\mathbb{A}^N$ are irreducible, the image of
        the map $\mathbb{A}^N\times \mathcal{C}\to \CommMat$ defined by
        $(\psi,x_1,\ldots ,x_{\ambM})\mapsto \psi(x_1,\ldots
        ,x_{\ambM})$ is
        irreducible. This image clearly contains $\mathcal{C}$ because $id$ is
        among $\psi$. Since
        $\mathcal{C}$ is a component, the image is the whole $\mathcal{C}$.
        The lemma now follows.
    \end{proof}

    \begin{corollary}\label{algebra}
        Two tuples of commuting matrices that generate the same unital
        $\kk$-algebra lie in the same components of $\CommMat$.
    \end{corollary}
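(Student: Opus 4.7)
The plan is to reduce Corollary~\ref{algebra} directly to Lemma~\ref{polynomialMaps}. Let $\xx = (x_1, \ldots, x_{\ambM})$ and $\xx' = (x_1', \ldots, x_{\ambM}')$ be two commuting $\ambM$-tuples generating the same unital $\kk$-algebra $A \subset \MM_{\degM}$. Since $x_i' \in A$ and $A$ is generated as a unital $\kk$-algebra by $x_1, \ldots, x_{\ambM}$, there exist polynomials $p_1, \ldots, p_{\ambM} \in \kk[y_1, \ldots, y_{\ambM}]$ such that $x_i' = p_i(x_1, \ldots, x_{\ambM})$ for every $i$; symmetrically there exist polynomials $q_1, \ldots, q_{\ambM}$ with $x_i = q_i(x_1', \ldots, x_{\ambM}')$.

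Define the polynomial map $\varphi\colon \mathbb{A}^{\ambM} \to \mathbb{A}^{\ambM}$ by $\varphi(y_1, \ldots, y_{\ambM}) = (p_1(y_1, \ldots, y_{\ambM}), \ldots, p_{\ambM}(y_1, \ldots, y_{\ambM}))$. By the convention of Lemma~\ref{polynomialMaps}, this induces a map $\varphi\colon \CommMat \to \CommMat$ (well-defined because any tuple of pairwise commuting matrices generates a commutative subalgebra, so polynomial expressions in its entries pairwise commute as well). By construction $\varphi(\xx) = \xx'$.

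Now fix any irreducible component $\mathcal{C}$ of $\CommMat$ containing $\xx$. By Lemma~\ref{polynomialMaps}, $\varphi(\mathcal{C}) \subseteq \mathcal{C}$, so $\xx' = \varphi(\xx) \in \mathcal{C}$. Running the same argument with the polynomials $q_1, \ldots, q_{\ambM}$ in place of $p_1, \ldots, p_{\ambM}$ gives the reverse inclusion: every component containing $\xx'$ also contains $\xx$. Hence $\xx$ and $\xx'$ lie in exactly the same set of irreducible components of $\CommMat$, proving the corollary.

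There is no real obstacle here: the only substantive input is Lemma~\ref{polynomialMaps}, and the only thing to verify is the elementary fact that membership in the same unital subalgebra yields mutual polynomial expressions (the word \emph{unital} is used because constant terms in $p_i$ correspond to scalar multiples of $I_{\degM}$, which must be available inside $A$).
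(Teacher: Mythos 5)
Your proof is correct and follows essentially the same route as the paper's: express each tuple as a polynomial in the other via the shared unital algebra, and then apply Lemma~\ref{polynomialMaps} in both directions. The extra remarks you include (that $\varphi$ is well-defined on $\CommMat$, and why \emph{unital} matters) are accurate but are routine checks that the paper leaves implicit.
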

    \begin{proof}
        Fix any component $\mathcal{C}$ of $\CommMat$ containing
        $(x_1,\ldots ,x_{\ambM})$ and let $(y_1,\ldots ,y_{\ambM})\in \CommMat$
        be an $\ambM$-tuple that generates the same algebra as $(x_1,\ldots
        ,x_{\ambM})$. Then there exist polynomial maps $\varphi,\psi\colon
        \mathbb{A}^n\to \mathbb{A}^n$ with $(y_1,\ldots ,y_{\ambM})=\varphi
        (x_1,\ldots ,x_{\ambM})$ and $(x_1,\ldots ,x_{\ambM})=\psi (y_1,\ldots
        ,y_{\ambM})$. Lemma~\ref{polynomialMaps} implies that $(y_1,\ldots
        ,y_{\ambM})\in \varphi (\mathcal{C})\subseteq \mathcal{C}$.
        Symmetrically, if
        $(y_1,\ldots ,y_{\ambM})$ belongs to some component
        $\mathcal{C}'$, then $(x_1,\ldots ,x_{\ambM})\in
        \psi(\mathcal{C}')\subseteq \mathcal{C}'$, again by
        Lemma~\ref{polynomialMaps}.
    \end{proof}

    \begin{corollary}\label{ref:truncation:cor}
        Suppose a tuple $(x_1, \ldots ,x_n)\in \CommMat$ lies on a
        non-elementary component. Then also its sub-tuple $(x_1,x_2,
        \ldots ,x_{n-1})\in C_{n-1}(\MM_d)$ lies on a non-elementary
        component.
    \end{corollary}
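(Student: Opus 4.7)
The plan is to follow the component $\mathcal{C}$ of $(x_1,\ldots,x_n)$ under the forgetting-last-matrix morphism $\pi \colon \CommMat \to C_{n-1}(\MM_d)$. Since $\mathcal{C}$ is irreducible, $\overline{\pi(\mathcal{C})}$ is contained in a single irreducible component $\mathcal{C}'$ of $C_{n-1}(\MM_d)$, and $(x_1,\ldots,x_{n-1}) \in \mathcal{C}'$; everything then reduces to proving that $\mathcal{C}'$ is non-elementary. By hypothesis and Lemma~\ref{ref:supportVsEigenvalues:lem}, some tuple $(y_1,\ldots,y_n) \in \mathcal{C}$ has its associated module $M$ supported at distinct points $p_1,\ldots,p_k \in \mathbb{A}^n$ with $k \geq 2$.

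The naive hope would be that $(y_1,\ldots,y_{n-1})$ is already non-elementary in $C_{n-1}(\MM_d)$, but this can fail: all $p_i$ might share the first $n-1$ coordinates, i.e., project to a single point of $\mathbb{A}^{n-1}$. This collapsing is the one real obstacle. To bypass it I would exploit the $\GL_n$-action on $\CommMat$ by linear combinations, $\sigma_A(y_1,\ldots,y_n) = (\sum_j a_{1j} y_j, \ldots, \sum_j a_{nj} y_j)$. Each $\sigma_A$ is a polynomial map of tuples, so by Lemma~\ref{polynomialMaps} it preserves the component $\mathcal{C}$. The module $M_A$ associated to $\sigma_A(y_1,\ldots,y_n)$ lives on the same vector space with the same simultaneous eigenvectors as $M$ but with the joint eigenvalue tuples transformed by $A$, so $\Supp(M_A) = A \cdot \{p_1,\ldots,p_k\}$.

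For generic $A \in \GL_n$ the projection to $\mathbb{A}^{n-1}$ still separates these points: the condition that $Ap_i$ and $Ap_j$ share their first $n-1$ coordinates says that $A$ sends the nonzero vector $p_i - p_j$ into the last coordinate axis, which is a proper closed subvariety of $\GL_n$, and there are only finitely many differences $p_i - p_j$ to avoid. Picking such an $A$, the tuple $(z_1,\ldots,z_n) := \sigma_A(y_1,\ldots,y_n) \in \mathcal{C}$ truncates to a point $(z_1,\ldots,z_{n-1}) \in \mathcal{C}'$ whose associated $\kk[y_1,\ldots,y_{n-1}]$-module has support of size $\geq 2$; by Lemma~\ref{ref:supportVsEigenvalues:lem} some $z_i$ with $i<n$ has more than one eigenvalue, so $\mathcal{C}'$ is non-elementary as required.
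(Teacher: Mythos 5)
Your proposal is correct and follows essentially the same approach as the paper: both use the forgetting-the-last-matrix map $\pi$ together with the $\GL_n$-action (Lemma~\ref{polynomialMaps}) to ensure that non-elementariness survives truncation. The paper phrases the key step in terms of the eigenvalues of the first matrix for a general tuple of $\mathcal{Z}$, while you phrase it via module supports; these are equivalent by Lemma~\ref{ref:supportVsEigenvalues:lem}, and your version just spells out more explicitly why the $\GL_n$-action makes the projection of the support to $\mathbb{A}^{n-1}$ injective.
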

    \begin{proof}
        Let the non-elementary component containing $(x_1, \ldots ,x_n)$ be
        $\mathcal{Z}$ and let $\pi\colon \CommMat\to C_{n-1}(\MM_d)$ forget
        the last matrix. Then $\pi(\mathcal{Z})$ is an irreducible locus
        containing $(x_1, \ldots ,x_{n-1})$. By Corollary~\ref{algebra} or
        simply by $\GL_n$-action, for a general tuple in $\mathcal{Z}$ the
        first matrix has at least two eigenvalues. The same is then true for
        $\pi(\mathcal{Z})$ which shows that this locus is contained in a
        non-elementary component.
    \end{proof}
    \newcommand{\shiftmap}[1]{\operatorname{shift}_{#1}}%
    Important classes of maps $\varphi$ above are translations and linear coordinate changes.
    More precisely, for a tuple $\alpha_{\bullet} = (\alpha_1, \ldots , \alpha_{\ambM})\in
    \kk^{\ambM}$, we have a map $\mathbb{A}^{\ambM}\to \mathbb{A}^{\ambM}$ that
    translates by $\alpha_{\bullet}$, so we also have \emph{translation map}
    $\shiftmap{\alpha}\colon \CommMat \to \CommMat$:
    \begin{equation}\label{eq:translationMap}
        \shiftmap{\alpha}(x_1, \ldots ,x_{\ambM}) = (x_1 + \alpha_1 I_d, \ldots
        ,x_{\ambM} + \alpha_{\ambM}I_d).
    \end{equation}
    In fact, the $\shiftmap{-}$ is an action of $(\mathbb{A}^{\ambM},+)$ on
    $\CommMat$.
    Similarly, for a linear coordinate change $\mathbb{A}^{\ambM}\to
    \mathbb{A}^{\ambM}$ we have an induced $\GL_{\ambM}$-action on $\CommMat$,
    where $A = [a_{ij}]\in \GL_{\ambM}$
    acts by
    \[
        A\circ (x_1,\ldots ,x_{\ambM})=A\cdot (x_1, \ldots ,x_{\ambM})^T :=
        \left( \sum_{j} a_{ij}x_j \right)_i.
    \]
    The actions of $\GL_{\ambM}$ and $\mathbb{A}^{\ambM}$ do not commute, rather they form a
    semidirect product; the group of affine coordinate changes. Both
    $\GL_{\ambM}$
    and $\mathbb{A}^{\ambM}$ commute with the $\GL(V)$-action.

    Let $x_1$ be a nilpotent matrix in the Jordan form
    and let $a_1\le a_2\le \cdots \le a_m$ be the sizes of Jordan blocks of $x_1$. Consider a block matrix $y$ in the form
        \[
            y = \left[
                \begin{array}{cccc}
                    y_{a_m,a_m}&y_{a_m,a_{m-1}}&\cdots&y_{a_m,a_1}\\
                    y_{a_{m-1},a_m}&y_{a_{m-1},a_{m-1}}&\cdots&y_{a_{m-1},a_1}\\
                    \vdots&\vdots&\ddots&\vdots\\
                    y_{a_1,a_m}&y_{a_1,a_{m-1}}&\cdots&y_{a_1,a_1}
                \end{array} \right]
        \]
        where $y_{k,l}$ are $k \times l$
        matrices.
        We say that $y$ is an \emph{upper-triangular Toeplitz matrix} if each
        $y_{k,l}$ is an upper-triangular Toeplitz matrix, i.e., it has the form
        \[
            \begin{bmatrix}
                0 & \ldots & 0 &z_0 & z_1 & z_2 & \ldots & z_{k-1}\\
                0 & \ldots & 0 &0 & z_0 & z_1 &  \ldots & z_{k-2}\\
                0 & \ldots & 0 &0 & 0 & z_0 &  \ldots & z_{k-3}\\
                \vdots & & \vdots  & &&\ddots & \ddots & \vdots\\
                0 & \ldots & 0 & & \ldots & & 0 & z_0
             \end{bmatrix}\quad  \mathrm{or}\quad
             \begin{bmatrix}
                z_0 & z_1 & z_2 & \ldots & z_{l-1}\\
                0 & z_0 & z_1 & \ldots & z_{l-2}\\
                \vdots & \ddots & \ddots & \ddots & \vdots\\
                0 & \ldots & 0 & z_0 & z_1\\
                0 & \ldots & \ldots & 0 & z_0\\
                0 & & \ldots & & 0\\
                \vdots & & & & \vdots\\
                0 & & & & 0
             \end{bmatrix}.
        \]

    \begin{lemma}\label{ref:toeplitz:lem}
        Suppose that $(x_1, \ldots ,x_n)\in \CommMat$ where $x_1$ is nilpotent and in the Jordan canonical form.
        Then each $x_i$, $i\geq 2$ is an upper-triangular Toeplitz matrix.
    \end{lemma}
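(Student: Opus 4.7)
The plan is to leverage the block-diagonal structure of $x_1$ to reduce the lemma to a single-block computation. Since $x_1$ is in Jordan normal form and nilpotent, we may write $x_1 = \diag(J_{a_m}, J_{a_{m-1}}, \ldots, J_{a_1})$, where $J_k$ denotes the $k\times k$ nilpotent Jordan block with $1$'s on the superdiagonal. For each $i\ge 2$, I would partition $x_i$ conformally into blocks $y_{k,l}$ indexed by pairs of Jordan block sizes, as in the statement of the lemma. Since $x_1$ is block-diagonal, the commutation relation $x_1x_i=x_ix_1$ decouples into the independent block equations $J_k\, y_{k,l} = y_{k,l}\, J_l$. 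Hence it suffices to classify the $k\times l$ matrices $M$ satisfying $J_k M = M J_l$ and check that they match precisely the two upper-triangular Toeplitz shapes displayed in the lemma.

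The core step is a direct entry-by-entry computation. Writing $M=[c_{p,q}]$, the entry $(p,q)$ of $J_k M$ equals $c_{p+1,q}$ (with the convention $c_{k+1,q}=0$), and the entry $(p,q)$ of $MJ_l$ equals $c_{p,q-1}$ (with $c_{p,0}=0$). Comparing interior entries gives the Toeplitz condition $c_{p+1,q}=c_{p,q-1}$, so $c_{p,q}$ depends only on $q-p$; set $c_{p,q}=z_{q-p}$. The boundary entries from the first column of $MJ_l$ and from the last row of $J_k M$ yield the additional constraints $c_{p+1,1}=0$ for $1\le p\le k-1$ and $c_{k,q-1}=0$ for $2\le q\le l$. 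In terms of the scalars $z_d$, these read $z_d=0$ for $d\in\{1-k,\ldots,-1\}$ and $z_d=0$ for $d\in\{1-k,\ldots,l-1-k\}$ respectively.

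It then remains to split into cases. When $k\le l$, the union of forbidden indices is $\{1-k,\ldots,l-1-k\}$, so the surviving free scalars are $z_{l-k},z_{l-k+1},\ldots,z_{l-1}$; after relabeling $z_{l-k+j}\mapsto z_j$ this produces exactly the ``wide'' upper-triangular Toeplitz block displayed first in the statement. When $k>l$, the condition from the last row is subsumed by the one from the first column, so the surviving scalars are $z_0,z_1,\ldots,z_{l-1}$, giving the ``tall'' upper-triangular Toeplitz block displayed second. Assembling these block descriptions over all pairs $(a_s,a_t)$ yields the claimed form of $x_i$. The main obstacle is really only careful bookkeeping of which indices vanish in each case; the lemma is a classical fact about the commutant of a nilpotent Jordan matrix, and the proof above gives the standard linear-algebraic route to it.
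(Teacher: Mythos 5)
Your proof is correct, and it takes a genuinely different route from the paper's. You prove the block equation $J_k M = M J_l$ by a direct entry-by-entry computation, deriving the Toeplitz recursion $c_{p+1,q}=c_{p,q-1}$ together with the boundary constraints from the first column and last row, and then resolving the two cases $k\le l$ and $k>l$; the index bookkeeping you carry out (forbidden diagonals $\{1-k,\ldots,-1\}$ and $\{1-k,\ldots,l-1-k\}$, with surviving diagonals $\{l-k,\ldots,l-1\}$ resp.\ $\{0,\ldots,l-1\}$) checks out and reproduces both displayed shapes. The paper instead equips $V$ with a $\kk[t]$-module structure via $t\cdot v = x_1(v)$, identifies $V\simeq\bigoplus_i \kk[t]/(t^{a_i})$, and observes that each $x_i$ is a $\kk[t]$-module endomorphism, so each block $y_{k,l}$ is an element of $\Hom_{\kk[t]}(\kk[t]/(t^{a_l}),\kk[t]/(t^{a_k}))$, i.e.\ multiplication by a polynomial subject to a divisibility constraint when $a_k>a_l$. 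Your route is self-contained and elementary, proving the classification from scratch; the paper's route is chosen deliberately because it introduces the ``polynomial'' notation $y_{k,l}\in\kk[t]$ and the module-endomorphism viewpoint that is reused heavily in later arguments (e.g.\ in Proposition~\ref{2-dim_kernel-structure}), so the two proofs optimize for different things. Both are complete and correct.
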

    \begin{proof}
        \emph{The proof is well-known, we give it to introduce some notation
        and concepts used later.}
        Recall that $x_i$ are linear operators on $V$. Introduce a
        $\kk[t]$-module structure on $V$ by $t\cdot v = x_1(v)$. Since all
        $x_i$ commute with $x_1$, they are endomorphisms of the resulting
        $\kk[t]$-module.
        Since $x_1$ is in Jordan form, this module is isomorphic to
        \[
            \bigoplus_{i=1}^m \frac{\kk[t]}{(t^{a_i})}
        \]
        where $a_1, \ldots , a_m$ are sizes of Jordan blocks of $x_1$. Let
        $M_i := \kk[t]/(t^{a_i})$. Then $V = \bigoplus_i M_i$, so every
        endomorphism of $V$ has the form

        \[
            \left[
                \begin{array}{cccc}
                    y_{a_m,a_m}&y_{a_m,a_{m-1}}&\cdots&y_{a_m,a_1}\\
                    y_{a_{m-1},a_m}&y_{a_{m-1},a_{m-1}}&\cdots&y_{a_{m-1},a_1}\\
                    \vdots&\vdots&\ddots&\vdots\\
                    y_{a_1,a_m}&y_{a_1,a_{m-1}}&\cdots&y_{a_1,a_1}
            \end{array} \right]
        \]
        for some $y_{i,i'}\in \kk[t]$. Moreover if $a_{i} > a_{i'}$ then
        $y_{i, i'}$ is divisible by $t^{a_{i} - a_{i'}}$. It remains to go back to matrices and
        see that $y_{i,j} = z_0+z_1t+z_2t^2 +  \ldots $ with $z_i\in \kk$
        corresponds to the upper-triangular Toeplitz matrix above.
    \end{proof}
    \begin{remark}\label{ref:firstBlockZero:rmk}
        By Corollary~\ref{algebra}, we may add a polynomial in $x_1$ to each
        $x_i$, $i\geq 2$ so that a chosen
        diagonal block $y_{k,k}$ is zero. We will frequently use this to make
        $y_{a_m, a_m}$ a zero block.
    \end{remark}

    \subsection{Concatenation and components}

    \newcommand{\concat}{\operatorname{concat}}%
    For any $d, d'$ the \emph{naive concatenation map} $\CommMatParam{d}\times
    \CommMatParam{d'}\to \CommMatParam{d+d'}$
    maps tuples $\xx = (x_i)$,
        $\xx' = (x'_i)$ to the tuple
    \[
        \left(\begin{bmatrix} x_i & 0\\ 0 &
        x'_i\end{bmatrix} \ |\ i=1,2, \ldots ,n\right).\]
    This map is nowhere dominant since we can conjugate the target tuple by an
    element of $\GL_{d+d'}$. To remedy this, we consider the \emph{concatenation map}
    $\concat\colon\GL_{d+d'}\times \CommMatParam{d}\times
    \CommMatParam{d'}\to \CommMatParam{d+d'}$ that sends a triple $(g'', \xx,
    \xx')$ to the tuple
    \[
        \left(g''\cdot \begin{bmatrix} x_i & 0\\ 0 &
        x'_i\end{bmatrix}\cdot (g'')^{-1}\ |\ {i=1,2, \ldots ,n}\right).
    \]
    In the theorem below, we show that $\concat$ map is usually dominant.

    We say that tuples $\xx,
    \xx'$ have \emph{intersecting supports} if for every $i$ the matrices
    $x_i$ and $x_i'$ have a common eigenvalue. We say that tuples have
    \emph{disjoint supports} if they do not have intersecting supports.  If
    the tuples have disjoint supports then the supports of the modules
    associated to $\xx$ and $\xx'$ are disjoint, but not vice versa,
    see~\S\ref{ssec:dictionary}.

    \begin{proposition}[Concatenation of
        components]\label{ref:productOfComponents:prop}%
        \phantom{AA}
        \begin{enumerate}
            \item\label{it:bundle} Let $\xx\in \CommMatParam{d}$ and $\xx'\in
                \CommMatParam{d'}$ be tuples with disjoint supports. Let $g\in
                \GL_{d+d'}$.
                Then $\concat$ is a smooth map near the point $(g, \xx,
                \xx')$. The fiber $\concat^{-1}(\concat(g, \xx, \xx'))$
                has dimension $d^2 + (d')^2$ at $(g, \xx, \xx')$.
            \item\label{it:components} Let $\mathcal{C}$ be an irreducible component of
                $\CommMatParam{d}$ and let $\mathcal{C}'$ be an irreducible
                component of $\CommMatParam{d'}$. Then the closure of $\GL_{d+d'}\cdot
                (\mathcal{C}\times \mathcal{C}')$ is an irreducible component of
                $\CommMatParam{d+d'}$ which has dimension
                $2d d' + \dim \mathcal{C} + \dim \mathcal{C}'$.
                We call this component the \emph{concatenation} of
                $\mathcal{C}$ and $\mathcal{C}'$. If $\mathcal{C},
                \mathcal{C}'$ have smooth points then their concatenation also
                has a smooth point.
        \end{enumerate}
    \end{proposition}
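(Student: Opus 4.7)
The plan is to leverage the ADHM correspondence (Section~\ref{ssec:quot_and_commuting}) together with the obstruction theory from Section~\ref{ssec:obstructions} and the vanishing of cross-$\Ext$ groups for modules with disjoint supports. For part~(\ref{it:bundle}), I would first reduce to the case $g = I_{d+d'}$ by $\GL_{d+d'}$-equivariance of $\concat$, so the image becomes $Y = \diag(\xx, \xx')$. Let $M, M'$ be the $S$-modules associated to $\xx, \xx'$; then $N := M \oplus M'$ corresponds to $Y$, and disjoint supports says $\Supp(M) \cap \Supp(M') = \emptyset$. The fiber dimension is immediate: any decomposition of $N$ as a direct sum of submodules of the given degrees is unique by primary decomposition (disjoint supports force any such decomposition to respect the partition), so the fiber is parameterized by $\GL_d \times \GL_{d'} \subset \GL_{d+d'}$, giving dimension $d^2 + (d')^2$.

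The smoothness assertion is the main technical step. I would use the obstruction theory on the Quot side via Proposition~\ref{ref:prodfamilybundleOverQuotmain:prop}: the tangent and obstruction spaces at $[F/K]$ with $F/K \simeq N$ are $\Hom_S(K, N)$ and $\Ext^1_S(K, N) \simeq \Ext^2_S(N, N)$ respectively. The key input is that disjoint supports force $\Ext^i_S(M, M') = 0 = \Ext^i_S(M', M)$ for all $i \geq 0$, as one checks by localizing at each maximal ideal (at most one of $M_{\mm}, M'_{\mm}$ is nonzero). Consequently $\Ext^i_S(N, N)$ splits as $\Ext^i_S(M, M) \oplus \Ext^i_S(M', M')$ for all $i$, so the map of obstruction theories induced by $\concat$ is an isomorphism and the tangent map is surjective (its cokernel would land in a vanishing cross-$\Ext^1$). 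The Fundamental Theorem of obstruction calculus quoted in Section~\ref{ssec:obstructions} then yields smoothness of $\concat$ after passing through the smooth $\GL$-bundles $\prodfamilystable \to \Quotmain$ and $\prodfamilystable \to \CommMatParam{d+d'}$ from Proposition~\ref{ref:prodfamilybundleOverQuotmain:prop}.

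Part~(\ref{it:components}) is then largely formal. The closure $Z := \overline{\GL_{d+d'} \cdot (\mathcal{C} \times \mathcal{C}')}$ is irreducible as the image of the irreducible $\GL_{d+d'} \times \mathcal{C} \times \mathcal{C}'$. To apply part~(\ref{it:bundle}), I would use the translation map $\shiftmap{\alpha}$ of~\eqref{eq:translationMap} for a generic $\alpha \in \kk^n$: it preserves $\mathcal{C}'$ by Lemma~\ref{polynomialMaps} and shifts eigenvalues (and hence supports) by $\alpha$, so after translation a generic pair in $\mathcal{C} \times \mathcal{C}'$ has disjoint supports. Then part~(\ref{it:bundle}) yields $\dim Z = 2dd' + \dim \mathcal{C} + \dim \mathcal{C}'$. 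To see $Z$ is a component, I additionally choose $\xx, \xx'$ in the nonempty open sets $\mathcal{C}^o := \mathcal{C} \setminus \bigcup_{\mathcal{D} \neq \mathcal{C}} \mathcal{D}$ in $\CommMatParam{d}$ (and analogously for $\mathcal{C}'$); smoothness of $\concat$ at $(I, \xx, \xx')$ makes $\concat$ open locally, and on $\mathcal{C}^o \times (\mathcal{C}')^o$ every nearby point still lies in $\mathcal{C} \times \mathcal{C}'$, so $Z$ contains an open neighborhood of $Y := \concat(I, \xx, \xx')$ in $\CommMatParam{d+d'}$, forcing $Z$ to be a component. Finally, if $\mathcal{C}, \mathcal{C}'$ have smooth points I pick $\xx, \xx'$ to be such (with disjoint supports); the tangent-space splitting from part~(\ref{it:bundle}) gives $\dim T_Y \CommMatParam{d+d'} = 2dd' + \dim \mathcal{C} + \dim \mathcal{C}' = \dim Z$, exhibiting $Y$ as a smooth point of $\CommMatParam{d+d'}$ lying on $Z$. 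The principal obstacle is the smoothness claim in part~(\ref{it:bundle}): the whole argument rests on the vanishing of cross-$\Ext$ groups for disjoint supports, which cleanly splits both the tangent and obstruction data, and tracking this through the ADHM identifications is the only delicate point.
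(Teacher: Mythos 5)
Your overall strategy matches the paper's: both invoke an obstruction theory with obstruction space $\Ext^2(N,N)$ for $N = M\oplus M'$, show the induced map of obstruction spaces is an injection, prove the tangent map $d\concat$ is surjective, and invoke the Fundamental Theorem of obstruction calculus. Your cross-$\Ext$ vanishing observation is correct and is the conceptual core of the result. The difference is in how the tangent-map surjectivity is established, and that is where your proposal is underdeveloped. The parenthetical ``its cokernel would land in a vanishing cross-$\Ext^1$'' hides real work. To make this precise you would need to set up a comparison
\[
    \begin{array}{ccc}
        \gl_{d+d'}\oplus T_{\xx}\CommMatParam{d}\oplus T_{\xx'}\CommMatParam{d'} & \longrightarrow & T_{\xx''}\CommMatParam{d+d'} \\
        \downarrow & & \downarrow \\
        \Ext^1(M,M)\oplus\Ext^1(M',M') & \longrightarrow & \Ext^1(N,N)
    \end{array}
\]
and prove that the kernel of the right vertical map is exactly the infinitesimal $\GL_{d+d'}$-orbit; that is essentially the statement that $\CommMatParam{d+d'}\to\Modfin$ has first-order deformations given by $\Ext^1(N,N)$ with the expected isotropy. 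The paper sidesteps this framework entirely and instead proves surjectivity by hand: given a tangent vector $\widetilde{\xx''}$, i.e.\ a commuting tuple over $\kk[\varepsilon]/\varepsilon^2$, it uses coprimality of $\Ann(M)$, $\Ann(M')$ and a Chinese Remainder argument to lift the direct-sum decomposition $N = M\oplus M'$ to the $\kk[\varepsilon]$-module, then reconstructs a preimage triple $(\widetilde g,\widetilde\xx,\widetilde\xx')$; the same reasoning identifies the kernel as $\gl_d\oplus\gl_{d'}$. This explicit lifting is morally the ``$\Ext^1(M,M')=0$'' fact you cite, applied at the level of genuine $\varepsilon$-deformations rather than cohomology, and it gives both surjectivity and the fiber dimension at once.

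Two smaller remarks. First, the detour through $\prodfamilystable\to\Quotmain$ is unnecessary and in fact slightly awkward: there is no canonical $\concat$ on the Quot side (you would have to match numbers of generators), and the paper avoids it by working directly with the obstruction theory on $\CommMat$ from Corollary~\ref{ref:obstructionTheoryForCommMat:cor}, which has obstruction space $\Ext^2(M,M)$; your argument is cleaner if you stay there too. Second, your fiber description ``parameterized by $\GL_d\times\GL_{d'}$'' via uniqueness of the primary decomposition is right in spirit and gives the correct dimension, but note that when $d=d'$ the set-theoretic fiber can have several components (the decomposition into summands is unique as a set of submodules, not as an ordered pair); this does not affect the dimension count, but it is worth saying. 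Your treatment of part~\ref{it:components}, including the translation trick to arrange disjoint supports and the tangent-space splitting for the smooth-point claim, is essentially the same as the paper's; the extra step of passing to $\mathcal{C}^o$ is harmless but not needed, since an open irreducible subset of $\CommMatParam{d+d'}$ of the computed dimension already has closure equal to a component.
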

    \begin{proof}
        \def\mm{\mathfrak{m}}%
        We first discuss how~\eqref{it:components} follows
        from~\eqref{it:bundle}.
        Choose general points $\xx$ and $\xx'$ of $\mathcal{C}, \mathcal{C}'$.
        Thanks to the translation maps~\eqref{eq:translationMap}, we assume
        that for every $i$ the matrices $x_i$ and $x_i'$ have disjoint
        eigenvalues. Then there exists an open neighbourhood $W$ of $(\xx, \xx')$
        in $\mathcal{C}\times \mathcal{C}'$ where this condition holds.
        By~\eqref{it:bundle}, the map $\concat|_{\GL_{d+d'} \times W}\colon
        \GL_{d+d'}\times W\to \CommMatParam{d+d'}$ is smooth and
        its fibers have
        dimension $d^2 + (d')^2$ at every point. Therefore the image of this
        map is open and has
        dimension $(d+d')^2 + \dim\mathcal{C} + \dim \mathcal{C}' - d^2 -
        (d')^2$. Since $W$ is open in $\mathcal{C} \times \mathcal{C}'$ the
        scheme $\GL_{d+d'}\times W$ is irreducible hence its image is an open
        irreducible subset of required dimension.

        \def\bb{\mathbf{b}}%
        Now we prove~\eqref{it:bundle}.
        Let $\xx'' = \concat(g, \xx, \xx')$.
        Let $M$, $M'$ be the modules associated to $\xx$, $\xx'$. By
        Lemma~\ref{ref:matricesaremoduleswithbasis} they come with canonical
        $\kk$-linear bases $\bb$, $\bb'$.
        The module associated to
        point $\xx''$ is $M\oplus M'$ by Lemma~\ref{ref:GLVaction:lem} and
        comes with a canonical basis $\bb'' = g\cdot (\bb, \bb')$.
        The point
        $\xx''\in\CommMatParam{d+d'}$ has an obstruction theory with obstruction group
        $\Ext^2(M\oplus M', M\oplus M')$, see
        Corollary~\ref{ref:obstructionTheoryForCommMat:cor}.
        Similarly, the
        point $(g, \xx, \xx')\in \GL_{d+d'}\times \CommMatParam{d}\times
        \CommMatParam{d'}$ has an obstruction theory with obstruction group
        $\Ext^2(M, M)\oplus \Ext^2(M', M')$. By an examination of
        \rred{the} construction of these obstruction
        theories~\cite[Theorem~6.4.9]{fantechi_et_al_fundamental_ag}, the map
        $\concat$ induces a map of
        those theories which is injective onto the direct summand $\Ext^2(M,
        M)\oplus \Ext^2(M', M')$ of $\Ext^2(M\oplus M', M\oplus
        M')$. If we prove that
        \[
            d\concat\colon T_{(g, \xx, \xx')}\GL_{d+d'}\times
            \CommMatParam{d}\times \CommMatParam{d'} \to T_{\xx''} \CommMatParam{d+d'}
        \]
        is surjective, then by the Fundamental Theorem of obstruction calculus, see
        \S\ref{ssec:obstructions}, the map $\concat$ is smooth at $(g, \xx,
        \xx')$. Then the
        dimension of the fiber of $\concat$ at $(g, \xx, \xx')$ is computed as
        $\dim_{\kk}\ker (d\concat)$.

        The vector space $T_{\xx''} \CommMatParam{d+d'}$ is
        described in Lemma~\ref{ref:tangentspace:lem} as tuples of commuting
        matrices in $\MM_{d+d'}(\kk[\varepsilon]/\varepsilon^2)$ which reduce to $\xx''$
        modulo $\varepsilon$. We fix such a tuple $\widetilde{\xx''}$. Arguing as in
        Lemma~\ref{ref:matricesaremoduleswithbasis}, we obtain an associated module
        $\mathcal{M}''$ over $S[\varepsilon]/\varepsilon^2$ together with a
        fixed $\kk[\varepsilon]/\varepsilon^2$-linear basis $\widetilde{\bb''}$. Since $\widetilde{\xx''}$
        reduces to $\xx''$ modulo $\varepsilon$, we have
        $\mathcal{M}''/\varepsilon\mathcal{M}'' \simeq M\oplus M'$ and the basis
        $\widetilde{\bb''}$ reduces to $\bb''$ modulo $\varepsilon$.

        \def\wI{J}%
        \def\wIp{J'}%
        We claim that the direct sum decomposition $\mathcal{M}''/\varepsilon\mathcal{M}''
        \simeq M\oplus M'$ lifts to a direct sum decomposition of
        $\mathcal{M}''$.
        As in Section~\ref{ssec:dictionary}, for $I = \Ann(M)$ and $I' =
        \Ann(M')$ we have that the set of maximal ideals containing
        $I$ (respectively, $I'$) is the support of $M$ (respectively, of
        $M'$). Since these supports are disjoint, no maximal ideal contains
        $I+I'$, so $I+I' = (1)$. Since $I'M' = 0$ and $IM = 0$, we have
        \[
            I(M\oplus M') = IM' = IM' + I'M' = (I+I')M' = M'
        \]
        and similarly $I'(M\oplus M') = M$, so
        we obtain $II'(M\oplus M') = 0$.
        Since $\mathcal{M}''/\varepsilon \mathcal{M}'' = M\oplus M'$, we have that
        $II'\mathcal{M}''\subset \varepsilon \mathcal{M}''$.
        The multiplication by $\varepsilon$ gives a surjection
        $\mathcal{M}''/\varepsilon \mathcal{M}''\to \varepsilon
        \mathcal{M}''$. The subset $II'\varepsilon \mathcal{M}''$ is the image of
        $II'(\mathcal{M}''/\varepsilon \mathcal{M}'') = II'(M \oplus M') =
        0$, so
        that $(II')^2 \mathcal{M}'' \subset II'\varepsilon \mathcal{M}'' = 0$. Since $I+I' = (1)$ also $I^2 + (I')^2
        = (1)$.
        Let $\wI, \wIp \subset
        S[\varepsilon]/\varepsilon^2$ be the ideals generated by $I$, $I'$
        respectively. They satisfy $\wI^2 + \wIp^2 = (1)$ and
        $\wI^2\wIp^2\mathcal{M}'' = 0$
        so $\wI^2\mathcal{M}''\cap \wIp^2\mathcal{M}'' = (\wI^2+\wIp^2)\cdot(\wI^2\mathcal{M}''\cap \wIp^2\mathcal{M}'') \subset
        \wI^2\wIp^2\mathcal{M}'' = 0$. By Chinese Remainder
        Theorem, we obtain
        \[
            \mathcal{M}'' = \frac{\mathcal{M}''}{\wI^2\mathcal{M}''\cap
                \wIp^2\mathcal{M}''}  \simeq
                \frac{\mathcal{M}''}{\wI^2\mathcal{M}''} \oplus
                \frac{\mathcal{M}''}{\wIp^2\mathcal{M}''}.
        \]
        Let $\mathcal{M} := \mathcal{M}''/\wI^2\mathcal{M}''$ and $\mathcal{M}' :=
        \mathcal{M}''/\wIp^2\mathcal{M}''$. We have
        \[
            \frac{\mathcal{M}}{\varepsilon\mathcal{M}} \simeq
            \frac{\mathcal{M}''}{(\varepsilon)\mathcal{M}''+\wI^2\mathcal{M}''}
             \simeq
            \frac{\frac{\mathcal{M}''}{\varepsilon
                \mathcal{M}''}}{I^2\left(\frac{\mathcal{M}''}{\varepsilon
                \mathcal{M}''}\right)}
            \simeq \frac{M\oplus M'}{I^2(M\oplus M')} = \frac{M\oplus M'}{M'}
            = M,
        \]
        so the submodule $\mathcal{M}$ reduces to $M$ modulo $\varepsilon$ and
        the same goes for $\mathcal{M}'$. Via the direct sum $\mathcal{M}'' =
        \mathcal{M}\oplus \mathcal{M}'$ we view $\mathcal{M}$,
        $\mathcal{M}'$ as submodules of $\mathcal{M}''$. Since $\wI$, $\wIp$
        are coprime, the submodule $\mathcal{M}$ is exactly the set of
        elements of $\mathcal{M}''$ annihilated by $\wI^2$, and similarly for
        $\mathcal{M}'$.

        The $S[\varepsilon]/\varepsilon^2$-modules $\mathcal{M}$, $\mathcal{M}'$ are
        free $\kk[\varepsilon]/\varepsilon^2$-modules as direct
        summands of the free $\kk[\varepsilon]/\varepsilon^2$-module $\mathcal{M}''$.
        Fix any $\kk[\varepsilon]/\varepsilon^2$-linear bases
        $\widetilde{\bb}$, $\widetilde{\bb'}$ of $\mathcal{M}$,
        $\mathcal{M}'$ which reduce to bases
        $\bb$, $\bb'$ respectively. By the argument of
        Lemma~\ref{ref:matricesaremoduleswithbasis} from the pair $(\mathcal{M},
        \widetilde{\bb})$ we obtain a tuple $\widetilde{\xx}$ of commuting matrices with entries
        in $\kk[\varepsilon]/\varepsilon^2$ which reduces to $\xx$ modulo
        $\varepsilon$. By Lemma~\ref{ref:tangentspace:lem} this gives an
        element of $T_{\xx}\CommMatParam{d}$ which we denote also by
        $\widetilde{\xx}$. From $(\mathcal{M}',
        \widetilde{\bb'})$ we analogously get an element
        $\widetilde{\xx'}\in T_{\xx'}\CommMatParam{d'}$. The bases
        $\widetilde{\bb''}$ and $g\cdot(\widetilde{\bb}, \widetilde{\bb'})$
        are two bases that reduce to $\bb'' = g\cdot(\bb, \bb')$ modulo
        $\varepsilon$, so there exists a unique element $\widetilde{g}\in
        T_{g}\GL_{d+d'}$ such that $\widetilde{g} \cdot(\widetilde{\bb},
        \widetilde{\bb'}) = \widetilde{\bb''}$.
        If we view elements of tangent space as morphisms from
        $\Spec(\kk[\varepsilon]/\varepsilon^2)$, then the tangent map
        $d\concat$ is given by composing them with $\concat$. Therefore,
        the triple $(\widetilde{g},
        \widetilde{\xx}, \widetilde{\xx'})$ in the source of $d\concat$ maps
        to $\widetilde{\xx''}$.
        It remains to compute the kernel of $d\concat$. So we assume
        $\widetilde{\xx''} = \xx''$ and $\mathcal{M}'' =
        (M\oplus M')[\varepsilon]/\varepsilon^2$. If an element $(\widetilde{g},
        \widetilde{\xx}, \widetilde{\xx'})$ maps to $\widetilde{\xx''}$, then
        it induces a decomposition of $\mathcal{M}'' = \mathcal{N}\oplus
        \mathcal{N}'$. Since
        $\widetilde{\xx}$ reduces to $\xx$ modulo $\varepsilon$, we have
        $\mathcal{N}/\varepsilon \mathcal{N} = M$. Since $IM = 0$, we have
        $\wI\mathcal{N} \subset \varepsilon \mathcal{N}$ and so
        $\wI^2 \mathcal{N} = 0$. Similarly
        $\wIp^2 \mathcal{N}' =
        0$, so by uniqueness above, we have $\mathcal{N} = \mathcal{M}$ and
        $\mathcal{N}' = \mathcal{M}'$ and hence the triple $(\widetilde{g},
        \widetilde{\xx}, \widetilde{\xx'})$ is uniquely determined by a choice
        of bases $\widetilde{\bb},
        \widetilde{\bb'}$: for these we have respectively a $d^2$ and $(d')^2$ dimensional
        space of choices.
    \end{proof}
    The component part of the previous theorem was proven by different means
    in~\cite{CrawleyBoevey}. However, it seems that the smoothness of the map
    $\concat$ and the dimension of fibers, which are important for us, are not
    present in that paper.

    \subsection{Specific Jordan blocks}\label{ssec:specificJordanBlocks}
    \begin{lemma}[Very big largest Jordan block]\label{big_largest_Jordan}
        Let $(x_1,\ldots ,x_n)\in \CommMat$ be an $n$-tuple of nilpotent
        matrices such that the sizes $a_m>a_{m-1}\ge \cdots \ge a_1$ of Jordan
        blocks of the Jordan form of some linear combination of $x_1,\ldots
        ,x_n$ satisfy $a_m>2a_{m-1}$. Then the $n$-tuple $(x_1,\ldots ,x_n)$
        belongs to a non-elementary component of $\CommMat$.
    \end{lemma}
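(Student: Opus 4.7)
My plan is to construct an explicit one-parameter family in $\CommMat$ whose generic member is non-elementary and whose special fiber is the given tuple $(x_1,\ldots,x_n)$.

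First I reduce to a convenient normal form. Using the $\GL_n$-action on $\CommMat$ from Lemma~\ref{polynomialMaps} together with $\GL_d$-conjugation, both of which preserve irreducible components, I may assume that $x_1$ itself is nilpotent and in Jordan canonical form with the block of size $a_m$ occupying the first $a_m$ rows and columns, and with $a_m>2a_{m-1}$; here one uses the fact that any linear combination of commuting nilpotent matrices remains nilpotent. By Lemma~\ref{ref:toeplitz:lem} each $x_i$ with $i\geq 2$ is then an upper-triangular Toeplitz matrix in the block decomposition induced by $x_1$. Applying Remark~\ref{ref:firstBlockZero:rmk} together with Corollary~\ref{algebra}, I further subtract appropriate polynomials in $x_1$ from each $x_i$ so that the top-left $(m,m)$-block $y_{a_m,a_m}$ of every $x_i$, $i\geq 2$, is zero; this replacement does not change the components under consideration.

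The heart of the argument is a combinatorial observation about the shape of the remaining blocks. For $k<m$ the block $y_{a_m,a_k}$ is $a_m\times a_k$ with $a_m>a_k$, so its rows $a_k+1,\ldots,a_m$ vanish; dually, $y_{a_k,a_m}$ has its first $a_m-a_k$ columns equal to zero. Combined with the vanishing of $y_{a_m,a_m}$, this forces both the $p$th row and the $p$th column of every $x_i$ with $i\geq 2$ to be identically zero for each integer
\[
    p\in\{a_{m-1}+1,\,a_{m-1}+2,\,\ldots,\,a_m-a_{m-1}\}.
\]
The hypothesis $a_m>2a_{m-1}$ (which for integers forces $a_m\geq 2a_{m-1}+1$) is precisely what makes this index range nonempty. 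For any such $p$, the matrix unit $E_{p,p}$ satisfies $E_{p,p}x_i=x_iE_{p,p}=0$ for all $i\geq 2$.

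I then consider the family $\tilde x_1(s):=x_1+sE_{p,p}$ parametrized by $s\in\mathbb{A}^1$, keeping $x_2,\ldots,x_n$ fixed. The identity $[E_{p,p},x_i]=0$ immediately gives $(\tilde x_1(s),x_2,\ldots,x_n)\in\CommMat$ for every $s$, with specialization to the original tuple at $s=0$. On the large Jordan block $\tilde x_1(s)$ is upper triangular with diagonal $(0,\ldots,0,s,0,\ldots,0)$, where $s$ sits at position $p$, while on the complementary block it is unchanged; hence $\tilde x_1(s)$ has eigenvalues $\{0,s\}$ and the tuple is non-elementary for every $s\neq 0$. The resulting irreducible one-parameter family is contained in some component of $\CommMat$, which by Lemma~\ref{generic_assumption} must also contain the original tuple and contains non-elementary points, proving the claim. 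The only genuinely delicate part of the argument is the combinatorial verification identifying the admissible range of indices $p$, and this is exactly where the sharp numerical hypothesis $a_m>2a_{m-1}$ enters.
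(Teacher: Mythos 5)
Your proof is correct and follows essentially the same route as the paper: reduce via the $\GL_n\times\GL(V)$ action to $x_1$ in Jordan form, apply the Toeplitz structure and Remark~\ref{ref:firstBlockZero:rmk} to kill the top-left block, observe that the numerical hypothesis $a_m>2a_{m-1}$ produces an index $p$ whose row and column vanish in every $x_i$ with $i\geq 2$, and deform $x_1\mapsto x_1+\lambda E_{p,p}$ to create a second eigenvalue. The paper takes $p=a_{m-1}+1$ specifically; your identification of the full admissible range $\{a_{m-1}+1,\ldots,a_m-a_{m-1}\}$ is a harmless (and correct) generalization but does not change the argument.
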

    \begin{proof}
        The action of $\GL_n \times \GL(V)$ stabilizes the irreducible components of $\CommMat$, therefore we may assume that
         $x_1$ is in the Jordan canonical form:
         \[
             x_1=\left[
                 \begin{array}{ccc}J_{a_m}\\&\ddots\\&&J_{a_1}
                 \end{array}
             \right]\]
        where $J_{a_k}$ denotes the nilpotent Jordan block of size $a_k$ for
        each $k=1,\ldots ,m$. By Lemma~\ref{ref:toeplitz:lem} the matrices $x_2,\ldots ,x_n$ are of the form
        \[
            x_i=\left[
                \begin{array}{cccc}
                    x_{a_m,a_m}^{(i)}&x_{a_m,a_{m-1}}^{(i)}&\cdots&x_{a_m,a_1}^{(i)}\\
                    x_{a_{m-1},a_m}^{(i)}&x_{a_{m-1},a_{m-1}}^{(i)}&\cdots&x_{a_{m-1},a_1}^{(i)}\\
                    \vdots&\vdots&\ddots&\vdots\\
                    x_{a_1,a_m}^{(i)}&x_{a_1,a_{m-1}}^{(i)}&\cdots&x_{a_1,a_1}^{(i)}
                \end{array} \right]
        \]
    where each $x_{a_j,a_k}^{(i)}$ is an $a_j\times a_k$ upper triangular and
    Toeplitz matrix. By Remark~\ref{ref:firstBlockZero:rmk} we assume that
    $x_{a_m, a_m}^{(i)} = 0$ for every $i\geq 2$. In particular, since $a_m>2a_{m-1}$, the $(a_{m-1}+1)$-th
    row and column of the matrix $x_i$ are zero for each $i\ge 2$.
    Consequently, the matrix $E_{a_{m-1}+1,a_{m-1}+1}$ that has 1 at the
    intersection of the $(a_{m-1}+1)$-th row and column and zeros elsewhere
    commutes with $x_i$ for $i\ge 2$.
    For a nonzero $\lambda$, the matrix $x_1 + \lambda
    E_{a_{m-1}+1,a_{m-1}+1}$ is not nilpotent and in fact has more than one
    eigenvalue. Therefore the
    line $\{(x_1+\lambda
        E_{a_{m-1}+1,a_{m-1}+1},x_2,\ldots ,x_n):\lambda \in \kk\}$ intersects
        \rred{in an open subset}
        some component of $\CommMat$ that is
        non-elementary. But a component is
        closed, so it contains the whole
        line, in particular $(x_1, \ldots ,x_{n})$, and the lemma follows.
    \end{proof}

    \begin{lemma}[One Jordan block]\label{1-dim_kernel}
        Let $d$ be arbitrary and let $(x_1,\ldots ,x_n)\in \CommMat$ be an
        $n$-tuple of nilpotent matrices such that some linear combination of
        $x_1,\ldots ,x_n$ has a one-dimensional kernel. Then the $n$-tuple
        $(x_1,\ldots ,x_n)$ belongs to the principal component of $\CommMat$.
    \end{lemma}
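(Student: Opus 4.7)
The plan is to reduce to a very concrete situation where everything collapses to a single Jordan block, and then use the tools from earlier in the section.

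First, I would apply a $\GL_n$-action on $\CommMat$ so that the given linear combination becomes $x_1$; this preserves the property of being a tuple of commuting nilpotent matrices (sums of commuting nilpotents are nilpotent) and lies in a single $\GL_n$-orbit, hence in a single component. After this reduction, the matrix $x_1$ is nilpotent of size $d \times d$ with a one-dimensional kernel, which forces $x_1$ to consist of a single Jordan block of size $d$. Conjugating by $\GL_d$ (which again preserves components) I may assume $x_1$ is in Jordan canonical form $J_d$.

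Next, by Lemma~\ref{ref:toeplitz:lem} each $x_i$ with $i\geq 2$ is an upper-triangular Toeplitz matrix, but in this case (one block) this just says that $x_i\in \kk[x_1]$ is a polynomial in $x_1$. Therefore the unital $\kk$-algebra generated by $(x_1, \ldots, x_n)$ coincides with $\kk[x_1]$, which is also the unital algebra generated by $(x_1, 0, \ldots, 0)$. By Corollary~\ref{algebra} these two tuples lie in the same irreducible components of $\CommMat$, so it suffices to show that $(x_1, 0, \ldots, 0)$ belongs to the principal component.

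For this final step I would exhibit an explicit one-parameter family of diagonalizable tuples limiting to $(x_1, 0, \ldots, 0)$. Namely, set $D = \diag(0, 1, 2, \ldots, d-1)$ and consider the tuple
\[
    (x_1 + \varepsilon D,\ 0,\ \ldots,\ 0)
\]
for $\varepsilon \in \kk$. Since the zero matrices commute with everything, this is a tuple in $\CommMat$ for every $\varepsilon$. For $\varepsilon \neq 0$ the matrix $x_1 + \varepsilon D$ is upper triangular with distinct diagonal entries $0, \varepsilon, \ldots, (d-1)\varepsilon$, hence diagonalizable, and the zero matrices are trivially simultaneously diagonalizable with it. Thus the whole line lies in the closure of the locus of tuples of simultaneously diagonalizable matrices, which is by definition the principal component. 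Taking $\varepsilon \to 0$ gives $(x_1, 0, \ldots, 0)$ in the principal component, completing the proof. There is no serious obstacle here; the only subtle point is the reduction to a single Jordan block, which is immediate once one observes that a nilpotent with one-dimensional kernel has a unique Jordan block.
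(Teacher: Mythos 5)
Your proof is correct and follows essentially the same route as the paper's: reduce to $x_1 = J_d$ via $\GL(V)$- and $\GL_n$-actions, invoke Lemma~\ref{ref:toeplitz:lem} to see each $x_i$ is a polynomial in $x_1$, and conclude with Corollary~\ref{algebra}. The paper leaves the last step implicit, whereas you spell out that $(x_1, 0, \ldots, 0)$ lies in the principal component by exhibiting the explicit degeneration $x_1 + \varepsilon D$; this is a useful addition, since Corollary~\ref{algebra} alone only transports the \emph{set} of components and one still needs to know a base tuple generating $\kk[x_1]$ lies in the principal component.
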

    \begin{proof}
        Using the actions of $\GL(V)$ and $\GL_n$ we may assume that
        $x_1 = J_d$ has a one-dimensional kernel, so by
        Lemma~\ref{ref:toeplitz:lem} every other matrix in the tuple is a
        polynomial in $x_1$ so the tuple generates the algebra
        $\kk[x_1]$. By Corollary~\ref{algebra} it follows that this tuple lies
        in the principal component.
    \end{proof}

    \begin{proposition}[Two Jordan blocks
        structure]\label{2-dim_kernel-structure}
        Let $(x_1,\ldots ,x_n)\in \CommMat$ be an
        $n$-tuple of nilpotent matrices such that some linear combination of
        $x_1,\ldots ,x_n$ has a two-dimensional kernel.
        Pick such a combination and let $\Delta$ be the difference of the
        sizes of its two Jordan blocks.
        Then up to
        $\GL(V)$-action and nonlinear change of generators as in
        Corollary~\ref{algebra}, there exists a $j$ such that the $n$-tuple
        $(x_1,\ldots ,x_n)$ is a limit of tuples of the form
        \begin{equation}\label{eq:2blocks_canonical_form}
            x_1 = \begin{bmatrix}
                t&\\&t
                \end{bmatrix},\ \ x_2 = \begin{bmatrix}
                    t^j & 0\\
                    0 & 0
                \end{bmatrix},\ \ x_i = \begin{bmatrix}
                    0 & b_i\\
                    c_i & 0
                \end{bmatrix}\mbox{ for }i\geq 3
            \end{equation}
        where for every $i\geq 3$ we have $c_i\in t^{j}\kk[t]$
        and $b_i\in t^{j+\Delta}\kk[t]$ and $t^jx_i
        = 0$. Here, we use the ``polynomial'' notation from
        Lemma~\ref{ref:toeplitz:lem}.
    \end{proposition}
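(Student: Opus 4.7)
The proof proceeds in three phases: put $x_1$ in a normal form, clean up the remaining matrices via nonlinear changes of generators (Corollary~\ref{algebra}), and finally exhibit the cleaned-up tuple as a limit of target-form tuples by a torus degeneration.

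\emph{Phase 1 (normal form and cleanup).} Using the $\GL_n$-action on tuples, assume the distinguished combination is $x_1$, and by $\GL(V)$ put $x_1$ in Jordan canonical form with blocks of sizes $a_2 \geq a_1$, so $a_2 - a_1 = \Delta$. View $V$ as the $\kk[t]$-module $M = \kk[t]/t^{a_2} \oplus \kk[t]/t^{a_1}$; by Lemma~\ref{ref:toeplitz:lem} each $x_i$ for $i \geq 2$ has block form $\begin{bmatrix}\alpha_i & \beta_i \\ \gamma_i & \delta_i\end{bmatrix}$ with $\alpha_i \in \kk[t]/t^{a_2}$, $\delta_i \in \kk[t]/t^{a_1}$, $\beta_i \in t^{\Delta}\kk[t]/t^{a_2}$, and $\gamma_i \in \kk[t]/t^{a_1}$. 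Using Corollary~\ref{algebra} I replace each $x_i$ by $x_i - \tilde\delta_i(x_1)$ for a suitable lift $\tilde\delta_i$ of $\delta_i$, simultaneously forcing $\delta_i = 0$ and $\alpha_i \in \kk[t]/t^{a_1}$. The $\kk[t]$-submodule of $\kk[t]/t^{a_1}$ spanned by $\{\alpha_i\}_{i\geq 2}$ is cyclic and of the form $t^j\kk[t]/t^{a_1}$ for some $j$; after a $\GL_n$-action and multiplication of $x_2$ by $u(x_1)^{-1}$, where $\alpha_2 = u(t)t^j$ with $u$ a unit in $\kk[t]$, I achieve $\alpha_2 = t^j$. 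For $i \geq 3$, writing $\alpha_i = p_i(t) t^j$, the replacement $x_i \mapsto x_i - p_i(x_1)\cdot x_2$ kills $\alpha_i$ and preserves $\delta_i = 0$, making $x_i$ purely off-diagonal. The relation $[x_2, x_i] = 0$ then forces $t^j \beta_i = 0$ and $t^j \gamma_i = 0$, which together with the Toeplitz divisibility on $\beta_i$ yields $c_i \in t^j\kk[t]$, $b_i \in t^{j+\Delta}\kk[t]$, and $t^j x_i = 0$.

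\emph{Phase 2 (limit argument).} After Phase~1, $x_2 = \begin{bmatrix} t^j & \beta_2 \\ \gamma_2 & 0 \end{bmatrix}$ may still carry nonzero off-diagonal entries, so the tuple is not yet of the exact target form. To place it in the closure of the target-form locus, I use a one-parameter $\Gmult$-conjugation on $V$ induced by a $t$-adic grading on $M$: assign to the $k$-th basis vector of the first Jordan block weight $k$ and to the $k$-th basis vector of the second block weight $k+C$ for a suitable shift $C$, and compensate via $\GL_n$ by rescaling $x_1$ by $s^{-1}$ and $x_2$ by $s^{-j}$. The resulting family of commuting tuples equals the cleaned-up tuple at $s = 1$ and degenerates as $s \to 0$ to a target-form tuple, so the cleaned-up tuple lies in the closure of the target-form locus.

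\emph{Main obstacle.} The hard part is Phase~2. A naive block-diagonal torus rescales $\beta_2$ and $\gamma_2$ by opposite powers of $s$, so one side blows up. The $t$-adic weighting works because both off-diagonals carry extra $t$-depth: the Toeplitz divisibility built $t^{\Delta}$ into $\beta_2$, while the commutation relations $\beta_2\gamma_i = \beta_i\gamma_2$ together with $t^j\beta_i = t^j\gamma_i = 0$ for $i\geq 3$ force additional depth on $\gamma_2$ and $\beta_2$. The shift $C$ must be calibrated against $j$ and $\Delta$ so that every entry of the cleaned-up tuple has either weight exactly zero (contributing to the persisting target-form entries) or strictly positive weight (dying in the limit), which ensures that the limit exists and is of the prescribed form.
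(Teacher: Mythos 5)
Your Phase 1 normal-form setup follows the paper's, but two steps contain genuine gaps, one of which is fatal.

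\textbf{Gap 1: wrong choice of $j$.} You define $j$ only from the diagonal parts $\alpha_i$ (the submodule they generate in $\kk[t]/t^{a_1}$). The paper instead sets $j = \min_i \ord(x_i)$ where $\ord(x_i) = \min\bigl(\nu(a^{(i)}-d^{(i)}),\ \nu(c^{(i)}),\ \nu(b^{(i)})-\Delta\bigr)$, accounting for \emph{all three} nonzero block entries. This matters: your $j$ can be strictly larger than the paper's (e.g.\ if some $\gamma_i$ has smaller valuation than all $\alpha_i$, or all $\alpha_i=0$ and the module they generate is zero). As a consequence, the conclusion $c_i \in t^j\kk[t]$ does not follow from what you prove. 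Commutativity gives you $t^j\gamma_i = 0$ in $\kk[t]/t^{a_1}$, which says $\gamma_i$ is divisible by $t^{a_1-j}$, not by $t^j$ — these are different conditions unless $j \le a_1/2$, and nothing you did forces that. The paper gets $t^j \mid c_i$ and $t^{j+\Delta}\mid b_i$ directly because $j$ was chosen to be a lower bound for the valuation of every block, and the division-with-remainder (done in three cases according to which term achieves the minimum) keeps that property. Both divisibility conditions \emph{and} $t^jx_i=0$ are used downstream in Proposition~\ref{2-dim_kernel}, so omitting either is a real loss.

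\textbf{Gap 2: the limit goes the wrong way.} Your Phase 2 torus degeneration builds a family $F(s)$ of $\GL(V)\times\GL_n$-transforms of the cleaned-up tuple, with $F(1)$ the cleaned-up tuple and $\lim_{s\to 0}F(s) = F_0$ in target form. This shows that $F_0$ lies in the closure of the orbit of the cleaned-up tuple — the opposite of what the proposition asserts, which is that the cleaned-up tuple lies in the closure of the target-form locus. For $s\neq 0,1$ the fibers $F(s)$ still carry nonzero off-diagonal $\beta_2,\gamma_2$ and hence are not target-form, so the degeneration does not produce a curve of target-form tuples approaching $F(1)$. The paper avoids this entirely with a much simpler move: observe that $A = \begin{bmatrix}t^j&0\\0&0\end{bmatrix}$ commutes with $x_1, x_3,\dots,x_n$, so $X_2(\lambda) := \lambda A + x_2$ gives a one-parameter family of commuting tuples whose $\lambda=0$ member is the starting tuple; for generic $\lambda$ one has $\nu(a^{(2)})=j$ exactly, and the remaining normalization (killing $b^{(2)}$, $c^{(2)}$ by the module automorphisms $\Phi(\alpha,\beta)$, then subtracting elements of $\kk[x_1,x_2]$) brings $X_2(\lambda)$ to target form. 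This is the direction the proposition needs. Your instinct that the ``main obstacle'' is the limit step is right, but the torus-weight calibration you sketch does not resolve it.

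A smaller issue: the paper also handles $\Delta=0$ separately (using the $\GL_2$-automorphism of the module to make the coefficient matrix at $t^j$ lower triangular, ensuring $b^{(2)}\in t^{j+1}\kk[t]$), which is needed for solvability of the equation defining $\Phi$; your Phase 1 omits this.
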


    \begin{proof}
        \def\coorder{\operatorname{coorder}}%
        \def\ord{\operatorname{ord}}%
        After a coordinate change we may assume that $x_1$ is in the Jordan
        canonical form with two Jordan blocks of sizes $m\ge k$:
        \[
            x_1=\left[
                \begin{array}{cc}J_m\\&J_k
                \end{array}
            \right].\]
        Introduce a $\kk[t]/(t^m)$-module structure on $V$ where $t\cdot v = x_1(v)$
        for all $v\in V$. Then $V = \kk[t]/(t^m) \oplus \kk[t]/(t^k)$, see
        Lemma~\ref{ref:toeplitz:lem}. Recall that the only ideals in
        $\kk[t]/(t^m)$ are generated by powers of $t$.
        For an element
        $f\in \kk[t]$ let the valuation of $f$, denoted $\nu(f)$ be the
        maximal power of $t$ that divides $f$.
        This valuation descends to a function on $\kk[t]/(t^m)$. For any $g,h\in
        \kk[t]/(t^m)$, we have $\nu(g)\leq \nu(h)$ if and only if the ideal
        $(g)$ contains the ideal $(h)$ in $\kk[t]/(t^m)$.

        Below we repeatedly make use of Corollary~\ref{algebra}.
        We have $\Delta = m - k\geq 0$.
        Write each $x_i$ as a matrix
        \begin{equation}\label{eq:xasmatrix}
            x_i =
            \begin{bmatrix}
                a^{(i)} & b^{(i)}\\
                c^{(i)} & d^{(i)}
            \end{bmatrix}
        \end{equation}
        for $a^{(i)}\in \kk[t]/(t^m)$, $b^{(i)}\in t^{\Delta}\kk[t]/(t^m)$ and $c^{(i)}, d^{(i)}\in
        \kk[t]/(t^k)$.
        By definition, the matrix $x_1$ is diagonal with both diagonal entries equal
        to $t$.         Let the \emph{vanishing order} $\ord(x_i)$ of $x_i$ be defined as
        $\min\left(\nu(a^{(i)}-d^{(i)}), \nu(c^{(i)}), \nu(b^{(i)}) - \Delta\right)$.
        Let $j = \min(\ord(x_i)\ |\ i=2,3, \ldots )$. After a coordinate
        change, we may assume $j = \ord(x_2)$. Moreover, if $\Delta =0$ we may use the $\GL_2$-action
        on the ring of the endomorphisms of the $\kk[t]/(t^m)$-module $V$ to assume that the
        matrix of the coefficients of $x_2$ at $t^j$ is lower triangular (for
        example, the transpose of a Jordan normal form), in particular
        $b^{(2)}\in t^{j+1}\kk[t]/(t^m)$.

        We subtract an appropriate polynomial in $x_1$ from each $x_i$, $i\geq
        2$, and hence obtain $d^{(i)} = 0$ for all $i\geq 2$. Note that this does not change the vanishing orders of the matrices $x_i$.
        Therefore we have
        \begin{equation}\label{eq:commutator}
            [x_i, x_j] = x_ix_j - x_jx_i = \begin{bmatrix}
                b^{(i)}c^{(j)} - b^{(j)}c^{(i)} & a^{(i)}b^{(j)} -
                a^{(j)}b^{(i)}\\
                c^{(i)}a^{(j)} - c^{(j)}a^{(i)} & c^{(i)}b^{(j)} -
                c^{(j)}b^{(i)}
            \end{bmatrix}.
        \end{equation}


        We will now divide $x_3, \ldots ,x_n$ by $x_2$ with remainder:
        we claim that each $x_i$ for $i\geq 3$ can be written as $x_i' + r_i$,
        where $x_i'\in x_2\kk[x_1]$ and
        \[
            r_i = \begin{bmatrix}
                a_i' & b_i'\\
                c_i' & 0
            \end{bmatrix}\mbox{ with } t^{j}r_i = \begin{bmatrix}
                * & 0\\
                0 & 0
            \end{bmatrix}\mbox{ and } a_i', c_i'\in t^j \kk[t],\ b_i'\in
            t^{j+\Delta} \kk[t].
        \]
        We have three cases.
        \begin{enumerate}
            \item $j = \nu(a^{(2)})$. Then $a^{(2)} = t^{j}u$ with $u\in
                \kk[t]/(t^m)$ invertible. For all $i\geq 3$ we have $j \leq
                \ord(x_i) \leq \nu(a^{(i)})$, hence $\nu(a^{(2)}) \leq
                \nu(a^{(i)})$, so $a^{(i)}$ is a multiple of $a^{(2)}$ in
                $\kk[t]/(t^m)$, say $a^{(i)} = a^{(2)}\cdot q(t)$ where $q(t)\in
                \kk[t]$.
                Let $x_i' := x_2\cdot q(x_1)$ and let $r_i = x_i - x_i'$. Then $r_i$
                is a matrix
                \[
                    \begin{bmatrix}
                        0 & b_i'\\
                        c_i' & 0
                    \end{bmatrix}.
                \]
                The commutativity $[x_2, x_i] = 0$ yields $[x_2, r_i] = 0$, so
                by~\eqref{eq:commutator} we get
                \[
                    0 = \begin{bmatrix}
                        b^{(2)}c'_i - b'_ic^{(2)} & a^{(2)}b'_i\\
                        - c'_ia^{(2)} & c^{(2)}b'_i -
                        c'_ib^{(2)}
                    \end{bmatrix}
                \]
                so indeed
                \[
                    t^{j}\cdot r_i = t^{j}\cdot \begin{bmatrix}
                        0 & b_i'\\
                        c_i' & 0
                    \end{bmatrix} = \begin{bmatrix}
                        0 & t^{j}b'_i\\
                        t^{j}c'_{i} & 0
                    \end{bmatrix} = u^{-1}\begin{bmatrix}
                        0 & a^{(2)}b'_{i}\\
                        a^{(2)}c'_{i} & 0
                    \end{bmatrix} = 0
                \]
                as claimed.
            \item
                $j = \nu(c^{(2)})$. Analogously as above, we divide every
                $x_i$ to obtain a remainder with $c'_i = 0$ and then
                use~\eqref{eq:commutator} to conclude that $c^{(2)}b'_i = 0$ in
                $\kk[t]/(t^m)$ hence also $t^jb'_i = 0$.
            \item
                $j = \nu(b^{(2)}) - \Delta$. Then $\nu(b^{(2)}) = j + \Delta$. Analogously as above, we divide every
                $x_i$ to obtain a remainder $r_i$ with $b'_i = 0$ and
                use~\eqref{eq:commutator} to conclude that $b^{(2)}c'_i = 0$
                in $\kk[t]/(t^m)$. This implies that $c'_i$ is divisible by
                $t^{m - \nu(b^{(2)})} = t^{m - j - \Delta} = t^{k-j}$, so
                $t^{j}\cdot c'_i = 0$ in $\kk[t]/(t^k)$ and hence the only
                nonzero entry of $t^{j}\cdot
                r_i$ is in the top left corner.
        \end{enumerate}
        Moreover, $a_i', c_i'\in t^j\kk[t]$ since the corresponding entries of
        both $x_i$ and $x_i'$ lie there; same for $b_i'\in t^{j+\Delta}\kk[t]$.
        We replace $x_i$ by $r_i$, by Corollary~\ref{algebra} this does not change
        the components containing our tuple. We keep the notation that
        \[
            x_i =
            \begin{bmatrix}
                a^{(i)} & b^{(i)}\\
                c^{(i)} & d^{(i)}
            \end{bmatrix}.
        \]
        Consider the matrix $A = \begin{bmatrix}
            t^j & 0\\
            0 & 0\\
        \end{bmatrix}$.
        Clearly, $A$ is an endomorphism of the $\kk[t]$-module $V$, so the
        underlying matrix in $\MM_{d}$ commutes with $x_1$.
        From the above considerations we see that $A\cdot x_i = x_i\cdot A =
        \begin{bmatrix}
            t^ja^{(i)} & 0\\
            0 & 0
        \end{bmatrix}$ for $i\geq 3$.
        For each $\lambda\in \kk$ define
        $X_2(\lambda) := \lambda A + x_2$. For each $\lambda\in\kk$, this
        matrix commutes with $x_1, x_3, x_4, \ldots ,x_n$.
        For all but one choices of $\lambda\in \kk$ we have $\nu(\lambda t^j +
        a^{(2)}) = j$. Since we want to prove that our starting tuple is a
        limit, we may replace $x_2$ by suitable $X_2(\lambda)$ and hence
        we can assume that $\nu(a^{(2)}) = j$ and that $x_i$ is as in the case (1) above for $i\ge 3$.

        Now we put $x_2$ in a normal form by using the automorphisms of the
        module $V$.
        For every $\alpha\in \kk[t]$, $\beta\in t^{\Delta}\kk[t]$
        such that $1-\alpha\beta\in \kk[t]/(t^m)$ is an invertible element (which is
        always the case if $\Delta > 0$) we have an automorphism $\Phi(\alpha,
        \beta)$ of
        the module $V$ given by the matrix
        \[
            \begin{bmatrix}
                1 & \beta\\
                \alpha & 1
            \end{bmatrix}
        \]
        whose inverse is the matrix $\frac{1}{1-\alpha\beta}\begin{bmatrix}
            1 & -\beta\\
            -\alpha & 1
        \end{bmatrix}$.
        The coordinate
        change $\Phi(\alpha, \beta)$ maps $x_1$ to itself and each $x_i$, for $i\geq 2$ to
        \[
            \frac{1}{1-\alpha\beta}\begin{bmatrix}
                a^{(i)} - \alpha b^{(i)}+ \beta c^{(i)} &  - \beta a^{(i)} + b^{(i)} - \beta ^2c^{(i)}\\
                \alpha a^{(i)} -\alpha ^2b^{(i)}+ c^{(i)} & -\alpha \beta a^{(i)} + \alpha b^{(i)} - \beta c^{(i)}
            \end{bmatrix}.
        \]
        Now, since $\nu(a^{(2)}) = j \leq \nu(b^{(2)}), \nu(c^{(2)})$
        and $b^{(2)}\in t^{j+1}\kk[t]$ (also for $\Delta=0$), we can
        solve the equation $\alpha a^{(2)} -\alpha ^2b^{(2)}+ c^{(2)} = 0$ in
        $\alpha\in \kk[t]/(t^m)$, hence after a transformation $\Phi(\alpha, 0)$
        we have $c^{(2)} = 0$. The change of coordinates might have not preserved the conditions
        $a^{(i)} = 0$ for $i\ge 3$ and $d^{(i)} = 0$ for $i\geq 2$, but it did preserve the
        conditions $a^{(i)}\in t^j\kk[t]/(t^m)$, $b^{(i)}\in
        t^{j+\Delta}\kk[t]/(t^m)$, $c^{(i)}, d^{(i)}\in t^j\kk[t]/(t^k)$ and $\nu (a^{(2)})=j$.  We now subtract an appropriate
        element of $\kk[x_1]$ from $x_2$ and appropriate elements of $\kk[x_1, x_2]$ from each $x_i$, $i\ge 3$, to guarantee
       $a^{(i)} =0$ for $i\ge 3$ and $d^{(i)} = 0$ for $i\ge 2$.
        Now, $\nu(b^{(2)}) - \Delta \geq
        \nu(a^{(2)})$, so we may also solve the equation $-\beta a^{(2)} +
        b^{(2)} = 0$ in $\beta\in t^{\Delta}\kk[t]/(t^m)$. Applying $\Phi(0,
        \beta)$ we get $b^{(2)} = 0$. Finally we rescale $x_2$ by an
        invertible element $u\in \kk[t]/(t^m)$ such that $ua^{(2)} = t^j$ so
        that
        \[
            x_2 = \begin{bmatrix}
                t^{j} & 0\\
                0 & 0
            \end{bmatrix}.
        \]
        Again, we subtract appropriate elements of $\kk[x_1,x_2]$ from each $x_i$ to get $a^{(i)}=d^{(i)}=0$ for each $i\ge 3$.
        This concludes the proof.
    \end{proof}

    \begin{proposition}[Two Jordan blocks]\label{2-dim_kernel}
        Let $d>4$ be arbitrary and let $(x_1,\ldots ,x_n)\in \CommMat$ be an
        $n$-tuple of nilpotent matrices such that some linear combination of
        $x_1,\ldots ,x_n$ has a two-dimensional kernel. Then the $n$-tuple
        $(x_1,\ldots ,x_n)$ belongs to a non-elementary component of $\CommMat$.
    \end{proposition}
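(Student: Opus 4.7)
My plan is to apply Proposition~\ref{2-dim_kernel-structure} to reduce to the canonical form and then handle the arising cases using earlier lemmas or an explicit deformation. After a $\GL(V)$-action and a nonlinear change of generators (both preserving components by Corollary~\ref{algebra}), our tuple is a limit of tuples in the canonical form with $x_1 = \diag(J_m, J_k)$, $x_2 = \diag(J_m^j, 0)$ for some $j \geq 1$, and $x_i = \begin{bmatrix} 0 & b_i \\ c_i & 0 \end{bmatrix}$ for $i \geq 3$ subject to $t^j x_i = 0$ and the valuation conditions on $b_i, c_i$. Since components are closed, it suffices to place a canonical-form tuple on a non-elementary component.

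If $j \geq k$, the valuation conditions force $c_i = 0$ in $\kk[t]/(t^k)$ and $b_i = 0$ in $\kk[t]/(t^m)$, hence $x_i = 0$ for all $i \geq 3$. Then $(x_1, x_2, 0, \ldots, 0)$ sits in the image of the closed embedding $C_2(\MM_d) \hookrightarrow \CommMat$. By Motzkin--Taussky \cite{MT} the variety $C_2(\MM_d)$ is irreducible, and any simultaneously diagonalizable pair $(y_1, y_2)$ with distinct eigenvalues extends (by perturbing the zero slots to generic commuting diagonal matrices) to a fully diagonalizable $n$-tuple in the principal component; passing to the limit, $(x_1, x_2, 0, \ldots, 0)$ lies in the principal (non-elementary) component. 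If instead $j < k$ and $m > 2k$, Lemma~\ref{big_largest_Jordan} applies directly to $x_1$ and finishes this case.

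The remaining case is $j < k$ with $k \leq m \leq 2k$, where the hypothesis $d > 4$ is essential: together with these inequalities, the only configuration it excludes is $(d, m, k, j) = (4, 2, 2, 1)$, the classical square-zero counterexample on $C_4(\MM_4)$. My strategy here is to construct an explicit one-parameter deformation $(X_1(t), \ldots, X_n(t)) \in \CommMat$ whose generic member has $X_1(t)$ with one-dimensional kernel, so that Lemma~\ref{1-dim_kernel} places it in the principal component. Concretely, $X_1$ is perturbed by a matrix designed to merge the two Jordan blocks of $x_1$ into a single Jordan block of size $d$, while $x_2, \ldots, x_n$ receive compensating higher-order corrections chosen to preserve commutativity. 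The main obstacle is solving this coupled system: the primary obstruction (Theorem~\ref{ref:primaryForQuot:thm}, transferred to $\CommMat$ via the ADHM construction) must vanish, which requires the extra centralizer dimension that $d > 4$ affords and that is absent in the $d = 4$ case.
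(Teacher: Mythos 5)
Your proposal correctly begins by invoking Proposition~\ref{2-dim_kernel-structure} to reach the canonical form, and your first two cases are sound: when $j\geq k$ the valuation conditions force $x_i=0$ for $i\geq 3$, so irreducibility of $C_2(\MM_d)$ finishes the argument; and when $m>2k$ Lemma~\ref{big_largest_Jordan} applies.

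However, your third case ($j<k$ and $k\leq m\leq 2k$) is precisely where the proof has to do real work, and you have not actually proved it. You correctly note that $d>4$ excludes only the configuration $(d,m,k,j)=(4,2,2,1)$, but the remaining case still contains infinitely many configurations (e.g.\ $(5,3,2,1)$, $(6,3,3,1)$, $(6,3,3,2)$, $(7,4,3,1)$, $\ldots$) as $d$ grows, and for all of these you offer only a strategy sketch: perturb $x_1$ so as to merge its two Jordan blocks into a single block of size $d$, adjust $x_2,\ldots,x_n$ to keep commutativity, and claim the primary obstruction ``must vanish'' because $d>4$ gives extra centralizer dimension. None of this is carried out, and the appeal to the primary obstruction is unsubstantiated hand-waving; Theorem~\ref{ref:primaryForQuot:thm} concerns lifting infinitesimal deformations, not constructing the one-parameter family you want. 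Moreover this is not what the paper does nor is it clear that it can be done: the paper's proof does not attempt to stay inside the nilpotent locus or merge Jordan blocks. Instead it exhibits a sum of idempotent matrix units $E$ (e.g.\ $E_{\lceil k/2\rceil,\lceil k/2\rceil}$, or $E_{m+\lceil k/2\rceil,m+\lceil k/2\rceil}$, or in the hardest subcase $E_{1,1}+E_{j+1,j+1}+E_{m+1,m+1}+E_{m+j+1,m+j+1}$) commuting with $x_i$ for all $i\geq 2$, replaces $x_1$ by $x_1+\lambda E$, and observes that for $\lambda\neq 0$ this matrix is \emph{not} nilpotent and has more than one eigenvalue. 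The role of $d>4$ is also much more pointed than you suggest: it enters only in the edge subcase $m=k$, $j=k/2$, where it guarantees that $m>2$ so that $E$ is not the identity matrix. Your case split would need to be refined (the paper splits on $j\geq\lceil(k+1)/2\rceil$, $j<k/2$, $j=k/2$) and an actual commuting perturbation must be exhibited; as written, the core of the argument is missing.
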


    \begin{proof}
        Using Proposition~\ref{2-dim_kernel-structure} and closedness of
        non-elementary locus, we assume that
        $(x_1, \ldots ,x_n)$ is in the form~\eqref{eq:2blocks_canonical_form}.
        We consider three cases:
        \begin{enumerate}
            \item $j \geq \frac{k+1}{2}$.
                    By adding appropriate powers of $x_1$ to
                    $x_2$, we may assume that $a^{(i)} = 0$ for $i\geq 2$.
                    (This operation makes $d^{(2)}$ nonzero.)
                    Since every block entry of every $x_i$ with $i\geq 2$ is
                    divisible by $t^j$, the matrix
                    $x_i$ has zeros in the $\lceil \frac{k}{2}\rceil$-th row and
                    column.
                    In this case, the matrix unit $E_{\lceil
                    \frac{k}{2}\rceil,\lceil \frac{k}{2}\rceil}$ commutes with $x_i$
                    for each $i\geq 2$. The line $\{(x_1+\lambda E_{\lceil
                        \frac{k}{2}\rceil,\lceil \frac{k}{2}\rceil},x_2,\ldots
                    ,x_n):\lambda \in \kk\}$ intersects in an open subset a component of $\CommMat$
                    that contains $n$-tuples of matrices with more than one
                    eigenvalue, so this component must contain the whole line, and in
                    particular it contains $(x_1,\ldots ,x_n)$.
                \item $j < \frac{k}{2}$.
                    Since $t^j$ annihilates the left-lower and right-upper
                    block entries of all $x_i$ with $i\geq 2$, we see that
                    each of these matrices has zeros in the $(m+\lceil
                    \frac{k}{2}\rceil)$-th row and
                    column.
                    In this case the matrix unit $E_{\lceil
                        m+\frac{k}{2}\rceil,\lceil m+\frac{k}{2}\rceil}$
                        commutes with $x_i$ for each $i\geq 2$ and as in the
                        previous case, the tuple $(x_1,\ldots ,x_n)$ lies on
                        a non-elementary component.
                \item $j = \frac{k}{2}$, $k$ even. In this case every two
                    matrices in the form~\eqref{eq:2blocks_canonical_form} commute by
                    equation~\eqref{eq:commutator}. So we have an affine space
                    of tuples of commuting matrices and it is
                    enough to prove that a \emph{general} element of this
                    space lies on a non-elementary component.
                    We thus assume that the space spanned by
                    $\kk[x_1]x_2$, $\kk[x_1]x_3$, $\kk[x_1]x_4$
                    is equal to the space spanned by
                    \[
                        \kk[t]\cdot\begin{bmatrix}
                            t^j & 0\\
                            0 & 0
                        \end{bmatrix},\ 
                        \kk[t]\cdot\begin{bmatrix}
                            0 & t^{j+\Delta}\\
                            0 & 0
                        \end{bmatrix},\ 
                        \kk[t]\cdot\begin{bmatrix}
                            0 & 0\\
                            t^{j} & 0
                        \end{bmatrix}.
                    \]
                    Then every other matrix $x_i$ is an element of $\langle
                    x_2, x_3, x_4\rangle \kk[x_1]$, so by
                    Corollary~\ref{algebra} we assume $x_i = 0$ for $i\geq 5$.
                    Finally, we replace $x_2$ by $x_1^j-x_2$ and get
                    \[
                        x_2 = \begin{bmatrix}
                            0 & 0\\
                            0 & t^j
                        \end{bmatrix}.
                    \]
                    If $m > k = 2j$, then all matrices $x_i$ for $i\geq 2$ have
                    $(j+1)$-th row
                    and column zero, so $E_{j+1,j+1}$ commutes with them and
                    the tuple lies on a non-elementary component by the same
                    argument as in previous cases.
                    If $m = k$, then the argument is slightly more
                    complicated: the matrix
                    $E := E_{1,1}+E_{j+1,j+1}+E_{m+1,m+1}+E_{m+j+1,m+j+1}$
                    commutes with $x_i$ for all $i\geq 2$. If $d > 4$, then $m
                    > 2$ so $E$ is not the identity matrix and we conclude as
                    before.\qedhere
        \end{enumerate}
    \end{proof}

    \begin{remark}
        If $d=4$, then there are two possible Jordan structures for a matrix
        with 2-dimensional kernel: $(3,1)$ and $(2,2)$. In the first case the
        proposition still holds, by Lemma~\ref{big_largest_Jordan}. On the other hand, in the second case the
        proposition is false, as will be shown in
        Example~\ref{ex:squareZero4x4}.
    \end{remark}

    \section{\BBname{} decompositions and components of $\Quotmain$}\label{sec:BBdecomposition}

    \newcommand{\EL}{\mathcal{E}{le}}%
    In this section, we assume $\chr \kk = 0$ for technical reasons;
    see~\cite{Jelisiejew__Elementary} for details.
    Oversimplifying, the \BBname{} on $\Quotmain$ works as
    follows: consider the locus
    \[
        \EL = \left\{ [F/K]\ |\ F/K\mbox{ is supported on a single point of
        }\mathbb{A}^{\ambM}\right\} \subset \Quotmain.
    \]
    The \BBname{} decomposition of $\Quotmain$ is a certain subdivision of
    $\EL$ into loci $\EL_1, \ldots , \EL_{s}$.  For a point $[F/K]\in \EL$
    there exists a simple linear algebraic condition: having
    \emph{trivial negative tangents}~\eqref{eq:TNTcondition}, which
    implies that $\EL_i\to \Quotmain$ is an open immersion near $[F/K]$, see Proposition~\ref{ref:TNTandElementary:prop}.
    In that case $[F/K]$ is a point of an elementary component of $\Quotmain$.
    So the basic takeaway from
    this section could be that
    \begin{center}
        \emph{To find an elementary component of $\Quotmain$ it is enough to prove
        that a given point $[F/K]$ satisfies the
        condition~\eqref{eq:TNTcondition}.}
    \end{center}
    Needless to say, there are follow-up questions, for example about
    smoothness of $[F/K]$. We will answer some of them below.
    The content of this section parallels the material on Hilbert schemes
    in~\cite{Jelisiejew__Elementary}; we refer the reader there for details.
    \smallskip

    Let $\Gmult = \Spec(\kk[t^{\pm 1}])$ be the one-dimensional torus.
    To even speak about the \BBname{} decomposition, we need a
    $\Gmult$-action on $\Quotmain$, which we now introduce:
    \begin{itemize}
        \item First, we fix a positive $\Gmult$-action on $S$, i.e., we
            fix $\deg(x_i) \in \mathbb{Z}_{>0}$ for $i=1, \ldots ,\ambM$.
        \item Second, we \emph{linearize} the free module $F$. This amounts
            to fixing the degrees on generators $e_1$, \ldots
            , $e_{\genM}$ of the $S$-module $F$, so that $F = S(-\deg e_1) \oplus
            S(-\deg e_2) \oplus  \ldots \oplus S(-\deg e_{\genM})$. Here we
            put no positivity assumptions on the degrees.
    \end{itemize}
    \begin{remark}
        In applications below use only the most natural action, where
        $\deg(x_i) = 1$ for all $i$ and $\deg e_j = 0$ for all $j$.
    \end{remark}
    The above choice of degrees of generators of $F$ determines an action of
    $\Gmult$ on $F$, namely $t\circ f := t^{-\deg f} f$ for a
    homogeneous element $f\in F$ and $t\in \Gmult(\kk)$.
    \begin{example}
        If $S = \kk[x_1, x_2]$ with $\deg x_1 = \deg x_2 = 1$ and $F =
        S(-1)\oplus S$, then its element $e_1 + x_2 e_2$ is homogeneous of
        degree one, hence $t\circ (e_1 + x_2 e_2) = t^{-1}(e_1 + x_2e_2)$.
    \end{example}
    The action on $F$ induces an action on $\Quotmain$, we just have $t\circ
    (F/K) = F/(t\circ K)$, where $t\circ K = \left\{ t\circ k\ |\ k\in K
    \right\}$.
    Let $\Gbar = \Spec\kk[t^{-1}] = \Gmult \cup \left\{ \infty \right\}$.
    The \emph{(negative) \BBname{} decomposition} of $\Quotmain$ is uniquely
    determined by the $\Gmult$-action. Specifically, for any $\kk$-algebra
    $A$, the $A$-points of this decompositions
    are
    \[
        \Quotplus(A) = \left\{ \varphi\colon \Gbar \times \Spec(A) \to
            \Quotmain\ |\ \varphi \mbox{ is $\Gmult$-equivariant} \right\}.
    \]
    The superscript ``$+$'' instead of the more natural ``$-$'' is introduced
    for consistency
    with~\cite{Jelisiejew__Elementary}.
    The above formula describes a functor $\Quotplus$. This functor is
    represented by a scheme by~\cite[Prop~4.5, 5.3]{jelisiejew_sienkiewicz__BB} since
    $\Quotmain$ is covered by $\Gmult$-invariant affine open subschemes given
    in Example~\ref{ex:monomialBasis}. Below we denote
    by $\Quotplus$ both the functor and the representing scheme.
    The $\kk$-points of $\Quotplus$ by definition correspond to
    $\Gmult$-equivariant maps $\varphi\colon \Gbar\to \Quotmain$. Each such
    map can be restricted to $\varphi^{\circ}\colon \Gmult\to \Quotmain$. But
    a $\Gmult$-equivariant map from $\Gmult$ is just the orbit map of
    $\varphi^{\circ}(1) = \varphi(1)$! From this point of view,
    $\varphi(\infty)$ is the limit of the orbit of $\varphi(1)$ and informally
    speaking $\Quotplus$ parameterizes points of $\Quotmain$ together with
    the limits at infinity of their $\Gmult$-orbits.

    The mapping $\varphi\mapsto \varphi(1, -)$ gives a natural forgetful
    morphism
    \newcommand{\thetazero}{\theta_0}%
    \[
        \thetazero\colon \Quotplus \to \Quotmain
    \]
    which is universally injective, or, in other words, injective on
    $L$-points for every field $L$. We identify the $\kk$-points of $\Quotplus$
    with their images in $\Quotmain$. Below we check which
    $\kk$-points of $\Quotmain$ are obtained in this way.
    \begin{lemma}\label{ref:positiveness}
        The $\kk$-points of $\Quotplus$ are exactly the $\kk$-points of
        $\Quotmain$ which correspond to modules $M$ supported only at the
        origin of $\mathbb{A}^n$.
    \end{lemma}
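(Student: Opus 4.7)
The plan is to establish the two implications separately. For the direction $(\Rightarrow)$, suppose $\varphi\colon\Gbar \to \Quotmain$ is a $\Gmult$-equivariant extension of the orbit of $[F/K]$. By the universal property of the Quot scheme, $\varphi$ corresponds to a flat family $\mathcal{M} = (F\otimes_{\kk}\OO_{\Gbar})/\mathcal{K}$ of degree-$\degM$ quotient modules over $\Gbar$. I would consider the closed subscheme $\mathcal{Z} \subset \mathbb{A}^{\ambM} \times \Gbar$ cut out by $\Ann(\mathcal{M})$. Since $\mathcal{M}$ is flat with zero-dimensional fibres of fixed degree, $\mathcal{Z}$ is finite and hence proper over $\Gbar$. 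By $\Gmult$-equivariance the fibre of $\mathcal{Z}$ over $t \in \Gmult$ is the $\Gmult$-translate of $\Supp(F/K)$ under the induced action on $\mathbb{A}^{\ambM}$, and this induced action has strictly positive weights (since $\deg(y_i) > 0$). Thus any nonzero point of $\Supp(F/K)$ would trace out a curve in $\mathcal{Z}$ whose projection to $\mathbb{A}^{\ambM}$ escapes to infinity as $t\to\infty$, contradicting properness of $\mathcal{Z}\to\Gbar$. Hence $\Supp(F/K) \subseteq \{0\}$.

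For $(\Leftarrow)$, I would construct the extension explicitly as a degeneration of $K$ to its initial submodule. Choose generators $k_1, \ldots ,k_N$ of $K$ and decompose each into homogeneous pieces $k_j = k_{j,d_j} + k_{j,d_j+1} + \cdots$ with $k_{j,i}\in F_i$ and $k_{j,d_j}\neq 0$. Over $\Gbar = \Spec\kk[s]$ (where $s$ is the coordinate vanishing at $\infty$), form the $S[s]$-submodule $\mathcal{K} \subset F\otimes_{\kk}\kk[s]$ generated by the lifts $\tilde{k}_j := k_{j,d_j} + s\cdot k_{j,d_j+1} + s^2\cdot k_{j,d_j+2} + \cdots$, with $s$ assigned the appropriate $\Gmult$-weight so that each $\tilde{k}_j$ is $\Gmult$-homogeneous. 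Specialization at $s = 1$ recovers a representative in the $\Gmult$-orbit of $K$, while specialization at $s = 0$ yields the initial submodule $K_{\mathrm{in}}$. The crucial flatness check reduces to $\dim_{\kk} F/K = \dim_{\kk} F/K_{\mathrm{in}}$, which follows from the associated-graded identification for the $\mm$-adic filtration on $F/K$: since $F/K$ is supported at the origin we have $\mm^D F \subset K$ for some $D$, so this filtration is finite and the standard initial-submodule argument applies.

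The main obstacle I anticipate is the flatness of the degeneration: one must verify that $\mathcal{K}$ gives a well-defined $S[s]$-submodule producing a flat family, with no jumps in fibre dimension. This is the module analogue of the classical Gr\"obner degeneration, and the key input is precisely the bound $\mm^D F \subset K$ provided by the support assumption. Once flatness is in hand, both directions of the claimed bijection between $\kk$-points follow, since the fibre of the extension at $s=1$ is identified with $[F/K]$ up to the $\Gmult$-action, which in turn identifies points of $\Quotplus(\kk)$ with their images under $\theta_0$.
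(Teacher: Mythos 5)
Your forward direction is essentially the same argument as the paper's, phrased more abstractly. Both proofs show that a nonzero point of $\Supp(F/K)$ would have $\Gmult$-translates escaping to infinity, contradicting the existence of the limit at $\infty$. You package this as finiteness of $\Supp\mathcal{M}$ over $\Gbar$; that does hold, but requires the small argument that a $\Gbar$-flat family with zero-dimensional fibres of constant degree has support finite over $\Gbar$ (via $S_{\Gbar}/\Ann(\mathcal{M})$ being a finite $\OO_{\Gbar}$-algebra). The paper instead argues directly with the supports and hyperplanes $V(x_i-\alpha)$, which is more elementary. Either works.

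The converse is where you genuinely diverge from the paper. The paper simply regards $[F/K]$ as a point of the projective Quot scheme of $\OO_{\mathbb{P}^n}^{\oplus r}$; projectivity supplies the limit at $\infty$ for free, and one only needs to observe that the limit is still supported at the origin of the affine chart. You instead attempt to produce the limit explicitly as a Gr\"obner-type degeneration of $K$ to its initial submodule $K_{\mathrm{in}}$. That is a reasonable alternative, and it gives extra information (an explicit description of $\varphi(\infty)$), but the construction as written has a real gap: the $S[s]$-submodule $\mathcal{K}$ generated by lifts $\tilde{k}_j$ of a \emph{finite generating set} $k_1,\ldots,k_N$ of $K$ is not in general flat over $\kk[s]$. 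Its fibre at $s=0$ is generated by the initial forms $\mathrm{in}(k_j)$, which typically span a submodule strictly smaller than $K_{\mathrm{in}}$, so the fibre dimension jumps. For instance with $S=\kk[y_1,y_2]$, $F=S$ and $K=(y_1^2-y_2^3,\,y_1y_2)$, the module $F/K$ has degree $5$ and is supported at the origin, but the initial forms of the two generators give only $(y_1^2, y_1y_2)$, whose quotient is infinite-dimensional; the missing initial form $y_2^4=\mathrm{in}\bigl(y_2(y_1^2-y_2^3)-y_1(y_1y_2)\bigr)$ arises from an $S$-linear combination that is not visible from the generators alone. To repair the argument you must either generate $\mathcal{K}$ by the elements $s^{-\nu(k)}\sigma_s(k)$ for \emph{all} $k\in K$ (where $\sigma_s$ rescales the degree-$i$ part of $k$ by $s^i$ and $\nu(k)$ is the lowest occurring degree), which is the Rees-module construction, or start from a generating set of $K$ that is a Gr\"obner basis with respect to the weight order; cf.~\cite[Ch.~15]{Eisenbud}. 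You flagged flatness as the obstacle, but the bound $\mm^D F\subset K$ alone does not dispatch it: it guarantees $K_{\mathrm{in}}$ is again cofinite, not that the initial forms of your chosen generators generate $K_{\mathrm{in}}$.
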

    \begin{proof}
        The argument is very similar
        to~\cite[Proposition~3.3]{Jelisiejew__Elementary}.

        Let $M$ correspond to a point of $\Quotplus$. This means that the
        $\Gmult$-orbit of $[M]$ extends to a morphism $\varphi\colon\Gbar \to \Quotmain$.
        Pick $i\in \{1,2, \ldots ,n\}$ and $\alpha\in \kk$ and suppose that
        $\Supp M$ intersects the hyperplane $V(x_i - \alpha)$. Then the support
        of $t\circ M$ intersects the hyperplane $V(t\circ(x_i - \alpha)) =
        V(x_i - t^{\deg x_i} \alpha)$. If $\alpha\neq 0$ this means that
        $\Supp(t\circ M)$ is divergent, a contradiction since it converges to
        the support of $\varphi(\infty)\in \Quotmain$. This shows that $\Supp M =
        \{0\}$.
        Conversely, suppose that $M$ is supported at zero. We may view $[M]$ as a
        point on the $\OpQuot$ scheme of $\OO^{\oplus r}_{\mathbb{P}^n}$. Such $\OpQuot$ is
        projective, hence every $\Gmult$ orbit extends, moreover the limit is
        still supported at zero so it lies in $\Quotmain$.
    \end{proof}
    Consider a module $M = F/K$ supported at the origin. We say that $M$ has \emph{trivial negative tangents} if
    \begin{equation}\label{eq:TNTcondition}
        \dim_{\kk}\left(\Hom(K, M)/\Hom(K, M)_{\geq 0}\right) = n.
    \end{equation}
    Recall from~\S\ref{ssec:components} that an irreducible component of $\Quotmain$ is \emph{elementary}
    if its geometric points correspond to modules supported only at one
    point. Let
    \[
        \theta\colon \mathbb{A}^{\ambM} \times \Quotplus \to\Quotmain
    \]
    be the morphism defined on points by $\theta(w, [F/K]) = [F/K] + w$. In other words,
    $\theta(w, [F/K])$ is the module $F/K$ translated by vector $w$.

    \begin{proposition}\label{ref:TNTandElementary:prop}
        If $M$ has trivial negative tangents, then $\theta\colon
        \mathbb{A}^{\ambM}\times \Quotplus\to \Quotmain$ is an open immersion
        near $[M]$.
        Conversely, if $\compo \subset \Quotmain$ is a generically reduced
        elementary component, then a general point of $\compo$ has trivial
        negative tangents.
    \end{proposition}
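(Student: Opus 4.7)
The plan is to adapt the strategy of the analogous result for Hilbert schemes in~\cite{Jelisiejew__Elementary} to the Quot scheme setting. The central identification to establish first is
\[
    T_{[M]}\Quotplus \simeq \Hom(K,M)_{\geq 0},
\]
the nonnegatively-graded part of the tangent space to $\Quotmain$. This holds because a tangent vector at $[M]$ is a $\Gmult$-equivariant morphism $\Gbar\times\Spec(\kk[\varepsilon]/\varepsilon^2)\to\Quotmain$ extending the orbit map, and such equivariant extensions to $\infty$ are exactly first-order deformations of nonnegative weight. Meanwhile, $d\theta$ restricted to $T_0\mathbb{A}^{\ambM}$ sends $\partial/\partial w_i$ to the homomorphism $k\mapsto \partial k/\partial y_i \bmod K$, which lies in $\Hom(K,M)$ in degree $-\deg(y_i)<0$. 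Thus $d\theta$ at $(0,[M])$ splits the target $\Hom(K,M)$ into a nonnegative part coming from $T_{[M]}\Quotplus$ and a strictly negative part coming from $T_0\mathbb{A}^{\ambM}$, and the TNT condition~\eqref{eq:TNTcondition} asserts exactly that these fit together to give a linear isomorphism.

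For the forward direction, I would then upgrade the tangent isomorphism to an open immersion in two steps. First, étaleness of $\theta$ at $(0,[M])$ follows from the Fundamental Theorem of obstruction calculus (\S\ref{ssec:obstructions}) applied to the morphism of obstruction theories where $\Ext^1(K,M)_{\geq 0}$ obstructs $\Quotplus$ and $\Ext^1(K,M)$ obstructs $\Quotmain$; the natural map between these is injective and $\mathbb{A}^{\ambM}$ is obstruction-free. Second, $\theta$ is injective on $\kk$-points: if $\theta(w,[M'])=\theta(w',[M''])$ with $[M'],[M'']\in\Quotplus$, both sides are supported at the singleton $\{w\}=\{w'\}$, whence $w=w'$ and consequently $[M']=[M'']$. Étaleness combined with injectivity on $\kk$-points in a neighborhood produces the desired open immersion.

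For the converse, let $\compo$ be a generically reduced elementary component with $[F/K]$ a general (hence smooth) point. After translation we may assume $[F/K]$ is supported at the origin, so $[F/K]\in\Quotplus$ by Lemma~\ref{ref:positiveness}. Since $\compo$ is elementary, every point of $\compo$ is supported at a single point and hence lies in the image of $\theta$, so $\theta$ dominates $\compo$. Smoothness of $[F/K]$ in $\compo$ together with the dominance gives that $d\theta$ at $(0,[F/K])$ surjects onto $T_{[F/K]}\compo = T_{[F/K]}\Quotmain=\Hom(K,M)$. Using $\dim T_{[F/K]}\Quotplus\leq \dim\Hom(K,M)_{\geq 0}$, the dimension count yields
\[
    \dim\bigl(\Hom(K,M)/\Hom(K,M)_{\geq 0}\bigr)\leq \ambM.
\]
The reverse inequality follows because the $\ambM$ translation directions give linearly independent elements modulo $\Hom(K,M)_{\geq 0}$: since $M$ is supported at the origin, for each $i$ there is some minimal-degree element of $K$ involving $y_i$ nontrivially whose derivative is nonzero in $M$, and these sit in distinct negative degrees (or can be separated using the grading).

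The main obstacle is the rigorous identification $T_{[M]}\Quotplus=\Hom(K,M)_{\geq 0}$: this requires the representability of $\Quotplus$ (already established via~\cite{jelisiejew_sienkiewicz__BB}) together with a graded refinement of the standard tangent computation for $\Quotmain$, and is the $\Quot$-analogue of~\cite[Proposition~3.5]{Jelisiejew__Elementary}. A secondary technical point is verifying that the $\ambM$ translation directions are genuinely linearly independent modulo nonnegative-degree homomorphisms; this is intuitive but needs both the support condition $\Supp M=\{0\}$ and finiteness of $M$ to exclude degenerate collapses in the partial-derivative map.
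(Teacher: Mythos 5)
Your overall plan --- establish $T_{[M]}\Quotplus = \Hom(K,M)_{\geq 0}$, identify the translation tangent directions as the negative-degree part, then combine the Fundamental Theorem of obstruction calculus with a support argument --- is the right adaptation of \cite[Theorems~4.5, 4.9]{Jelisiejew__Elementary} to the Quot setting and matches the paper's intended proof.

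There is, however, a genuine gap in your justification that the translation tangent directions $\varphi_i\colon k\mapsto\partial k/\partial y_i\bmod K$ are linearly independent modulo $\Hom(K,M)_{\geq 0}$. This step is needed both for bijectivity of $d\theta$ in the forward direction (to upgrade smoothness to \'etaleness) and for the lower bound $\dim\bigl(\Hom(K,M)/\Hom(K,M)_{\geq 0}\bigr)\geq\ambM$ in the converse. You claim the $\varphi_i$ ``sit in distinct negative degrees (or can be separated using the grading),'' but that fails in the standard case $\deg(y_1)=\cdots=\deg(y_n)=1$ --- which the paper says is the only case it uses --- since then every $\varphi_i$ lies in degree $-1$. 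The correct argument uses precisely characteristic zero together with $\Supp M=\{0\}$: if some nontrivial $D=\sum c_i\,\partial/\partial y_i$ (say $c_1\neq 0$ after permutation) preserved $K$, then because $y_1^N e_j\in K$ for all $j$ and $N\gg 0$, iterating $D$ would give $N!\,c_1^N e_j\in K$, hence $e_j\in K$ for all $j$ and $F=K$, a contradiction. Equivalently, the translation action of $\mathbb{A}^{\ambM}$ on $\Quotmain$ has trivial stabilizer at a point supported at the origin, so in characteristic zero the orbit map has injective differential. You do flag linear independence as a ``secondary technical point,'' but it is precisely where the standing characteristic-zero hypothesis of Section~\ref{sec:BBdecomposition} enters, and the degree-separation argument you offer would not close the gap.
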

    \begin{proof}
        This follows similarly to~\cite[Theorem~4.5,
        Theorem~4.9]{Jelisiejew__Elementary}.
    \end{proof}

        While we do not employ it significantly in the current article,
        Proposition~\ref{ref:TNTandElementary:prop} is very useful to describe
        new elementary components of Quot schemes, see the introduction
        of~\cite{Jelisiejew__Elementary} for the case of Hilbert schemes.
        Also, all examples in Section~\ref{ssec:examples} below have trivial
        negative tangents.

    With a bit of deformation theory, we can even check that a given point
    with trivial negative tangents is smooth.
    \begin{lemma}\label{ref:obstruction:lem}
        The tangent space to $\Quotplus$ at a point $[M = F/K]$ is equal to
        $\Hom_S(K, M)_{\geq 0}$. Moreover, the point $[M = F/K]\in \Quotplus$ has an
        obstruction theory with obstruction space $\Ext^1_S(K, M)_{\geq 0}$.
    \end{lemma}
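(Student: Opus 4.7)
The plan is to adapt to the Quot scheme the arguments used for Hilbert schemes in~\cite[Section~4]{Jelisiejew__Elementary}, leveraging the general theory of \BBname{} decompositions from~\cite{jelisiejew_sienkiewicz__BB}. The two key inputs are Lemma~\ref{ref:tangentSpaceToQuot:lem}, identifying $T_{[M]}\Quotmain \simeq \Hom_S(K, M)$, and the obstruction theory on $\Quotmain$ with obstruction space $\Ext^1_S(K, M)$ from~\cite[Theorem~6.4.9]{fantechi_et_al_fundamental_ag}.

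For the tangent space, I would use the functor-of-points description of $\Quotplus$. An element of $T_{[M]}\Quotplus$ is a $\Gmult$-equivariant morphism $\widetilde{\varphi}\colon \Gbar \times \Spec(\kk[\varepsilon]/\varepsilon^2) \to \Quotmain$ whose reduction modulo $\varepsilon$ is the orbit morphism of $[M]$. First I would restrict $\widetilde{\varphi}$ to $t=1$ to obtain a tangent vector in $\Hom_S(K, M)$; the restriction is injective since $\widetilde{\varphi}$ is determined by its value at $t=1$ together with equivariance. Next I would translate extendability across $\infty\in \Gbar$ into a weight condition, showing that the resulting tangent vector lies in $\Hom_S(K, M)_{\geq 0}$: writing the family in coordinates of $\kk[t^{-1}][\varepsilon]/\varepsilon^2$, the equivariance forces the $\varepsilon$-term to be a sum of terms $t^{-w}\psi_w$ where $\psi_w\in \Hom_S(K, M)_w$, and regularity at $t^{-1}=0$ requires $w\geq 0$. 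Conversely, every element of $\Hom_S(K, M)_{\geq 0}$ produces a $\Gmult$-equivariant family over $\Gbar$ by scaling.

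For the obstruction theory, I would equip $[M]$ with the candidate obstruction space $\Ext^1_S(K, M)_{\geq 0}$ and define the obstruction maps by pulling back, through $\thetazero\colon \Quotplus \to \Quotmain$, the obstruction maps on $\Quotmain$. Given a small extension $0\to J \to B \to B_0 \to 0$ of $\Gmult$-equivariant Artin local $\kk$-algebras and a $\kk$-algebra homomorphism $\varphi_0$ from the local ring of $\Quotplus$ at $[M]$ to $B_0$, the composition with $\thetazero$ gives a lifting problem on $\Quotmain$ whose obstruction class lives in $\Ext^1_S(K, M)\otimes J^{\vee}$. The point is that the construction of this class in~\cite[Theorem~6.4.9]{fantechi_et_al_fundamental_ag} can be performed $\Gmult$-equivariantly when the input is $\Gmult$-equivariant, so the class lies in the non-negative graded piece; its vanishing then yields a $\Gmult$-equivariant lift, that is, a morphism into $\Quotplus$. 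Conversely, every lift within $\Quotplus$ gives a $\Gmult$-equivariant lift within $\Quotmain$, hence forces the $\Quotmain$-obstruction class to vanish.

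The main obstacle is verifying that the construction of the obstruction class from~\cite[Theorem~6.4.9]{fantechi_et_al_fundamental_ag} respects the $\Gmult$-action in the strong sense needed here, and correctly fixing sign/weight conventions so that ``limit at $\infty$'' corresponds to non-negative rather than non-positive degrees. Both of these points are settled in the analogous Hilbert scheme setting in~\cite[Theorem~4.5, Theorem~4.9]{Jelisiejew__Elementary}, and the argument there carries over without essential change to the Quot scheme, since all the relevant constructions are functorial in the $\Gmult$-action and independent of the rank $\genM$ or the grading placed on the free module $F$.
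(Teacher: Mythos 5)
Your proposal is correct and takes essentially the same approach as the paper: the paper's proof consists of a single sentence deferring to the analogous Hilbert scheme result, specifically~\cite[Theorem~4.2]{Jelisiejew__Elementary} (you cite Theorems~4.5 and~4.9, which the paper uses for Proposition~\ref{ref:TNTandElementary:prop} instead, but the reference that actually contains this argument is Theorem~4.2). Your sketch simply spells out the content of ``follows exactly as in'' — the equivariant tangent-vector computation over $\Gbar$ and the pullback of the $\Quotmain$ obstruction theory along $\thetazero$ with a $\Gmult$-equivariance check — which is what that citation is intended to convey.
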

    \begin{proof}
        This follows exactly as
        in~\cite[Theorem~4.2]{Jelisiejew__Elementary}.
    \end{proof}

    \begin{theorem}
        Let $M = F/K$ be a module of finite degree supported at the origin and such that $\Ext^1_S(K, M)_{\geq 0} = 0$. Then
        $[M]\in \Quotplus$ is a smooth point. If moreover $M$ has trivial
        negative tangents, then $[M]\in \Quotmain$ is a smooth point on an
        elementary component.
    \end{theorem}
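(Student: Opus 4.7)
The plan is to chain together Lemma~\ref{ref:obstruction:lem}, the general obstruction-theoretic smoothness criterion recalled in Example~\ref{example:obs}, and Proposition~\ref{ref:TNTandElementary:prop}. First, by Lemma~\ref{ref:obstruction:lem} the point $[M] \in \Quotplus$ carries an obstruction theory whose obstruction space is $\Ext^1_S(K,M)_{\geq 0}$, which we assume to be zero. As explained in Example~\ref{example:obs}, if a complete local $\kk$-algebra $(A,\mm)$ admits an obstruction theory with zero obstruction space then $A$ is isomorphic to a power series ring, i.e., the corresponding point is smooth. Applied to $\hat{\mathcal{O}}_{\Quotplus,[M]}$, this gives smoothness of $[M]$ in $\Quotplus$.

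For the second statement, assume in addition that $M$ has trivial negative tangents. Proposition~\ref{ref:TNTandElementary:prop} then states that $\theta \colon \mathbb{A}^{\ambM} \times \Quotplus \to \Quotmain$ is an open immersion in a neighborhood of $[M]$. Since smoothness is preserved under products with $\mathbb{A}^{\ambM}$ and under open immersions, the point $(0,[M])\in \mathbb{A}^{\ambM} \times \Quotplus$ is smooth, and its image $[M] \in \Quotmain$ is therefore a smooth point.

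It remains to check that the unique irreducible component $\compo \subset \Quotmain$ through $[M]$ is elementary. Because $[M]$ is a smooth point, $\compo$ coincides locally near $[M]$ with the image of the open immersion $\theta$, so a general point $[M']$ of $\compo$ is of the form $\theta(w,[N])$ where $[N]\in \Quotplus$ has support only at the origin by Lemma~\ref{ref:positiveness}. Translating by $w$, the module $[M']$ is supported only at the point $w \in \mathbb{A}^{\ambM}$, in particular at a single point. Since having support at a single point is a constructible condition (and holds on an open dense subset of $\compo$), the whole component $\compo$ parameterizes modules supported at one point, hence is elementary. No step here is a real obstacle: the content lies entirely in the two invoked results, and the main point is simply to assemble them in the correct order.
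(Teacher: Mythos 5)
Your proof follows the same skeleton as the paper's proof, which simply cites Lemma~\ref{ref:obstruction:lem}, Example~\ref{example:obs}, and Proposition~\ref{ref:TNTandElementary:prop} in sequence; you have filled in the implicit steps correctly. The only slip is the closing sentence: ``constructible and holding on a dense open subset'' does \emph{not} imply ``holds on all of $\compo$'' (consider the complement of a point in $\mathbb{A}^1$). What you actually want is either to observe that having support at a single point is a \emph{closed} condition in $\Quotmain$ (it is the preimage of the small diagonal under the support morphism to $\Sym^d(\mathbb{A}^n)$), or simply to read the definition of an elementary component as a condition on the general point, in which case that parenthetical is unnecessary; either fix is immediate.
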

    \begin{proof}
        Follows by combining Lemma~\ref{ref:obstruction:lem}, Example~\ref{example:obs}, and
        Proposition~\ref{ref:TNTandElementary:prop}.
    \end{proof}

    Later we will also need the \emph{positive} \BBname{} decomposition that we
    describe below.
    Let $\Gbar' = \Spec\kk[t] = \Gmult \cup \left\{ 0 \right\}$.
    For any $\kk$-algebra
    $A$, the $A$-points of this decomposition
    are
    \[
        \Quotminus(A) = \left\{ \varphi\colon \Gbar' \times \Spec(A) \to
            \Quotmain\ |\ \varphi \mbox{ is $\Gmult$-equivariant} \right\}.
    \]
    This decomposition too is represented by a scheme, which we denote
    $\Quotminus$ and its point $M = F/K$ has tangent space $\Hom(K, M)_{\leq 0}$ and an obstruction
    theory with obstruction space
    $\Ext^1(K, M)_{\leq 0}$.
    \section{Results specific for degree at most eight}\label{sec:degeight}

    Throughout this section we assume $\chr \kk = 0$. Some of the arguments
    do not use this assumptions, most others can be made for large enough
    characteristics. However, characteristic zero is indispensable for proofs
    of surjectivity of tangent
    maps and for all arguments performed with the help of
    \emph{Macaulay2}.

    \subsection{Examples of elementary components}\label{ssec:examples}

        In this subsection we gather examples of elementary components in
        $\CommMat$. For compactness we use the tensor notation: a tuple
        $(x_1, \ldots ,x_n)\in \CommMat$ is written as $\sum x_i\cdot e_i$
        where $e_1, \ldots ,e_n$ are formal coordinates. For example, the
        triple
        \[
            \begin{bmatrix}
                1 & 0\\
                0 & 1
            \end{bmatrix}, \begin{bmatrix}
                1 & 1\\
                0 & 1
            \end{bmatrix}, \begin{bmatrix}
                0 & 1\\
                0 & 0
            \end{bmatrix}
            \mbox{ is presented as a single matrix }
            \begin{bmatrix}
                e_1 + e_2 & e_2 + e_3\\
                0 & e_1 + e_2
            \end{bmatrix}.
        \]
        For any $n$, $d$ and $m\in \{1,\ldots ,d-1\}$  define
        \[
            \squareZeroComp{m}{d-m}{n}=\left\{g\cdot\left(\left[
                \begin{array}{cc}0&A_1\\0&0
                \end{array}
            \right],\ldots ,\left[
                \begin{array}{cc}0&A_n\\0&0
                \end{array}
            \right]\right)\cdot g^{-1};A_1,\ldots ,A_n\in \MM_{m\times (d-m)},
        g\in \GL_d\right\}
            \subset \CommMat.
        \]
        For a general tuple in $\squareZeroComp{m}{d-m}{n}$, the set of $g\in \GL_d$ that conjugate this
        tuple into a block upper triangular tuple
        has codimension $m(d-m)$ in $\GL_d$.
        Therefore, $\dim  \squareZeroComp{m}{d-m}{n}  =
        (n+1)m(d-m)$. Adding some multiple of identity matrices to each matrix,
        we get a locus of dimension $(n+1)m(d-m)+n$, that we call the
        $m$-th \emph{square-zero locus} of $\CommMat$.
        Below we say that a tuple $\xx$ \emph{witnesses} that the square-zero locus
        is a component if the tangent space $T_{\xx} \CommMat$ has dimension
        $(n+1)m(d-m)+n$; equal to the dimension of the square-zero locus.

        \begin{example}[$4\times 4$ square-zero
            quadruple, $m=2$]\label{ex:squareZero4x4}
            The quadruple
            \[
                \begin{bmatrix}
                    0 & 0 & e_1 & e_2\\
                    0 & 0 & e_3 & e_4\\
                    0 & 0 & 0 & 0\\
                    0 & 0 & 0 & 0
                \end{bmatrix}
            \]
            witnesses that the square-zero locus is a component for $(n,d,m) =
            (4,4,2)$. This tuple
            corresponds to a module $M$ with Hilbert function $(2, 2)$ that
            is generated by two elements and has trivial negative tangents.
            It was already known in~\cite[p.~72]{Gur} that this tuple
            does not lie on the principal component, while seemingly it was
            unknown which other components \rred{it lies} on.
        \end{example}
        \begin{example}[$5\times 5$ square zero
            quintuple, $m=2$]\label{ex:squareZero5x5}
            The tuple
            \[
                \begin{bmatrix}0& 0& {e}_{1}& {e}_{2}& {e}_{3}\\ 0& 0& {e}_{4}&
                    {e}_{1}+{e}_{5}& {e}_{2}\\ 0& 0& 0& 0& 0\\ 0& 0& 0& 0& 0\\ 0&
                    0& 0& 0& 0\\ \end{bmatrix}
            \]
            witnesses that the square-zero locus is a component for $(n,d,m) =
            (5,5,2)$. This tuple
            corresponds to a module with Hilbert function $(3, 2)$ that has
            trivial negative tangents and is a smooth point of an elementary
            $31$-dimensional component of Quot.
        \end{example}
        \begin{example}[$6\times 6$ square zero
            sextuple, $m=3$]\label{ex:squareZero6x6m3}
            The tuple $x_1=E_{14}+E_{25}+E_{36}, x_2=E_{15}+E_{26},
            x_3=E_{16}, x_4=E_{24}+E_{35}, x_5=E_{25}+E_{36}, x_6=E_{34}$
            witnesses that the square-zero locus is a component for $(n,d,m) =
            (6,6,3)$. This tuple
            corresponds to a module with Hilbert function $(3,3)$ that has
            trivial negative tangents and is a smooth point of  an elementary $51$-dimensional component of Quot.
        \end{example}
        \begin{example}[$6\times 6$ square zero
            sextuple II, $m=2$]\label{ex:squareZero6x6m2}
            The tuple $x_1=E_{13}+E_{24}, x_2=E_{14}+E_{25}, x_3=E_{15}+E_{26}, x_4=E_{16}, x_5=E_{23}, x_6=E_{24}$
            witnesses that the square-zero locus is a component for $(n,d,m) =
            (6,6,2)$. This tuple
            corresponds to a module with Hilbert function $(4,2)$ that has
            trivial negative tangents and is a smooth point of an elementary $50$-dimensional component of Quot.
        \end{example}
        \begin{example}[$7\times 7$ square zero
            quintuple, $m=3$]\label{ex:squareZero7x7m3}
            The tuple $x_1=E_{14}+E_{25}+E_{36}, x_2=E_{15}+E_{26}+E_{37}, x_3=E_{16}+E_{27}, x_4=E_{24}+E_{35}, x_5=E_{26}+E_{37}$
            witnesses that the square-zero locus is a component for $(n,d,m) =
            (5,7,3)$. This tuple
            corresponds to a module with Hilbert function $(4,3)$ that has
            trivial negative tangents and is a smooth point of an elementary $56$-dimensional component of Quot.
        \end{example}
        \begin{example}[$7\times 7$ square zero
            septuple, $m=2$]\label{ex:squareZero7x7m2}
            The tuple $x_1=E_{13}+E_{24}, x_2=E_{14}+E_{25},
            x_3=E_{15}+E_{26}, x_4=E_{16}+E_{27}, x_5=E_{17}, x_6=E_{23},
            x_7=E_{24}$
            witnesses that the square-zero locus is a component for $(n,d,m) =
            (7,7,2)$. This tuple
            corresponds to a module with Hilbert function $(5,2)$
            that has
            trivial negative tangents and is a smooth point of an elementary
            $73$-dimensional component of Quot.
        \end{example}

        \newcommand{\cubeZeroCompII}[5]{\mathcal{W}_{#1,#2,#3;#4}^{#5}}
        \newcommand{\cubeZeroComp}[4]{\mathcal{W}_{#1,#2,#3}^{#4}}
        \newcommand{\cubeZeroCompIIstandard}{\cubeZeroCompII{a}{b}{d-a-b}{c}{n}}
        \begin{example}[$7\times 7$ non-square zero quintuples]\label{ex:332}
            This is the only example that does not parameterize square-zero
            matrices.  Consider the locus of quintuples of $7\times 7$
            matrices that have the form
            \begin{equation}\label{eq:ex322matrices}
                x_i = \begin{bmatrix}
                    \mu_i & 0 & 0 & \lambda_{1i}u_{11}+\lambda_{2i}u_{21} & \lambda_{1i}u_{21}+\lambda_{2i}u_{31} & * & *\\
                    0 & \mu_i & 0 & \lambda_{1i}u_{12}+\lambda_{2i}u_{22} & \lambda_{1i}u_{22}+\lambda_{2i}u_{32} & * & *\\
                    0 & 0 & \mu_i & \lambda_{1i}u_{13}+\lambda_{2i}u_{23} & \lambda_{1i}u_{23}+\lambda_{2i}u_{33} & * & *\\
                    0 & 0 & 0 & \mu_i & 0 & 0 & \lambda_{1i}\\
                    0 & 0 & 0 & 0 & \mu_i & 0 & \lambda_{2i}\\
                    0 & 0 & 0 & 0 & 0 & \mu_i & 0\\
                    0 & 0 & 0 & 0 & 0 & 0 & \mu_i
                \end{bmatrix}\qquad i=1,2, \ldots 5,
            \end{equation}
            where we take arbitrary $\mu_i, \lambda_{1i}, \lambda_{2i}, u_{jk}\in \kk$ for
            $i=1,2, \ldots ,5$ and $j,k = 1,2,3$ and stars
            denote arbitrary entries. The matrices in each quintuple
            commute. There are $5\cdot (3+6)+ 9 = 54$
            parameters, so we obtain a morphism $\mathbb{A}^{54}\to C_5(\MM_7)$.
            For $[\lambda_{1i}\ \lambda_{2i}]_{i=1}^2$
            linearly independent we can recover the $u_{jk}$ from the
            matrices, so this map is generically one-to-one and so its
            image is a rational locus $\mathcal{L}$ of dimension $54$.
            Consider the locus $\GL_7\cdot \mathcal{L}$. To
            obtain its dimension we
            pick a
            general point $\xx = (x_1, \ldots ,x_5)\in \mathcal{L}$ and
            compute the dimension of $G := \left\{ g\in \GL_7 |\ g\xx g^{-1}\in \mathcal{L}
        \right\}$. This
            subgroup does not change when we subtract identity matrices,
            so we assume $\mu_i = 0$ for $i=1,2, \ldots ,5$.
            We have $\bigcap_i\ker(x_i) = (*, *, *, 0, 0, 0, 0)$,
            $(\bigcap_i\ker(x_i)) + \sum\im(x_i) = (*, *, *, *, *, 0, 0)$,
            $\bigcap_{i,j} \ker(x_i\cdot x_j) = (*, *, *, *, *, *, 0)$ so
            any element of $G$ stabilizes those
            spaces. Therefore, $G\subset \GL_7$ has codimension at least
            $17$ and so $\dim(\GL_7\cdot \mathcal{L})\geq 71$.
            The tangent space to $\CommMat$ at the quintuple
            \[
                \begin{bmatrix}0& 0& 0& {e}_{1}& 0& {e}_{3}& 0\\
                        0& 0& 0& {e}_{2}& {e}_{1}& {e}_{4}& 0\\
                        0& 0& 0& 0& {e}_{2}& {e}_{5}& 0\\
                        0& 0& 0& 0& 0& 0& {e}_{1}\\
                        0& 0& 0& 0& 0& 0& {e}_{2}\\
                        0& 0& 0& 0& 0& 0& 0\\
                        0& 0& 0& 0& 0& 0& 0\\ \end{bmatrix}
            \]
            is $71$-dimensional, so indeed we obtain a component.
            The tuple above corresponds to an $\kk[y_1, \ldots
            ,y_5]$-module $M$ with Hilbert function $(2, 2, 3)$ that is
            generated by two elements.
            In the language of Section~\ref{ssec:cubeZero} below, this
            component is a part of $\cubeZeroCompIIstandard$ for $(a,b,c) =
            (3,3,2)$.
            Moreover, if $\mu_i = 0$ then $x_i^2$ has rank at most one, which
            implies that the associated module does not have the strong
            Lefschetz property. Note that this happens for any graded
            module in the open locus of this component.
        \end{example}

        \begin{remark}\label{ref:addingMatrices:rmk}
            All of the above components give rise to elementary components of
            $\CommMat$ for $n$ greater than in the examples. More precisely,
            consider a witness point $\xx\in \CommMat$ as in each of the
            examples above; this point is smooth. Consider the point
            $\xx'\in C_{n+e}(\MM_d)$ obtained by padding the tuple $\xx$ with $e$
            zero matrices. We claim that $\xx'$ is smooth as well. By
            Lemma~\ref{ref:tangentspace:lem} the difference
            $\dim T_{\xx'}C_{n+e}(\MM_d) - \dim T_{\xx}\CommMat$ is equal to $e\cdot
            \dim C$, where $C\subset \MM_d$ is the space of matrices commuting
            with every matrix in $\xx$. Now, in each case a direct check shows
            that this commutator is as small as possible: in the square-zero
            cases it is given by square-zero matrices (and scalar matrices)
            while in the case of Example~\ref{ex:332} it is given by matrices
            of the shape~\eqref{eq:ex322matrices} with the same $(u_{ij})$ as
            in $\xx$. Knowing this, we see that the tangent space dimension is equal to
            the dimension of the locus defined analogously as in \rred{the} examples,
            thus $\xx'$ is smooth.
        \end{remark}\goodbreak

    \subsection{Cube nonzero cases}
    \begin{proposition}\label{cube_not_zero}
        Let $d\le 7$ and let $(x_1,\ldots ,x_n)\in \CommMat$ be a tuple of
        nilpotent matrices such that some linear combination \rred{of them} has nonzero
        cube. Then the $n$-tuple belongs to a non-elementary component of
        $\CommMat$.
    \end{proposition}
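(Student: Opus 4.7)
\emph{Proof plan.} My approach is to put $x_1$ into Jordan canonical form and case-analyze on its Jordan type. First, if some $x_i$ has two distinct eigenvalues then the tuple is already on a non-elementary component; otherwise, using Corollary~\ref{algebra} to subtract scalar multiples of $I_d$ I may assume every $x_i$ is nilpotent. Applying a $\GL_n$-change of variables (which preserves components by Lemma~\ref{generic_assumption}) I may take $x_1$ itself to have nonzero cube, and a $\GL(V)$-conjugation puts $x_1$ into Jordan canonical form. Let $a_m \geq a_{m-1} \geq \cdots \geq a_1$ be its Jordan block sizes; since $x_1^3 \neq 0$ we have $a_m \geq 4$, and $\sum a_i = d \leq 7$.

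Three cases absorb most Jordan types. If $m = 1$, then $\ker x_1$ is one-dimensional and Lemma~\ref{1-dim_kernel} places the tuple on the principal component, which is non-elementary since $d \geq 4$. If $a_m > 2a_{m-1}$, Lemma~\ref{big_largest_Jordan} applies directly. Otherwise $m \geq 2$ and $a_m \leq 2a_{m-1}$; enumerating partitions of at most $7$ with $a_m \geq 4$ satisfying these constraints leaves exactly $(4,2)$ in $d = 6$ and $(4,3)$ or $(4,2,1)$ in $d = 7$. For $(4,2)$ and $(4,3)$, $x_1$ has exactly two Jordan blocks so $\dim \ker x_1 = 2$, and Proposition~\ref{2-dim_kernel} applies since $d \geq 6 > 4$.

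The remaining type $(4,2,1)$ in $d = 7$ is the main obstacle, as $\ker x_1$ is three-dimensional and none of the preceding lemmas applies to $x_1$ itself. My plan is first to search $\spann{x_1, \ldots, x_n}$ for another nilpotent linear combination $y$ whose Jordan type admits one of the earlier lemmas---a single block, exactly two blocks, or largest block more than twice the next; if such a $y$ exists, a $\GL_n$-change of generators reduces to a previous case. The residual subcase is when every cube-nonzero linear combination in $\spann{x_1, \ldots, x_n}$ has Jordan type exactly $(4,2,1)$, which imposes a rigid structure on the algebra $A = \kk\langle x_1, \ldots, x_n\rangle$. Here I would exploit the $J_1$-summand: the vector $g_1$ spanning it satisfies $x_1 g_1 = 0$, and by analyzing the action of each $x_i$ on $\ker x_1$ I would either produce a non-scalar idempotent $E$ in the simultaneous centralizer of the tuple (by showing that $\kk g_1$ splits as an $S$-module direct summand of the associated module $M$, so that the deformation $(x_1 + \lambda E, x_2, \ldots, x_n)$ acquires the extra eigenvalue $\lambda$) or reduce via Corollary~\ref{algebra} to a tuple generating $A$ with few enough generators that the known irreducibility of $C_2(\MM_7)$ and $C_3(\MM_7)$ completes the argument. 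Ruling out the pathological in-between configurations in this residual subcase is the step I expect to be most delicate.
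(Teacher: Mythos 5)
Your initial reductions are correct and match the paper's: normalizing so $x_1^3 \neq 0$ is in Jordan form, disposing of kernel dimension $\leq 2$ via Lemma~\ref{1-dim_kernel} and Proposition~\ref{2-dim_kernel}, disposing of $a_m > 2a_{m-1}$ via Lemma~\ref{big_largest_Jordan}, and isolating Jordan type $(4,2,1)$ as the sole remaining case. (The step about first removing scalar multiples of $I_d$ is redundant, since the hypothesis already takes the $x_i$ to be nilpotent.)

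The gap is exactly where you flag it: you never close the $(4,2,1)$ case, and the two strategies you sketch for it are unlikely to succeed as stated. First, hunting for a different nilpotent linear combination of type $(4,2)$, $(4,3)$, $(5,2)$, etc.\ can genuinely fail: the residual subcase where \emph{every} cube-nonzero combination has type $(4,2,1)$ is precisely what you must handle, and you defer it. Second, and more importantly, your plan to produce a non-scalar idempotent $E$ lying in the \emph{simultaneous centralizer} of $(x_1,\ldots,x_n)$ is strictly weaker than what is actually available. The paper's resolution does not require the deforming matrix to commute with all $x_i$; instead, after putting $x_2,\ldots,x_n$ in upper-triangular Toeplitz form relative to $x_1$ (Lemma~\ref{ref:toeplitz:lem}, Remark~\ref{ref:firstBlockZero:rmk}), it reads off from a commutator entry that the scalar pairs $(b_0^{(i)}, d_0^{(i)})$ for $i\geq 2$ are pairwise proportional, normalizes via Corollary~\ref{algebra} so that $b_0^{(i)}=d_0^{(i)}=0$ for $i\geq 3$, and then exhibits a one-parameter family $(x_1 + \lambda E_{22},\, x_2 + \lambda y,\, x_3, \ldots, x_n)$ of \emph{commuting} tuples in which both $x_1$ and $x_2$ are deformed simultaneously; the compensating matrix $y$ (a single entry $-b_0^{(2)}$) is needed precisely because $E_{22}$ does not centralize $x_2$. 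For $\lambda\neq 0$ the first matrix has two eigenvalues, which concludes. Your fallback appeal to irreducibility of $C_2(\MM_7)$ or $C_3(\MM_7)$ via Corollary~\ref{algebra} also does not generically apply, since there is no reason the algebra $\kk[x_1,\ldots,x_n]$ should be generated by two or three of the $x_i$. So the outline is right, but the key trick---deforming more than one matrix of the tuple at once---is missing, and without it the residual case remains open.
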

    \begin{proof}
        Using the action of $\GL_n$ we assume that $x_1^3\ne 0$. We put $x_1$ in a Jordan form.
        Let $a\geq a'$ be the sizes of the two largest Jordan blocks of $x_1$,
        so $a\geq 4$.
        If the kernel of $x_1$ is at most two-dimensional,
        we conclude using Proposition~\ref{2-dim_kernel} or
        Lemma~\ref{1-dim_kernel}. Otherwise, $x_1$ has at least three
        Jordan blocks, so $a+a'\leq 6$. If $a\geq 2a'+1$, then we conclude
        using Lemma~\ref{big_largest_Jordan}. If $a\leq 2a'$ then $a = 4$, $a'
        = 2$, so the Jordan type of $x_1$ is $(4, 2, 1)$.
        Using Lemma~\ref{ref:toeplitz:lem} and Remark~\ref{ref:firstBlockZero:rmk}
        we put the matrices $x_i$ for $i\geq 2$ in the form
        \[
            x_i=\left[
                \begin{array}{ccccccc}
                    0 & 0 & 0 & 0 & b_0^{(i)} & b_1^{(i)} & c^{(i)}\\
                    0 & 0 & 0 & 0 & 0 & b_0^{(i)} & 0\\
                    0 & 0 & 0 & 0 & 0 & 0 & 0\\
                    0 & 0 & 0 & 0 & 0 & 0 & 0\\
                    0 & 0 & d_0^{(i)} & d_1^{(i)} & e_0^{(i)} & e_1^{(i)} & f^{(i)}\\
                    0 & 0 & 0 & d_0^{(i)} & 0 & e_0^{(i)} & 0\\
                    0 & 0 & 0 & g^{(i)} & 0 & h^{(i)} & k^{(i)}
                \end{array}
            \right].
        \]
        Since $x_i$ are nilpotent, we have $k^{(i)} = 0 = e_0^{(i)}$ for every
        $i\geq 2$.
        The $(1,3)$-entry of the commutator of $x_i$ and $x_j$ is $b_0^{(i)}d_0^{(j)} -
        b_0^{(j)}d_0^{(i)}$, so any two pairs $(b_0^{(i)},d_0^{(i)})$ and
        $(b_0^{(j)},d_0^{(j)})$ with $2\le i<j$ are linearly dependent. We
        assume by Corollary~\ref{algebra} that $b_0^{(i)}=0$ and $d_0^{(i)}=0$ for
        each $i\ge 3$. Let $y$ be the matrix with the $(3,6)$-th entry equal
        to $-b_0^{(2)}$ and all other entries zero.
        The tuple $(x_1+\lambda E_{22},x_2+\lambda y,x_3,\ldots ,x_n)$
        commutes for each $\lambda \in \Bbbk$. For $\lambda \ne 0$ the matrix
        $x_1+\lambda E_{22}$ has two distinct eigenvalues, so such an
        $n$-tuple belongs to a non-elementary component of $\CommMat$.
        Since components are closed, also $(x_1, \ldots ,x_n)$ belongs to this component.
    \end{proof}

    \subsection{Cube zero, square nonzero cases}\label{ssec:cubeZero}
        Proposition~\ref{cube_not_zero} takes care of the case when
        $x_ix_jx_k\neq 0$ for some $i$, $j$, $k$. In \rred{this} subsection we
        consider the next case: where $x_ix_jx_k = 0$ for all $i,j,k$ but
        there exist some $i$, $j$ such that $x_ix_j\neq 0$.

        Now we decompose the cube-zero locus into subloci.
        For a tuple $\mathbf{x} = (x_1, \ldots ,x_n)$ in $\CommMat$ let
        $K_1(\xx):=\cap_{i=1}^n\ker x_i$ and $K_2(\xx):=\cap _{i,j=1}^n\ker
        x_ix_j$ and $\im(\xx) = \sum_{i=1}^n \im x_i$. From cube-zero we
        get $\im(\xx) \subset K_2(\xx)$. Consider the locus
        \[
            \cubeZeroComp{a}{b}{d-a-b}{n}=\Big\{\xx\in \CommMat\ |\ \dim
                K_1(\xx)=a,\ \dim K_2(\xx)=a+b,\ \forall{i,j,k} : x_ix_jx_k = 0\}.
        \]
        and its subloci, for $1\leq c$, defined by
        \[
            \cubeZeroCompIIstandard =
                \left\{(x_1,\ldots ,x_n)\in\cubeZeroComp{a}{b}{d-a-b}{n}\ |\
            \dim (\im(\xx) + K_1(\xx))=a+c\right\}.
        \]
        Observe that for $c>b$ or $c>n(d-a-b)$ the locus
        $\cubeZeroCompIIstandard$ is empty.
        For any tuple $\xx\in\cubeZeroComp{a}{b}{d-a-b}{n}$, we have $K_1(\xx) \subset K_2(\xx)$ and if we
        choose bases of these spaces compatibly, then the matrices $x_i$ can
        be written as
        \begin{equation}\label{x_i}
            x_i=\left[
                \begin{array}{ccc}0&A_i&B_i\\0&0&C_i\\0&0&0
                \end{array}
            \right]
        \end{equation}
        for some matrices $A_i\in \MM_{a\times b}$, $B_i\in \MM_{a\times (d-a-b)}$
        and $C_i\in \MM_{b\times (d-a-b)}$. By construction, the common kernel
        of $(A_i)_i$ is zero and the common kernel of $(C_i)_i$ is zero.
        The introduction of parameters $a$, $b$, $c$ is arbitrary, but it
        decomposes the cube zero locus into more approachable subloci. Below
        we prove that some of them are irreducible.

        \begin{lemma}\label{dim-cube_zero}
            Let $a,b,c,d$ be positive integers, $d-a-b=1$
            and $c\le b, n$. Then the locally closed locus
            $\cubeZeroCompIIstandard$ is irreducible of dimension
            $n(a(b-c+1)+c)+\frac{ac(c+1)}{2}+ab+bc-c^2+a+b$.
        \end{lemma}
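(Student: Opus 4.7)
The plan is to put each $\xx \in \cubeZeroCompIIstandard$ into the block form~\eqref{x_i} dictated by the filtration $K_1(\xx) \subset K_2(\xx) \subset V$, analyze the resulting parameter space $Y$ of block-form tuples, and recover $\cubeZeroCompIIstandard$ as $\GL_d \cdot Y$. Let $Y$ denote the locally closed subvariety of $\MM_{a \times b}^n \times (\kk^a)^n \times (\kk^b)^n$ of tuples $(A_i, B_i, C_i)_i$ satisfying the commutativity relations arising from~\eqref{x_i} together with $\bigcap_i \ker A_i = 0$ and $\dim\operatorname{span}(C_1, \ldots, C_n) = c$. Cube-zero is automatic in block form, and a direct block multiplication shows that $x_i x_j$ has only its top-right $a \times 1$ block nonzero, equal to $A_i C_j$; hence commutativity reduces to the single family of equations
\[
    A_i C_j = A_j C_i \qquad \text{for all } i, j.
\]

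To compute $\dim Y$, I consider the projection $\pi\colon Y \to U_c$, $(A, B, C) \mapsto C$, where $U_c \subset (\kk^b)^n$ is the irreducible locus of $n$-tuples spanning a $c$-dimensional subspace; it has dimension $c(b-c) + cn$, being open in the total space of the tautological vector bundle $\mathcal{U}^n$ over $\Gr(c, b)$. Over a generic $C$, choose coordinates on $\kk^b$ so that $C_k = e_k$ for $k \leq c$ and $C_j = \sum_{k \leq c} \alpha_{jk} e_k$ for $j > c$. Writing $a_{i,k}$ for the $k$-th column of $A_i$, the equations $A_i C_j = A_j C_i$ translate into: $a_{i,j} = a_{j,i}$ for $i, j \leq c$; $a_{j,i} = \sum_{k \leq c} \alpha_{jk} a_{i,k}$ for $j > c$, $i \leq c$; and the remaining relations for $i, j > c$ are consequences of these two. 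Counting the free columns produces a linear fiber of dimension
\[
    a\binom{c+1}{2} + ac(b-c) + a(n-c)(b-c) = \frac{ac(c+1)}{2} + an(b-c),
\]
coming from the symmetric $c \times c$ array $(a_{i,j})_{i,j \leq c}$, the columns $a_{i,j}$ with $i \leq c < j$, and the columns $a_{i,j}$ with $c < i$ and $c < j$. Together with the $na$ free $B$-parameters this gives $\dim Y = na + c(b-c) + cn + \frac{ac(c+1)}{2} + an(b-c)$.

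Finally, every $\xx \in \cubeZeroCompIIstandard$ is conjugate by some $g \in \GL_d$ to an element of $Y$, and the ambiguity is exactly the parabolic $P \subset \GL_d$ stabilizing the standard filtration of type $(a, b, 1)$; hence $\GL_d \times Y \to \cubeZeroCompIIstandard$ has fibers of dimension $\dim P$, with $\dim \GL_d - \dim P = ab + a + b$. Adding this to $\dim Y$ and rearranging yields the stated formula. Irreducibility of $\cubeZeroCompIIstandard$ follows from irreducibility of $U_c$, of the generic fiber of $\pi$ (a linear space intersected with a nonempty open subset), and of $\GL_d$, combined with the fact that $\cubeZeroCompIIstandard$ is the image of the irreducible $\GL_d \times Y$ under the conjugation morphism. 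The main obstacle is the fiber-dimension calculation for $\pi$: one must verify that the linear system $A_i C_j = A_j C_i$ really has the predicted rank at a generic rank-$c$ tuple $C$, and the adapted-coordinate argument above is the cleanest route I see.
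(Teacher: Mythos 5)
Your proof is correct and follows essentially the same route as the paper: fiber the locus over the flag data determined by $\xx$, normalize the $C_i$'s in adapted coordinates, and count the rank of the linear system $A_iC_j = A_jC_i$. The only difference is bookkeeping: the paper fibers $\cubeZeroCompIIstandard$ over the full flag variety $\mathrm{Fl}(a,\,a+c,\,a+b;\,d)$ (so the $C_i$'s live in a fixed $c$-dimensional quotient and contribute $nc$ parameters), whereas you fiber over the partial flag via the $\GL_d\times Y$ map and fold the choice of the middle flag step into the Grassmannian factor of $U_c$; the two decompositions give the same count and the same irreducibility argument.
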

        \begin{proof}
            In essence, this is the same parameter count as in Example~\ref{ex:332}.
            By definition, a tuple $\xx\in \cubeZeroCompIIstandard$ determines
            a flag $K_1(\xx) \subset K_1(\xx)+\im(\xx) \subset K_2(\xx)$ so
            the locus $\cubeZeroCompIIstandard$ is fibered over the
            flag variety of subspaces $V_1\subset V_2\subset V_3\subset
            V \simeq \kk^{d}$ where $(\dim V_1, \dim V_2, \dim V_3) = (a, a+c, a+b)$.
            This flag variety has dimension $ab+bc-c^2+a+b$. It remains to
            compute the fiber, so we assume that $K_1(\xx) = \spann{e_1,
            \ldots ,e_a}$,
            $K_1(\xx)+\im(\xx) = \spann{e_1, \ldots ,e_{a+c}}$, $K_2(\xx) =
            \spann{e_1, \ldots , e_{a+b}}$. In this case, the matrices $x_i$ have
            the form~\eqref{x_i} with $C_i$ having nonzero entries only in the
            first $c$ rows. Up to a linear change of coordinates, which
            amounts to $nc$ parameters, we assume $C_i = e_i$ for $i\leq c$
            and $C_i = 0$ for $i > c$.
            The commutativity condition then reduces to saying that for $1\leq i <
            j\leq c$ the $i$-th column of $A_j$ is the $j$-th column of $A_i$
            and additionally, the first $c$ columns of $A_i$ are zero for $i >
            c$.
            Therefore we have $na(b-c+1)+ac^2$ parameters and $a\binom{c}{2}$
            linear and independent conditions, so be obtain
            an affine space, in particular the whole fiber is irreducible.
            The flag variety is homogeneous under the action of
            $\GL(V)$, the whole locus $\cubeZeroCompIIstandard$ is an image of the
            product of the fiber and $\GL(V)$ so this locus is irreducible.
            The dimension count follows.
        \end{proof}
        The following lemma will be frequently used in the proof of the main theorem of this section. It is a generalization of the case $a_m=3$ of Lemma~\ref{big_largest_Jordan}.
        \begin{lemma}\label{small_image_of_square}
            Let $\xx\in \CommMat$ be a cube-zero tuple such that
            $\dim \sum_{i,j} \im(x_ix_j) = 1$ and $\dim K_2(\xx) = d-1$. Then $\xx$ belongs to a
            non-elementary component of $\CommMat$.
        \end{lemma}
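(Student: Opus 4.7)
The plan is to produce a nonscalar idempotent $E\in\Endd(V)$ commuting with each $x_i$. Given such $E$, the line $\{(x_1+\lambda E, x_2, \ldots, x_n) \mid \lambda\in\kk\}$ lies in $\CommMat$ (since $E$ commutes with each $x_i$) and, for $\lambda\neq 0$, the matrix $x_1+\lambda E$ has two distinct eigenvalues $0$ and $\lambda$ (because $x_1$ is nilpotent and commutes with a nontrivial projector). The line therefore meets a non-elementary component in a Zariski-open subset, so by closedness of components $\xx$ itself lies in a non-elementary component.

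\textbf{Construction of $E$.} The cube-zero hypothesis gives $\im(x_i)\subset K_2(\xx)=:H$, since $x_jx_k(x_iv)=0$ for all $j,k,v$. Extend a basis of $H$ to a basis $(v_1,\ldots,v_{d-1},e_d)$ of $V$; each $x_i$ then takes the block form $\begin{pmatrix} A_i & b_i\\ 0 & 0\end{pmatrix}$ with $A_iA_j=0$ on $H$ (from $x_i(H)\subset K_1(\xx)$), and the vectors $A_ib_j=x_ix_j(e_d)$ all lie on the single line $L\subset K_1(\xx)\cap\im(\xx)$. Seek $v_0\in K_1(\xx)$ and $\phi\in V^*$ with $\phi|_{\im(\xx)}=0$ and $\phi(v_0)=1$; then $E:=v_0\otimes\phi$ satisfies $E^2=E$, and $x_iE=\phi(\cdot)\cdot x_iv_0=0$ (as $v_0\in K_1$) while $Ex_i=(\phi\circ x_i)(\cdot)\cdot v_0=0$ (as $\phi$ vanishes on $\im(x_i)$). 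Such $v_0$ and $\phi$ exist exactly when $K_1(\xx)\not\subset\im(\xx)$, which I expect to be the generic situation.

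\textbf{Main obstacle: the case $K_1(\xx)\subset\im(\xx)$.} Here no rank-one commuting idempotent of the above form exists, so a different argument is needed. The plan is to reduce to an already established lemma by analyzing the Jordan structure of a generic linear combination $x:=\sum\lambda_i x_i$. Since $A_iA_j=0$, the upper-left block of $x^2$ vanishes while its upper-right column is generically a nonzero element of $L$; thus $x^2\neq 0$ generically, and $x$ has exactly one Jordan block of size three, together with blocks of size at most two. When the smaller blocks are all of size one, Lemma~\ref{big_largest_Jordan} applies ($a_m=3>2=2a_{m-1}$); when $\dim\ker(x)\leq 2$, Lemma~\ref{1-dim_kernel} or Proposition~\ref{2-dim_kernel} applies. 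The genuinely delicate subcase is when $x$ has Jordan type combining several blocks of size two with the single size-three block: here the plan is to exploit the rigidity forced by $A_iA_j=0$ and the one-dimensionality of $L$ to exhibit, after a $\GL(V)$-change of basis, a matrix unit $E_{ii}$ commuting with every $x_i$, paralleling the proof of Lemma~\ref{big_largest_Jordan}. Verifying this rigidity in full generality is the principal technical difficulty.
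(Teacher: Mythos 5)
Your argument is complete in the case $K_1(\xx)\not\subset\im(\xx)$, where the rank-one commuting idempotent exists, but you correctly flag that the case $K_1(\xx)\subset\im(\xx)$ is not finished, and this is a genuine gap. That case does occur and your fallback lemmas do not cover it. For instance, take $S=\kk[y_1,y_2,y_3]$ and $M=\kk[y_1]/(y_1^3)\oplus\kk[y_2]/(y_2^2)\oplus\kk[y_3]/(y_3^2)$, so $d=7$. Then $K_1(\xx)=\kk y_1^2\oplus\kk y_2\oplus\kk y_3\subset\mm M=\im(\xx)$, $\dim K_2(\xx)=6=d-1$, $\sum_{i,j}\im(x_ix_j)=\kk y_1^2$ is one-dimensional, and a generic linear combination $x$ of the $x_i$ has Jordan type $(3,2,2)$ with $\dim\ker x=3$. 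Here $a_m=3\not>4=2a_{m-1}$, so Lemma~\ref{big_largest_Jordan} fails, and $\dim\ker x=3>2$, so Lemma~\ref{1-dim_kernel} and Proposition~\ref{2-dim_kernel} are out of scope. Whether one can always find a commuting matrix unit in this residual case is precisely the unproved assertion; as it stands your proposal does not establish the lemma.

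The paper's proof avoids the dichotomy by dropping the requirement that the commuting perturbation be an idempotent: nonzero trace suffices, since a nilpotent plus a trace-nonzero commuting matrix has a nonzero eigenvalue. Concretely, after replacing the tuple by linear combinations one may assume $x_1^2\neq 0$ and $x_1x_i=0$ for all $i\geq 2$ (possible because every $x_ix_j$ is a scalar multiple of the single rank-one operator $f\mapsto e$, where $\kk e=\sum_{i,j}\im(x_ix_j)$ and $\kk f$ complements $K_2(\xx)$). Writing each $x_i$ in block form with row vector $A_i'$ (sending a middle block into $\kk e$) and column vector $C_i'$ (sending $\kk f$ into that middle block), the relations $x_1x_i=x_ix_1=0$ for $i\geq 2$ give $A_1'C_i'=0=A_i'C_1'$. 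The matrix $y$ whose only nonzero block is $C_1'A_1'$ in the middle then commutes with every $x_i$, $i\geq 2$, and $\tr(y)=A_1'C_1'$ is exactly the scalar forcing $x_1^2\neq 0$, hence nonzero. Thus $x_1+\lambda y$ has a nonzero eigenvalue for $\lambda\neq 0$ and the conclusion follows uniformly, with no case distinction on whether $K_1(\xx)\subset\im(\xx)$.
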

        \begin{proof}
            Let $f\in V \setminus K_2(\xx)$, then $V = K_2(\xx) + \kk f$.
            Let $e$ span $\sum_{i,j} \im(x_ix_j)$. Then every
            $x_ix_j$ is a multiple of the unique rank one matrix that sends $f$ to $e$.
            Replacing matrices with linear combinations, we may assume
            $x_1^2\neq 0$ and then $x_1x_i = 0$ for all $i\geq 2$. Rearrange our fixed basis of $V$ such that the
            first basis element is $e$. In this basis, our tuple
            becomes
            \[
                \left(\left[
                    \begin{array}{ccccc}0&0&A_1'& * & * \\0&0&0& * & *
                        \\0&0&0&0&C_1'\\0&0&0&0&0\\0&0&0&0&0
                    \end{array}
                \right],\ldots ,\left[
                    \begin{array}{ccccc}0&0&A_n'&*&*\\0&0&0& *&*\\0&0&0&0&C_n'\\0&0&0&0&0\\0&0&0&0&0
                    \end{array}
                \right]\right)
            \]
            where $*$ denotes some irrelevant yet possibly nonzero parts.
            Consider the
            matrix $y$ with the middle block
            $C_1'A_1'$ and all other blocks zero. The relations
            $x_1x_i = x_ix_1 = 0$ for all $i\geq 2$ imply that $A_1'C_i' = 0$ and
            $A_i'C_1' = 0$ for
            these $i$ and this proves that the matrix $y$ commutes with $x_i$ for each $i\ge 2$.
            The trace of $y$ is nonzero, so
            for $\lambda \ne 0$ the
            matrix $x_1+\lambda y$ has nonzero trace so it has a nonzero eigenvalue, so the family
            $(x_1+\lambda y,x_2,\ldots ,x_n)$ proves that $\xx$ lies on a non-elementary
            component.
        \end{proof}

        \begin{theorem}\label{cube_zero}
            Let $d\le 7$ and let $a$, $b$, $c$, $n$ be positive
            integers with $c\le b$, $c\leq n(d-a-b)$ and $a+b<d$ and let
                $\mathcal{V} = \mathcal{V}_{a,b,d-a-b;c}^n := (\kk
                    I)^n+\overline{\cubeZeroCompII{a}{b}{d-a-b}{c}{n}}$. Then the following holds:
            \begin{enumerate}[label=(\arabic*)]
                \item\label{it:caseOne}
                    For $(a,b,c,d) = (3,3,2,7)$ and $n\geq 5$ the variety
                    $\mathcal{V}$
                    is an irreducible component of $C_n(\MM_{d})$.
                \item\label{it:caseTwo} For $(a,b,c,d) = (2, 2, 2, 7)$ and
                        $n\geq 5$ the variety $\mathcal{V}$ has a
                        component $\mathcal{Z}$ which is the transpose of the locus from
                        case~\ref{it:caseOne} and $\mathcal{Z}$ is a
                        component of $\CommMat$,
                        while all other components of $\mathcal{V}$ belong to non-elementary
                    components of $\CommMat$.
                \item\label{it:caseNonelementary}
                    In all other cases the variety $\mathcal{V}$
                    belongs to the union of non-elementary components of $\CommMat$.
            \end{enumerate}
        \end{theorem}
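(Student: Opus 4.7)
The proof breaks into three parts matching the three claims. A general tuple in $\cubeZeroCompIIstandard$ has the block form
\[
x_i = \begin{bmatrix} 0 & A_i & B_i \\ 0 & 0 & C_i \\ 0 & 0 & 0 \end{bmatrix}
\]
with block sizes $(a, b, d-a-b)$ and joint column span of $(C_i)$ in the middle block of dimension $c$. Throughout, I will use transpose and module-duality arguments from Section~\ref{ssec:transposes} and the ``matrix-unit commuting'' deformation technique used already in Lemmas~\ref{big_largest_Jordan} and~\ref{small_image_of_square}.

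For case~\ref{it:caseOne}, since $d-a-b=1$ I can apply Lemma~\ref{dim-cube_zero} directly: it shows $\cubeZeroCompII{3}{3}{1}{2}{n}$ is irreducible. The Minkowski sum with $(\kk I)^n$ only adds $n$ further directions (scalar matrices are not cube-zero apart from $0$), so $\mathcal{V}$ is irreducible of dimension equal to the formula of Lemma~\ref{dim-cube_zero} plus $n$, which for $n=5$ evaluates to $71$. The quintuple of Example~\ref{ex:332} lies in $\mathcal{V}$ and has tangent space in $\CommMat$ of dimension $71$ (by Lemma~\ref{ref:tangentspace:lem}), so $\mathcal{V}$ is a component. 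For $n>5$ I would invoke Remark~\ref{ref:addingMatrices:rmk}, padding the witness tuple by zero matrices and checking that the centralizer of the $(n=5)$-witness has exactly the required dimension.

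For case~\ref{it:caseTwo}, transposition is an automorphism of $\CommMat$ that permutes components (Section~\ref{ssec:transposes}), so the image $\mathcal{Z}$ of the component of case~\ref{it:caseOne} is again a component. The module $M$ at a general point of case~\ref{it:caseOne} has Hilbert function $(2,2,3)$, so $M^{\vee}$ has Hilbert function $(3,2,2)$; reading off the table at the end of Section~\ref{ssec:transposes} gives $\dim K_1(\xx^T) = 2$, $\dim K_2(\xx^T) = 4$, and $\dim(\im(\xx^T) + K_1(\xx^T)) = 4$, which places $\mathcal{Z}$ inside $\mathcal{V}_{2,2,3;2}^n$. The remaining components of $\mathcal{V}_{2,2,3;2}^n$ (those not contained in $\mathcal{Z}$) are then subsumed into the analysis of case~\ref{it:caseNonelementary}.

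For case~\ref{it:caseNonelementary}, the plan is to exhibit, for a general tuple $\xx\in\cubeZeroCompIIstandard$, a coordinate $k$ in the middle block such that $E_{kk}$ commutes with every $x_i$. The family $(x_1 + \lambda E_{kk}, x_2, \ldots, x_n)$ then has a first matrix with two distinct eigenvalues for $\lambda\ne 0$, placing $\xx$ on a non-elementary component exactly as in Lemmas~\ref{big_largest_Jordan} and~\ref{small_image_of_square}. Letting $V_A = \{v : A_iv = 0 \text{ for all } i\}$ and $V_C = \{w : w^TC_i = 0 \text{ for all } i\}$ inside the middle block, such $k$ exists (after a $\GL_b$-change of coordinates in the middle block) iff $V_A\cap V_C\neq 0$; writing $c'$ for the dimension of the joint column span of the $A_i$ in the first block, this reduces to $c+c' < 2b$. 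Over the admissible triples $(a,b,d-a-b)$ with $a+b<d\le 7$, a finite case check should show this inequality holds in all cases other than the two critical ones of~\ref{it:caseOne} and~\ref{it:caseTwo}. The main obstacle is managing this finite case check cleanly and handling the small number of borderline instances (e.g.\ when the middle block is tiny or when the generic bound on $c'$ is not attained); these will require supplementary calls to Lemma~\ref{big_largest_Jordan}, Lemma~\ref{small_image_of_square}, Lemma~\ref{1-dim_kernel}, or Proposition~\ref{2-dim_kernel} to close the small residual cases.
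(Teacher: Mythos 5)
Your treatment of case~\ref{it:caseOne} (irreducibility from Lemma~\ref{dim-cube_zero}, Example~\ref{ex:332} as witness, then Remark~\ref{ref:addingMatrices:rmk} for $n>5$) matches the paper, and the observation that case~\ref{it:caseTwo}'s component $\mathcal{Z}$ arises by transposition is also how the paper treats that piece. The serious problem is your plan for case~\ref{it:caseNonelementary}.

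Your criterion is that a middle-block matrix unit $E_{kk}$ commutes with every $x_i$ if and only if $V_A\cap V_C\neq 0$ (after $\GL_b$-change of coordinates), where $V_A=\bigcap_i\ker A_i$. But $V_A=0$ identically on the locus $\cubeZeroComp{a}{b}{d-a-b}{n}$: if some nonzero $v$ lay in $\bigcap_i\ker A_i$, the vector $(0,v,0)$ would be in $K_1(\xx)$, forcing $\dim K_1(\xx)>a$, contradicting the definition. The paper records exactly this below equation~\eqref{x_i}: ``the common kernel of $(A_i)_i$ is zero.'' So $V_A\cap V_C=0$ always, the deformation $x_1+\lambda E_{kk}$ you rely on never exists, and the reduction to the inequality $c+c'<2b$ is vacuous. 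In the paper, the commuting perturbations that do work are never of this shape: in Lemma~\ref{small_image_of_square} the commuting matrix is $C_1'A_1'$ (a product, not a matrix unit), and in the core of Theorem~\ref{cube_zero} the notion is ``deformable in the middle,'' where \emph{all} $x_i$ acquire middle blocks $D_i$ simultaneously.

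This is not a small residual: case~\ref{it:caseNonelementary} is where essentially all of the difficulty of the theorem lives, and the paper's argument has no analogue of your finite numerical check. It proceeds by a long chain of reductions — handling $c=1$ via Lemma~\ref{small_image_of_square}, $b=c$ via cyclic tuples and irreducibility of the Hilbert scheme of at most $7$ points (citing~\cite{cartwright_erman_velasco_viray_Hilb8}), $a=2$ via explicit deformations from that same paper, $b=2$ via a rank argument on $2\times 2$ commuting spaces, and the $b=c=3$ cases via apolarity (Examples~\ref{example:twoQuadricsThreeLinears} and~\ref{example:independentQuadrics}), the Eisenbud--Harris classification of spaces of matrices with no full-rank element~\cite{Eisenbud__Harris__vector_spaces}, graded duality for minimal free resolutions (Theorem~\ref{ref:duality_for_resolutions:thm}) and a Green's-Linear-Syzygy-type argument, an upward induction through $d=8,9$, and concluding tangent-space computations at explicit witnesses. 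None of that is captured by, or reducible to, a matrix-unit deformation, so the proposal cannot be repaired without replacing the entire case~\ref{it:caseNonelementary} strategy.
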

        \begin{proof}
            Fix a tuple $\xx\in \cubeZeroCompII{a}{b}{d-a-b}{c}{n}$.
            First we discard several easy cases.
            If $n\leq 2$ then it is classically known that $\CommMat$ is
            irreducible. If $n = 3$ then it is also known that $\CommMat$ is
            irreducible for $d\leq 10$, see~\cite{Han,
            Sivic__Varieties_of_commuting_matrices}.
            If $a = 1$ and $d-a-b=1$ then $\dim
            \sum_{i,j} \im(x_ix_j) = 1$ so the tuple lies on a non-elementary
            component by Lemma~\ref{small_image_of_square}.
            So, after possibly
            transposing the matrices, we assume
            $\mathbf{a\geq2}$.

            Assume now $c=1$.
            We will check that $\dim K_2(\xx) = d-1$ and that $\dim
            \sum_{i,j} \im(x_ix_j)= 1$.
            Since $\im(\xx)\not\subset \ker(\xx)$, up to linear coordinate
            change we assume $\im(x_1) \not \subset \ker(\xx)$. Let $w\in V$
            be such that $x_1(w)\not\in \ker(\xx)$ and denote this element by
            $v$. Since $x_i(w)\in \im(\xx) \subset \kk v + \ker(\xx)$, we have $x_i(w) = \alpha_i v +
            k_i$ where $\alpha_i\in \kk$ and $k_i\in \ker(\xx)$.
            Then $x_i(v) = x_i(x_1(w)) = x_1(x_iw) = x_1(\alpha_i v + k_i) =
            \alpha_i x_1(v)$ so after linear change of
            coordinates we assume that $x_i(v) =
            0$ for $i=2,3, \ldots ,n$. In particular, $\dim \sum_{i,j}
            \im(x_ix_j)= 1$. Since $x_2$, \ldots ,$x_n$
            annihilate $\im(\xx)$ we have that $(x_2, \ldots ,x_n)\cdot
            (x_1, \ldots ,x_n)$ annihilates $V$ and so
            $\bigcap\ker(x_ix_j) = \ker(x_1^2)$ is of codimension one, as
            claimed. By Lemma~\ref{small_image_of_square}, also this tuple
            lies on a non-elementary component.
            In the following we assume $\mathbf{c\geq 2}$.
            Then $\mathbf{b\leq 4}$.
            We will subdivide the remaining cases with respect to $d-a-b$.

            \textbf{Case 1, $\mathbf{d-a-b =1}$.}
            Example~\ref{ex:332} proves the part~\ref{it:caseOne} for $n = 5$.
            To obtain examples with bigger $n$ simply add zero
            matrices, see Remark~\ref{ref:addingMatrices:rmk}. It remains
            to prove that the
            remaining cases are non-elementary.
            The locus
            $\cubeZeroCompIIstandard$ is irreducible by
            Lemma~\ref{dim-cube_zero}.

            Suppose first $\mathbf{b=c}$. We claim that $\cubeZeroCompIIstandard$ contains \emph{cyclic
            tuples}: the tuples $\xx$ for which there exists a vector $v\in V$
            such that $V = \kk[x_1, \ldots ,x_n]\cdot v$. If $a\leq
                \binom{b+1}{2}$ then the claim follows from
                Example~\ref{example:moduleForAB}. If $a >
                \binom{b+1}{2}$ then let $\delta = a - \binom{b+1}{2}$ and
                consider the tuple corresponding to the algebra
                \[
                    \frac{\kk[y_1, \ldots ,y_b,y'_1, \ldots , y'_\delta]}{(y_1,
                        \ldots ,y_b)^3 + (y_1, \ldots ,y_b,y_1', \ldots
                        ,y_{\delta}')(y'_1, \ldots
                ,y'_\delta)}
                \]
                in its monomial basis. In our case $d\leq 7$ so the only option is
                $a = 4$, $b = 2$, then $b+\delta = 3\leq n$ so indeed the above
                algebra
                gives rise to a tuple in $\cubeZeroCompII{a}{b}{d-a-b}{c}{n}$;
                if $n > b+\delta$ we pad the tuple with zero matrices.
            The set of cyclic tuples is open
            and corresponds to the locus of algebras of degree $1 + a + b$
            in the ADHM construction, see~\S\ref{ssec:quot_and_commuting}.
            The Hilbert scheme of up to $7$ points is
            irreducible~\cite[Theorem~1.1]{cartwright_erman_velasco_viray_Hilb8},
            so these tuples belong to the closure of the principal component.

            In the following we can thus assume $\mathbf{b > c}$. We already have
            $c\geq 2$, so $b\geq 3$ and so $a \leq 3$.
            Since $a\leq 3$ and $c\geq 2$, we have $a\leq
            \binom{c+1}{2}$.
            Recall that $n\geq 4\geq b$.
            Using Example~\ref{example:independentQuadrics} and irreducibility
            of $\cubeZeroCompIIstandard$ we
            assume that the linear span of $(A_iC_j)_{1\leq i,j\leq n}$ is
            $a$-dimensional.
            We now make a series of reductions to relate the present case to
            the previous one.
            By a version of \cite[Lemma 2.7]{Han} we may and
            will assume that $B_i = 0$.
            Using the $\GL_n$-action, we assume that $C_i = 0$ for $i > c$.
            Consider the subtuple $(x_1, \ldots ,x_c)$. The matrices $C_1,
            \ldots ,C_c$ have nonzero common cokernel, so up to linear
            transformation some of the rows are
            identically zero in each $x_i$, similarly for $A_1, \ldots ,A_c$
            if those have nonzero common cokernel. Erase those rows and
            the corresponding columns.
            We obtain a commuting tuple which
            falls in the cube zero, square nonzero case; in fact it belongs
            to the case $b=c$ just considered and is even cyclic, thus corresponds
            to an algebra. By slight abuse of notation,
            we refer to it as $(x_1, \ldots ,x_c)$. We know already that
            it lies on a non-elementary component, however, we need a bit
            more, so we make a provisional definition. We say that the tuple
            $(x_1, \ldots ,x_c)$
            is \emph{deformable in the middle} if there exists a tuple
            \[
                z_i = \begin{bmatrix}
                    0 & 0 & 0\\
                    0 & D_i & 0\\
                    0 & 0 & 0
                \end{bmatrix}
            \]
            such that $(x_i + \lambda z_i)_{i=1}^c$ is a commuting tuple for
            every $\lambda\in \kk$ and moreover at least one $z_i$ has a
            nonzero eigenvalue. We now show that for $a = 2$ the tuple is
            deformable in the middle:
            \begin{itemize}
                \item If $c\geq 3$ then the tuple corresponds to an algebra with
                    Hilbert function $(1, c, 2)$.
                    The deformation
                    from~\cite[Proposition~4.10]{cartwright_erman_velasco_viray_Hilb8} gives
                    a deformation in the middle for the monomial basis
                    $y_{c-1}^2, y_c^2, y_c,  \ldots , y_1, 1$. (The referenced result
                    requires a generality assumption on the tuple, but we may
                    impose arbitrary such assumptions.) Specifically, it gives
                    a deformation
                    \[
                        (y_iy_j\ |\ i\neq j) + (y_1^2 - \lambda y_1 - a_1y_{c-1}^2 -
                        b_1y_c^{2}) + (y_i^2 - a_{i}y_{c-1}^2 - b_iy_c^2\ |\
                        i\geq 2),
                    \]
                    parameterized by $\lambda$ where $a_i, b_i\in \kk$ are
                    constants.
                \item In the special case $c = 2$, the tuple corresponds to an
                    algebra with Hilbert function $(1, 2, 2)$.
                    This algebra is the quotient of a polynomial ring by an
                    ideal
                    generated by a single quadric --- which we may assume by
                    genericity has
                    full rank --- and all cubics, so it is isomorphic
                    to $(y_1 y_2, y_1^3, y_2^3)$. For $\lambda\in \kk$ the deformation
                    \[
                        \frac{\kk[y_1, y_2]}{(y_1y_2, y_1^3, y_2^3 -
                        \lambda y_2^2)}
                    \]
                    in the basis $y_2^2 - \lambda y_2, y_1^2, y_2, y_1, 1$ gives a
                    deformation in the middle.
            \end{itemize}
            Going back to our original tuple, we see that \rred{it}
            lies on a non-elementary component whenever $(x_1, \ldots ,x_c)$
            is deformable in the middle; this is
            because the first $c$ columns of $A_i$ matrices, for $i > c$, are
            zero. Therefore, we automatically get that the tuple lies on a
            non-elementary component whenever $a = 2$.
            We have already
            reduced to the case $a\leq 3$, so it remains to consider
            $\mathbf{a = 3}$. Since $b \geq 3$ and $a+b\leq 7-1$, we get
            $\mathbf{b = 3}$. Since $b > c \geq 2$, we get $\mathbf{c = 2}$.
            In this case for $n\geq 5$ we have a component, see
            Example~\ref{ex:332}, so we assume
            $n\leq 4$, hence $n = 4$.
            Consider the tuple corresponding to the
            module whose dual generators are $Q = z_1^2e_1^* + z_1z_2e_2^* +
            z_2^2e_3^*, L = z_3e_1^* + z_4e_2^*$. In tensor notation, in the
            basis $e_1^*, e_2^*, e_3^*, L, z_1e_1^* + z_2e_2^*, z_1e_2^* +
            z_2e_3^*, Q$ it is given
            by the following matrix with $\lambda = 0$:
            \[
                \begin{bmatrix}
                    0 & 0 & 0 & e_3 & e_1 & 0   & 0\\
                    0 & 0 & 0 & e_4 & e_2 & e_1 & 0\\
                    0 & 0 & \lambda e_1 & 0   & 0 & e_2   & 0\\
                    0 & 0 & 0 & 0 & 0 & 0 & 0\\
                    0 & 0 & 0 & 0 & 0 & 0 & e_1\\
                    0 & 0 & 0 & 0 & 0 & \lambda e_1 & e_2\\
                    0 & 0 & 0 & 0 & 0 & 0 & \lambda e_1
                \end{bmatrix}
            \]
            The above tuple commutes for arbitrary $\lambda$. For
            $\lambda\neq 0$ we get a tuple $(x_1, \ldots ,x_4)$ with $x_1$
            having eigenvalues $0$ and $\lambda$. The tuple is then a product
            of
            \begin{enumerate}
                \item a tuple of $3\times 3$ matrices acting on the
                    $3$-dimensional generalized eigenspace of $x_1$ for eigenvalue $\lambda$. This
                    tuple lies in the principal component.
                \item a tuple of $4\times 4$ matrices acting on the
                    $4$-dimensional generalized eigenspace of $x_1$ for eigenvalue zero. This tuple
                    is square-zero.
            \end{enumerate}
            By Proposition~\ref{ref:productOfComponents:prop} this shows that our
            original tuple lies on the concatenation of $4\times 4$ square-zero component and
            the $3\times 3$ principal component.
            Those components have dimensions $24$ and $18$ respectively
            by~\S\ref{ssec:examples} and~\S\ref{ssec:components} so by
            Proposition~\ref{ref:productOfComponents:prop} the
            concatenation has dimension $24 + 18 + 2\cdot 3\cdot 4 = 66$.
            A direct check shows that the
            tangent space to $C_4(\MM_7)$ at our tuple is $66$-dimensional, so our tuple is a
            smooth point of this component and so the whole locus
            $\cubeZeroCompII{3}{3}{1}{2}{4}$, being
            irreducible, is contained in this component.

            \textbf{Case 2, $\mathbf{d-a-b \geq 2}$.}
                The spaces $V/\left(\sum_{i,j} \im(x_ix_j)\right)$ and
                $\bigcap\ker(x_j^Tx_i^T)$ are dual and they have dimension $d
                - \dim \left(\sum_{i,j}\im A_iC_j\right)$. If they have codimension one,
                then the tuple $\xx^T$ falls into Case~1 (this concerns in
                particular the component $\mathcal{Z}$ from
                part~\ref{it:caseTwo}). Since we already considered Case~1,
                below we assume $\dim \sum_{i,j}\im
                A_iC_j \geq 2$.
                In particular, $\mathbf{a\geq 2}$ so $\mathbf{b\leq
                3}$.

            It also suffices to consider
            $n$-tuples $(x_1,\ldots ,x_n)\in \cubeZeroComp{a}{b}{d-a-b}{n}$
            with $x_i$ of the form~\eqref{x_i} where some linear combination
            of the matrices $A_1,\ldots ,A_n$ has rank at least $2$.
            Indeed, if each linear combination of the matrices $A_i$ has rank
            at most $1$, then the matrices $A_i$ have either a common kernel
            of codimension 1 or a common 1-dimensional image, see for
            example~\cite[Lemma~2]{AL}. The common kernel of the matrices
            $A_i$ is trivial, as discussed below~\eqref{x_i}, and $b\ge 2$, so
            these matrices do not have a codimension one common kernel.
            If the matrices $A_i$ have common $1$-dimensional
                image, then $\dim \sum_{i,j} A_iC_j = 1$, a contradiction.

            Assume first that $\mathbf{b=2}$. As above, consider an arbitrary $n$-tuple
            $(x_1,\ldots ,x_n)\in \cubeZeroComp{a}{2}{d-a-2}{n}$ with $x_i$ of
            the form~\eqref{x_i}. By the argument above, some linear combination of the
            matrices $A_i$ has rank $2$, and using the actions of $\GL_n$ and
            $\GL_d$ we may assume that $A_1=\left[
                \begin{array}{c}I\\0
                \end{array}
            \right]$. Let $A_i=\left[
                \begin{array}{c}A_i'\\A_i''
                \end{array}
            \right]$ for $i\ge 2$. The commutativity then implies
            $C_i=A_i'C_1$ for each $i\ge 2$, and hence $0=\cap_{i=1}^n\ker
            C_i=\ker C_1$. It follows that $C_1$ is injective and consequently
            $d-a-2=2$. We may assume that $C_1=I$, and then $C_i=A_i'$ for
            $i\ge 2$. Commutativity then reduces to
            $[A_i',A_j']=0$ for $i,j\ge 2$ and $A_i''=0$ for each $i\ge 2$.
            The maximal dimension
            of a commutative vector space of $2\times 2$ matrices is $2$,
            therefore we use $\GL_n$-action to assume that $A_i'=0$ for $i\ge 3$ and
            that $A_2'$ is singular. Since $A_2'$ is
            singular, there exist nonzero vectors $u,v\in \Bbbk^2$ such that
            $A_2'u=0$ and $v^TA_2'=0$. The matrix
            \[
                y = \begin{bmatrix}0&0&0\\0&uv^T&0\\0&0&0
                    \end{bmatrix}
            \]
            then commutes with $x_i$ for each $i\ge 2$. If $v^Tu\ne 0$, then
            $y$ has two distinct eigenvalues, and if $v^Tu=0$, then for
            $\lambda \neq 0$ the matrix $x_1+\lambda y$ is nilpotent, but with
            nonzero cube, so the $n$-tuple $(x_1,\ldots ,x_n)$ in both cases
            belongs to a non-elementary component, in the second case, by Proposition~\ref{cube_not_zero}.
            This in particular concludes the proof of part~\ref{it:caseTwo}.
            In the rest of the proof we assume that $\mathbf{b = 3}$. This
            forces $\mathbf{a=2}$ and
            $\mathbf{d-a-b=2}$.

        If $\mathbf{c=2}$, let $(x_1,\ldots ,x_n)\in
        \cubeZeroCompII{2}{3}{2}{2}{n}$ be an $n$-tuple of commuting matrices
        of the form~\eqref{x_i}. As $c=2$ and $b=3$, there exists a nonzero
        vector $u\in \kk^b$ such that $u^TC_i=0$ for each $i=1,\ldots ,n$.
        Consider the transposed tuple $(x_1^T, \ldots ,
            x_n^T)$. The vector $u$ forces it to have at least three-dimensional
            common kernel, so either this tuple falls into the Case~1 or into the case
        $b=2$ above.

            We thus assume that $\mathbf{c = 3}$.
            In the remaining case our strategy is to find deformations in the
            middle. The matrices $B_i$ are irrelevant here, so we assume
            $B_i=0$, so the associated module is naturally graded.
            We begin by proving that the matrices $(x_1, \ldots ,x_n)$
            span an at most three dimensional space.  We use
            apolarity~\S\ref{ssec:apolarity}. Our matrices in the standard
            basis correspond to a module $M$, where $M\subset
            \Fdual$ is generated by two (homogeneous) quadrics $Q_1, Q_2$.
            Using Example~\ref{example:twoQuadricsThreeLinears}, we get our
            claim. So we reduce to the case $n=3$ and $S = \kk[y_1, y_2, y_3]$
            and below we assume $\mathbf{n=3}$.

            We will use downward induction on $d$ and for that reason we will
            consider also $d=8,9$ and $a=2,3$ even though such large $d$ fall
            outside the scope of the current
            theorem. For a tuple $\xx$ we also introduce the number $a^T = a^T(\xx)
            := \dim \coker \xx = \dim \bigcap \ker(x_i^T)$. This number can be
            computed from the associated module and by abuse of notation we
            will view $a^T$ also as a function of a module. In this setting it
            is simply the minimal number of generators of this module,
            see~\S\ref{ssec:transposes}.

            We first tackle the case $\mathbf{(a, a^T, b, c, d) = (3, 3, 3, 3,
            9)}$. Since $d-a-b=3 = a^T$, we have $\im(\xx)
                \supset \ker(\xx)$.
            By assumptions, the matrices $A_i$ have zero common kernel and
            cokernel. Assume that no linear combination of those matrices has
            full rank. By~\cite[Theorem~1.1]{Eisenbud__Harris__vector_spaces}
            this implies that either all $A_i$ are
            skew-symmetric
            or they form
            a \emph{compression space}, which means that, up to base change,
            they jointly have the shape
            \begin{equation}\label{eq:Amats}
                \begin{bmatrix}
                    0 & 0 & *\\
                    0 & 0 & *\\
                    * & * & *
                \end{bmatrix}.
            \end{equation}
            We first show that the compression space case is in fact
            impossible. Suppose that the matrices have the
            shape~\eqref{eq:Amats}. The common kernel of $A_1$, $A_2$, $A_3$
            is zero, so up to linear change of coordinates we have
            \[
                A_1 =
                \begin{bmatrix}
                    0 & 0 & 1\\
                    0 & 0 & 0\\
                    * & * & *
                \end{bmatrix},
                \quad
                A_2 =
                \begin{bmatrix}
                    0 & 0 & 0\\
                    0 & 0 & 1\\
                    * & * & *
                \end{bmatrix}
                \quad \mbox{and}\quad
                A_3 =
                \begin{bmatrix}
                    0 & 0 & 0\\
                    0 & 0 & 0\\
                    * & * & *
                \end{bmatrix}.
            \]
            The condition $A_iC_j = A_jC_i$ for all $i,j$ then forces all
            $C_i$ matrices to have zero last row, which implies that
            $\dim(\im(\xx) + K_1(\xx))\leq 5$ and thus contradicts $a+c = 6$.
            Therefore, we assume that $A_i$ are skew-symmetric.
            Since they have
            zero common kernel, in fact the matrices $A_i$ form a basis of the
            space of skew-symmetric matrices. But then commutativity implies
            that $C_i = 0$ for all $i$, a contradiction. Summing up, there
            exists a linear combination of $A_i$ that has full rank. Repeating
            the argument for $C_i$ and performing linear operations, we assume
            $A_1 = C_1 = I$. Commutativity with $x_1$ implies $A_i = C_i$
            for all $i$ and then $A_i$, $A_j$ commute for any $i$, $j$. We
            have now an obvious deformation in the middle that
            adds a block $A_i$ in the middle of each $x_i$. Since $A_1$ has
            three nonzero eigenvalues, this deformation
            splits the module into a degree three and six modules. (This
            case did not use the assumption $n=3$.)

            Now we consider the case $\mathbf{(a, a^T, b, c, d) = (3, 2, 3, 3,
            8)}$ and additionally make the following
                assumption: if $Q_1, Q_2\in \Fdual_2$ are the homogeneous dual
                generators for the module $M$ associated
                to our tuple, then no linear combination of $Q_1$, $Q_2$ is
                ``rank one'', i.e., annihilated by a two-dimensional subspace of $S_1$.
            This is the most challenging part. Consider the dual module $M^{\vee}$.

            Since $a=3$, the
            dual module is minimally generated by three
            elements~\S\ref{ssec:transposes} so we present it
            as a quotient of $S^{\oplus 3}$. By
            apolarity~\S\ref{ssec:apolarity} our module $M$ is a submodule of
            $(S^{\oplus 3})^*$. Since $a^T=2$,  the module $M$ is generated by two elements
            corresponding to $e_7$, $e_8$ vectors. Since $B_i = 0$ for all
            $i$, the module $M$ is naturally graded and the two above generators are
            of degree two.

            We will show that $M^{\perp}$, the annihilator of
            $M$ in $S^{\oplus 3}$, has a degree two
            element among minimal homogeneous generators. If it is so, then replacing this
            generator $g\in M^{\perp}$ by $S_1g$ we get an
            inclusion $N \subset M^{\perp}$ with $\dim_{\kk} M^{\perp}/N = 1$.
            Therefore, we have
            $M^{\vee}  \simeq  S^{\oplus 3}/M^{\perp}$ is
            a quotient of a module $S^{\oplus 3}/N$ of degree $9$.
            Since $B_i$ are assumed to be zero, the modules $M$, $M^{\vee}$,
            $S^{\oplus 3}/N$ are all graded and moreover
            $H_{S^{\oplus 3}/N} = (3, 3, 3)$. This shows that the module
            $S^{\oplus 3}/N$ corresponds to a tuple of matrices with
            invariants $(a, a^T, b, c, d) = (3, 3, 3, 3,
            9)$. In terms of matrices,
            this means that the transpose of our tuple arises from a $9\times 9$ tuple by
            erasing the first row and column. The deformation in the middle in the case
            $9\times 9$, constructed above, gives a deformation in the middle in our case.

            To look for generators of $M^{\perp}$ we will look at the
            syzygies of $M$.
            The module $M$ is generated by $Q_1$, $Q_2$ so we have $M =
            S^{\oplus 2}/K$. Since $\dim M_1 = 3$, we have $\dim K_1 = 3$. Let
            $k_1, k_2, k_3\in S^{\oplus 2}$ be a $\kk$-basis of $K_1$.

            Assume first that there are no linear
            forms $l_1, l_2, l_3\in S$ not all equal to zero such that $\sum
            k_i l_i = 0$. Write $k_i = k_{1i}e_1 + k_{2i}e_2$ so that the
            vector $(k_1, k_2, k_3)$ becomes a matrix
            \[
                \begin{bmatrix}
                    k_{11} & k_{12} & k_{13}\\
                    k_{21} & k_{22} & k_{23}
                \end{bmatrix}.
            \]
            Let $\Delta_i$ be the minor of this matrix obtained by removing
            $i$-th column. By a general fact, the vector $(\Delta_1,
            -\Delta_2, \Delta_3)^T$ lies in the kernel of this matrix. Let us
            translate this to the resolution language. This vector then
            becomes a quadratic syzygy for the module
            $M$.
            Our assumption on the nonexistence of linear forms $l_1,
            \ldots l_3$ such that $\sum k_il_i = 0$ implies that there are no
            linear syzygies between $k_1,k_2,k_3$ so the above syzygy is minimal and the minimal resolution
            of $M$ looks like
            \[
                \begin{tikzcd}
                    0 & \arrow[l] M & \arrow[l] S^{\oplus 2}(-2) & \arrow[l]
                    S^{\oplus 3}(-3)\oplus G_1 & \arrow[l] S^{\oplus 1}(-5) \oplus G_2 &
                    \arrow[l] G_3 &\arrow[l] 0
                \end{tikzcd}
            \]
            for some graded free $S$-modules $G_1, G_2, G_3$.
            Now by Theorem~\ref{ref:duality_for_resolutions:thm} the resolution of $S^{\oplus
            3}/M^{\perp}$ is dual to the resolution of $M$ twisted by $n = 3$ so the minimal
            syzygy $S(-5)$ above corresponds to a minimal generator
            of degree $5-3 = 2$ in $M^{\perp}$ and we win.

            It remains to consider the case when there are linear forms $l_1,
            l_2, l_3\in S$ not all equal to zero such that $\sum k_i l_i = 0$.
            In other words assume that $M$ has some linear syzygy.
            Green's Linear Syzygy
            Theorem~\cite[Theorem~7.1]{eisenbud:syzygies} implies that there
            are some linear forms annihilating a generator of our
            module. Below we prove this directly for our case, without using
            the theorem.

            If $l_1, l_2, l_3$ are linearly independent, then up to coordinate
            change $(l_1, l_2, l_3) = (y_1, y_2, y_3)$. By a direct
            computation, the triples $(k_1', k_2', k_3')$ of linear forms such
            that $\sum k_i'y_i = 0$ are linear combinations of the rows of
            \[
                B = \begin{bmatrix}
                    0 & -y_3 & y_2\\
                    y_3 & 0 & -y_1\\
                    -y_2 & y_1 & 0
                \end{bmatrix}
            \]
            so $(k_{i1}, k_{i2}, k_{i3})$ are linear combinations of the
            rows of $B$ for $i=1,2$ so
            \[
                \begin{bmatrix}
                    k_{11} & k_{12} & k_{13}\\
                    k_{21} & k_{22} & k_{23}
                \end{bmatrix} = A\cdot B
            \]
            for some matrix $A\in \MM_{2\times 3}(\kk)$ of rank two. The
            matrix $A B A^{T}$ is nonzero and antisymmetric so it has the form
            \[
                \begin{bmatrix}
                    0 & \ell\\
                    -\ell & 0
                \end{bmatrix}
            \]
            for a non-zero linear form $\ell$. This matrix is equal to
            $[k_{ij}]\cdot A^T$, so $-\ell e_2, \ell
            e_1\in K_1$.
            After a coordinate change we assume $\ell = y_3$ and view $S^{\oplus
            3}/M^{\perp}$ as a module over $\kk[y_1, y_2]$. Since
            $M^{\perp}_1$ is $3$-dimensional, it generates at most a
            $6$-dimensional subspace of the $7$-dimensional space
            $M^{\perp}_2$. So there is a minimal quadric generator and we
            conclude.

            If $l_1, l_2, l_3$ are linearly dependent, then up to coordinate
            change we have $(l_1, l_2, l_3) = (y_2, y_1, 0)$, so $k_1 =
            \lambda y_1 e_1 + \mu y_1 e_2$ and $k_2 = -\lambda y_2 e_1 - \mu
            y_2 e_2$ so both $y_2$ and $y_1$ annihilate $\lambda Q_1 + \mu
            Q_2$, which is thus a rank one quadric which condradicts our
            assumption that no such quadric exists in $\spann{Q_1, Q_2}$. This concludes the case $(a, b, c, d)
            = (3, 3, 3, 8)$.

            The last remaining case for $b = 3$ is $\mathbf{(a, b, c, d) = (2,
            3, 3, 7)}$.
            As explained at the beginning of the Case~2, we have a matrix
            $A_i$ of rank two, in particular the joint image of the matrices $A_i$
            spans the first two coordinates. Using $c=3$ we deduce $a^T = 2$.

            From the point of view of modules, in this case we consider the module $S^{\oplus
            2}/M^{\perp}$ with Hilbert function $(2, 3, 2)$. We have $\dim M^{\perp}_1 = 3$, so that
            $\dim S_1M^{\perp}_1 \leq 9$ while $\dim M^{\perp}_2 = 10$.
            Therefore $M^{\perp}$ has a minimal generator of degree two and so
            arguing as in the $d=8$ case, we get that
            the dual of $M$ is a quotient of a graded module $N$ with Hilbert function $(2,
            3, 3)$. 
            The module $S^{\oplus 2}/M^{\perp}$ is dual to
            $M$ and
            $a(M^{\vee}) = a^T(M) = 2$ and $a^T(M^{\vee}) = 2$. As in
            the previous case, we get $a(N) = 3$, $a^T(N) = 2$, $b(N) = c(N) = 3$.
                Consider the degree zero part of $S^{\oplus 2}/M^{\perp}$.
                Suppose first that no element of this part is annihilated by a
                two-dimensional subspace of $S_1$, then the same holds for $N$. In this case we reduce to
                the case $d=8$: the deformation in the middle for $N$ has been constructed
            above.

                It remains to consider the case where there is a degree zero
                element of $S^{\oplus 2}/M^{\perp}$
                annihilated by a two-dimensional subspace of $S_1$.
                Consider homogeneous dual generators for this
                module. Since $a = 2$ and $S^{\oplus 2}/M^{\perp}$ has
                Hilbert function $(2, 3, 2)$, these are two quadrics. So we
                consider the locus $\mathcal{L}$ of pairs of quadrics $Q_1, Q_2\in
                (S^{\oplus 2})^*_2$ such that two conditions hold:
                \begin{enumerate}
                    \item some non-zero linear combination of $Q_1$, $Q_2$ has
                        ``rank one'', i.e., it is annihilated by a codimension one space
                        of $S_1$,
                    \item $\dim_{\kk}(S_1Q_1 + S_1Q_2) \leq 3$.
                \end{enumerate}
                Up to linear transformations we may assume $Q_1 = z_1^2 e_1^*$
                and either $y_1Q_1 = y_2Q_2$ or
                $y_1(Q_1 - Q_2) = 0$.
                The locus with $y_1(Q_1 - Q_2) = 0$ lies in the closure of the one
                where $y_1Q_1 = y_2Q_2$. The locus where $y_1Q_1 = y_2Q_2$ is just
                an affine space. This shows that $\mathcal{L}$ is
            irreducible.

                To prove that the modules obtained from $\mathcal{L}$ lie on a non-elementary component
                we will consider a deformation that is \textbf{not} in the
                middle, so below we also take into account the $B_i$ matrices
                and so we consider all $n$, not necessarily $n = 3$.
                By Corollary~\ref{ref:truncation:cor} we may assume $n\geq 4$.
                The addition of arbitrary $B_i$ matrices does not break the irreducibility of
                our locus (on the level of dual generators it amounts to adding arbitrary linear
                forms to each $Q_i$).
                For every $\lambda\in \kk$ consider the commuting $n$-tuple
                written in tensor notation as
                \[
                    \xx(\lambda) := \begin{bmatrix}
                        \lambda e_1 & 0 & e_2 & e_1 & 0 & 0 & 0\\
                        0 & 0 & e_3 & 0 & e_1 & 0 & e_4\\
                        0 & 0 & \lambda e_1 & 0 & 0 & e_1 & 0\\
                        0 & 0 & 0 & 0 & 0 & e_2 & e_1\\
                        0 & 0 & \lambda e_3 & 0 & \lambda e_1 & e_3 & 0 \\
                        0 & 0 & 0 & 0 & 0 & 0 & 0\\
                        0 & 0 & 0 & 0 & 0 & 0 & 0
                    \end{bmatrix}
                \]
                so that the matrices $x_i$ are zero for $5\leq i\leq n$.
                The tuple $\xx(0)$ corresponds to a module from $\mathcal{L}$
                with $B_i$ matrices added.
                For $\lambda\neq 0$ the tuple $\xx(\lambda)$ corresponds to a
                module in the concatenation of the $4\times 4$ square zero
                component in $C_n(\MM_4)$ and the principal component in
                $C_n(\MM_3)$. By
                Proposition~\ref{ref:productOfComponents:prop} and using the
                formulas for dimensions given in~\S\ref{ssec:components}
                and~\S\ref{ssec:examples} this
                component has dimension $34 + 8n$.
                For $n = 4$ we directly check that $T_{\xx(0)}C_4(\MM_7)$ is
                $66$-dimensional. We also directly compute that a matrix
                commuting with all elements of $\xx(0)$ is an element of $\kk[\xx(0)]$ and
                $\dim_{\kk} \kk[\xx(0)] = 8$. As in
                Remark~\ref{ref:addingMatrices:rmk} this shows that for $n\geq
                4$ the tangent space $T_{\xx(0)}C_n(\MM_7)$ has dimension
                $66+8(n-4) = 34 + 8n$. Thus $\xx(0)$ is a smooth point of the
                concatenated component. By irreducibility the whole locus
                lies in the corresponding non-elementary
                component of Quot.
                This concludes the whole proof.
        \end{proof}

        \subsection{Square-zero cases}

            In this section we consider commuting matrices such that the
            square of any
            linear combination of them is zero. Since $\chr \kk\neq 2$, we equivalently assume $x_ix_j = 0$
            for all $i, j$. This implies that the image of each $x_i$ is
            contained in the common kernel. Extending a basis of $\sum \im
            x_i$ to a basis of $V$ we get matrices that have nonzero entries
            only in the top-right $m\times (d-m)$ corner.

            For any $d$, any $m\in \{1,\ldots ,d-1\}$ and any $n\geq1$ define
            \[
                \squareZeroCompUpper{m}{d-m}{n}=\left\{\left(\left[
                    \begin{array}{cc}0&A_1\\0&0
                    \end{array}
                \right],\ldots ,\left[
                    \begin{array}{cc}0&A_n\\0&0
                    \end{array}
                \right]\right);A_1,\ldots ,A_n\in \MM_{m\times (d-m)}\right\}
                \subset \CommMat.
            \]
            and let $\squareZeroComp{m}{d-m}{n} = \GL_d \cdot
            \squareZeroCompUpper{m}{d-m}{n}$.
            The discussion above shows that a square-zero tuple $(x_i)$ with
            $\dim \sum \im(x_i) = m$ corresponds to a point in
            $\squareZeroComp{m}{d-m}{n}$. The locus
            $\squareZeroComp{m}{d-m}{n}$ is irreducible for all values of $n$,
            $m$, $d$ and its dimension is $(n+1)m(d-m)$ by~\S\ref{ssec:examples}.

            Let $\mathcal{V}_{m, d-m}^n \subset\CommMat$ consist of tuples of
            commuting matrices with lower left $(d-m)\times m$ corner equal to zero.
            We have an inclusion $\squareZeroCompUpper{m}{d-m}{n} \subset \mathcal{V}_{m,
            d-m}^n$. Moreover, for every $t\in \kk$ and a commuting tuple
            of matrices
            $
                \begin{bmatrix}
                    B_i & A_i\\
                    0 & C_i
                \end{bmatrix}$
                 in $\mathcal{V}_{m, d-m}^n$ the tuple
                $\begin{bmatrix}
                    tB_i & A_i\\
                    0 & tC_i
                \end{bmatrix}$
            commutes as well. Letting $t$ go to zero, we get that
            $\squareZeroCompUpper{m}{d-m}{n} \subset \mathcal{V}_{m,
            d-m}^n$ is a retract. Let $\mathcal{W}_{m,d-m}^n$ be the
            intersection of the principal component of $\CommMat$ with
            $\mathcal{V}_{m, d-m}^n$. The proof in~\S\ref{ssec:components} that the
            principal component is irreducible and of dimension $d^2+(n-1)d$,
            adapts immediately and shows that $\mathcal{W}_{m,d-m}^n$ is irreducible and of
            dimension $d^2-(d-m)m+(n-1)d$, since a polynomial in a matrix with
            zero lower left $(d-m)\times m$ corner is a matrix of the same
            shape. The retraction $\mathcal{V}_{m,d-m}^n\to
            \squareZeroCompUpper{m}{d-m}{n}$ restricts to a retraction
            $\mathcal{W}_{m, d-m}^n\to \squareZeroCompUpper{m}{d-m}{n}\cap
            \mathcal{W}_{m, d-m}^n$. We define
            \[
                \pi_{m, d-m}^n\colon \mathcal{W}_{m, d-m}^n\to
                \squareZeroCompUpper{m}{d-m}{n}
            \]
            as the composition of the above retraction and the inclusion in
            $\squareZeroCompUpper{m}{d-m}{n}$.
            The image of $\pi_{m, d-m}^n$ is $\squareZeroCompUpper{m}{d-m}{n}\cap
            \mathcal{W}_{m, d-m}^n$ so it is closed, but not necessarily
            the whole $\squareZeroCompUpper{m}{d-m}{n}$.

            \noindent In the following theorem we restrict to $n\geq
            4$, since for $n\leq 3$ the variety $C_n(\MM_d)$ is irreducible for all $d\leq
            10$ by~\cite{MT, Sivic__Varieties_of_commuting_matrices}.
            \begin{theorem}\label{square_zero}
                Let $n\ge 4$ and $d\le 7$ be arbitrary and $1\le m\le d-1$. Then the following holds:
                \begin{enumerate}
                    \item[(a)]
                        If $m=1$ or $m=d-1$, then $\squareZeroComp{m}{d-m}{n}$ belongs to the principal component.
                    \item[(b)]
                        If $m\in \{2,d-2\}$ and $n<d$, then $\squareZeroComp{m}{d-m}{n}$ belongs to the principal component.
                    \item[(c)]
                        The variety $\overline{\squareZeroComp{3}{3}{n}}$ belongs to the
                        principal component for $n<6$.
                    \item[(d)]
                        If $d=7$ and $m\in \{3,4\}$, then
                        $\squareZeroComp{m}{d-m}{4}$ belongs to the
                        concatenation of $(\Bbbk
                        I)^4+\overline{\squareZeroComp{2}{2}{4}}$ and
                        the principal component of $C_4(\MM_{3})$.
                    \item[(e)]
                        In all other cases the set $(\Bbbk
                        I)^n+\overline{\squareZeroComp{m}{d-m}{n}}$ is an
                        irreducible component of $\CommMat$.
                \end{enumerate}
            \end{theorem}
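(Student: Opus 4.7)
The strategy handles each part by an appropriate technique: parts (a)--(c) via the retraction $\pi_{m,d-m}^n$, part (d) via Proposition~\ref{ref:productOfComponents:prop}, and part (e) via the witnesses from Section~\ref{ssec:examples}.

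For (a)--(c), the goal is to show that $\pi_{m,d-m}^n$ is surjective; since its image is the intersection of the principal component with $\squareZeroCompUpper{m}{d-m}{n}$, surjectivity places $(\kk I)^n + \overline{\squareZeroComp{m}{d-m}{n}}$ inside the principal component. To prove surjectivity, given arbitrary $A_i \in \MM_{m \times (d-m)}$, I would construct a one-parameter family
\[
    X_i(t) = \begin{bmatrix} tB_i & A_i \\ 0 & tC_i \end{bmatrix}
\]
with commuting $(B_i)_i$ and $(C_i)_i$ lying respectively in the principal components of $\CommMatParam{m}$ and $C_n(\MM_{d-m})$, and satisfying the compatibility $B_iA_j - B_jA_i = A_jC_i - A_iC_j$. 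Taking $B_1$, $C_1$ regular semisimple with disjoint spectra and $B_j, C_j$ as polynomials in them reduces the compatibility to a linear system. For (a) $m=1$ the system is scalar-valued and always solvable; for (b) $m \in \{2, d-2\}$ with $n < d$ a parameter count provides sufficient freedom; for (c) $m = d - m = 3$ with $n < 6$ one exploits the symmetry between the two blocks.

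For part (d), I plan to show that a generic $\xx \in \squareZeroComp{3}{4}{4}$ (and its transpose for $m = 4$) arises as the $t = 0$ limit of a family whose generic member has two distinct eigenvalues and splits $V = \kk^7$ as $V_1 \oplus V_2$ with $\dim V_1 = 4$, $\dim V_2 = 3$, the $V_1$-block lying in $\squareZeroComp{2}{2}{4}$ and the $V_2$-block being principal. By Proposition~\ref{ref:productOfComponents:prop} the concatenation has dimension $24 + 18 + 2 \cdot 12 = 66$, strictly containing the irreducible locus $(\kk I)^4 + \overline{\squareZeroComp{3}{4}{4}}$ of dimension $64$. For part (e), I use the smooth witness points of Section~\ref{ssec:examples}: Example~\ref{ex:squareZero4x4} for $(d, m, n) = (4, 2, 4)$, Example~\ref{ex:squareZero5x5} for $(5, 2, 5)$, Example~\ref{ex:squareZero6x6m3} for $(6, 3, 6)$, Example~\ref{ex:squareZero6x6m2} for $(6, 2, 6)$, Example~\ref{ex:squareZero7x7m3} for $(7, 3, 5)$, and Example~\ref{ex:squareZero7x7m2} for $(7, 2, 7)$. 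Transposition via \S\ref{ssec:transposes} handles the complementary values of $m$, and Remark~\ref{ref:addingMatrices:rmk} provides smooth witnesses for larger $n$ by padding with zero matrices. In each case the tangent space dimension equals $(n+1)m(d-m) + n = \dim\bigl((\kk I)^n + \overline{\squareZeroComp{m}{d-m}{n}}\bigr)$, so irreducibility of this closure forces it to be an entire component.

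The main anticipated obstacle is part (c): the dimension inequality is tight, with $n \le 5$ forced by $(n+1) \cdot 9 + n \le 6n + 30$, so the commuting lift $(B_i, C_i)$ must be chosen delicately. The polynomial-in-regular-semisimple ansatz from (a), (b) may not extend verbatim because the target $\MM_{3 \times 3}$ leaves little room; a more subtle construction, perhaps exploiting the equality $m = d - m$, will likely be needed, and secondary case analysis may be required when $(A_1,\ldots ,A_n)$ is not generic.
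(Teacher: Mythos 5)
Your high-level structure (retraction for (a)--(c), concatenation for (d), smooth witnesses for (e)) matches the paper, but the way you propose to execute (a)--(d) diverges significantly from what the paper does, and some of your steps have genuine gaps.

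For parts (a)--(c), you propose to prove that $\pi_{m,d-m}^n$ is surjective by lifting an \emph{arbitrary} $(A_1,\ldots,A_n)$, constructing commuting $(B_i)$, $(C_i)$ in principal components satisfying the block compatibility. This is far more than is needed and is where the difficulty you flag in (c) originates. The paper's key observation is that $\im(\pi_{m,d-m}^n)$ is a \emph{closed} subset of the irreducible variety $\squareZeroCompUpper{m}{d-m}{n}$. Hence it suffices to exhibit a \emph{single} point $\xx \in \mathcal{W}_{m,d-m}^n$ which is smooth and at which $d\pi$ is surjective onto $T_{\pi(\xx)}\squareZeroCompUpper{m}{d-m}{n}$: then the image contains an open subset, so by closedness and irreducibility it is everything. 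For (b) and (c) the paper supplies one explicit matrix $x$ (with zero lower-left block and carefully chosen entries so that $x$ is regular semisimple) and takes $x_i = x^i$; smoothness of that single point and surjectivity of $d\pi$ there are direct finite checks. Your ``lift arbitrary $A_i$'' program likely can be made to work for $m=1$ (and is even so more work than the paper's two-line limit $(E_{1,i+1}+\lambda E_{i+1,i+1})$), but it would require a genuine new construction for (c) where $m = d-m = 3$, which you acknowledge but do not resolve. The single-smooth-point argument sidesteps this entirely.

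For part (d), your dimension comparison ($66 > 64$) is necessary but certainly not sufficient for the containment: a larger irreducible component need not contain a given smaller irreducible locus. You do say you ``plan to show'' a generic $\xx$ is a limit of split tuples, which is the real content, but the proposal provides no construction. The paper's actual argument repeats the retraction trick: it builds a modified retraction $\pi'$ from $\mathcal{C}\cap\mathcal{V}_{3,4}^4$ (where $\mathcal{C}$ is the concatenated component) to $\squareZeroCompUpper{3}{4}{4}$, exhibits an explicit point $\xx$ in the intersection obtained by conjugating a concatenated tuple $\xx_0$ by an explicit $g$, checks by direct computation that $T_{\xx}\mathcal{V}_{3,4}^4$ has dimension $54 = 66-12$ (so $\xx$ is a smooth point of the intersection), and that $d\pi'$ is surjective there. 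Again: one smooth point plus closedness, not a general construction.

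Part (e) as you describe it is correct and matches the paper: the examples, their transposes, and padding by zero matrices via Remark~\ref{ref:addingMatrices:rmk} give smooth witness points whose tangent space dimension equals the dimension of the irreducible square-zero locus.
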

            \begin{proof}
                Case \textbf{(a)}. Up to transposition we may assume $m=1$. Up to $\GL_d$ and $\GL_n$ action we have
                $x_i = E_{1i+1}$ for $i=1,2, \ldots ,s$ and $x_{i} = 0$ for
                $i>s$. Then our tuple is a limit of commuting tuples of the form
                $(E_{1, i+1} + \lambda E_{i+1,i+1}\ :\ i=1,2 \ldots ,s)$ which
                belong to the principal component.

                Case \textbf{(b)}. Recall that $d > n\geq 4$, so $d\geq 5$. Up to transposition we may assume $m=2$.
                Additionally, by the same argument as in Corollary~\ref{ref:truncation:cor} we assume $n=d-1$. We
                will show that the image of the map $\pi =
                \pi_{2,d-2}^{d-1}$ contains an open subset of
                $\squareZeroCompUpper{2}{d-2}{d-1}$. We define $x_j=x^j$ for $j=1,\ldots ,n=d-1$ where
                \[
                    x=\left[\begin{array}{c c | c c c c}
                        &1&1&\\-1&&&1&\\\hline
                        &&&1\\&&&&\ddots\\&&&&&1\\&&-1
                    \end{array}\right].
                \]
                The matrix $x_1$ has all eigenvalues distinct.
                We directly verify that
                the point
                $(x_1, \ldots ,x_{d-1})\in \mathcal{W}_{m, d-m}^n$ is smooth.
                We directly
                compute that the tangent map $d\pi$ is
                surjective at $(x_1, \ldots ,x_{d-1})$
                so the image of $\pi$
                contains an open neighbourhood of $\pi( (x_1, \ldots
                ,x_{d-1}))$ in $\squareZeroCompUpper{2}{d-2}{d-1}$. On the other
                hand, the image of the map $\pi$ is a closed subset, so
                $\pi$ is surjective.

                Case \textbf{(c)}. We follow the argument from Case~(b). It is
                enough to make the proof for $n=5$ and
                we take the tuple
                $(x, x^2, x^3, x^4, x^5)$ where
                \[
                    x=\left[\begin{array}{ccc|ccc}
                        &1&&1\\&&1&&1\\1&&&&&1\\\hline&&&&1\\&&&-1\\&&&&&-1
                \end{array}\right].
                \]
                Case \textbf{(d)}.
                Let $\mathcal{C}$ denote the concatenated
                component. We employ an argument analogous to the previous
                cases, but consider a
                retraction from
                $\mathcal{C}\cap \mathcal{V}_{m, d-m}^n$ to $\mathcal{C}\cap
                \squareZeroCompUpper{m}{d-m}{n}$ and a map $\pi'$ analogous to
                $\pi$. Transposing if necessary, we can
                assume $m=3$. Consider a tuple $\xx_0$ of four
                matrices, written in tensor notation
                (see~\S\ref{ssec:examples}) and another invertible matrix $g$
                as below.
                \[
                    \xx_0 = \begin{bmatrix}
                        e_3 + e_4 & 0 & 0 & 0 & 0 & 0 & 0\\
                        0 & e_2 + e_4 & 0 & 0 & 0 & 0 & 0\\
                        0 & 0 & e_1 + e_4 & 0 & 0 & 0 & 0\\
                        0 & 0 & 0 & 0 & 0 & e_2 & e_1\\
                        0 & 0 & 0 & 0 & 0 & e_4 & e_3\\
                        0 & 0 & 0 & 0 & 0 & 0 & 0\\
                        0 & 0 & 0 & 0 & 0 & 0 & 0
                    \end{bmatrix}\qquad g = \begin{bmatrix}
                        1 & 0 & 0 & 0 & 1 & 0 & 0\\
                        0 & 1 & 0 & 1 & 1 & 0 & 0\\
                        0 & 0 & 1 & 1 & 0 & 0 & 0\\
                        0 & 0 & 0 & 1 & 0 & 0 & 0\\
                        0 & 0 & 0 & 0 & 1 & 0 & 0\\
                        0 & 0 & 0 & 0 & 0 & 1 & 0\\
                        0 & 0 & 0 & 0 & 0 & 0 & 1
                    \end{bmatrix}.
                \]
                We take the tuple $\xx = (gx_ig^{-1})_{x_i\in \xx_0}$.
                Directly by construction of $\xx_0$, the module corresponding
                to both tuples lies in the concatenated component.
                By Proposition~\ref{ref:productOfComponents:prop} the component
                $\mathcal{C}$ is $66$-dimensional. By the construction from
                that proposition, the intersection of $\mathcal{C}$ with
                the space of tuples of matrices with zero lower $4\times 3$ corner
                contains the locus obtained by naively concatenating the
                components and then acting with the parabolic subgroup of $\GL_7$
                consisting of matrices with zero lower $4\times 3$ corner.
                In particular, this intersection contains a locus of
                dimension at least $66 - 12$ and our point lies on that locus.
                We directly
                compute that the tangent space at $\xx$ to $\mathcal{V}_{3,
                4}^4$ has dimension $54 =
                66 - 12$, so $\xx$ is a smooth point of $\mathcal{C}\cap
                \mathcal{V}_{3, 4}^4$. We also directly compute that $d\pi'$
                is surjective at this point.

                It remains to prove that the other cases correspond to
                elementary components. For $m=2$ and $n\geq d$ it is
                enough (Remark~\ref{ref:addingMatrices:rmk}) to take $n=d$ and the smooth points with
                trivial negative tangents are exhibited in
                Examples~\ref{ex:squareZero4x4},~\ref{ex:squareZero5x5},~\ref{ex:squareZero6x6m2},~\ref{ex:squareZero7x7m2}.
                The transposes of those give the cases for $m=d-2$.
                For $(m,d)=(3, 6)$ see Example~\ref{ex:squareZero6x6m3},
                while for $(m, d) = (3, 7)$ see
                Example~\ref{ex:squareZero7x7m3}.
            \end{proof}

            \begin{remark}
                In the case \textbf{(d)}, we have strong computational
                evidence that the locus does \emph{not} lie in the principal
                component, however this remains open. Also, this locus
                contains no smooth points of $C_4(\MM_7)$.
            \end{remark}

            \begin{proof}[Proof of
                Theorem~\ref{ref:numberOfComponentsIntro:thm} and correctness of Table~\ref{tab:componentTableForQuot}]
                First consider elementary components of $\CommMat$. By
                Proposition~\ref{cube_not_zero}, Theorem~\ref{cube_zero} and
                Theorem~\ref{square_zero} apart from the principal component
                of $C_n(\MM_1)$, all other elementary components are the
                ones listed in~\S\ref{ssec:examples} and their transposes (as
                well as components formed by increasing $n$, see
                Remark~\ref{ref:addingMatrices:rmk},
                but we ignore these, counting them only at the very end).
                Explicitly, these are the square-zero loci for
                \[
                    (d, m, n) = (4,
                    2, 4),\ (5, 2, 5),\ (5, 3, 5),\ (6, 2, 6),\ (6, 3, 6),\
                    (6, 4, 6),\ (7,2,7),\
                    (7, 3, 5),\ (7,
                4, 5),\ (7, 5, 7)
                \]
                and two other components: the cube-zero, square non-zero component from
                Example~\ref{ex:332} and its transpose (both with $n=5$).
                Clearly this last component could only coincide with its
                transpose. But this does not happen, as the general tuple from
                this component has (after adding multiples of the identity
                matrix to make the
                matrices nilpotent) a three-dimensional common kernel, while its
                transpose has a two-dimensional common kernel.
                The square zero loci mentioned above are also pairwise
                distinct since $m$ can be recovered from a general element of
                the square-zero locus as the dimension of the common kernel.
                We have the following number of those elementary components:
                \[
                    \begin{tabular}{c c c c c c c c c}
                        &           $d=4$ & $d=5$  & $d=6$ & $d=7$\\\midrule
                        $n = 4$ &  $1$  & $ $    & $ $   & $ $ \\
                        $n = 5$ &  $ $  & $2$    & $ $   & $4$ \\
                        $n = 6$ & $ $  & $ $    & $3$   & $  $\\
                        $n = 7$  & $ $  & $ $    & $ $   & $2$\\
                    \end{tabular}
                \]
                By increasing $n$ as in Remark~\ref{ref:addingMatrices:rmk} and/or concatenating with the principal
                component of $C_n(\MM_1)$ to increase $d$ we get the numbers of components as
                in Table~\ref{tab:componentTable}. Arguing as in
                Proposition~\ref{ref:genericallyReduced:prop} below, we conclude that
                these are all the components. Finally, to obtain the number of
                components for $\Quotmain$ we compute the number of generators
                for the modules corresponding to general points of the elementary components in
                Section~\ref{ssec:examples} and, for the non-elementary
                components, note that if modules $M_1$ and $M_2$ have disjoint
                supports, then a surjection from $S^{\oplus r}$ onto
                $M_1\oplus M_2$ exists if and
                only if such surjections exist for both $M_1$ and $M_2$.

                Every component contains a smooth point as proven in
                Proposition~\ref{ref:genericallyReduced:prop}.
            \end{proof}

        \subsection{A generically nonreduced component of
            $\Quot{4}{8}$}\label{ssec:nonreducedComponent}

            \begin{proposition}\label{ref:genericallyReduced:prop}
                For $d\leq 7$ and any $n$, $r$ the schemes $\CommMat$ and $\Quotmain$ are
                generically reduced (their singular loci are nowhere dense).
            \end{proposition}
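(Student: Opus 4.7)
The plan is to exhibit a smooth $\kk$-point on every irreducible component of $\CommMat$: since the smooth locus of a finite-type $\kk$-scheme is open, any component containing a smooth point is generically smooth and hence generically reduced. The corresponding statement for $\Quotmain$ will then follow from the diagram in Table \ref{table:objectsinvolved}: the maps $\prodfamilystable \to \CommMat$ and $\prodfamilystable \to \Quotmain$ are smooth, every component of $\Quotmain$ corresponds to a component of $\CommMat$ via the injection established in \S\ref{ssec:components}, and smoothness transfers across the two smooth morphisms exactly as in the tangent-space comparison \eqref{eq:tangentSpaces}.

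First I would handle the principal component of $\CommMat$ by taking any tuple whose first matrix $x_1$ has $d$ distinct eigenvalues. The explicit parameterization recalled in \S\ref{ssec:components} realizes the principal component, near such a point, as an open subset of the set of such $x_1$ times $(n-1)$ copies of $\kk[x_1]$, so its dimension is $d^2+(n-1)d$; a direct check using Lemma \ref{ref:tangentspace:lem} shows that the tangent space also has this dimension, so the point is smooth. For each elementary component the smooth witness is already recorded in \S\ref{ssec:examples}: the tuples of Examples \ref{ex:squareZero4x4}--\ref{ex:332} were chosen precisely so that a \emph{Macaulay2} tangent-space computation matches the component dimension calculated in that subsection.

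For every other component of $\CommMat$ I would appeal to the classification encoded in the proofs of Proposition \ref{cube_not_zero}, Proposition \ref{2-dim_kernel}, and Theorems \ref{cube_zero}, \ref{square_zero}, which present each remaining non-principal, non-elementary component as the concatenation, in the sense of Proposition \ref{ref:productOfComponents:prop}, of components already handled — possibly including the principal component of some $C_n(\MM_{d'})$ with $d'<d$, for which the result is inductive in $d$. Part (2) of Proposition \ref{ref:productOfComponents:prop} then guarantees that a concatenation of components each carrying a smooth point again carries a smooth point. Finally, Remark \ref{ref:addingMatrices:rmk} extends smoothness to the components obtained by padding a tuple with zero matrices (so the result propagates to arbitrary $n$), and transposition is an automorphism of $\CommMat$ that preserves smoothness, taking care of the transposes of the explicit witnesses.

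The main obstacle is not mathematical but bookkeeping: one must reread each non-elementary case in the proofs of Theorems \ref{cube_zero} and \ref{square_zero} and pin down the concatenation structure explicitly, because in those proofs it occasionally sufficed to certify that a tuple lies on \emph{some} non-elementary component without identifying which one. Once every component is placed into one of the three classes (principal, elementary from \S\ref{ssec:examples} up to transposition and zero-padding, or concatenation of such) the generic reducedness follows uniformly from the three arguments above, and the transfer to $\Quotmain$ is immediate from the smooth-bundle discussion.
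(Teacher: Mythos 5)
Your overall strategy coincides with the paper's: both proofs reduce to the fact that the elementary components of degree at most~$7$ each have a smooth point (established via the explicit witnesses of \S\ref{ssec:examples}), and then extend to all components using the concatenation machinery of Proposition~\ref{ref:productOfComponents:prop}. The direction of the transfer differs --- you prove the claim for $\CommMat$ and push forward to $\Quotmain$, while the paper proves it for $\Quotmain$ and pulls back to $\CommMat$ along the smooth surjection $\prodfamilystable\to\CommMat$ (for $r\geq d$) --- but this is cosmetic, since regularity propagates in both directions along a smooth surjection.

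The one place where your plan goes slightly astray is the treatment of the non-elementary components. You claim that the proofs of Theorems~\ref{cube_zero} and~\ref{square_zero} ``present each remaining non-principal, non-elementary component as the concatenation of components already handled'' and that the remaining work is bookkeeping through those case analyses. That is not what those theorems do: they classify \emph{elementary} components, and for the non-elementary cases they only certify that certain tuples lie on \emph{some} non-elementary component, without producing a classification of the non-elementary components themselves. Re-reading those proofs would therefore not yield the concatenation structure you need. The correct observation --- which also makes the bookkeeping evaporate --- is structural, and is what the paper uses: take an arbitrary irreducible component $\compo\subset\Quotmain$ and a general point $[M]\in\compo$; decompose $M=\bigoplus_i M_i$ into modules supported at single points; by generality the number of summands is maximal (so each $M_i$ is not a limit of reducible modules), and each $[M_i]$ lies on a unique elementary component $\compo_i$ and is a smooth point there (using that the union of the $\compo_i$'s non-smooth loci is nowhere dense, which you have from the elementary classification in all degrees $\leq 7$). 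Proposition~\ref{ref:productOfComponents:prop}\eqref{it:bundle} then gives smoothness of $\concat$ at a preimage of $[M]$ and hence smoothness of $[M]$. This also dispenses with the separate treatment of the principal component, which is the concatenation of $d$ copies of $\Quot{\genM}{1}$.
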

            \begin{proof}
                It is enough to prove this for $\Quotmain$, since for $r$
                large enough $\CommMat$ is an image of a smooth map from a
                scheme $\prodfamilystable$ smooth over $\Quotmain$,
                see~\S\ref{ssec:components} and the target of a smooth map is
                reduced if and only if the source
                is~\cite[Tag~039Q, Tag~025O]{stacks_project}.
                The classification of elementary components for degrees at
                most $7$ shows that each of them has a smooth point, so is
                generically reduced.
                Now we argue that for $d\leq 7$ \emph{every} component of $\Quot{n}{d}$ is
                generically reduced. In one sentence, this is because each
                non-elementary component locally in \'etale topology (over
                $\mathbb{C}$ one can take analytic topology) is a symmetrized product of
                elementary components.
                To make this more precise, consider an arbitrary component
                $\mathcal{Z}$ and a general point $[M]$ on it.  Suppose that $M =
                \bigoplus M_i$, where each $M_i$ is supported at a single point.
                Since $[M]$ is general, the number of summands is maximal, so that
                $M_i$ is not a limit of reducible modules. By generality again, we
                may assume $[M_i]$ lies on a single elementary component
                $\mathcal{Z}_i$ and is a smooth point there. It
                follows that $\mathcal{Z}$ is a concatenation of $\mathcal{Z}_i$.
                By Proposition~\ref{ref:productOfComponents:prop} and since $[M_i]$ are
                smooth, the point $[M]$ is smooth as well.
            \end{proof}
            The aim of this section is to show that
            Proposition~\ref{ref:genericallyReduced:prop} is no longer valid
            for $d = 8$ and $n\geq 4$.  Apart from theoretical importance,
            this is relevant for the search for elementary components, since
            one method for such a search is that a locus is constructed and
            then certified to be a
            component by exhibiting a smooth point. The result below shows
            that for larger $d$, $n$ this method is in general insufficient.

            We first discuss the general assumptions necessary for our example and then
            provide a concrete example satisfying them. For a graded module
            $N$ its \emph{Hilbert series} is the formal series $\sum_i
            (\dim_{\kk} N_i) T^i$.
            Consider the locus
            $\mathcal{L}_0 = \squareZeroCompUpper{4}{4}{4}\subset
            C_4(\MM_8)$ of $4$-tuples of $8\times 8$ matrices that
            have nonzero entries only in the top right quadrant.
            From the module point of view we consider $S = \kk[y_1, \ldots
            ,y_4]$, a free module $F = S^{\oplus 4}$ and quotient modules of the
            form $F/K$ where $K$ is generated by all quadratic forms and
            $4\cdot 4 - 4 = 12$ linear forms, so that the quotient $F/K$ is
            graded with Hilbert series $4+4T$. Denote their locus (with
            reduced scheme structure) by
            $\mathcal{L}^{\OpQuot}_0$. Denote by $\mathcal{L}^{\OpQuot}$ the
            locus consisting of modules in $\mathcal{L}^{\OpQuot}_0$
            translated by an arbitrary vector in $\mathbb{A}^4$.
            The above construction gives morphisms $\Gr(12, 16)\to
            \mathcal{L}^{\OpQuot}_0$ and $\Gr(12, 16) \times \mathbb{A}^4\to
            \mathcal{L}^{\OpQuot}$ that are bijective on closed points, in
            particular $\mathcal{L}^{\OpQuot}$ is $52$-dimensional.

            Consider a point $[M]$ corresponding to a quotient $F/K$ for $K$
            as above.
            We will now put several constrains on $K$ that allow us to deduce
            nonreducedness and finally we exhibit an example of $K$ satisfying
            those assumptions.
            \begin{assumption}\label{assu:first}
                Assume that $K$ is generated by linear forms.
            \end{assumption}
            Under the above assumption, the multiplication map
            $S_1\otimes_{\kk} K_1 \to K_2$ is onto, so its kernel is
            $8$-dimensional and so the minimal graded free resolution of $F/K$
            has the shape
            \begin{equation}\label{eq:minresDegree8}
                \begin{tikzcd}
                    0 & \arrow[l] F/K & \arrow[l] F & \arrow[l] S^{12}(-1) &
                    \arrow[l] S^{\oplus 8}(-2) \oplus F_2'\arrow[l]
                        \ldots 
                \end{tikzcd}
            \end{equation}
            where the free module $F_2'$ is a direct sum of $S(-j)$ for
            varying $j\geq 3$.
            \begin{lemma}\label{ref:tangentToQuotAtLocus:lem}
                The tangent space $T$ to $\Quot{4}{8}$ at $[F/K]$ as above is
                homogeneous and satisfies $\dim T_{-1} \geq 16$ as well as
                $\dim T_0 = 48$.
            \end{lemma}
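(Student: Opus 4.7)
The plan is to identify the tangent space $T=T_{[F/K]}\Quot{4}{8}$ with $\Hom_S(K,F/K)$ via Lemma~\ref{ref:tangentSpaceToQuot:lem} and then read off its graded pieces from the minimal free resolution~\eqref{eq:minresDegree8}. Homogeneity is inherited from the $\Gmult$-action: the chosen $\Gmult$-linearization makes $F$, $K$ and $F/K$ graded, the point $[F/K]$ is a $\Gmult$-fixed point of $\Quot{4}{8}$, and $\Hom_S(K,F/K)$ acquires a natural grading as a $\Hom$-space between graded modules. Moreover, since $K$ is generated in degree $1$ under Assumption~\ref{assu:first} and $F/K$ is concentrated in degrees $0$ and $1$, the module $\Hom_S(K,F/K)$ vanishes outside degrees $-1$ and $0$, so it suffices to compute those two pieces.

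The main tool is the left-exact functor $\Hom_S(-,F/K)$ applied to the start of the resolution~\eqref{eq:minresDegree8}. Writing the beginning of the resolution of $K$ as $S^{\oplus 8}(-2)\oplus F_2'\to S^{\oplus 12}(-1)\to K\to 0$ with $F_2'$ concentrated in degrees $\geq 3$, we obtain the exact sequence
\[
0\to \Hom_S(K,F/K)\to \Hom_S(S^{\oplus 12}(-1),F/K)\xrightarrow{\ \psi\ } \Hom_S(S^{\oplus 8}(-2)\oplus F_2',F/K).
\]
Using $\Hom_S(S(-a),F/K)=(F/K)(a)$ and the Hilbert series $4+4T$ of $F/K$, the source of $\psi$ has graded pieces of dimension $48$ in degrees $-1$ and $0$ and vanishes elsewhere, while the target in those degrees is contributed only by the $S^{\oplus 8}(-2)$-summand (the $F_2'$-part contributes $(F/K)_{d+j}$ with $j\geq 3$ and $d\in\{-1,0\}$, hence zero). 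Explicitly, the target has dimension $32$ in degree $-1$ and dimension $0$ in degree $0$.

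From these dimensions, the equality $\dim T_0=48$ is immediate since the target of $\psi$ vanishes in degree $0$, and the inequality $\dim T_{-1}\geq 48-32=16$ follows from left-exactness. The main (and really the only) point requiring care is checking that $\Hom_S(K,F/K)$ is indeed graded and concentrated in degrees $\{-1,0\}$; everything else is a bookkeeping computation from the resolution and the Hilbert series. Note that we do not need to determine $\dim T_{-1}$ exactly; the lower bound $16$ is what is used in the subsequent argument, and the precise value depends on the rank of $\psi$ in degree $-1$, i.e., on the degree-$2$ syzygies of $K$, which is not required here.
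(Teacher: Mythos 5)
Your proposal is correct and follows essentially the same route as the paper: identify $T\simeq\Hom_S(K,F/K)$ via Lemma~\ref{ref:tangentSpaceToQuot:lem}, apply $\Hom_S(-,F/K)$ to the beginning of the resolution~\eqref{eq:minresDegree8}, and read off the graded pieces from the Hilbert series of the middle and right terms. The only cosmetic difference is that you separately note $\Hom_S(K,F/K)$ is concentrated in degrees $-1,0$ from $K$ being generated in degree $1$, whereas the paper deduces the same bound for the kernel directly from the degree support of $(F/K)^{\oplus 12}(1)$; both are immediate and the computation is otherwise identical.
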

            \begin{proof}
                By Lemma~\ref{ref:tangentSpaceToQuot:lem}, the tangent space $T$ is isomorphic to $\Hom_S(K, F/K)$
                so it is homogeneous. From~\eqref{eq:minresDegree8} we see
                that the minimal graded free resolution of $K$ begins with
                \[
                    \begin{tikzcd}
                        0 & \arrow[l] K & \arrow[l] S^{12}(-1) &
                        \arrow[l] S^{\oplus 8}(-2) \oplus F_2' & \arrow[l]
                        \ldots
                    \end{tikzcd}
                \]
                Applying $\Hom_S(-, F/K)$ we get an exact sequence
                \[
                    \begin{tikzcd}
                        0 \arrow[r] & T \arrow[r] & (F/K)^{\oplus 12}(1)
                        \arrow[r] & (F/K)^{\oplus 8}(2) \oplus \Hom_S(F_2',
                        F/K).
                    \end{tikzcd}
                \]
                The Hilbert series of $\Hom_S(F_2', F/K)$ is concentrated in
                degrees $\leq -2$.
                The Hilbert series of $(F/K)^{\oplus 12}(1)$ is
                $48T^{-1} + 48$ while the Hilbert series of $(F/K)^{\oplus
                8}(2)$ is $32T^{-2} + 32T^{-1}$. Thus the kernel has Hilbert
                series $\alpha T^{-1} + 48$, where $\alpha\geq 16$, which is exactly
                the claim.
            \end{proof}
            One can observe that the $48$ tangent directions in degree zero
            arise from varying the $4\cdot 12$ coefficients of the linear
            forms in $K_1$.
            \begin{assumption}\label{assu:tg}
                Assume that the tangent space at $[F/K]\in \Quot{4}{8}$ is
                $64$-dimensional.
            \end{assumption}
            \def\locring{\hat{\OO}_{[M]}}%
            Consider now the complete local ring $\locring$ of $[M]\in
            \Quot{4}{8}$.
            By Example~\ref{ex:monomialBasis}, there is an
            affine open neighbourhood $\Spec(A)$ of $[M]$ preserved by the
            usual $\Gmult$-action, see Section~\ref{sec:BBdecomposition} for
            details on the $\Gmult$-action. This action gives a $\mathbb{Z}$-grading on
            the algebra $A$. The point $[M]$ is $\Gmult$-fixed since $K$ is
            homogeneous, so that the corresponding maximal ideal $\mm \subset
            A$ is also graded. Therefore also $A/\mm^k$ for every $k$ and in
            particular $\mm/\mm^2$ are graded.
            Let $\hatS= \kk[[t_1, \ldots
            ,t_{16}, u_1, \ldots , u_{48}]]$ where $t_i$ and $u_j$ forms a
            basis of the cotangent space, where $\deg t_i = 1$ and $\deg u_j =
            0$; note that the degrees on the cotangent space are the negatives
            of the degrees on the tangent space. Let $\mms= (t_1, \ldots ,
            t_{16}, u_1, \ldots ,u_{48})$.
            By Assumption~\ref{assu:tg} the local ring $\locring$ has the form
            $\locring = \hatS/I$, where $I \subset \mms^2$.

            We now come to the main part of the
            argument. Recall the primary obstruction map from
            Theorem~\ref{ref:primaryForQuot:thm}. Its dual is a homogeneous map
            \[
                o\colon \Ext^1(K, M)^{\vee} \to \Sym_2(\mms/\mms^2).
            \]
            We restrict it to the degree $2$ part of $\Ext^1(K, M)^{\vee}$ and
            obtain the
            map
            \begin{equation}\label{eq:baro}
                \bar{o}\colon \left(\Ext^1(K, M)^{\vee}\right)_{2} \to
                \Sym_2(\spann{t_1, \ldots ,t_{16}}).
            \end{equation}
            We now make our third and final restriction about the point.
            \begin{assumption}\label{assu:dimension}
                The quotient ring of $\kk[t_1, \ldots ,t_{16}]$ by the ideal generated by $\im \bar{o}$ has dimension
                at most $4$.
            \end{assumption}
            \begin{theorem}\label{ref:genericallyNonreduced:thm}
                Let $[M] = [F/K]$ be a point of $\mathcal{L}^{\OpQuot}_0\subset \Quot{4}{8}$ satisfying
                Assumptions~\ref{assu:first}-\ref{assu:dimension}. Then
                the image of
                $\mathcal{L}^{\OpQuot}\to \Quot{4}{8}$ is $|\mathcal{Z}|$ for
                an
                irreducible component $\mathcal{Z}$ of $\Quot{4}{8}$. The component
                $\mathcal{Z}$ is
                generically nonreduced.
            \end{theorem}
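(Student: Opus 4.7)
The plan is to show that the closure of the image of $\mathcal{L}^{\OpQuot}$ in $\Quot{4}{8}$ is an irreducible component by bounding the local dimension of $\Quot{4}{8}$ at $[M]$ by $52 = \dim \mathcal{L}^{\OpQuot}$, and then to derive generic nonreducedness from Lemma~\ref{ref:tangentToQuotAtLocus:lem}, which will supply tangent dimension at least $64 > 52$ on a dense open.

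For the dimension bound, the essential input is the natural $\Gmult$-action on $\Quot{4}{8}$. Because $K$ is homogeneous (Assumption~\ref{assu:first}), the point $[M]$ is $\Gmult$-fixed and the complete local ring $\locring = \hatS/I$ inherits a $\mathbb{Z}$-grading; by Assumption~\ref{assu:tg} and Lemma~\ref{ref:tangentToQuotAtLocus:lem}, we may take graded cotangent generators $u_1,\ldots,u_{48}$ of degree zero (dual to $T_0$) and $t_1,\ldots,t_{16}$ of degree one (dual to $T_{-1}$). The ideal $I$ is $\Gmult$-graded, and by the primary obstruction analysis in Section~\ref{ssec:obstructions} its $\Gmult$-weight-$2$ part in $\mms^2/\mms^3$ equals $\im \bar o$. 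Since $\im \bar o \subset \Sym_2 \spann{t_1,\ldots,t_{16}}$ consists of pure-$t$ quadrics, its elements canonically lift to $I \cap \Sym_2 \spann{t_1,\ldots,t_{16}} \subset \hatS$. Let $J \subset \hatS$ be the ideal generated by these lifts; then $J \subset I$ and
\[
\hatS/J \ \cong\ \kk[[u_1,\ldots,u_{48}]] \,\hat\otimes_{\kk}\, \kk[[t_1,\ldots,t_{16}]]/(\im \bar o).
\]
Assumption~\ref{assu:dimension} caps the Krull dimension of the second factor at $4$, so $\dim \locring \leq \dim \hatS/J \leq 48 + 4 = 52$.

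Since the map $\Gr(12,16) \times \mathbb{A}^4 \to \mathcal{L}^{\OpQuot}$ is bijective on closed points, $\overline{\mathcal{L}^{\OpQuot}}$ is irreducible of dimension $52$ and contains $[M]$. The previous step implies that every irreducible component of $\Quot{4}{8}$ through $[M]$ has dimension at most $52$, so $\overline{\mathcal{L}^{\OpQuot}}$ is such a component $\mathcal{Z}$. For generic nonreducedness I argue that Assumption~\ref{assu:first} is generic on $\mathcal{Z}$: the condition that $K$ be generated in degree one is equivalent to surjectivity of the multiplication map $K_1 \otimes_{\kk} S_1 \onto F_2$, which is Zariski-open on $\mathcal{L}^{\OpQuot}_0 \cong \Gr(12,16)$ and holds at $[M]$. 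Because $\mathbb{A}^4$-translation acts on $\Quot{4}{8}$ by automorphisms that preserve tangent dimension, Lemma~\ref{ref:tangentToQuotAtLocus:lem} delivers $\dim T \geq 64 > 52 = \dim \mathcal{Z}$ on a dense open subset of $\mathcal{Z}$; every such point is then singular, so $\mathcal{Z}$ is generically nonreduced.

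The main obstacle will be making the graded analysis in the dimension bound precise: lifting the abstract $\im \bar o \subset \Sym_2(\mms/\mms^2)$ to honest pure-$t$ quadrics in $I \subset \hatS$ (not merely modulo $\mms^3$), and verifying that the ideal $J$ so generated decomposes as the claimed completed tensor product with a $4$-dimensional factor. Both points rely critically on the $\Gmult$-homogeneity of $I$, which in turn depends on the $\Gmult$-fixedness of $[M]$ supplied by Assumption~\ref{assu:first}.
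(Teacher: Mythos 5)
Your overall strategy matches the paper's: bound $\dim\locring$ by $52$ using the primary obstruction plus the $\Gmult$-grading, then combine with Lemma~\ref{ref:tangentToQuotAtLocus:lem} and semicontinuity to deduce nonreducedness on a dense open. The nonreducedness half of your argument is sound. But the dimension bound has a genuine gap — the very one you flag as ``the main obstacle'' — and $\Gmult$-homogeneity alone does \emph{not} close it. The claim that the pure-$t$ quadrics in $\im\bar o$ lift to actual elements of $I\cap\Sym_2\spann{t_1,\ldots,t_{16}}$, i.e., that $\im\bar o\subset I$, is false in general. Homogeneity only tells you that the weight-$2$ piece $I_2$ consists of elements $\sum_{a\le b} t_at_b\,g_{ab}(u)$ with $g_{ab}\in\kk[[u]]$, and $\bar o$ records the reduction $u\mapsto 0$; the truncation $\sum t_at_b\,g_{ab}(0)$ need not itself belong to $I$. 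A concrete obstruction: in $\kk[[t_1,t_2,u]]$ with $\deg t_i=1$, $\deg u=0$, the graded ideal $I=(t_1^2+ut_1t_2,\ t_2^2)\subset\mms^2$ has $\im\bar o=\spann{t_1^2,t_2^2}$, yet $t_1^2\notin I$ (any weight-$2$ element of $I$ is $a_0(t_1^2+ut_1t_2)+b_0t_2^2$ with $a_0,b_0\in\kk[[u]]$, and matching coefficients forces both $a_0=1$ and $a_0u=0$, which is impossible). So your ideal $J$ need not be contained in $I$, and the inequality $\dim\hatS/I\le\dim\hatS/J$ is unsupported.

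What the grading \emph{does} give is the weaker but sufficient inclusion $\im\bar o\subset I+(u_1,\ldots,u_{48})$: for $f=\sum t_at_b g_{ab}(u)\in I_2$ reducing to $Q=\sum t_at_b g_{ab}(0)\in\im\bar o$, the difference $f-Q=\sum t_at_b\bigl(g_{ab}(u)-g_{ab}(0)\bigr)$ lies in $(u_1,\ldots,u_{48})$, hence $Q\in I+(u)$. This is exactly what the paper's proof exploits: instead of comparing $\hatS/I$ with a single $\hatS/J$, it first shows (via the graded surjections $\varphi_k\colon\kk[t_1,\ldots,t_{16}]\to\locring/(\mm^k+(u))$ and a limit) that $\kk[[t_1,\ldots,t_{16}]]/(\im\bar o)$ surjects onto $\locring/(u_1,\ldots,u_{48})$, applies Assumption~\ref{assu:dimension} to bound that quotient by $4$, and then recovers $\dim\locring\le 48+4$ by cutting out the $u$'s one at a time. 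Your completed-tensor-product picture and the final arithmetic survive once you reroute the argument through $\locring/(u_1,\ldots,u_{48})$ in this way; the direct inclusion $J\subset I$ must be abandoned.
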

            \begin{proof}
                For every $k\geq 3$ consider the natural map
                \[
                    \varphi_k\colon\kk[t_1, \ldots ,t_{16}]\to
                    \frac{\locring}{\mm^k+(u_1,  \ldots , u_{48})}.
                \]
                This is a homomorphism of graded rings so its kernel is a
                homogeneous ideal. This map is also bijective on tangent
                spaces and the maximal ideal of the ring on the right is nilpotent, so the induced map
                \[
                    \kk[t_1, \ldots ,t_{16}]/(t_1,\ldots ,t_{16})^k\to
                    \frac{\locring}{\mm^k+(u_1,  \ldots , u_{48})}
                \]
                is surjective by the nilpotent Nakayama lemma and hence $\varphi_k$ is surjective.
                The canonical projection
               \[
                   \frac{\locring}{(u_1,\ldots ,u_{48})}\to \frac{\hatS}{\mms^2+(u_1,\ldots ,u_{48})}=\frac{\locring}{\mm^2+(u_1,\ldots ,u_{48})}
               \]
                can be clearly lifted to a local ring homomorphism
               \[
                   \frac{\locring}{(u_1,\ldots ,u_{48})}\to \frac{\hatS}{\mms^3+I+(u_1,\ldots ,u_{48})}=\frac{\locring}{\mm^3+(u_1,\ldots ,u_{48})},
               \]
               so the corresponding obstruction map 
               $ob_I\colon (\mms^2/(\mms^3+I))^{\vee}\to \Ext^1(K,M)$ is zero. By the discussion before Theorem~\ref{ref:primaryForQuot:thm} we therefore get that the image of the map
               $\bar{o}$ from~\eqref{eq:baro} lies in $(\mms^3+I)/\mms^3$, so in the kernel of $\varphi_3$.
                The degree two parts of $\ker \varphi_k$ for all $k\geq 3$ are
                equal. In particular, the intersection of all $\ker \varphi_k$
                contains $\im \bar{o}$. Passing to the limit of the
                maps from $\kk[t_1, \ldots ,t_{16}]/((t_1, \ldots ,t_{16})^k + \im
                \bar{o})$ induced by $\varphi_k$, we get a
                surjective ring homomorphism
                \[
                    \varphi\colon \kk[[t_1, \ldots ,t_{16}]]/(\im
                    \bar{o})\onto \frac{\locring}{(u_1, \ldots ,u_{48})}.
                \]
                By Assumption~\ref{assu:dimension} the ring $\kk[t_1, \ldots
                ,t_{16}]/(\im \bar{o})$ localized at $(t_1, \ldots
                ,t_{16})$ has dimension at most four. Completion of a
                Noetherian local ring preserves dimension,
                see~\cite[Corollary~11.19]{Atiyah_Macdonald} so also
                the dimension of $\kk[[t_1, \ldots ,t_{16}]]/(\im \bar{o})$ is at
                most four. Since $\varphi$ is surjective, the dimension of
                $\locring/(u_1, \ldots ,u_{48})$ is at most four. Since
                dividing a local ring by an element of the maximal ideal
                lowers the dimension by at most one,
                see~\cite[Corollary~11.18]{Atiyah_Macdonald}, the dimension of
                $\locring$ is at most $48+4 = 52$, which is the dimension of
                $\mathcal{L}^{\OpQuot}$. Therefore $\mathcal{L}^{\OpQuot}$
                corresponds to a minimal prime of $\OO_{[M]}$ and hence to a
                component $\mathcal{Z}$ passing through $[M]$. By
                Lemma~\ref{ref:tangentToQuotAtLocus:lem} the tangent space at
                every $\kk$-point of this component is at least $64$, so that
                $\mathcal{Z}$ is generically nonreduced.
            \end{proof}

            \begin{example}\label{ex:explicitNonreduced}
        Let $\kk$ be a field of characteristic zero and consider $K$
        generated by the columns of the following matrix
        \setcounter{MaxMatrixCols}{20}
        \[
            \begin{pmatrix}y_1& y_2& y_3& y_4& 0& 0& 0& 0& 0& 0& 0& 0\\ 0&
                0& 0& 0& y_1& y_2& y_3& y_4& 0& 0& 0& 0\\ 0& 0& 0& 0& 0&
                0& 0& 0& y_1& y_2& y_3& y_4\\ 0& 0& y_4& y_1& y_2& 0& 0&
              2y_1+3y_2+y_3+2y_4& y_2+y_3& 2y_1+3y_2+5y_3+5y_4& 0&0
          \end{pmatrix}
        \]
        Using \texttt{Macaulay2} and formula~\eqref{eq:primaryOb} we verify
        Assumptions~\ref{assu:first}-\ref{assu:dimension} and additionally we
        check that the obstruction group $\Ext^1(K, M)$ is concentrated in degree
        $-2$.
        The code for computing explicitly the dimension in
            Assumption~\ref{assu:dimension} is included as an ancillary file
            \emph{VerifyComputationsInArticle.m2} in~\cite{selfReference}.
    \end{example}

    \begin{corollary}\label{ref:genericallynonreducedmatrices:cor}
        The component $(\Bbbk I)^4+\overline{\squareZeroComp{4}{4}{4}}$ of
        $C_4(\MM_8)$ is generically
        nonreduced.
    \end{corollary}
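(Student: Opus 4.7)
The plan is to transfer the statement of Theorem~\ref{ref:genericallyNonreduced:thm} from $\Quot{4}{8}$ to $C_4(\MM_8)$ via the ADHM correspondence of \S\ref{ssec:quot_and_commuting}. Fix $r=4$ and consider the smooth morphisms
\[
    \Quot{4}{8} \xleftarrow{\ p\ } \prodfamilystable \xrightarrow{\ p_1\ } C_4(\MM_8),
\]
where $p$ is the principal $\GL_8$-bundle of Proposition~\ref{ref:prodfamilybundleOverQuotmain:prop} and $p_1$ is smooth onto its open image by Lemma~\ref{ref:componentBijectionAssumptions:lem}. Both maps are therefore flat with geometrically regular fibers.

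First I would identify $(\Bbbk I)^4 + \overline{\squareZeroComp{4}{4}{4}}$ as the component of $C_4(\MM_8)$ corresponding to the component $\mathcal{Z}$ of $\Quot{4}{8}$ produced in Theorem~\ref{ref:genericallyNonreduced:thm}. A general point of $(\Bbbk I)^4 + \overline{\squareZeroComp{4}{4}{4}}$, after subtracting the common scalar part, is a square-zero tuple whose associated module $M$ satisfies $\dim(M/\mathfrak{m}M)=4$, so $M$ is $4$-generated and the point lies in $p_1(\prodfamilystable)$. Under the correspondence of Lemma~\ref{ref:descriptionOfCommMat:lem}, the graded quotients $F/K$ with $F=S^{\oplus 4}$ and $K \supset \mathfrak{m}^2 F$ of Hilbert series $4+4T$ (that is, $\mathcal{L}^{\OpQuot}_0$) are in bijection with the $\GL_8$-orbits of $\squareZeroCompUpper{4}{4}{4}$, obtained by choosing a basis of $V$ adapted to the splitting $F_0 \oplus F_1$. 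Allowing translation of the support by $\mathbb{A}^4$ on the $\Quot$ side corresponds to adding an arbitrary scalar tuple on the matrix side, so $\mathcal{L}^{\OpQuot}$ matches, via ADHM, the open locus of $(\Bbbk I)^4 + \squareZeroComp{4}{4}{4}$ consisting of points with $4$-generated modules. The dimensions are consistent: using the formula $\dim \mathcal{Z}^{\OpQuot} = rd - d^2 + \dim \mathcal{Z}^{C}$ from \S\ref{ssec:components} and $\dim \squareZeroComp{4}{4}{4} = (n+1)m(d-m) = 80$, we get $\dim \mathcal{Z} = 32-64+84 = 52$, matching Theorem~\ref{ref:genericallyNonreduced:thm}.

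Next I would transfer generic nonreducedness through the two smooth maps. Since smooth morphisms are flat with geometrically regular (hence reduced) fibers, for $f\colon X\to Y$ smooth and $x \mapsto y$ one has $\OO_{X,x}$ reduced if and only if $\OO_{Y,y}$ is reduced. By Theorem~\ref{ref:genericallyNonreduced:thm} a general point $[F/K]$ of $\mathcal{Z}$ is nonreduced in $\Quot{4}{8}$; lifting to any $u \in p^{-1}([F/K])$ yields a nonreduced point of $\prodfamilystable$; and then $p_1(u)$ is a nonreduced point of $C_4(\MM_8)$ lying in $(\Bbbk I)^4 + \overline{\squareZeroComp{4}{4}{4}}$. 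By genericity of $[F/K]$ and irreducibility of the component, the whole component is generically nonreduced.

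The only real obstacle is the bookkeeping in the second paragraph: verifying cleanly that the graded quotients parameterized by $\mathcal{L}^{\OpQuot}_0$ correspond under ADHM precisely to $\squareZeroCompUpper{4}{4}{4}$ (and that translations match the $(\Bbbk I)^4$ summand), so that Theorem~\ref{ref:genericallyNonreduced:thm} in fact applies to the component in question. The nontrivial analytic content — the primary obstruction computation and the verification of Assumptions~\ref{assu:first}-\ref{assu:dimension} through Example~\ref{ex:explicitNonreduced} — is already packaged inside Theorem~\ref{ref:genericallyNonreduced:thm}.
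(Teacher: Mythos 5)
Your proposal is correct and follows essentially the same route as the paper: identify the component of $C_4(\MM_8)$ with the component $\mathcal{Z}\subset\Quot{4}{8}$ from Theorem~\ref{ref:genericallyNonreduced:thm} via the ADHM maps $\Quot{4}{8}\leftarrow\prodfamilystable\rightarrow C_4(\MM_8)$, and transfer nonreducedness along these smooth morphisms. The paper phrases the transfer via the equivalence ``generically nonreduced $\iff$ no smooth points'' together with the tangent-space comparison of \S\ref{ssec:components}, whereas you invoke the flat-local-homomorphism-with-regular-fibers criterion directly; these are equivalent formulations of the same step.
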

    \begin{proof}
        This component corresponds to the
        generically nonreduced component $\mathcal{Z} \subset \Quotmain$
        constructed in Theorem~\ref{ref:genericallyNonreduced:thm} using
        Example~\ref{ex:explicitNonreduced}.
        Being generically nonreduced, the component $\mathcal{Z}$ has no
        smooth points.
        By the discussion of Section~\ref{ssec:components} also
        $(\Bbbk I)^4+\overline{\squareZeroComp{4}{4}{4}}$ has no smooth points, so is generically
        nonreduced as well.
    \end{proof}

    \begin{proof}[Proof of Main
        Theorem~\ref{ref:mainthm:genericallynonreduced:intro}]
        It is enough to prove that the generically nonreduced irreducible
        component $\mathcal{Z}$
        from Corollary~\ref{ref:genericallynonreducedmatrices:cor}
        induces a generically nonreduced irreducible component of $\CommMat$
        for $n\geq 4$ and $d\geq 8$.
        To
        pass from $n$ to $n+1$, consider the map $i\colon C_n(\MM_d)\to
        C_{n+1}(\MM_d)$ that adds the zero matrix. Forgetting about the last
        matrix gives a map $s\colon C_{n+1}(\MM_d)\to C_n(\MM_d)$ such that
        $si = \id$. Let $\xx$ be a point of $\mathcal{Z}$ that does not lie on
        other components and let
        $\mathcal{Z}'$ be any component containing $i(\xx)$. A deformation
        of $i(\xx)$ induces a deformation of $\xx$ by $s$ so the component
        $\mathcal{Z}'$ consists of square-zero matrices of the same shape as
        in $\mathcal{Z}$. In particular, for any $n\geq 4$ the locus $(\Bbbk
        I)^n+\overline{\squareZeroComp{4}{4}{n}}$ is a component. Now a
        tangent space argument shows that it is nonreduced.
        To pass from $d$ to $d+1$ take the
        concatenation with the principal component of
        $C_n(\MM_1)$ and use Proposition~\ref{ref:productOfComponents:prop}.

        To pass from $\CommMat$ to Quot schemes, fix $r\geq 4$.
        The tuples in the above components of $\CommMat$ correspond to modules
        that have a four-element generating set, hence the components
        correspond to components of $\Quotmain$ which are generically
        nonreduced by the comparison of tangent spaces (or arguing using the
    smooth maps from $\prodfamilystable$).
    \end{proof}

    \begin{remark}
        We decided to avoid mentioning explicitly the \BBname{} decomposition
        in our argument. However, it does underlie the whole construction and
        we discuss here the argument from its perspective. Assumption~\ref{assu:first} together
        with the equivalent of Proposition~\ref{ref:TNTandElementary:prop}
        imply that injection of the \BBname{} cell of the \emph{positive}
        decomposition (constructed at the end of~\S\ref{sec:BBdecomposition})
        is an open immersion near $[F/K]$. Therefore, locally
        near this point, the Quot scheme retracts to its $\Gmult$-fixed
        locus. By Assumption~\ref{assu:tg}, the $\Gmult$-fixed locus locally near our point is just the Grassmannian $\Gr(12, 16)$
        and is smooth. By Assumption~\ref{assu:tg} again, the fiber of the
        retraction has $16$-dimensional tangent space, while by
        Assumption~\ref{assu:dimension} the fiber itself is $4$-dimensional,
        hence the contradiction with being reduced. The details of this line
        of argument will appear in~\cite{Szachniewicz}.
    \end{remark}

    \appendix
    \section{Functorial approach to comparison between $\CommMat$ and
    $\Quotmain$}\label{ssec:functorial}

        The bijection on points obtained in
        Lemma~\ref{ref:descriptionOfCommMat:lem} is not enough to compare
        singularities of $\CommMat$ and $\Quotmain$ or to prove that the map
        $\prodfamilystable\to \Quotmain$ is a morphism. However, the same idea
        can be extended to give such a comparison, using the language of the
        functor of points~\cite[Section~VI.1]{eisenbud_harris}.

        For a $\kk$-algebra $A$, an $A$-point of a scheme $X$ is just a
        morphism $\Spec(A)\to X$ of schemes over $\Spec(\kk)$. In particular,
        when $\kk$ is algebraically closed and $X$ is reasonable (for example,
        locally of finite type), the
        $\kk$-points of $X$ and closed points of $X$ agree.
        The set of $A$-points is denoted by $X(A)$.
        For a morphism of affine schemes $\Spec(A)\to \Spec(B)$ we get a map
        of sets $X(B)\to X(A)$. In this way $X(-)$ becomes a functor from
        $\kk$-algebras to sets.

        The idea of the functor of points is that for every $A$ the set of
        $X(A)$ resembles the set $X(\kk)$ of closed points of $X$, so it is easier
        to deal with sets of $A$-points for every $A$ than with
        $X$ viewed as a locally ringed topological space.
        \begin{example}
            What is an $A$-point of $\Matrices$? Since $\Matrices = \Spec
            \kk[z_{ij}\ |\ 1\leq i,j\leq d]$, an $A$-point of $\Matrices$ is a
            homomorphism $\varphi\colon\kk[z_{ij}]\to A$, i.e., a matrix
            $[\varphi(z_{ij})]_{i,j}$ with entries in $A$. Conversely, having
            such a matrix $[a_{ij}]$ we can uniquely define $\varphi$ by
            $\varphi(z_{ij}) = a_{ij}$. The conclusion
            is that an $A$-point of $\Matrices$ is a $d\times d$ matrix with entries in
            $A$.  Similarly, since $\GL_{\degM} = \Spec\kk[z_{ij},
                \rred{\Delta}\ |\ 1\leq
            i,j\leq d\rred{]/(\Delta \cdot \det [z_{ij}] = 1)}$, an $A$-point of $\GL_{\degM}$ is a matrix
            with entries in $A$ and with invertible determinant which is
            exactly an element of $\GL_{\degM}(A)$.
        \end{example}
        \begin{example}
            The scheme $\CommMat$ is closed in $\Matrices^{n} = \Spec
            \kk[z_{ij}^e\ |\ 1\leq i,j\leq d,1\leq e\leq n]$ given by the
            quadratic equations as explained in~\S\ref{ssec:commMat}. An $A$-point
            of $\CommMat$ is a homomorphism $\varphi\colon \kk[z_{ij}^e\ |\ 1\leq i, j\leq
            d,1\leq e\leq n]/I(\CommMat)\to A$. This gives an $n$-tuple
            $[\varphi(z_{ij}^e)]_{1\leq i,j\leq d}$ for $e=1,2, \ldots ,n$ of
            $d\times d$ matrices and the fact that $\varphi$ factors through
            the quotient by $I(\CommMat)$
            implies exactly that those matrices commute.
            This shows that an $A$-point of $\CommMat$ is just a commuting
            $n$-tuple of $d\times d$ matrices with entries in $A$.
        \end{example}
        For simplicity of notation, let $S_A:=S\tensor_{\kk} A$, $F_A := F\tensor_{\kk} A$ and $V_A :=
        V\tensor_{\kk} A$ where $F := S^{\oplus r}$ is a free module. The Quot scheme is \emph{defined} as a functor of
        points.
        \begin{example}[{\cite[Chapter~5]{fantechi_et_al_fundamental_ag}}]\label{ex:Quotdef}
            We define the scheme $\Quotmain$ by declaring that for every
            $\kk$-algebra $A$ the set of $A$-points of
            $\Quotmain$ is
            \begin{align*}
                \Quotmain(A) := \Big\{ \frac{F_A}{K}\ |\ &K \subset
                F_A\mbox{ is an $S_A$-submodule, the
                $A$-module }\frac{F_A}{K}\mbox{ is locally free
                and}\\
                & \mbox{for every maximal $\mm\triangleleft A$ the $(A/\mm)$-vector
                space }
                \frac{F\tensor_{\kk} (A/\mm)}{\overline{K}}\mbox{ has dimension
                }d\Big\}.
            \end{align*}
            For example, $\Quotmain(\kk) = \left\{ F/K\ |\ K\subset F\mbox{ an
            $S$-submodule, }\dim_{\kk}(F/K) = d \right\}$. An important
            observation is that $K$ is a kernel of a surjection of locally
            free $A$-modules, so it is a locally free $A$-module as well.
            Warning: the $S_A$-module $F_A/K$ is not locally free: it is not
            even torsion-free.
            For a map $A\to A'$ of algebras we declare that the map
            $\Quotmain(A)\to \Quotmain(A')$ sends $\frac{F_A}{K}$ to
            $\frac{F_A}{K}\tensor_{A} A' \simeq
            \frac{F_{A'}}{K\tensor_{A} A'}$. This defines a functor
            $\Quotmain$. It is nontrivial theorem that this functor is
            represented by a scheme,
            see~\cite[Chapter~5]{fantechi_et_al_fundamental_ag}.
        \end{example}

        A technically less demanding way of proving that $\Quotmain$ is a
        scheme is to
        exhibit an open cover by affine
        schemes and use \cite[Theorem~VI.14]{eisenbud_harris}. We present this
        approach,
        without much detail, below.
        \begin{example}\label{ex:monomialBasis}
            Fix $\degM$ elements of the module $F$ that are ``monomials'',
            i.e., that have the form $y_1^{a_1}\cdot  \ldots y_n^{a_n}
            e_j$ for some $a_i\in \mathbb{Z}_{\geq 0}$, $j\in \{1,2, \ldots
            ,r\}$ and denote their
            set by $\lambda$. Inside $\Quotmain$ we consider the locus
            $U_{\lambda}$
            of quotients $F/K$ such that the image of $\lambda$ is a
            $\kk$-linear basis of $F/K$. More precisely, for each $A$, we define
            $U_{\lambda}(A)$ as the set of $S_A$-submodules $K \subset F_A$ such that the
            images of $\lambda$ form an $A$-linear base of $F_A/K$. This is an open condition and the
            resulting open subscheme $U_{\lambda} \subset \Quotmain$ is affine by the argument
            repeating the one done for the Hilbert scheme
            in~\cite[Section~18.1]{Miller_Sturmfels}. The subschemes
            $\left\{U_{\lambda} \right\}_{\lambda}$ form an open cover of the
            Quot scheme. To prove representability, apply
            \cite[Theorem~VI.14]{eisenbud_harris}.
        \end{example}

        Having discussed the existence of $\Quotmain$, we discuss the
        analogues of the maps defined in~\S\ref{ssec:quot_and_commuting}.
    \newcommand{\FA}{F_{A}}%
    \newcommand{\VA}{V_{A}}%
    \begin{lemma}\label{ref:functorialDescriptionOfProdFam:lem}
            Let $A$ be a $\kk$-algebra. The map $(x_1, \ldots ,x_{n}, v_1, \ldots , v_{\genM})\mapsto
        (\frac{\FA}{\ker \pi_M}, \pibarM)$ is a bijection between the $A$-points of
        $\prodfamilystable$ and the set
        \[
            \left\{ \left(\frac{\FA}{K},
            \varphi\right)\ \Big|\ [\FA/K]\mbox{ is an $A$-point of
            }\Quotmain,\ \varphi\colon \FA/K\to \VA\mbox{ is an
            isomorphism of $A$-modules} \right\}.
        \]
        This bijection gives rise to an isomorphism of functors.
    \end{lemma}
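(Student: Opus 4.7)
The plan is to adapt the point-level bijection from Lemma~\ref{ref:descriptionOfCommMat:lem} to the relative setting over an arbitrary $\kk$-algebra $A$, being careful that each step commutes with base change so that the resulting bijection is natural in $A$.

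First I will unpack what an $A$-point of $\prodfamilystable$ is. Since $\prodfamily = \CommMat \times V^{\genM}$, such a point amounts to a commuting tuple $(x_1, \ldots, x_n) \in \MM_d(A)^n$ together with vectors $(v_1, \ldots, v_{\genM}) \in V_A^{\genM}$, satisfying the open stability condition inherited from $\prodfamilystable \subset \prodfamily$. I will verify that stability of such an $A$-point amounts exactly to surjectivity of the $S_A$-module map $\pi_M \colon F_A \to V_A$ sending $e_j$ to $v_j$, where $V_A$ carries the $S_A$-module structure $y_i \cdot v := x_i(v)$ (well-defined by commutativity of the $x_i$). Since $V_A$ is $A$-free of rank $d$, Nakayama together with the fact that stability is fiberwise yields the equivalence.

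Given this, the forward map is direct: set $K := \ker(\pi_M)$. Then $\pi_M$ induces an $S_A$-linear isomorphism $\bar{\pi}_M \colon F_A/K \to V_A$, so $F_A/K$ is $A$-locally free of rank $d$ and thus defines an $A$-point of $\Quotmain$ in the sense of Example~\ref{ex:Quotdef}; the map $\bar{\pi}_M$ serves as the desired $\varphi$. For the inverse, transport the $S_A$-module structure on $F_A/K$ along $\varphi$ to $V_A$; multiplication by $y_i$ becomes an $A$-linear endomorphism of $V_A$ which, in the fixed basis, yields a matrix $x_i \in \MM_d(A)$. Commutativity of these matrices follows from commutativity of $S_A$, and setting $v_j := \varphi(\overline{e_j})$ gives the required vectors. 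The composition $F_A \to F_A/K \stackrel{\varphi}{\to} V_A$ is surjective and sends $e_j$ to $v_j$, so the resulting tuple is stable. Mutual inverseness will follow by chasing definitions, exactly as in the proof of Lemma~\ref{ref:descriptionOfCommMat:lem}.

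The step that I expect to require the most care, and which is the only one genuinely new beyond the point-level argument, is the functoriality in $A$: for every $\kk$-algebra homomorphism $A \to A'$, both constructions must commute with $-\otimes_A A'$. The key input is that $K \subset F_A$ is $A$-flat, being the kernel of a surjection between $A$-locally free modules; consequently $K \otimes_A A'$ injects into $F_{A'}$ and coincides with the kernel of the base-changed surjection, which yields $F_{A'}/(K \otimes_A A') \simeq (F_A/K) \otimes_A A'$ as $S_{A'}$-modules. With this, the matrices, vectors, and isomorphism $\varphi$ all transform compatibly under base change, giving the required natural isomorphism of functors and, via representability of $\Quotmain$ and of $\GL(V)$, the promised scheme-theoretic identification.
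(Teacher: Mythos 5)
Your proof is correct and follows essentially the same route as the paper. The paper's own proof is a single sentence --- ``the proof works exactly as in the case $A = \kk$ proven in Lemma~\ref{ref:descriptionOfCommMat:lem}'' --- and your argument is a careful elaboration of that adaptation, spelling out the reinterpretation of stability via Nakayama's lemma and the flatness of $K$ over $A$ (as kernel of a surjection onto the locally free module $V_A$, it is an $A$-direct summand of $F_A$) which makes base change behave. These details are left implicit in the paper, so your write-up is, if anything, more informative than the original.
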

    \begin{proof}
        The proof works exactly as in the case $A=\kk$ proven in
        Lemma~\ref{ref:descriptionOfCommMat:lem}.
    \end{proof}
    Arguing as in Lemma~\ref{ref:GLVaction:lem} but for $A$-points, we get a
    map of functors $\GL(V)\times \prodfamilystable\to \prodfamilystable$ so
    a $\GL(V)$-action on $\prodfamilystable$.
    For an algebraic group $G$, a \emph{(Zariski local) principal $G$-bundle} $f\colon P\to T$ is
    a morphism of schemes with $G$-action fiberwise that locally
    trivialises: for every
    point $t\in T$ there exists an open neighbourhood $U$ of $t$ and a $G$-equivariant
    isomorphism $f^{-1}(U)
    \simeq G \times U$ of schemes over $U$.
    \begin{corollary}\label{ref:prodfamilybundleOverQuotmain:cor}
        There is a morphism of schemes $p\colon\prodfamilystable\to
        \Quotmain$
        defined on $A$-points by the formula $(x_1, \ldots ,x_n, v_1, \ldots ,
        v_{\genM})\mapsto \left[\frac{\FA}{\ker \pi_M}\right]$.
        This map makes $\prodfamilystable$ a principal $\GL(V)$-bundle over
        $\Quotmain$.
    \end{corollary}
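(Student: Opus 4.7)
The plan is to use the functorial description from Lemma~\ref{ref:functorialDescriptionOfProdFam:lem} to build $p$ by Yoneda, and then exhibit explicit local trivializations over the affine cover $\{U_{\lambda}\}$ from Example~\ref{ex:monomialBasis}.

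First I would construct $p$. By Lemma~\ref{ref:functorialDescriptionOfProdFam:lem}, the functor of points of $\prodfamilystable$ is naturally isomorphic to the functor sending a $\kk$-algebra $A$ to the set of pairs $(F_A/K, \varphi)$ with $[F_A/K]\in \Quotmain(A)$ and $\varphi\colon F_A/K\to V_A$ an $A$-module isomorphism. Forgetting $\varphi$ defines a natural transformation to the functor of points of $\Quotmain$. Since both sides are representable schemes, Yoneda produces a unique morphism of schemes $p\colon\prodfamilystable\to \Quotmain$ realizing this transformation, and on $A$-points it is precisely the formula $(x_1,\ldots,x_n,v_1,\ldots,v_r)\mapsto [F_A/\ker\pi_M]$ by tracing through the identification of Lemma~\ref{ref:functorialDescriptionOfProdFam:lem}.

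Next I would build the local trivializations. For each $\lambda$ in Example~\ref{ex:monomialBasis}, on $A$-points of $U_{\lambda}\subset \Quotmain$ the image of $\lambda$ is, by definition, an $A$-basis of $F_A/K$. Fixing the $\kk$-basis of $V$ we get a canonical $A$-module isomorphism $\varphi_{\lambda}\colon F_A/K\to V_A$ sending the image of $\lambda$ to this fixed basis, and this is natural in $A$. The assignment $[F_A/K]\mapsto (F_A/K,\varphi_{\lambda})$ therefore defines a morphism of functors $s_{\lambda}\colon U_{\lambda}\to p^{-1}(U_{\lambda})$, hence a morphism of schemes with $p\circ s_{\lambda}=\id$. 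Composing with the $\GL(V)$-action from Lemma~\ref{ref:GLVaction:lem} (whose functoriality on $A$-points is immediate), I define
\[
    \Phi_{\lambda}\colon \GL(V)\times U_{\lambda}\longrightarrow p^{-1}(U_{\lambda}), \qquad (g,u)\longmapsto g\cdot s_{\lambda}(u).
\]
This is $\GL(V)$-equivariant over $U_{\lambda}$ by construction. To see it is an isomorphism, I produce an inverse on $A$-points: a pair $(F_A/K,\varphi)\in p^{-1}(U_{\lambda})(A)$ is sent to $(\varphi\circ \varphi_{\lambda}^{-1},[F_A/K])$, where the first factor is an $A$-point of $\GL(V)$ because it is an $A$-module automorphism of $V_A$. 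The two constructions are mutually inverse natural transformations, hence $\Phi_{\lambda}$ is an isomorphism of schemes by Yoneda.

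The main obstacle is the bookkeeping of naturality: checking that $s_{\lambda}$ and $\Phi_{\lambda}$ really are morphisms of functors (compatible with base change $A\to A'$) and that the inverse just described is well-defined, i.e., that $\varphi\circ\varphi_{\lambda}^{-1}$ is an $A$-module automorphism of $V_A$ rather than merely an $A$-linear isomorphism of some other module. Both points reduce to the elementary fact that $\varphi_{\lambda}$ is defined purely in terms of the monomial basis $\lambda$, which is stable under base change. Once this is in place, the open cover $\{U_{\lambda}\}$ of $\Quotmain$ yields the required system of trivializations, and $p\colon \prodfamilystable\to \Quotmain$ is a Zariski-local principal $\GL(V)$-bundle, as claimed.
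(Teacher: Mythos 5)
Your proposal is correct and follows essentially the same strategy as the paper: construct $p$ via Lemma~\ref{ref:functorialDescriptionOfProdFam:lem} and Yoneda, then trivialize Zariski-locally by producing an isomorphism between the universal quotient and the trivial bundle $V_A$. The only cosmetic difference is that the paper trivializes over an arbitrary affine open on which the universal quotient is free and picks an arbitrary trivializing isomorphism $\varphi_0$, whereas you use the explicit cover $\{U_\lambda\}$ from Example~\ref{ex:monomialBasis} and the canonical isomorphism given by the monomial basis — a slightly more concrete instance of the same argument.
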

    \begin{proof}
        By Lemma~\ref{ref:functorialDescriptionOfProdFam:lem}, the map above
        is a map of functors, so by Yoneda's
        Lemma~\cite[Lemma~VI.1]{eisenbud_harris} it gives a morphism of
        schemes $p\colon \prodfamilystable\to \Quotmain$.
        To prove that $p$ is a principal $\GL(V)$-bundle, we can argue
        locally on $\Quotmain$. Choose a point of this scheme and its open
        neighbourhood $Z = \Spec(B)$. The corresponding submodule
        $\mathcal{K} \subset F_B$ has a
        quotient
        \[
            \mathcal{Q} = \frac{F_B}{\mathcal{K}},
        \]
        which is a locally free $B$-module of rank $\degM$. Shrink $Z$ so that
        $\mathcal{Q}$ becomes a free $B$-module and choose an isomorphism
        $\varphi_0\colon \mathcal{Q} \to V_{B}$ of $B$-modules.
        The preimage $p^{-1}(Z)$ is the fiber product $Z \times_{\Quotmain}
        \prodfamilystable$, so an $A$-point of this preimage is
        a morphism $j\colon \Spec(A)\to \Spec(B)$ and an $A$-point of
        $\prodfamilystable$. By
        Lemma~\ref{ref:functorialDescriptionOfProdFam:lem}, this $A$-point
        gives an $S_A$-submodule $K\subset
        F_{A}$ together with an isomorphism of $A$-modules
        $\varphi\colon F_A/K \simeq V_A$. The product is fibered over
        $\Quotmain$ which means that the submodules $K$ and
        $\mathcal{K}\tensor_{B} A$ of $\FA$ are equal.

        Summing up, an $A$-point of $p^{-1}(Z)$ is an isomorphism of
        $A$-modules $\varphi\colon F_{A}/(\mathcal{K}\tensor_{B} A)\to V_A$.
        Let $\overline{\varphi}_0\colon \mathcal{Q}\tensor_B A\to
        V_{B}\tensor_B A = V_{A}$ be obtained from $\varphi_0$, then we get an automorphism
        $\varphi\circ\overline{\varphi}_0^{-1}\colon \VA\to
        \VA$ of the $A$-module $\VA$, hence an $A$-point of $\GL(V)$. Conversely, such an
        $A$-point gives an isomorphism of $\mathcal{Q}\tensor_{B} A$ with
        $\VA$. This shows that the functor of points of $p^{-1}(Z)$ is
        isomorphic to the functor of points of $\GL(V) \times Z$, so by
        Yoneda's lemma we get the claim.
    \end{proof}
    We can repeat the argument of
    Lemma~\ref{ref:functorialDescriptionOfProdFam:lem} for $\CommMat$.
    \begin{lemma}\label{ref:functorialDescriptionOfCommMat:lem}
        Let $A$ be a $\kk$-algebra. The map $(x_1, \ldots ,x_n)\mapsto
            (M, \id)$ is a bijection between the $A$-points of
            $\CommMat$ and the set
            \[
                \left\{ \left(M,
                    \varphi\right)\ \Big|\ M\ \mbox{ a locally free
                    $A$-module, }\varphi\colon M\to \VA\mbox{ is an $A$-linear
                    isomorphism} \right\}\big/ \mathrm{iso}.
            \]
    \end{lemma}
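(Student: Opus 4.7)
The plan is to generalize the proof of Lemma~\ref{ref:matricesaremoduleswithbasis} (and to mirror the argument of Lemma~\ref{ref:functorialDescriptionOfProdFam:lem}) from $\kk$ to an arbitrary $\kk$-algebra $A$. The argument splits into a forward direction, a backward direction, and a check that the equivalence $/\mathrm{iso}$ matches exactly the information lost in going from pairs $(M,\varphi)$ to commuting tuples.

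For the forward direction, take a commuting tuple $(x_1,\ldots,x_n)\in \CommMat(A)$. Each $x_i$ is an $A$-linear endomorphism of the free $A$-module $V_A$. Declaring $y_i\cdot v := x_i(v)$ makes $V_A$ into an $S_A$-module (well-defined because the $x_i$ pairwise commute, and $S_A$ is the polynomial ring in commuting variables). Set $M := V_A$ with this $S_A$-structure and $\varphi := \id_{V_A}$; then $M$ is automatically locally free of rank $d$ over $A$. For the backward direction, given a pair $(M,\varphi)$ on the right, transport the $S_A$-action across $\varphi$: define $x_i\in \End_A(V_A) \simeq \MM_d(A)$ by $x_i := \varphi\circ(y_i\cdot-)\circ \varphi^{-1}$. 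Since $y_iy_j = y_jy_i$ in $S_A$, the endomorphisms $x_i$ commute, hence $(x_1,\ldots,x_n)\in \CommMat(A)$.

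The key step is to verify that these two constructions descend to mutually inverse maps on the equivalence class level. Starting from $(x_1,\ldots,x_n)$, passing forward and then backward clearly returns the same tuple, because the transported action of $y_i$ on $V_A$ is again $x_i$. In the other direction, two pairs $(M,\varphi)$ and $(M',\varphi')$ produce the same commuting tuple iff the $S_A$-module structures transported to $V_A$ coincide, iff the $A$-linear isomorphism $\psi := (\varphi')^{-1}\circ \varphi \colon M\to M'$ is $S_A$-linear; this $\psi$ is then the unique $S_A$-module isomorphism satisfying $\varphi' \circ \psi = \varphi$, which is precisely the equivalence relation $/\mathrm{iso}$. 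Hence the map descends to a bijection.

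The proof is largely formal, unwinding the definitions; the only point worth attention is that the equivalence relation $/\mathrm{iso}$ is interpreted as identifying $(M,\varphi)\sim(M',\varphi')$ whenever there exists an $S_A$-module isomorphism $\psi\colon M\to M'$ with $\varphi'\circ\psi = \varphi$. Finally, one would extend this bijection to an isomorphism of functors by checking naturality in $A$: for any $\kk$-algebra homomorphism $A\to A'$, base change $(-)\otimes_A A'$ commutes with the constructions above, exactly as in the proof of Lemma~\ref{ref:functorialDescriptionOfProdFam:lem}. There is no genuine obstacle; the statement is essentially a translation of Lemma~\ref{ref:matricesaremoduleswithbasis} into families.
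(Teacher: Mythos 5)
Your proposal is correct and takes essentially the same approach the paper intends: the paper gives no explicit proof of this lemma, merely noting that the argument of Lemma~\ref{ref:functorialDescriptionOfProdFam:lem} (itself a verbatim adaptation of Lemma~\ref{ref:matricesaremoduleswithbasis} over $A$) can be repeated, and your write-up is precisely that adaptation with the details spelled out, including the correct reading of the equivalence relation and the (implicit in the paper) requirement that $M$ carries an $S_A$-module structure.
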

    There is no scheme $X$ whose $A$-points correspond to locally
    free $A$-modules.
    However, there is such an algebraic stack (see~\cite{Olsson} for
    introduction to stacks) and it is called $\Modfin$.
    \begin{corollary}\label{ref:CommMatAsBundleOverMod:cor}
        The variety $\CommMat$ is an $\GL(V)$-bundle over $\Modfin$.
    \end{corollary}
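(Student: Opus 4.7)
The plan is to mimic the proof of Corollary~\ref{ref:prodfamilybundleOverQuotmain:cor} (the analogous statement for $\prodfamilystable \to \Quotmain$), replacing apparatus for schemes with apparatus for algebraic stacks. By Lemma~\ref{ref:functorialDescriptionOfCommMat:lem} there is a natural transformation of functors (or of stacks, after accounting for isomorphism classes) $q\colon \CommMat \to \Modfin$ sending $(M,\varphi) \mapsto M$, where we forget the trivialization $\varphi\colon M\to V_A$. By Yoneda (in the stacky sense), this gives a morphism of stacks. The $\GL(V)$-action on $\CommMat$ by simultaneous conjugation corresponds under the bijection of Lemma~\ref{ref:functorialDescriptionOfCommMat:lem} to the action $g\cdot (M,\varphi) = (M, g\circ\varphi)$, which acts trivially on $M$. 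Hence $q$ is $\GL(V)$-invariant.

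Next, one checks that the fibers of $q$ are $\GL(V)$-torsors. Given a scheme $Z = \Spec(B)$ and a $B$-point of $\Modfin$, i.e.\ a locally free $B$-module $\mathcal{M}$ of rank $d$ endowed with a commuting $S_B$-action, the fiber product $Z \times_{\Modfin} \CommMat$ parameterizes, over each $A$-algebra $B\to A$, pairs consisting of $\mathcal{M}\otimes_B A$ together with an $A$-linear isomorphism $\varphi\colon \mathcal{M}\otimes_B A \to V_A$. If we Zariski-localize $Z$ so that $\mathcal{M}$ becomes a free $B$-module of rank $d$ and fix an isomorphism $\varphi_0\colon \mathcal{M}\to V_B$, then precomposition with $\overline{\varphi_0}^{-1}$ identifies these $\varphi$ with automorphisms of $V_A$, i.e., with $A$-points of $\GL(V)$. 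This gives an isomorphism of functors $Z\times_{\Modfin}\CommMat \simeq \GL(V)\times Z$, which is exactly the local triviality condition. This local model also shows $q$ is smooth of relative dimension $d^2$, justifying the labels in Table~\ref{table:objectsinvolved}.

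The main subtlety, and the only real obstacle, is the technical setup: we need to know that $\Modfin$ exists as an algebraic stack whose $A$-points (up to isomorphism) are exactly the pairs $(M, S_A\text{-action})$ with $M$ locally free of rank $d$ over $A$. One can either quote this as part of the general theory (for instance, $\Modfin$ is the quotient stack $[\CommMat/\GL(V)]$, which gives the result tautologically), or one can argue directly that the atlas $\CommMat \to \Modfin$ is smooth surjective, giving $\Modfin$ the structure of a quotient stack. In either case, the local triviality over a Zariski cover of the atlas is exactly what we have just established, which is the content required for $q$ to be a principal $\GL(V)$-bundle. Since the content of the corollary is essentially that statement, and since the intended use (in Corollary~\ref{ref:obstructionTheoryForCommMat:cor} and elsewhere) only requires the comparison of deformation and obstruction theories via this smooth map, no finer stack-theoretic result is needed.
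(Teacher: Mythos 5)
Your proposal is correct and takes essentially the same approach as the paper: the paper's proof of this corollary simply states that it follows from Lemma~\ref{ref:functorialDescriptionOfCommMat:lem} ``along the same lines'' as Corollary~\ref{ref:prodfamilybundleOverQuotmain:cor}, which is precisely the argument you spell out. Your added remark that $\Modfin$ may be read as the quotient stack $[\CommMat/\GL(V)]$ (making the bundle structure tautological) is a fair way to address the existence question the paper leaves implicit.
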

    \begin{proof}
        This follows from Lemma~\ref{ref:functorialDescriptionOfCommMat:lem}
        along the same lines as Corollary~\ref{ref:prodfamilybundleOverQuotmain:cor}.
    \end{proof}
    \begin{corollary}\label{ref:obstructionTheoryForCommMat:cor}
        The variety $\CommMat$ has an obstruction theory, where a point
        $(x_1, \ldots ,x_{n})$ with corresponding module $M$ has
        obstruction group $\Ext^2(M, M)$.
    \end{corollary}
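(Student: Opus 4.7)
The plan is to transfer the obstruction theory from $\Quotmain$ (which has obstruction space $\Ext^1(K, F/K)$, as recalled in~\S\ref{ssec:obstructions}) to $\CommMat$ via the smooth maps of Table~\ref{table:objectsinvolved}. By Corollary~\ref{ref:prodfamilybundleOverQuotmain:cor} the morphism $p\colon \prodfamilystable\to \Quotmain$ is a principal $\GL(V)$-bundle, hence smooth, so the obstruction theory on $\Quotmain$ pulls back to one on $\prodfamilystable$ at every point. On the other hand $\prodfamilystable\to \CommMat$ is the composition of the open immersion $\prodfamilystable\into \CommMat\times V^{\genM}$ with the smooth projection, hence it too is smooth. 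Since smooth morphisms have vanishing obstructions, the expectation is that an obstruction theory on $\CommMat$ pulls back to the same obstruction theory on $\prodfamilystable$; so the task is to exhibit one with obstruction space $\Ext^2(M,M)$ that maps isomorphically onto the pullback of $\Ext^1(K, F/K)$.

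The algebraic input is the canonical isomorphism $\Ext^1(K, M)\simeq \Ext^2(M, M)$ obtained by applying $\Hom_S(-,M)$ to the short exact sequence $0\to K\to F\to M\to 0$: since $F$ is free, $\Ext^i(F,M)=0$ for all $i\geq 1$, so the connecting map in the long exact sequence is an isomorphism. The crucial point is that the right-hand side depends only on the abstract $S$-module $M$ and not on the chosen presentation $F/K$. I would thus define the obstruction map for $\CommMat$ by choosing locally a lift of a point $(x_1,\ldots,x_n)\in \CommMat(B_0)$ to a point of $\prodfamilystable(B_0)$ (i.e.\ a choice of generators for the associated module $M$), transporting the primary obstruction from $\Ext^1(K,M)$ to $\Ext^2(M,M)$ along the above isomorphism, and then verifying that the result is independent of the choice of generators.

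The main obstacle is this functoriality/well-definedness check, together with compatibility with morphisms of square-zero extensions as required in Definition~\ref{ref:obstructiontheory:def}. The cleanest resolution is to appeal to Corollary~\ref{ref:CommMatAsBundleOverMod:cor}, which exhibits $\CommMat$ as a $\GL(V)$-bundle over the stack $\Modfin$ of finite-degree $S$-modules; the obstruction theory on $\Modfin$ at a point $M$ is intrinsically $\Ext^2(M,M)$, as can be seen either via standard stack theory or from the DGLA $\Endd(F_{\bullet})$ used in the first proof of Theorem~\ref{ref:primaryForQuot:thm}, whose second cohomology is $\Ext^2(M,M)$. Pulling this theory back along the smooth structure map $\CommMat\to \Modfin$ and checking compatibility with the already-constructed theory on $\Quotmain$ through the common smooth cover $\prodfamilystable$ yields the desired obstruction theory on $\CommMat$ with obstruction space $\Ext^2(M,M)$.
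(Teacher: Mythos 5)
Your final route — pulling back the obstruction theory on $\Modfin$, with obstruction space $\Ext^2(M,M)$, along the smooth map $\CommMat\to\Modfin$ furnished by Corollary~\ref{ref:CommMatAsBundleOverMod:cor} — is exactly the paper's one-line proof. The preliminary attempt to descend the $\Quotmain$ obstruction theory through $\prodfamilystable$ is a detour you rightly abandon, since an obstruction theory does not automatically push forward along a smooth surjection; once you switch to the $\Modfin$ viewpoint the statement follows immediately.
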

    \begin{proof}
        The map $\CommMat\to \Modfin$ is smooth by
        Corollary~\ref{ref:CommMatAsBundleOverMod:cor}, so the obstruction
        theory for $\Modfin$,
        see~\cite[Proposition~6.5.1]{fantechi_et_al_fundamental_ag}, lifts to an obstruction
        theory for $\CommMat$.
    \end{proof}

    \small

\newcommand{\etalchar}[1]{$^{#1}$}


\begin{thebibliography}{HMNW03}

\bibitem[AHDM78]{ADHM}
M.~F. Atiyah, N.~J. Hitchin, V.~G. Drinfeld, and Yu.~I. Manin.
\newblock Construction of instantons.
\newblock {\em Phys. Lett. A}, 65(3):185--187, 1978.

\bibitem[AL81]{AL}
M.~D. Atkinson and S.~Lloyd.
\newblock Primitive spaces of matrices of bounded rank.
\newblock {\em J. Austral. Math. Soc. Ser. A}, 30(4):473--482, 1980/81.

\bibitem[AM69]{Atiyah_Macdonald}
M.~F. Atiyah and I.~G. Macdonald.
\newblock {\em Introduction to commutative algebra}.
\newblock Addison-Wesley Publishing Co., Reading, Mass.-London-Don Mills, Ont.,
  1969.

\bibitem[Bar00]{Baranovsky__construction_of_Quot}
Vladimir Baranovsky.
\newblock Moduli of sheaves on surfaces and action of the oscillator algebra.
\newblock {\em J. Differential Geom.}, 55(2):193--227, 2000.

\bibitem[BH93]{BrunsHerzog}
Winfried Bruns and J{\"u}rgen Herzog.
\newblock {\em Cohen-{M}acaulay rings}, volume~39 of {\em Cambridge Studies in
  Advanced Mathematics}.
\newblock Cambridge University Press, Cambridge, 1993.

\bibitem[CBS02]{CrawleyBoevey}
William Crawley-Boevey and Jan Schr\"{o}er.
\newblock Irreducible components of varieties of modules.
\newblock {\em J. Reine Angew. Math.}, 553:201--220, 2002.

\bibitem[CEVV09]{cartwright_erman_velasco_viray_Hilb8}
Dustin~A. Cartwright, Daniel Erman, Mauricio Velasco, and Bianca Viray.
\newblock Hilbert schemes of 8 points.
\newblock {\em Algebra Number Theory}, 3(7):763--795, 2009.

\bibitem[EH88]{Eisenbud__Harris__vector_spaces}
David Eisenbud and Joe Harris.
\newblock Vector spaces of matrices of low rank.
\newblock {\em Adv. in Math.}, 70(2):135--155, 1988.

\bibitem[EH00]{eisenbud_harris}
David Eisenbud and Joe Harris.
\newblock {\em The geometry of schemes}, volume 197 of {\em Graduate Texts in
  Mathematics}.
\newblock Springer-Verlag, New York, 2000.

\bibitem[Eis95]{Eisenbud}
David Eisenbud.
\newblock {\em Commutative algebra}, volume 150 of {\em Graduate Texts in
  Mathematics}.
\newblock Springer-Verlag, New York, 1995.
\newblock With a view toward algebraic geometry.

\bibitem[Eis05]{eisenbud:syzygies}
David Eisenbud.
\newblock {\em The geometry of syzygies}, volume 229 of {\em Graduate Texts in
  Mathematics}.
\newblock Springer-Verlag, New York, 2005.
\newblock A second course in commutative algebra and algebraic geometry.

\bibitem[FGI{\etalchar{+}}05]{fantechi_et_al_fundamental_ag}
Barbara Fantechi, Lothar G{\"o}ttsche, Luc Illusie, Steven~L. Kleiman, Nitin
  Nitsure, and Angelo Vistoli.
\newblock {\em Fundamental algebraic geometry}, volume 123 of {\em Mathematical
  Surveys and Monographs}.
\newblock American Mathematical Society, Providence, RI, 2005.
\newblock Grothendieck's FGA explained.

\bibitem[FIM12]{Fiorenza_Iacono_Martinengo}
Domenico Fiorenza, Donatella Iacono, and Elena Martinengo.
\newblock Differential graded {L}ie algebras controlling infinitesimal
  deformations of coherent sheaves.
\newblock {\em J. Eur. Math. Soc. (JEMS)}, 14(2):521--540, 2012.

\bibitem[FM98]{Fantechi_Manetti}
Barbara Fantechi and Marco Manetti.
\newblock Obstruction calculus for functors of {A}rtin rings. {I}.
\newblock {\em J. Algebra}, 202(2):541--576, 1998.

\bibitem[Ger61]{Ger}
Murray Gerstenhaber.
\newblock On dominance and varieties of commuting matrices.
\newblock {\em Ann. of Math. (2)}, 73:324--348, 1961.

\bibitem[GS00]{GuralnickSethuraman}
Robert~M. Guralnick and B.~A. Sethuraman.
\newblock Commuting pairs and triples of matrices and related varieties.
\newblock {\em Linear Algebra Appl.}, 310(1-3):139--148, 2000.

\bibitem[Gur92]{Gur}
Robert~M. Guralnick.
\newblock A note on commuting pairs of matrices.
\newblock {\em Linear and Multilinear Algebra}, 31(1-4):71--75, 1992.

\bibitem[Han05]{Han}
Yongho Han.
\newblock Commuting triples of matrices.
\newblock {\em Electron. J. Linear Algebra}, 13:274--343, 2005.

\bibitem[Har10]{HarDeform}
Robin Hartshorne.
\newblock {\em Deformation theory}, volume 257 of {\em Graduate Texts in
  Mathematics}.
\newblock Springer, New York, 2010.

\bibitem[HG21]{HG}
Abdelmoubine~A. Henni and Douglas~M. Guimar{\~{a}}es.
\newblock A note on the {ADHM} description of {Q}uot schemes of points on
  affine spaces.
\newblock {\em Internat. J. Math.}, 32(6):2150031, 17, 2021.

\bibitem[HJ85]{MatrixAnalysis}
Roger~A. Horn and Charles~R. Johnson.
\newblock {\em Matrix analysis}.
\newblock Cambridge University Press, New York, 1985.

\bibitem[HJ18]{dosSantos}
Abdelmoubine~A. Henni and Marcos Jardim.
\newblock Commuting matrices and the {H}ilbert scheme of points on affine
  spaces.
\newblock {\em Adv. Geom.}, 18(4):467--482, 2018.

\bibitem[HMM{\etalchar{+}}13]{Harima_et_al__Lefschetz_properties}
Tadahito Harima, Toshiaki Maeno, Hideaki Morita, Yasuhide Numata, Akihito
  Wachi, and Junzo Watanabe.
\newblock {\em The {L}efschetz properties}, volume 2080 of {\em Lecture Notes
  in Mathematics}.
\newblock Springer, Heidelberg, 2013.

\bibitem[HMNW03]{Harima_Migliore__Lefschetz}
Tadahito Harima, Juan~C. Migliore, Uwe Nagel, and Junzo Watanabe.
\newblock The weak and strong {L}efschetz properties for {A}rtinian
  {$K$}-algebras.
\newblock {\em J. Algebra}, 262(1):99--126, 2003.

\bibitem[HO01]{HO}
John Holbrook and Matja\v{z} Omladi\v{c}.
\newblock Approximating commuting operators.
\newblock {\em Linear Algebra Appl.}, 327(1-3):131--149, 2001.

\bibitem[HO15]{HO'M}
J.~Holbrook and K.~C. O'Meara.
\newblock Some thoughts on {G}erstenhaber's theorem.
\newblock {\em Linear Algebra Appl.}, 466:267--295, 2015.

\bibitem[IE78]{emsalem_iarrobino_small_tangent_space}
Anthony Iarrobino and Jacques Emsalem.
\newblock Some zero-dimensional generic singularities; finite algebras having
  small tangent space.
\newblock {\em Compositio Math.}, 36(2):145--188, 1978.

\bibitem[IK99]{iarrobino_kanev_book_Gorenstein_algebras}
Anthony Iarrobino and Vassil Kanev.
\newblock {\em Power sums, {G}orenstein algebras, and determinantal loci},
  volume 1721 of {\em Lecture Notes in Mathematics}.
\newblock Springer-Verlag, Berlin, 1999.
\newblock Appendix C by Anthony Iarrobino and Steven L. Kleiman.

\bibitem[IMM20]{Iarrobino_Marques_McDaniel__Artinian_algebras_and_Jordan_type}
Anthony Iarrobino, Pedro~Macias Marques, and Chris McDaniel.
\newblock Artinian algebras and {J}ordan type.
\newblock arXiv:1802.07383v5, 2020.

\bibitem[Jel17]{Jel_classifying}
Joachim Jelisiejew.
\newblock Classifying local {A}rtinian {G}orenstein algebras.
\newblock {\em Collect. Math.}, 68(1):101--127, 2017.

\bibitem[Jel19]{Jelisiejew__Elementary}
Joachim Jelisiejew.
\newblock Elementary components of {H}ilbert schemes of points.
\newblock {\em Journal of the London Mathematical Society}, 100(1):249--272,
  2019.

\bibitem[Jel20]{Jelisiejew__Pathologies}
Joachim Jelisiejew.
\newblock Pathologies on the {H}ilbert scheme of points.
\newblock {\em Invent. Math.}, 220(2):581--610, 2020.

\bibitem[JS19]{jelisiejew_sienkiewicz__BB}
Joachim Jelisiejew and {\L}ukasz Sienkiewicz.
\newblock Bia{\l}ynicki-{B}irula decomposition for reductive groups.
\newblock {\em J. Math. Pures Appl. (9)}, 131:290--325, 2019.

\bibitem[J{\v{S}}21]{selfReference}
Joachim Jelisiejew and Klemen {\v{S}}ivic.
\newblock Components and singularities of {Q}uot schemes and varieties of
  commuting matrices.
\newblock arXiv:2106.13137 ancillary file, 2021.

\bibitem[Kas15]{Kass}
Jesse~Leo Kass.
\newblock The compactified jacobian can be nonreduced.
\newblock {\em Bull. Lond. Math. Soc.}, 47(4):686--692, 2015.

\bibitem[Lan12]{Landsberg__tensors}
Joseph~M. Landsberg.
\newblock {\em Tensors: geometry and applications}, volume 128 of {\em Graduate
  Studies in Mathematics}.
\newblock American Mathematical Society, Providence, RI, 2012.

\bibitem[LM17]{Landsberg_Michalek__Abelian_Tensors}
J.~M. Landsberg and Mateusz Micha{\l}ek.
\newblock Abelian tensors.
\newblock {\em J. Math. Pures Appl. (9)}, 108(3):333--371, 2017.

\bibitem[LN{\v{S}}21]{Levy_Ngo_Sivic}
Paul Levy, Nham~V. Ngo, and Klemen {\v{S}}ivic.
\newblock Commuting varieties and cohomological complexity theory.
\newblock {\em arXiv:2105.07918}, 2021.

\bibitem[Man99]{Manetti__Def_theory_of_DGLA}
Marco Manetti.
\newblock Deformation theory via differential graded {L}ie algebras.
\newblock In {\em Algebraic {G}eometry {S}eminars, 1998--1999 ({I}talian)
  ({P}isa)}, pages 21--48. Scuola Norm. Sup., Pisa, 1999.

\bibitem[Man11]{Manetti__notes}
Marco Manetti.
\newblock {D}ifferential {G}raded {L}ie {A}lgebras and formal deformation
  theory.
\newblock \url{http://www1.mat.uniroma1.it/people/manetti/DT2011/ManSea.pdf},
  2011.

\bibitem[MR18]{Moschetti_Ricolfi}
Riccardo Moschetti and Andrea~T. Ricolfi.
\newblock On coherent sheaves of small length on the affine plane.
\newblock {\em J. Algebra}, 516:471--489, 2018.

\bibitem[MS05]{Miller_Sturmfels}
Ezra Miller and Bernd Sturmfels.
\newblock {\em Combinatorial commutative algebra}, volume 227 of {\em Graduate
  Texts in Mathematics}.
\newblock Springer-Verlag, New York, 2005.

\bibitem[MT55]{MT}
T.~S. Motzkin and Olga Taussky.
\newblock Pairs of matrices with property {$L$}. {II}.
\newblock {\em Trans. Amer. Math. Soc.}, 80:387--401, 1955.

\bibitem[Nak99]{nakajima_lectures_on_Hilbert_schemes}
Hiraku Nakajima.
\newblock {\em Lectures on {H}ilbert schemes of points on surfaces}, volume~18
  of {\em University Lecture Series}.
\newblock American Mathematical Society, Providence, RI, 1999.

\bibitem[N{\v{S}}14]{NS}
Nham~V. Ngo and Klemen {\v{S}}ivic.
\newblock On varieties of commuting nilpotent matrices.
\newblock {\em Linear Algebra Appl.}, 452:237--262, 2014.

\bibitem[Ols16]{Olsson}
Martin Olsson.
\newblock {\em Algebraic spaces and stacks}, volume~62 of {\em American
  Mathematical Society Colloquium Publications}.
\newblock American Mathematical Society, Providence, RI, 2016.

\bibitem[Ric20]{Ricolfi_nonreduced}
Andrea~T. Ricolfi.
\newblock The {H}ilbert scheme of hyperelliptic {J}acobians and moduli of
  {P}icard sheaves.
\newblock {\em Algebra Number Theory}, 14(6):1381--1397, 2020.

\bibitem[Sch05]{Schur}
Issai Schur.
\newblock Zur {T}heorie der vertauschbaren {M}atrizen.
\newblock {\em J. Reine Angew. Math.}, 130:66--76, 1905.

\bibitem[Ser06]{Sernesi__Deformations}
Edoardo Sernesi.
\newblock {\em Deformations of algebraic schemes}, volume 334 of {\em
  Grundlehren der mathematischen Wissenschaften [Fundamental Principles of
  Mathematical Sciences]}.
\newblock Springer-Verlag, Berlin, 2006.

\bibitem[{\v{S}}iv12]{Sivic__Varieties_of_commuting_matrices}
Klemen {\v{S}}ivic.
\newblock On varieties of commuting triples {III}.
\newblock {\em Linear Algebra Appl.}, 437(2):393--460, 2012.

\bibitem[sta17]{stacks_project}
{S}tacks {P}roject.
\newblock \url{http://math.columbia.edu/algebraic_geometry/stacks-git}, 2017.

\bibitem[Str96]{Stromme__Hilbert_schemes}
Stein~Arild Str{\o{}}mme.
\newblock Elementary introduction to representable functors and {H}ilbert
  schemes.
\newblock In {\em Parameter spaces ({W}arsaw, 1994)}, volume~36 of {\em Banach
  Center Publ.}, pages 179--198. Polish Acad. Sci. Inst. Math., Warsaw, 1996.

\bibitem[Sza21]{Szachniewicz}
Micha{\l{}} Szachniewicz.
\newblock Non-reducedness of the {H}ilbert schemes of points.
\newblock MSc thesis under guidance of Joachim Jelisiejew, expected on arXiv
  around 09.2021, 2021.

\bibitem[Vak06]{Vakil_MurphyLaw}
Ravi Vakil.
\newblock Murphy's law in algebraic geometry: badly-behaved deformation spaces.
\newblock {\em Invent. Math.}, 164(3):569--590, 2006.

\end{thebibliography}
    \end{document}